\numberwithin{equation}{section}
\theoremstyle{plain}
\newtheorem{thm}{Theorem}[section]
\newtheorem*{thm*}{A special case of the Main Theorem}{}
\newtheorem{coro}[thm]{Corollary}
\newtheorem{prop}[thm]{Proposition}
\newtheorem{lemm}[thm]{Lemma}
\theoremstyle{definition}
\newtheorem{deff}[thm]{Definition}
\newtheorem{examp}[thm]{Example}
\newtheorem*{examp*}{Example}
\theoremstyle{remark}
\newtheorem{rema}[thm]{Remark}
\newcommand\twoscript[2]{\substack{{#1} \\ {#2}}}
\newcommand\rmi{\mathrm{i}}
\newcommand\rme{\mathrm{e}}
\newcommand\legendre[2]{\genfrac{(}{)}{}{}{#1}{#2}}
\newcommand\tbtmat[4]{\left(\begin{smallmatrix}{#1} & {#2} \\ {#3} & {#4}\end{smallmatrix}\right)}
\newcommand\tbtMat[4]{\begin{pmatrix}{#1} & {#2} \\ {#3} & {#4}\end{pmatrix}}
\newcommand*\abs[1]{\lvert#1\rvert}
\newcommand\etp[1]{\mathfrak{e}\left(#1\right)}
\newcommand\diff{\,\mathrm{d}}
\newcommand\ord{\mathop{\mathrm{ord}}}
\renewcommand\div{\mathop{\mathrm{div}}}
\newcommand\rad{\mathop{\mathrm{rad}}}
\newcommand\radE{\mathop{\mathrm{rad}_E}}
\newcommand\radO{\mathop{\mathrm{rad}_O}}
\newcommand\radp{\mathop{\mathrm{rad}'}}
\newcommand\irad{\mathop{\mathrm{irad}}}
\newcommand\iradp{\mathop{\mathrm{irad}'}}
\newcommand\numZ{\mathbb{Z}}
\newcommand\numQ{\mathbb{Q}}
\newcommand\numR{\mathbb{R}}
\newcommand\numC{\mathbb{C}}
\newcommand\projQ{\mathbb{P}^1(\mathbb{Q})}
\newcommand\numgeq[2]{\mathbb{#1}_{\geq #2}}
\newcommand\glR{\mathrm{GL}_2(\mathbb{R})}
\newcommand\slZ{\mathrm{SL}_2(\mathbb{Z})}
\newcommand\slR{\mathrm{SL}_2(\mathbb{R})}
\newcommand\glpQ{\mathrm{GL}_2^{+}(\mathbb{Q})}
\newcommand\glpR{\mathrm{GL}_2^{+}(\mathbb{R})}
\newcommand\uhp{\mathfrak{H}}
\newcommand\eleglptRaDp[6]{\left(\left(\begin{smallmatrix}{#1} & {#2} \\ {#3} & {#4}\end{smallmatrix}\right),{#5}\left({#3}^\prime \tau+{#4}^\prime \right)^{\frac{1}{#6}}\right)}
\newcommand\elesltRaDs[5]{\left(\left(\begin{smallmatrix}{#1} & {#2} \\ {#3} & {#4}\end{smallmatrix}\right),{#5}\right)}
\newcommand\Dcover[2]{\widetilde{#1^{#2}}}
\begin{document}

%%%%% To ease editing, for IMPAN journals add:

\baselineskip=17pt

%%%%%%%%%%%%%%%%

\title[Dimension formulas for modular form spaces]{Dimension formulas for modular form spaces of rational weights, the classification of eta-quotient characters and an extension of Martin's theorem}

\author{Xiao-Jie Zhu}
\address{School of Mathematical Sciences\\
Key Laboratory of MEA (Ministry of Education) \& Shanghai Key Laboratory of PMMP\\
East China Normal University\\
500 Dongchuan Road, 200241\\
Shanghai, P. R. China}
\email{zhuxiaojiemath@outlook.com}
\urladdr{https://orcid.org/0000-0002-6733-0755}

\begin{abstract}
We give an explicit formula for dimensions of spaces of rational-weight modular forms whose multiplier systems are induced by eta-quotients of fractional exponents. As the first application, we give series expressions of Fourier coefficients of the $n$th root of certain infinite $q$-products. As the second application, we extend Yves Martin's list of multiplicative holomorphic eta-quotients of integral weights by first extending the meaning of multiplicativity, then identifying one-dimensional spaces, and finally applying Wohlfahrt's extension of Hecke operators. A table containing $2277$ of such eta-quotients is presented. As a related result, we completely classify the multiplier systems induced by eta-quotients of integral exponents. For instance, there are totally $384$ such multiplier systems on $\Gamma_0(4)$ for any fixed weight. There are also some new results on $n$-fold covers of modular groups for $n\geq3$. Finally, we provide SageMath programs for verifying the theorems and generating the tables.
\end{abstract}

\subjclass[2020]{Primary 11F12, 11F20; Secondary 11F25, 11F30, 30F10, 22E15, 11L05}

\keywords{modular form, dimension formula, rational weight, Dedekind eta function, Hecke operator, multiplicative eta-quotient, multiple cover}

\thanks{This work is supported in part by Science and Technology Commission of Shanghai Municipality (No. 22DZ2229014).}

\maketitle

\tableofcontents

\section{Introduction}
\label{sec:Introduction}
\subsection{Dimension formulas}
A holomorphic function $f$ defined on the upper half plane $\uhp$ is called a modular form if it satisfies two conditions: first $f\left(\frac{a\tau+b}{c\tau+d}\right)=(c\tau+d)^k\chi\tbtmat{a}{b}{c}{d}f(\tau),\,\tau\in\uhp$ for all $\tbtmat{a}{b}{c}{d}$ in some discrete matrix group $G$, $k$ being an integer or half integer, called the weight, and $\chi$ being a multiplier system of $G$; second $f$ is holomorphic at all cusps of $G$, the meaning of which will be clarified in Definition \ref{deff:modularForm}. Let the space of all modular forms with given $G$, $k$ and $\chi$ be denoted by $M_k(G,\chi)$.

Modular forms, spaces of modular forms and their variants play important roles in mathematics. In the study of these topics, having an explicit formula for the dimension of the complex vector space $M_k(G,\chi)$ is crucial to some applications. For instance, $\dim_\numC M_k(G,\chi)=1$ would imply that any function $f$ in $M_k(G,\chi)$ is an eigenfunction for a family of operators called the Hecke operators and hence we could obtain nontrivial relations among the Fourier coefficients of $f$.

A natural and elegant way for computing $\dim_\numC M_k(G,\chi)$ is to use the Riemann-Roch theorem and the Riemann-Hurwitz formula. Petersson \cite[p. 194]{Pet38} applied this method and gave a dimension formula which he called the generalized Riemann-Roch theorem. Petersson's formula concerns arbitrary Fuchsian group $G$ of the first kind, arbitrary complex weight $k$ and arbitrary multiplier system $\chi$, but this formula is not so explicit in the sense that one cannot directly compute the dimension using elementary operations and numerical information about $G$ and $\chi$. To our best knowledge, Shimura \cite[Section 2.4]{Shi71} gave explicit formulas for $G$ arbitrary, $k$ being an integer and $\chi$ trivial first. Another way for computing $\dim_\numC M_k(G,\chi)$ is to use the Eichler-Selberg trace formula for Hecke operators; cf. \cite{Zag77_1,Zag77_2} for the case $G$ being the full modular group, $\chi$ trivial and \cite{Coh77} for the case $G=\Gamma_0(N)$ and $\chi$ induced by a Dirichlet character modulo $N$, both dealing with integral weights. See also \cite{Hij74}. For the half-integral weights, Cohen and Oesterl\'e \cite{CO77} gave an explicit dimension formula for the case $G=\Gamma_0(4N)$, $\chi$ being the product of the multiplier system of the theta series $\sum_{n\in\numZ}\rme^{2\uppi\rmi n^2\tau}$ and that of a Dirichlet character modulo $4N$. Since then, there appear many excellent works on computing $\dim_\numC M_k(G,\chi)$ for certain special $G$ or special $\chi$, or computing the dimensions of certain subspaces; cf. \cite{Mar05,Que10, KP17,Mar18,ZZ22,Ros26}.

Modular forms of rational weight have received comparatively less systematic attention. We know little about them. For instance, no comparably developed Hecke theory seems to be available in this generality at present. However, there actually exist many such forms and more importantly, there are interesting applications, e.g., to noncongruence modular forms. Explicit dimension formulas for spaces of rational weights have been obtained by Ibukiyama \cite[Lemma 1.7]{Ibu00}, \cite[p. 5]{Ibu20} in some special cases, using essentially the method of Petersson \cite{Pet38}, but no proof is given.

As our main result, we give explicit formulas for $\dim_\numC M_k(\Gamma_0(N),\chi)$ where $k$ is any rational number, $\chi$ is the multiplier system of any level $N$ eta-quotient $\prod_{n\mid N}\eta(n\tau)^{r_n}$ of fractional exponents $r_n$. See Definition \ref{deff:modularForm} for the exact meaning of $M_k(\Gamma_0(N),\chi)$ and \eqref{eq:charEtaQuotient} for that of $\chi$. The proof is based on Petersson's method \cite{Pet38} and we provide the full details. Petersson’s theorem applies in a much broader complex-weight setting, but the present work restricts to rational weights and to multiplier systems induced by eta-quotients, where the $D$-cover formalism (see \eqref{eq:Dcover}) allows one to obtain fully explicit formulas.

The following theorem is a special case of Theorem \ref{thm:main}, which is our main theorem.
\begin{thm*}
If $k>2-\frac{6}{m}\varepsilon_2-\frac{8}{m}\varepsilon_3-\frac{12}{m}\sum_{c\mid N}\phi(c,N/c)\cdot\left(1-\left\{\frac{x_c}{24}\right\}\right)$, then
\begin{equation}
\label{eq:IntroMain}
\dim_\numC M_k(\Gamma_0(N), \chi)=\frac{k-1}{12}m+\frac{1}{4}\varepsilon_2+\frac{1}{3}\varepsilon_3+\sum_{c\mid N}\phi(c,N/c)\cdot\left(\frac{1}{2}-\left\{\frac{x_c}{24}\right\}\right),
\end{equation}
where $m$, $\varepsilon_2$ and $\varepsilon_3$, depending on $N$, are given in \eqref{eq:mGamma0N}, \eqref{eq:e2Gamma0N} and \eqref{eq:e3Gamma0N} respectively and
\begin{equation*}
k=\frac{1}{2}\sum_{n\mid N}r_n,\quad x_c=\sum_{n\mid N}\frac{N}{(N,c^2)}\cdot\frac{(n,c)^2}{n}r_n\quad\text{  for }c\mid N.
\end{equation*}
Note that the notation $\phi(c,N/c)$ refers to the Euler totient $\phi$ of the greatest common divisor of $c$ and $N/c$.
\end{thm*}

In particular, if $k\geq2$, then \eqref{eq:IntroMain} always holds. For any rational $k$ with $0<k<2$, there are also infinitely many pairs $(N,\chi)$ such that $\dim_\numC M_k(\Gamma_0(N), \chi)$ can be computed using \eqref{eq:IntroMain}. See Section \ref{sec:Dimension formulas for weight 1/2, 1, 3/2 and small levels} for complete lists of such pairs $(N,\chi)$ for $k=1/2,\,1,\,3/2$ and $r_n$ being integers. The dimensions for $k=1/2$ and $k=3/2$ in the case $G=\Gamma_0(4N)$ and $\chi$ is the product of the multiplier system of the theta series $\sum_{n\in\numZ}\rme^{2\uppi\rmi n^2\tau}$ and that of a Dirichlet character modulo $4N$ are previously known: Serre and Starks \cite{SS77} gave explicit bases of the spaces of weight $1/2$; Cohen and Oesterl\'e \cite{CO77} established relations between spaces of weight $1/2$ and those of weight $3/2$. The important and interesting problem of computing dimensions in weight $1$ is difficult and the most complete result about this was obtained by Deligne and Serre \cite{DS74}. Using their theory, one can compute $\dim_\numC M_1(\Gamma_0(N), \chi)$ for any $N\in\numgeq{Z}{1}$ and $\chi$ induced by any Dirichlet character modulo $N$ although there is no explicit formula.

The difference between our formulas for $\dim_\numC M_k(\Gamma_0(N), \chi)$ with $k\in\frac{1}{2}\numZ$ and others' is that we deal with those $\chi$ induced by eta-quotients while the existing formulas are about those $\chi$ induced by Dirichlet characters or the multiplier system of $\sum_{n\in\numZ}\rme^{2\uppi\rmi n^2\tau}$. Moreover, when $k\in\numQ\setminus\frac{1}{2}\numZ$, our formulas include the one in \cite[Lemma 1.7]{Ibu00} and \cite[p. 5]{Ibu20} as special cases.

We use the standard construction of compact modular curves, the (classical) Riemann-Roch theorem, the Riemann-Hurwitz formula, and eta-transformation formulas as background. The new point here is the explicit evaluation of the resulting divisor-theoretic formula for eta-quotient multiplier systems of rational weight.

We note that the dimensions of spaces of modular forms of \emph{real} weight $k>2$ with unitary multiplier systems can be computed using the Selberg trace formula \cite[p. 84]{Sel56} as well.\footnote{A more general (and more explicit) form of Selberg's formula, commonly called the Eichler-Selberg trace formula, was obtained by Eichler \cite{Eic57}.} (In contrast, Petersson's divisor-theoretic method applies to complex weights, but is valid under a restriction on the degree of the corresponding divisor; see \eqref{eq:RiemannRochSpecial}.) See for instance \cite[p. 481]{Hej83}, \cite[Theorem 2.5.4]{Fis87} or \cite[Theorem 12.4.11]{CS17} for various versions of dimension formulas derived from the Selberg trace formula. Fischer's version actually applies to arbitrary unitary multiplier systems and to vector-valued modular forms. More importantly, \cite[Theorem 2.5.4]{Fis87} indeed provides dimension formulas in real weight $k>2$ since by \cite[Proposition 1.3.6]{Fis87} there exists a unitary multiplier system for every real weight. However, like Petersson's formula, this kind of general Selberg trace formula is not explicit. Finally, note that the $k\geq2$ part of Theorem \ref{thm:main} can also be proved using the Selberg trace formula, but it is beyond our ability to determine whether the Selberg trace formula can be used to derive the $0<k<2$ part of Theorem \ref{thm:main}.

After preparing, stating, and proving the main theorem and establishing some related tools (Proposition \ref{prop:rnxc}, Theorem \ref{thm:SNrepr}, Tables \ref{table:wt1/2}, \ref{table:wt1} and Section \ref{subsec:wt3/2}), we will give mainly two applications, both of which are based on identifying one-dimensional spaces. The first application concerns rational-weight modular forms and the second concerns ordinary eta-quotients whose exponents $r_n$ are all integers.

\subsection{Application I}
\label{subsec:AppI}
We give a series expression of the Fourier coefficients of the $n$th root of certain infinite $q$-products. See Corollary \ref{coro:etaQuotientCoefficient} for the details. To avoid overlapping with the main text, here we give a randomly chosen example which is different from Examples \ref{examp:App1Examp1} and \ref{examp:N4rn274}.
\begin{prop}
\label{prop:inIntroExamp}
We have
\begin{multline*}
\frac{\sqrt[3]{\prod_{n\in\numgeq{Z}{1}}(1-q^n)}\cdot\sqrt{\prod_{n\in\numgeq{Z}{1}}(1-q^{2n})^{29}}}{\sqrt[3]{\prod_{n\in\numgeq{Z}{1}}(1-q^{4n})^{22}}}=1+\rme^{-\frac{15\uppi\rmi}{8}}\frac{(2\uppi)^{\frac{15}{4}}}{\Gamma(\frac{15}{4})}\\
\cdot\sum_{n\in\numgeq{Z}{1}}n^{\frac{11}{4}}\left(\sum_{c\in\numgeq{Z}{1}}\frac{1}{(4c)^{\frac{15}{4}}}\sum_{\twoscript{0\leq d<4c}{(d,4c)=1}}\etp{\frac{dn}{4c}-\frac{1}{24}P(4c,d;\frac{29}{2},-\frac{22}{3})}\right)\cdot q^n,
\end{multline*}
where $q=\rme^{2\uppi\rmi\tau}$ throughout the paper, $\tau\in\uhp$ and
\begin{gather*}
P(4c,d;\frac{29}{2},-\frac{22}{3})=-88s(-d,c)+174s(-d,2c)+4s(-d,4c)-\frac{45}{2},\\
s(-d,c)=\sum_{r=1}^{c-1}\frac{r}{c}\left(\left\{\frac{-dr}{c}\right\}-\frac{1}{2}\right).
\end{gather*}
\end{prop}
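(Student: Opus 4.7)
The strategy is to recognize the left-hand side as a weight $15/4$ eta-quotient on $\Gamma_0(4)$, to show via the Main Theorem that the ambient space of modular forms is one-dimensional, and then to invoke Corollary \ref{coro:etaQuotientCoefficient} for the closed-form Fourier coefficients. First I would use $\prod_{n\geq 1}(1-q^{mn})=q^{-m/24}\eta(m\tau)$ for $m\in\{1,2,4\}$ to rewrite the LHS; a direct computation shows the combined $q$-prefactor is $-\frac{1}{72}-\frac{29}{24}+\frac{11}{9}=0$, so the LHS equals $\eta(\tau)^{1/3}\eta(2\tau)^{29/2}\eta(4\tau)^{-22/3}$. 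This identifies it as an eta-quotient with $N=4$ and $(r_1,r_2,r_4)=(1/3,29/2,-22/3)$, of weight $k=\frac{1}{2}(r_1+r_2+r_4)=15/4$, which matches the exponent $k-1=11/4$ on $n$ appearing in the claimed series.

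Next I apply the Main Theorem with this data. For $\Gamma_0(4)$ one has $m=6$, $\varepsilon_2=\varepsilon_3=0$, and three cusps indexed by $c\in\{1,2,4\}$ each with $\phi(\gcd(c,N/c))=1$. Direct substitution into $x_c=\sum_{n\mid N}\frac{N}{(N,c^2)}\frac{(n,c)^2}{n}r_n$ yields $x_1=23$, $x_2=22$, $x_4=0$. The weight hypothesis of the theorem reduces to $k>-1/4$, which holds, and the dimension formula then gives
\[
\dim_\numC M_{15/4}(\Gamma_0(4),\chi)=\frac{11/4}{12}\cdot 6+\left(\frac{1}{2}-\frac{23}{24}\right)+\left(\frac{1}{2}-\frac{11}{12}\right)+\frac{1}{2}=1.
\]
Because $x_4=0$, the eta-quotient is holomorphic and nonvanishing at the cusp $\infty$, and its $q$-expansion there has leading coefficient $1$.

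Since the ambient space is one-dimensional and our eta-quotient is already normalized with constant term $1$ at $\infty$, it must coincide with the unique normalized modular form in that space, whose Fourier coefficients are furnished by Corollary \ref{coro:etaQuotientCoefficient} as a Rademacher-type Poincar\'e sum. Specializing $k=15/4$, $N=4$ and $(r_1,r_2,r_4)=(1/3,29/2,-22/3)$ in that corollary produces the claimed series; the cocycle term $P(4c,d;29/2,-22/3)$ arises from combining the individual eta-multipliers via Rademacher's formula, which packages the gcd-contributions $(-d,c)$ and $(-d,2c)$ together with the Dedekind sum $s(-d,4c)$, and the prefactor $\rme^{-15\uppi\rmi/8}=\rme^{-\uppi\rmi k/2}$ together with $(2\uppi)^{k}/\Gamma(k)$ are the standard weight-$k$ normalizations in such a series.

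The only real obstacle is the arithmetic bookkeeping: correctly evaluating the three cusp parameters $x_c$ and confirming both that the dimension equals exactly one and that the eta-quotient's leading coefficient at $\infty$ equals exactly one. Once these are verified, the identity reduces to a mechanical specialization of the general Rademacher-type formula of Corollary \ref{coro:etaQuotientCoefficient}, and no further analytic or algebraic input is needed.
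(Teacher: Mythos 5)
Your proposal is correct and follows essentially the same route as the paper: rewrite the left-hand side as the eta-quotient $\eta(\tau)^{1/3}\eta(2\tau)^{29/2}\eta(4\tau)^{-22/3}$ of weight $15/4$ on $\Gamma_0(4)$ (your cancellation of the $q$-prefactor and your cusp data $x_1=23$, $x_2=22$, $x_4=0$ are all correct), check via Theorem \ref{thm:main} (equivalently Corollary \ref{coro:level4x4zeroDim0}, since $r_2=29/2$, $r_4=-22/3$ satisfy \eqref{eq:r2r4inequality}) that $\dim_\numC M_{15/4}(\Gamma_0(4),\chi)=1$, and then specialize Corollary \ref{coro:etaQuotientCoefficient}, which packages the comparison with the Eisenstein series $E_{I,k}$ and the normalization at $\rmi\infty$. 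One small caution: the terms $-88(-d,c)$ and $174(-d,2c)$ in the statement are Dedekind sums $s(-d,c)$ and $s(-d,2c)$ with the letter $s$ dropped by a misprint, not gcds, so all three terms of $P(4c,d;\tfrac{29}{2},-\tfrac{22}{3})$ come from specializing \eqref{eq:Pcdr2r4} rather than from any separate ``gcd-contribution.''
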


Corollary \ref{coro:etaQuotientCoefficient} contains infinitely many such identities of level $4$, extending a previous result of the author \cite[Theorem 7.1]{Zhu23} which concerns levels $2$ and $3$. For the outline of the proof, say of Proposition \ref{prop:inIntroExamp}, let the infinite product be $f$, which is an eta-quotient of fractional exponents. Then $f\in M_{15/4}(\Gamma_0(4),\chi)$ where $\chi$ is the multiplier system of $f$. There is as well an Eisenstein series $g$ in $M_{15/4}(\Gamma_0(4),\chi)$; see Definition \ref{deff:Eis}. Now we apply Theorem \ref{thm:main} and find that $\dim_\numC M_{15/4}(\Gamma_0(4),\chi)=1$. Hence $f$ and $g$ are proportional. The identity thus follows by figuring out the Fourier coefficients of $g$.

\subsection{Application II}
\label{subsec:AppII}
This is the major application. In 1996, Martin \cite{Mar96} obtained the complete list\footnote{There is a misprint in \cite[p. 4853]{Mar96}. The entry $2^{-1}\cdot4^4\cdot6^{-1}\cdot8^{-1}\cdot12^{4}\cdot24$ should be corrected to $2^{-1}\cdot4^4\cdot6^{-1}\cdot8^{-1}\cdot12^{4}\cdot24^{-1}$.} of integral-weight holomorphic eta-quotients $f$ (with integral exponents) satisfying that
\begin{enumerate}
  \item the multiplier system of $f$ is induced by some Dirichlet character,
  \item if we write $f=\sum_{n\in\numgeq{Z}{0}}c_f(n)q^n$, then $c_f(n_1)c_f(n_2)=c_f(n_1n_2)c_f(1)$ for any coprime positive integers.
\end{enumerate}
We will discard condition (a) and seek for arbitrary holomorphic eta-quotients (with integral exponents) satisfying a multiplicativity property similar to the one in condition (b). We will also give many interesting new identities involving Fourier coefficients of these eta-quotients. Let us first present some randomly chosen examples. For the basic knowledge about Dedekind eta function $\eta(\tau)$ and eta-quotients, see the second half of Section \ref{sec:Modular forms of rational weight}.
\begin{examp*}
Let $f(\tau)=\eta(\tau)^{-7}\eta(2\tau)^{17}\eta(4\tau)^{-3}=\sum_{n\in\frac{5}{8}+\numgeq{Z}{0}}c_f(n)q^n$. Let $l_1,l_2$ be coprime odd square-free integers. Then
\begin{equation*}
c_f\left(\frac{5l_1^2}{8}\right)c_f\left(\frac{5l_2^2}{8}\right)=c_f\left(\frac{5l_1^2l_2^2}{8}\right).
\end{equation*}
See Proposition \ref{prop:semiMultiExampLevel4} for details and the proof.
\end{examp*}

\begin{examp*}
Let $f(\tau)=\eta(3\tau)^2\eta(9\tau)^{-1}\eta(27\tau)=\sum_{n\in\numgeq{Z}{1}}c_f(n)q^n$. Then $c_f(l_1)c_f(l_2)=c_f(l_1l_2)$ whenever $l_1,l_2\equiv1\bmod{3}$ are square-free positive integers with $(l_1,l_2)=1$. See Example \ref{examp:level27Multi} for details.
\end{examp*}

\begin{examp*}
Let $f(\tau)=\eta(\tau)^{1}\eta(2\tau)^{-1}\eta(3\tau)^{-1}\eta(4\tau)^{1}\eta(6\tau)^4\eta(12\tau)^{-2}=\sum_{n\in\numgeq{Z}{0}}c_f(n)q^n$. Then
\begin{equation*}
c_f(l)=-\sum_{a\mid l}\legendre{a}{6}=-\prod_{p\mid l}\left(1+\legendre{p}{6}\right)
\end{equation*}
where $1\leq l\equiv1\bmod{24}$ is square-free. Note that $\legendre{a}{6}$ refers to the Kronecker-Jacobi symbol. See Example \ref{examp:level12} for details.
\end{examp*}

Martin \cite{Mar96} proved his results using Hecke operators in the sense of \cite{AL70} while we prove our results using Wohlfahrt's extension of Hecke operators \cite{Woh57}. See also \cite[Section 3]{ZZ23} for a theory of the double coset version of these operators. These operators are denoted by $T_l$, where $l$ runs through a multiplicative submonoid $L$ of the positive integers; see Corollary \ref{coro:Lfmf}. As a prerequisite, we give an explicit formula for the action on Fourier coefficients; see Theorem \ref{thm:cTlfbycf}. Then we aim to find holomorphic eta-quotient $f$ such that $T_lf=c_l\cdot f$ with $c_l\in\numC$. At this point Theorem \ref{thm:main} enters into play. If $f$, whose level is $N$ and character is $\chi$, lies in a one-dimensional space $M_k(\Gamma_0(N),\chi)$ or $S_k(\Gamma_0(N),\chi)$, the subspace of cusp forms, then we will have $T_lf=c_l\cdot f$ for all $l\in L$. ($L$ depends on $f$.) This fact, which is stated in Theorem \ref{thm:mainApp2}, is the main theorem of this application.

It seems that there are more than 10000 eta-quotients that are Hecke eigenforms (in the sense of \eqref{eq:mainApp2}) for infinitely many $l$. We list 2277 of them in Table \ref{table:admissibleTypeI}. They are what we call admissible eta-quotients of type I with levels
$$
N=1,2,3,4,5,6,7,8,9,10,11,13,14,15,17,19,21,27.
$$
For the meaning of admissible eta-quotients of type I, see the beginning part of Section \ref{sec:an extension of Martin's list of multiplicative eta-quotients}. For other levels, either there is no admissible eta-quotient of type I, or there are too many so it is not appropriate to list them in the paper. One can find the SageMath code that generates admissible eta-quotients of type I or II with any given level in Appendix \ref{apx:Usage of SageMath code}.

Besides Theorem \ref{thm:mainApp2}, another important result is Theorem \ref{thm:multiplicativeEtaQuotients}, which says that
\begin{equation*}
c_{l_1}\cdot c_{l_2}=c_{l_1l_2},\quad l_1,l_2\in L,\quad(l_1,l_2)=1.
\end{equation*}
Here $c_l=T_lf/f$. This is the genuine multiplicativity property that is satisfied by all admissible eta-quotients and reduces to the relation $c_f(n_1)c_f(n_2)=c_f(n_1n_2)c_f(1)$ for ordinary multiplicative eta-quotients in \cite{Mar96}.

All identities in the above three examples are immediate consequences of Theorems \ref{thm:mainApp2} and \ref{thm:multiplicativeEtaQuotients}.

\subsection{Other results, structure of the paper, and notations}
As a related result, we give the complete classification of linear characters that are induced by eta-quotients $\prod_{n\mid N}\eta(n\tau)^{r_n}$ with $r_n\in\numZ$ on the double cover of $\Gamma_0(N)$. See Corollary \ref{coro:AllEtaQuoChars} for the conclusion and Examples \ref{examp:classifyChar1}, \ref{examp:classifyChar2}, \ref{examp:classifyChar3}, \ref{examp:Nis4palpha}, \ref{examp:classifyChar5} for examples. For instance, for each $k\in\frac{1}{2}\numZ$, there are totally $384$ linear characters of the double cover of $\Gamma_0(4)$ that are induced by $\eta(\tau)^{r_1}\eta(2\tau)^{r_2}\eta(4\tau)^{r_n}$ of weight $k$. As a comparison, there are only two characters of $\Gamma_0(4)$ that are induced by Dirichlet characters modulo $4$ and these two characters are also contained in the above $384$ characters with any fixed $k\in\numZ$. For a description of \emph{all} linear unitary characters of $\Gamma_0(4)$, cf. \cite[Main Theorem]{Zhu25}.

The structure of the paper is as follows. We review some elements of rational-weight modular forms in Section \ref{sec:Modular forms of rational weight}. The second half of this section contains elements of Dedekind eta function and eta-quotients of fractional exponents. In Section \ref{sec:Divisors of modular forms} we associate a divisor on certain compact Riemann surface with any rational-weight meromorphic modular form and in addition, we give a detailed proof of the valence formula in the case of rational weights. In Section \ref{sec:Dimension formulas} we state and prove the main theorem via Petersson's method \cite{Pet38}. The following three sections contain some tools which are needed by the following two applications: Section \ref{sec:Order-character relations} contains a formula that relates the orders at cusps and the exponents (or the character) of an eta-quotient. This formula is due to Bhattacharya (cf. \cite[Eq. (5.13)]{Bha17}). Here we give more details on the proof. In Section \ref{sec:The classification of characters induced by eta-quotients} we classify all linear characters that are induced by eta-quotients $\prod_{n\mid N}\eta(n\tau)^{r_n}$ with $r_n\in\numZ$ on the double cover of $\Gamma_0(N)$, as is described in the last paragraph. Section \ref{sec:Dimension formulas for weight 1/2, 1, 3/2 and small levels}, which is subdivided into three subsections, contains tables of dimensions of modular form spaces of weight $1/2$, $1$ and $3/2$ that can be computed by our main theorem. In Sections \ref{sec:one-dimensional spaces and Eisenstein series of rational weights} and \ref{sec:an extension of Martin's list of multiplicative eta-quotients} we carry out the first and second applications, which has been described in Sections \ref{subsec:AppI} and \ref{subsec:AppII}, respectively. Section \ref{sec:Miscellaneous observations and open questions} contains some comments, open problems and conjectures. In Appendix \ref{apx:covers}, we establish some fundamental facts about the $D$-cover \eqref{eq:Dcover}. Finally, many formulas in this paper have been verified and many tables are generated by SageMath \cite{Sage} programs. Appendix \ref{apx:Usage of SageMath code} contains the usage of the code.

We collect some notations. For a set $A$, the symbols $\abs{A}$ and $\#A$ both denote its cardinality. The notation $\numQ^A$ denotes the $\numQ$-vector space of functions from $A$ into $\numQ$. A multiset $\{x_i\colon i\in I\}$ is defined to be the ordinary set $\{(y,n)\colon n=\#\{i\in I\colon x_i=y\},\,n\neq0\}$. Its underlying (ordinary) set is still denoted by $\{x_i\colon i\in I\}$. If $f\colon X\rightarrow Y$ is a function, then $f\vert_A$ is its restriction to $A$ where $A$ is a subset of $X$. If $f,g$ are functions then $f\circ g$ is their composition: $f\circ g(x)=f(g(x))$. For a family of sets $A_i$, $i\in I$, the notation $\bigsqcup_{i\in I}A_i$ denotes the union and only when the family $A_i$ are disjoint can one use this notation.

For a group $G$, $\langle X\rangle$ denotes the subgroup generated by $X$ where $X$ is a subset of $G$. If $H$ is a subgroup of $G$ then $[G\colon H]=\#G/H=\#H\backslash G$ is the index. If $G$ acts on the left (respectively right) on a set $X$, then $G\backslash X$ (respectively $X/G$) denotes the set of orbits $G\cdot x$ (respectively $x\cdot G$) where $x\in X$. If X is itself a group, $G$ is a subgroup of $X$ and the action is the group operation, then $G\backslash X$ (respectively $X/G$) is called the left (respectively right) coset space. For $x\in X$, $G_x$ denotes the stabilizer, that is, the subgroup of $g\in G$ such that $gx=x$ (or $xg=x$). A character $\chi$ of $G$ is a complex linear character, that is, a group homomorphism from $G$ to $\numC^\times$, the multiplicative group of nonzero complex numbers. If $\abs{\chi(g)}=1$ for all $g\in G$, then $\chi$ is called unitary. If $G'$ is another group and $\pi\colon G\rightarrow G'$ is a group surjection, then we say the character $\chi$ descends to a character on $G'$ if there is a character $\chi'$ of $G'$ such that $\chi=\chi'\circ\pi$.

The symbol $\phi$ refers to the Euler totient function. For integers $a, b,\dots$, the notation $(a,b,\dots)=\gcd(a,b,\dots)$ denotes the greatest common divisor and $\phi(a,b,\dots)$ denotes the value of $\phi$ at $(a,b,\dots)$. Let $N\in\numgeq{Z}{1}$; the summation range of $\sum_{n\mid N}$ is implicitly understood to be the \emph{positive} divisors of $N$. Let $p$ be a prime and $\alpha\in\numgeq{Z}{0}$; then $p^\alpha\parallel N$ means $p^\alpha\mid N$ but $p^{\alpha+1}\nmid N$. The $p$-adic exponential valuation, $v_p(N)$, is the largest $\alpha$ such that $p^\alpha\parallel N$. An empty sum is understood to be $0$ and an empty product be $1$. For a real number $x$, the notation $[x]$ means the largest integer not exceeding $x$ and $\{x\}=x-[x]$. The functions $\Gamma(s)$ and $\zeta(s)$ are the usual Euler Gamma function and Riemann zeta function, respectively.

Let $m,n\in\numZ$; the Kronecker-Jacobi symbol $\legendre{m}{n}$ is defined as follows:
\begin{itemize}
\item $\legendre{m}{p}$ is the usual Legendre symbol if $p$ is an odd prime.
\item $\legendre{m}{2}$ equals $0$ if $2 \mid m$, and equals $(-1)^{(m^2-1)/8}$ if $2 \nmid m$.
\item $\legendre{m}{-1}$ equals $1$ if $m \geq 0$, and equals $-1$ otherwise.
\item $\legendre{m}{1}=1$ by convention.
\item $\legendre{m}{n}$ is defined to make it a complete multiplicative function of $n \in \numZ-\{0\}$.
\item $\legendre{m}{0}=0$ if $m \neq \pm1$, and $\legendre{\pm1}{0}=1$.
\end{itemize}
We shall freely use the following properties, especially in the proof of Theorem \ref{thm:cTlfbycf}.
\begin{itemize}
\item $\legendre{m}{n_1n_2}=\legendre{m}{n_1}\legendre{m}{n_2}$. When $m=-1$, it is required that $n_1,n_2\neq0$.
\item $\legendre{m_1m_2}{n}=\legendre{m_1}{n}\legendre{m_2}{n}$. When $n=-1$, it is required that either $m_1,m_2\neq0$, or one of $m_1$, $m_2$ is $0$ and the other is nonnegative.
\item $\legendre{m}{n}\legendre{n}{m}=\varepsilon(m,n)\cdot(-1)^{\frac{n-1}{2}\cdot\frac{m-1}{2}}$ where $m,n$ are coprime odd integers, $\varepsilon(m,n)=-1$ if $n,m<0$ and $\varepsilon(m,n)=1$ otherwise.
\item $\legendre{-1}{n}=(-1)^{\frac{n-1}{2}}$ and $\legendre{2}{n}=(-1)^{\frac{n^2-1}{8}}$ where $n$ is odd.
\item The function $n\mapsto\legendre{m}{n}$ is $\abs{m}$-periodic if $m\equiv0,1\bmod{4}$; it is $\abs{4m}$-periodic if $m\equiv2\bmod{4}$.
\item The function $m\mapsto\legendre{m}{n}$ is $n$-periodic if $n$ is odd and positive.
\end{itemize}
For the proofs, see \cite[Section 2.2.2]{Coh07}.

\section{Modular forms of rational weight}
\label{sec:Modular forms of rational weight}
In this section, we review some elements of the theory of modular forms of rational weight (cf. \cite{Zhu23, Ibu20, Aok17, KM12, KM03, Ibu00, Pet38}). The concept of modular forms of rational weight is a special case of that of generalized modular forms which was initiated by Knopp and Mason \cite{KM03}. The multiplier system of a generalized modular form is not required to be unitary or to have finite order, while in this paper, the multiplier systems of all modular forms that occur are unitary and have finite orders. In addition, it seems that the modular forms of rational weights that occur in this paper can only be regarded as modular forms on non-congruence subgroups of $\slZ$ with trivial character. As a comparison, Freitag and Hill \cite{FH23} have recently constructed modular forms of weight $1/3$ on $\mathrm{SU}(2,1)$ whose levels are certain congruence subgroups of $\mathrm{SU}(2,1)$. Furthermore, see \cite[Section 2]{LL23} for harmonic weak Maass forms of real weight and their relations with weakly holomorphic modular forms.

First some notations. Let $\glpR$ be the group of all $2\times 2$ real matrices with positive determinants and $\slR$ be the subgroup of $\glpR$ whose elements have determinant $1$. Let $D$ be a positive integer. We shall investigate modular forms of weight $k\in\frac{1}{D}\numZ$, where the automorphic factor $(c\tau+d)^{k}$ appears. This is not single-valued unless we keep track of a chosen $D$th root. Motivated by this, we define the $D$-cover of $\glpR$ by
\begin{equation}
\label{eq:Dcover}
\Dcover{\glpR}{D}=\left\{\eleglptRaDp{a}{b}{c}{d}{\varepsilon}{D} \colon \tbtmat{a}{b}{c}{d} \in \glpR,\, \varepsilon^D=1\right\},
\end{equation}
where $\tbtmat{a'}{b'}{c'}{d'}$ is the matrix in $\slR$ proportional to $\tbtmat{a}{b}{c}{d}$ and $\left(c^\prime \tau+d^\prime \right)^{\frac{1}{D}}$ means a function of $\tau\in\uhp=\{z\in\numC\colon \Im(z)>0\}$. The notation $\eleglptRaDp{a}{b}{c}{d}{\varepsilon}{D}$ is sometimes abbreviated to $\elesltRaDs{a}{b}{c}{d}{\varepsilon}$ when $D$ is understood. The composition is given by
\begin{multline}
\label{eq:multplicationCover}
\eleglptRaDp{a_1}{b_1}{c_1}{d_1}{\varepsilon_1}{D}\cdot\eleglptRaDp{a_2}{b_2}{c_2}{d_2}{\varepsilon_2}{D} \\
= \left(\left(\begin{smallmatrix} a_1 & b_1 \\ c_1 & d_1\end{smallmatrix}\right)\left(\begin{smallmatrix} a_2 & b_2 \\ c_2 & d_2\end{smallmatrix}\right),\varepsilon_1\varepsilon_2\left(c_1^\prime\frac{a_2^\prime\tau+b_2^\prime}{c_2^\prime\tau+d_2^\prime}+d_1^\prime\right)^\frac{1}{D}\left(c_2^\prime\tau+d_2^\prime\right)^\frac{1}{D}\right).
\end{multline}
It can be verified straightforwardly that $\Dcover{\glpR}{D}$ with this composition is a group. Another less direct, but more natural way to define $\Dcover{\glpR}{D}$ is to realize it as a semi-direct product of $\glR$ and the group of $D$th roots of unity; cf. \cite[Lemma 1.1.3]{CS17}. One should note that different choices of holomorphic branches of $\left(c^\prime \tau+d^\prime \right)^{\frac{1}{D}}$ lead to different $D$-covers. We choose the following branch throughout:
\begin{equation}
\label{eq:branchRoot}
z^{r}=\exp\left(r\log z\right),\quad -\uppi < \Im(\log z) \leq \uppi.
\end{equation}
It is possible to make $\Dcover{\glpR}{D}$ a topological covering of $\glpR$ and hence $\Dcover{\glpR}{D}$ becomes a Lie group whose smooth structure is transposed from $\glpR$ using the above covering map; see Appendix \ref{apx:covers}. Using $D$-covers when dealing with modular forms of weight $k\in\frac{1}{D}\numZ$ has the advantage that the multiplier systems become group characters on $D$-covers.

Let $G$ be a subgroup of $\glpR$; then by $\Dcover{G}{D}$ we understand the preimage of $G$ under the natural projection $\Dcover{\glpR}{D}\rightarrow\glpR$, $\elesltRaDs{a}{b}{c}{d}{\varepsilon}\mapsto\tbtmat{a}{b}{c}{d}$. For $\gamma\in\glpR$, let $\widetilde{\gamma}$ denote $(\gamma,1)\in\Dcover{\glpR}{D}$. (The cover index $D$ should be inferred from the context.) In addition, we set $\overline{G}=\left\{g\cdot\{\pm \tbtmat{1}{0}{0}{1}\}\colon g\in G\right\}$, which is a subgroup of $\overline{\glpR}=\glpR/\{\pm \tbtmat{1}{0}{0}{1}\}$.

The group $\slZ$ is the set of $2\times 2$ integral matrices of determinant $1$ and is known as the \emph{full modular group}. We also need the congruence subgroup
\begin{equation*}
\Gamma_0(N)=\left\{\tbtmat{a}{b}{c}{d}\in\slZ\colon c \equiv 0 \bmod N\right\},
\end{equation*}
where $N$ is a positive integer called the level. It is well known that the index $[\slZ\colon\Gamma_0(N)]=N\prod_{p \mid N}(1+1/p)$ where $p$ denotes a prime (cf. \cite[Coro. 6.2.13]{CS17}). The matrices $\tbtmat{1}{0}{0}{1}$, $\tbtmat{1}{1}{0}{1}$, $\tbtmat{0}{-1}{1}{0}$ and $\tbtmat{N}{0}{0}{1}$ are denoted by $I$, $T$, $S$, $B_N$ respectively.

Let $\mathscr{M}(\uhp)$ denote the field of meromorphic functions on the upper half plane $\uhp$ and $k\in\frac{1}{D}\numZ$. Define the \emph{slash operator} of weight $k$ by
\begin{equation*}
f\vert_k\elesltRaDs{a}{b}{c}{d}{\varepsilon}(\tau)=\left(\varepsilon\sqrt[D]{c^\prime\tau+d^\prime}\right)^{-Dk}f\left(\frac{a\tau+b}{c\tau+d}\right),
\end{equation*}
where $f\in\mathscr{M}(\uhp)$, $\elesltRaDs{a}{b}{c}{d}{\varepsilon}\in\Dcover{\glpR}{D}$ and $\tau\in\uhp$. It is immediate that $\Dcover{\glpR}{D}$ acts on $\mathscr{M}(\uhp)$ on the right via the slash operator of weight $k$. Let $G$ be a subgroup of $\slZ$ and $\chi$ be a character\footnote{A character always means a unitary linear character, that is, a group homomorphism to the group of complex numbers of absolute value $1$.} on $\Dcover{G}{D}$. We say $f$ transforms like a modular form of weight $k$ and with multiplier system (or with character) $\chi$ if $f\vert_k\gamma=\chi(\gamma)f$ for any $\gamma\in\Dcover{G}{D}$. If this is the case, and $[\slZ\colon G]<+\infty$ and $\chi(\gamma)$ has finite order for any $\gamma\in\Dcover{G}{D}$, then for $\tbtmat{a}{b}{c}{d}\in\slZ$ we have
\begin{equation*}
f\vert_k\widetilde{\tbtmat{a}{b}{c}{d}}(\tau)=\sum_{n\in m^{-1}\numZ}c_nq^n,
\end{equation*}
where $q=\etp{\tau}=\exp(2\uppi\rmi\tau)$ and $m\in\numgeq{Z}{1}$ provided that there is $Y_0>0$ (depending on $\tbtmat{a}{b}{c}{d}$) such that $f\vert_k\widetilde{\tbtmat{a}{b}{c}{d}}$ has no poles on $\uhp_{Y_0}=\{\tau\in\uhp\colon\Im(\tau)>Y_0\}$. Moreover, the series converges normally on $\uhp_{Y_0}$. To see this, note that there exists $m\in\numgeq{Z}{1}$ such that $\chi\left(\widetilde{\tbtmat{a}{b}{c}{d}}\widetilde{T}^m\widetilde{\tbtmat{a}{b}{c}{d}}^{-1}\right)=1$ and hence $f\vert_k\widetilde{\tbtmat{a}{b}{c}{d}}(\tau+m)=f\vert_k\widetilde{\tbtmat{a}{b}{c}{d}}(\tau)$. Therefore, the desired expansion follows from Fourier's theorem or Laurent's theorem. Define the order $\ord_{\rmi\infty}(f\vert_k(\tbtmat{a}{b}{c}{d},\varepsilon))$ to be the least $n$ such that $c_n \neq 0$. If the expansion does not exist (that is, $f\vert_k(\tbtmat{a}{b}{c}{d},\varepsilon)$ has a nonisolated singularity at infinity) then the order is undefined; if the expansion holds but for any $n_0\in\numZ$ there exists $n<n_0$ such that $c_n \neq 0$ then define $\ord_{\rmi\infty}(f\vert_k(\tbtmat{a}{b}{c}{d},\varepsilon))=-\infty$; if $f\vert_k(\tbtmat{a}{b}{c}{d},\varepsilon)$ is identically zero then define $\ord_{\rmi\infty}(f\vert_k(\tbtmat{a}{b}{c}{d},\varepsilon))=+\infty$.

\begin{deff}
\label{deff:modularForm}
Let $D\in\numgeq{Z}{1}$ and $k\in\frac{1}{D}\numZ$. Let $G$ be a subgroup of $\slZ$ of finite index and $\chi\colon\Dcover{G}{D}\rightarrow\numC^\times$ be a character of finite order\footnote{If $\chi$ is not of finite order as considered by Knopp and Mason \cite{KM03}, this definition also makes sense.} (that is, $\chi(\gamma)$ has finite order for any $\gamma\in\Dcover{G}{D}$). Let $f\in\mathscr{M}(\uhp)$. Then we say $f$ is a \emph{meromorphic modular form} of weight $k$ for $G$ with character $\chi$ if $f\vert_k\gamma=\chi(\gamma)f$ for any $\gamma\in\Dcover{G}{D}$ and $\ord_{\rmi\infty}(f\vert_k\gamma)>-\infty$ for any $\gamma\in\Dcover{\slZ}{D}$. Suppose this is the case. Then we say $f$ is a \emph{weakly holomorphic modular form} if it has no poles on $\uhp$; it is a \emph{modular function} if the weight $k=0$; it is a \emph{modular form} (\emph{cusp form} respectively) if it is weakly holomorphic and $\ord_{\rmi\infty}(f\vert_k\gamma)\geq 0$ ($>0$ respectively) for any $\gamma\in\Dcover{\slZ}{D}$. The vector spaces (over $\numC$) of modular forms and of cusp forms are denoted by $M_k(G,\chi)$ and $S_k(G,\chi)$ respectively. When $\chi$ is trivial (which means $k\in2\numZ$ and $\chi$ always takes the value $1$), we let $M_k(G)=M_k(G,\chi)$ and $S_k(G)=S_k(G,\chi)$.
\end{deff}

Note that the cover index $D$ can be recovered from the domain of $\chi$ and $M_k(G,\chi)$ is also denoted by $M_k(\Dcover{G}{D},\chi)$ when needed.

\begin{rema}
\label{rema:mulSysProp}
Suppose there exists a \emph{nonzero} meromorphic modular form $f$ of weight $k$ for $G$ with character $\chi$. Then we have the following facts:
\begin{itemize}
  \item $\chi\left(I,\etp{1/D}\right)=\etp{-k}$.
  \item If $-I\in G$, then $\chi\left(\widetilde{-I}\right)=\etp{-k/2}$.
  \item If $D'$ is another positive integer with $k\in\frac{1}{D'}\numZ$ and $D'\mid D$, then $\chi\colon\Dcover{G}{D}\rightarrow\numC^\times$ descends to a character on $\Dcover{G}{D'}$, i.e., $\chi$ can be factored as $\chi=\chi_1p_{D,D'}$ where $p_{D,D'}\colon\Dcover{G}{D}\rightarrow\Dcover{G}{D'}$ is the natural projection $(\gamma,\varepsilon)\mapsto(\gamma,\varepsilon^{D/D'})$ and $\chi_1$ is a character on $\Dcover{G}{D'}$ (which is unique).
\end{itemize}
The proof is straightforward by considering slash operators acting on $f$ of which we omit the details. Motivated by the first two facts, we call a character $\chi$ on $\Dcover{G}{D}$ with the properties $\chi\left(I,\etp{1/D}\right)=\etp{-k}$ and $\chi\left(\widetilde{-I}\right)=\etp{-k/2}$ if $-I\in G$ a \emph{multiplier system} for $G$ of weight $k$ of cover index $D$. Note that if $-I\in G$, then the second property implies the first one. Finally, note that the third fact still holds when we require that $\chi$ is a multiplier system for $G$ of weight $k$ of cover index $D$ even if we can \emph{not} find a nonzero form $f$.
\end{rema}

\begin{rema}
\label{rema:multiplicationModularForm}
If $f$ is a meromorphic modular form of weight $k$ for $G$ with character $\chi$, and $g$ is a meromorphic modular form of weight $k'$ for $G$ with character $\chi'$ with $k,k'\in\frac{1}{D}\numZ$, then it is immediate that $f\cdot g$ is a meromorphic modular form of weight $k+k'$ for $G$ with character $\chi\cdot\chi'$. Moreover, if $f$ and $g$ are both modular forms (cusp forms respectively), then so is $f\cdot g$. 
\end{rema}

One of the aims of this paper is to give explicit formulas concerning $\dim_\numC M_k(\Gamma_0(N),\chi)$ and $\dim_\numC S_k(\Gamma_0(N),\chi)$ where $k\in\numQ$ and $\chi$ is a character on $\Dcover{\Gamma_0(N)}{D}$ of some kind which we explain now.

We need the Dedekind eta function and its logarithm, which are defined by
\begin{align}
\eta(\tau)&=q^{1/24}\prod_{n\in\numgeq{Z}{1}}\left(1-q^n\right),\quad\tau\in\uhp, \notag\\
\log\eta(\tau)&=\log\eta(\rmi)+\int_{\rmi}^{\tau}\frac{\eta'(z)}{\eta(z)}\diff z, \label{eq:logeta}
\end{align}
where $\log\eta(\rmi)$ is the real logarithm. The transformation equations of $\log\eta$ under $\slZ$ are obtained by Dedekind (cf. \cite[Equation (12), Section 3.4]{Apo90}):
\begin{equation}
\label{eq:transLogEta}
\log\eta\left(\frac{a\tau+b}{c\tau+d}\right)-\log\eta(\tau)=2\uppi\rmi\left(\frac{a+d}{24c}+\frac{1}{2}s(-d,c)-\frac{1}{8}\right)+\frac{1}{2}\log(c\tau+d),
\end{equation}
where $\tbtmat{a}{b}{c}{d} \in \slZ$ with $c>0$ and $s(-d,c)$ is the Dedekind sum
\begin{equation}
\label{eq:DedekindSum}
s(h,k)=\sum_{r \bmod k}\Big(\Big(\frac{r}{k}\Big)\Big)\cdot\Big(\Big(\frac{hr}{k}\Big)\Big),\quad h\in\numZ,\,k\in\numgeq{Z}{1}.
\end{equation}
In the above definition, $((x))=x-[x]-1/2$ if $x\in\numR\setminus\numZ$ and $((x))=0$ if $x\in\numZ$.
Set
\begin{align}
\label{eq:Psi}
\Psi\colon \slZ &\rightarrow \numZ \\
\tbtmat{a}{b}{c}{d} &\mapsto \begin{dcases}
\frac{a+d}{c}+12s(-d,c)-3,   & c>0; \\
\frac{a+d}{c}+12s(d,-c)+3,   & c<0; \\
b,                           & c=0, a>0; \\
-b-6,                        & c=0, a<0. 
\end{dcases}\notag
\end{align}
Fix a positive integer $D$. Then the transformation equations of $\eta$ follow from that of $\log\eta$ and can be expressed as $\eta\vert_{1/2}(\gamma,\varepsilon)=\chi_{\eta}(\gamma,\varepsilon)\eta$, where $\chi_{\eta}$ is the multiplier system for $\slZ$ of weight $1/2$ of cover index $2D$ defined by
\begin{equation*}
\chi_{\eta}(\gamma,\varepsilon)=\varepsilon^{-D}\etp{\frac{\Psi(\gamma)}{24}},\quad(\gamma,\varepsilon)\in\Dcover{\slZ}{2D}.
\end{equation*}
The fact that $\Psi(\slZ)\subseteq\numZ$ (when we give \eqref{eq:Psi} this is tacitly assumed) follows from the above formula with $D=1$, the fact $\slZ$ is generated by $T$ and $S$ and the fact $\chi_{\eta}(\widetilde{T})^{24}=\chi_{\eta}(\widetilde{S})^{24}=1$.

Let $N,D\in\numgeq{Z}{1}$. By an eta-quotient of level $N$ and cover index $D$ we understand a product $f(\tau)=\prod_{0<n\mid N}\eta(n\tau)^{r_n}$ with $r_n\in\numQ$ such that $D\cdot r_n\in2\numZ$ for any $n\mid N$. Thus an eta-quotient of cover index $2$ is just an ordinary eta-quotient whose exponents are integers. The fractional powers are defined by $\eta(n\tau)^{r_n}=\exp\left(r_n\log\eta(n\tau)\right)$ (see \eqref{eq:logeta}). Since $\eta$ has no poles on $\uhp$, the eta-quotient $f$ is a weakly holomorphic modular form of weight $k=\frac{1}{2}\sum_{n\mid N}r_n$ for $\Dcover{\Gamma_0(N)}{D}$ with character
\begin{align}
\chi\colon\Dcover{\Gamma_0(N)}{D} &\rightarrow\numC^\times \label{eq:charEtaQuotient}\\
\elesltRaDs{a}{b}{c}{d}{\varepsilon} &\mapsto \varepsilon^{-Dk}\etp{\frac{1}{24}\sum_{n \mid N}r_n\Psi\tbtmat{a}{bn}{c/n}{d}}.\notag
\end{align}
The above formula can be proved using \eqref{eq:transLogEta}, Remark \ref{rema:multiplicationModularForm} and the fact $\eta(n\tau)=n^{-1/4}\eta\vert_{1/2}\widetilde{B_n}(\tau)$. Note that $\eta$ has no zeros on $\uhp$ according to its infinite product expansion. Therefore any eta-quotient has no zeros or poles on $\uhp$. For the order at infinity, we have (cf. \cite[Lemma 4.2]{Zhu23})
\begin{equation}
\label{eq:orderEtaQuotient}
\ord\nolimits_{\rmi\infty}(f\vert_k\widetilde{\tbtmat{a}{b}{c}{d}})=\frac{1}{24}\sum_{n \mid N}\frac{(n,c)^2}{n}r_n,\quad\tbtmat{a}{b}{c}{d}\in\slZ.
\end{equation}
We recommend the reader to see \cite[Section 2]{Bha21} or \cite[Section 5]{Bha17} for more details on ordinary eta-quotients. If \eqref{eq:orderEtaQuotient} are always nonnegative for all $\tbtmat{a}{b}{c}{d}\in\slZ$, we say $f$ is a \emph{holomorphic eta-quotient}.

We will investigate the dimensions $\dim_\numC M_k(\Gamma_0(N),\chi)$ and $\dim_\numC S_k(\Gamma_0(N),\chi)$ where $k\in\frac{1}{D}\numZ$ and $\chi$ is the character of an eta-quotient of weight $k$, level $N$ and cover index $D$ in the next three sections.

\begin{rema}
The reader should be warned that when dealing with half-integral weights, our notation may differ from other authors', e.g. from the notation encountered in \cite{Shi73} and \cite{SS77}. Let $\psi$ be a Dirichlet character modulo $4N$ and let $2k\in\numgeq{Z}{1}$. The space $M_{0}(4N, k, \psi)$ in Serre and Stark's notation is the same thing as $M_{k}(\Gamma_0(4N), \chi_1\psi_1)$ in our notation where $\chi_1$ is the character of $\eta(\tau)^{-4k}\eta(2\tau)^{10k}\eta(4\tau)^{-4k}$ and $\psi_1$ is the character that maps $\elesltRaDs{a}{b}{c}{d}{\varepsilon}$ to $\psi(d)$. The character $\chi_1$ is hidden in many authors' notation of spaces and it sometimes appears in their definitions of slash operators or modular transformations or automorphic factors.
\end{rema}

\section{Divisors of modular forms}
\label{sec:Divisors of modular forms}
Our method relies on the classical Riemann-Roch theorem, so we recall some elements here. By a \emph{compact Riemann surface} $X$, we understand a compact connected Hausdorff topological space equipped with a maximal atlas $\{(U_i,\phi_i)\}$ (i.e., each $U_i$ is open in $X$ and $\phi_i$ is a homeomorphism from $U_i$ onto an open set in $\numC$) such that when $(U_i,\phi_i)$ and $(U_j,\phi_j)$ overlap, the transition map $\phi_j\circ\phi_i^{-1}\colon\phi_i(U_i\cap U_j)\rightarrow\phi_j(U_i\cap U_j)$ is holomorphic. It can be shown that $X$ is second-countable (cf. \cite[p. 88]{Fri11}). Therefore, according to the well known classification of compact connected topological surfaces (cf. e.g. \cite[Theorem 10.22]{Lee11}) and the fact that Riemann surfaces are oriented, $X$ is homeomorphic to the $2$-sphere $\mathbb{S}^2$ or to the connected sum of $g$ copies of the torus $\mathbb{T}^2$ with $g\in\numgeq{Z}{1}$. The number $g$ is called the \emph{genus} of $X$. (If $X$ is a sphere, then its genus is defined to be $0$.)

A \emph{meromorphic differential} $\omega$ on $X$ is by definition a holomorphic differential on $X\setminus S$ where $S$ is a finite subset of $X$ such that if $(U,\phi\colon U\rightarrow\phi(U)\subseteq\numC)$ is any chart of $X$, then $\omega$ is of the form $f(z)\diff z$ on this chart where $f(z)$ is a meromorphic function on $\phi(U)$ whose poles are exactly $\phi(U\cap S)$. The basic feature of a meromorphic differential is the transformation equation between different charts: if $(V,\psi)$ is another chart that overlaps $(U,\phi)$ and $\omega=g(w)\diff w$ on $(V,\psi)$, then we have
\begin{equation*}
f(z)=g(\psi\circ\phi^{-1}(z))\frac{\diff \psi\circ\phi^{-1}(z)}{\diff z},\quad z\in\phi(U\cap V).
\end{equation*}
The set of all meromorphic differentials on $X$ is denoted by $\mathscr{K}(X)$ which is obviously a complex vector space. Let $\mathscr{M}(X)$ denote the complex vector space of all meromorphic functions on $X$ as in Section \ref{sec:Modular forms of rational weight}. If there exists a nonzero $\omega_0\in\mathscr{K}(X)$, then the map $\mathscr{M}(X)\rightarrow\mathscr{K}(X)$ that sends $f$ to $f\cdot\omega_0$ is a $\numC$-linear isomorphism. Such $\omega_0$ actually exists according to a fundamental result in the theory of Riemann surfaces (cf. \cite[Theorem 1.10, Chapter IV]{Fri11}). As a consequence, nonzero meromorphic functions always exist on any compact Riemann surface.

By a ($\numQ$-valued) divisor on $X$, we understand a function $X\rightarrow\numQ$ with finite support. We often write a divisor $D$ as a formal sum $\sum_{x\in X}d_x\cdot(x)$ where $d_x=D(x)$. The term ``divisor'' traditionally refers to a $\numZ$-valued divisor but in this paper to a $\numQ$-valued divisor. The sets of all $\numZ$-valued divisors and of all $\numQ$-valued divisors are denoted by $\mathscr{D}(X)$ and $\mathscr{D}_\numQ(X)$ respectively. Equipped with the pointwise addition, they are abelian groups. We define the \emph{degree} of $D\in\mathscr{D}_\numQ(X)$ by $\deg(D)=\sum_{x\in X}D(x)$, which is well-defined since the sum is actually a finite sum. Moreover, define the floor function as follows: for $D\in\mathscr{D}_\numQ(X)$, set $[D]=\sum_{x\in X}[d_x]\cdot(x)$, where $[d_x]$ is the greatest integer that does not exceed $d_x$. Finally, for two divisors $D_1$ and $D_2$, we say $D_1\geq D_2$ if $D_1(x) \geq D_2(x)$ far any $x\in X$, and we say $D_1 > D_2$ if $D_1 \geq D_2$ but $D_1\neq D_2$.

Generally speaking, with any meromorphic section of a holomorphic line bundle over $X$ one can associate a $\numZ$-valued divisor. Specifically, if $f\in\mathscr{M}(X)$, then define $\div(f)=\sum_{x\in X}d_x\cdot(x)$ with $d_x$ being the least exponent in the Laurent expansion of $f$ at $x$ in any chart. For $\omega\in\mathscr{K}(X)$ we define $\div(\omega)$ in a similar manner.

The compact Riemann surfaces we need are the \emph{modular curves} $X_G$ where $G$ is a finite index subgroup of $\slZ$ which we describe now. Let $\uhp^{*}=\uhp\cup\projQ$ where $\projQ$ is the projective line over $\numQ$ which can be identified with $\numQ$ together with a point $\rmi\infty=1/0$. The group $\slZ$ acts on $\projQ$ on the left via $\tbtmat{a}{b}{c}{d}\cdot(p/q)=\frac{ap+bq}{cp+dq}$. We endow $\uhp^{*}$ with the topology generated by the usual topology of $\uhp$ and the sets
\begin{equation*}
\{\gamma\tau\colon \tau\in\uhp_{Y_0}\cup\{\rmi\infty\}\},\quad\gamma\in\slZ,\,Y_0>0,
\end{equation*}
where $\uhp_{Y_0}=\{\tau\in\uhp\colon\Im(\tau)>Y_0\}$. It is immediate that $\uhp^{*}$ with this topology is a Hausdorff space\footnote{The space $\uhp^{*}$ is not locally compact and hence can not be a topological manifold.}, $\uhp$ is an open subset of $\uhp^{*}$ and $G$ acts on $\uhp^{*}$ (on the left) by homeomorphisms. Let $G\backslash\uhp^{*}$ be the orbit space (endowed with the quotient topology) and $p\colon\uhp^{*}\rightarrow G\backslash\uhp^{*}$ be the quotient map. It can be shown that $G\backslash\uhp^{*}$ is Hausdorff (cf. e.g. \cite[Lemma 1.7.7]{Miy06}) and when $[\slZ\colon G]<+\infty$ $G\backslash\uhp^{*}$ is compact (cf. e.g. \cite[Proposition 14.6, Chapter IV]{Fri11}). Hereafter we always assume that $[\slZ\colon G]<+\infty$. Write $G\backslash\uhp^{*}=G\backslash\uhp\cup G\backslash\projQ$, a disjoint union. Then $G\backslash\uhp$ is open in $G\backslash\uhp^{*}$ and $G\backslash\projQ$ is a finite set since $\slZ$ acts transitively on $\projQ$ and hence $\abs{G\backslash\projQ}\leq[\slZ\colon G]$. 
The orbits in $G\backslash\projQ$ are called \emph{cusps} of $G\backslash\uhp^{*}$ and by abuse of language, the points in $\projQ$ are also called cusps. The \emph{width} of a cusp $G\cdot x\in G\backslash\projQ$ is defined by $w_{Gx}=w_x=[\overline{\slZ}_{x}\colon\overline{G}_x]$ which is independent of the choice of the representative $x$. We have (cf. \cite[Proposition 6.3.8(b)]{CS17})
\begin{equation*}
[\overline{\slZ}\colon\overline{G}]=\sum_{Gx\in G\backslash\projQ}w_{Gx}.
\end{equation*}
We also need the notion of \emph{elliptic points}. It is well known that the stabilizer of each point in $\{\gamma\rmi\colon\gamma\in\slZ\}$ under the action of $\slZ/\{\pm I\}$ is a cyclic group of order $2$, the stabilizer of each point in $\{\gamma\rho\colon\gamma\in\slZ\}$ is a cyclic group of order $3$ where $\rho=\frac{-1+\sqrt{3}\rmi}{2}$ and the stabilizer of any other point in $\uhp$ is trivial. Thus for $\tau\in\uhp$, $\abs{\overline{G}_\tau}=1$, $2$ or $3$ since $\abs{\overline{G}_\tau}$ divides $\abs{\overline{\slZ}_\tau}$. If $\abs{\overline{G}_\tau}=2$ ($3$ respectively) then we call $p(\tau)$ an elliptic point for $G\backslash\uhp^{*}$ of period $2$ ($3$ respectively). By abuse of language, we also call the number $\tau$ an elliptic point if $p(\tau)$ is.

The description and verification of the atlas on $G\backslash\uhp^{*}$ that turns it into a compact Riemann surface are rather technical and tedious but it is basic and well known (cf. e.g. \cite[Section 1.8]{Miy06}). Let $X_G$ denote the resulting compact Riemann surface. We will mainly use $X_{\Gamma_0(N)}$ hereafter so set $X_0(N)=X_{\Gamma_0(N)}$ for simplicity. Let $g$ be the genus of $X_G$. Then by applying the Riemann-Hurwitz ramification formula to the holomorphic map $X_G\rightarrow X_{\slZ}$, $G\cdot z \mapsto \slZ\cdot z$, noticing that the degree of this map equals $m=[\overline{\slZ}\colon\overline{G}]$ and the genus of $X_{\slZ}$ is $0$, we obtain (cf. e.g. \cite[Theorem 3.1.1]{DS05} for the case $G$ is a congruence subgroup)
\begin{equation}
\label{eq:genusGamma0N}
g=1+\frac{m}{12}-\frac{\varepsilon_2}{4}-\frac{\varepsilon_3}{3}-\frac{\varepsilon_\infty}{2},
\end{equation}
where $\varepsilon_2$ and $\varepsilon_3$ are the numbers of elliptic points of $X_G$ of period $2$ and $3$ respectively and $\varepsilon_\infty$ is the number of cusps of $X_G$. More formally,
\begin{align*}
\varepsilon_2&=\#\{G\cdot z\in G\backslash\slZ\rmi\colon\abs{\overline{G}_{z}}=2\},\\
\varepsilon_3&=\#\{G\cdot z\in G\backslash\slZ\rho\colon\abs{\overline{G}_{z}}=3\},\quad\rho=\frac{-1+\sqrt{3}\rmi}{2},\\
\varepsilon_\infty&=\#G\backslash\projQ.
\end{align*}
For $G=\Gamma_0(N)$, it is known that
\begin{align}
m&=[\overline{\slZ}\colon\overline{\Gamma_0(N)}]=[\slZ\colon\Gamma_0(N)]=N\prod_{p\mid N}\left(1+\frac{1}{p}\right),\label{eq:mGamma0N}\\
\varepsilon_2&=\begin{dcases}
\prod_{p \mid N}\left(1+\legendre{-4}{p}\right) & \text{ if } 4\nmid N,\\
0 & \text{ if } 4\mid N,
\end{dcases}\label{eq:e2Gamma0N}\\
\varepsilon_3&=\begin{dcases}
\prod_{p \mid N}\left(1+\legendre{-3}{p}\right) & \text{ if } 9\nmid N,\\
0 & \text{ if } 9\mid N,
\end{dcases}\label{eq:e3Gamma0N}\\
\varepsilon_\infty&=\sum_{d\mid N}\phi(d,N/d),\label{eq:einftyGamma0N}
\end{align}
where $p$ denotes a prime, $\phi(d,N/d)=\phi(\gcd(d,N/d))$, and $\phi$ is the Euler function. For proofs, see \cite[Corollary 6.2.13, Corollary 6.3.24(b)]{CS17} and \cite[Corollary 3.7.2]{DS05}.

Now let us associate a $\numQ$-valued divisor to any meromorphic modular form of rational weight. For integral or half-integral weights, this association is the same as the ordinary one\footnote{But it is different from the one described in \cite[p. 299]{Fri11}} (cf. \cite[Eq. (3.2) and (3.3)]{DS05}). Our treatment has the feature that one need not distinguish between regular and irregular cusps.
\begin{deff}
\label{deff:divisorModularForm}
Let $G$ be a subgroup of $\slZ$ of finite index, $D\in\numgeq{Z}{1}$ and $k\in\frac{1}{D}\numZ$. Let $\chi$ be a multiplier system for $G$ of weight $k$ of cover index $D$ (not assumed to be of finite order). Let $f$ be a \emph{nonzero} meromorphic modular form of weight $k$ for $G$ with character $\chi$. We define its divisor $\div(f)=\sum_{x\in X_G}\div_x(f)\cdot(x)\in\mathscr{D}_\numQ(X_G)$ as follows:
\begin{enumerate}
  \item If $\tau\in\uhp$, then set $\div_{G\tau}(f)=n_0/\abs{\overline{G}_\tau}$ where $n_0$ is the integer such that $f(z)=\sum_{n\geq n_0}c_n(z-\tau)^n$, $c_{n_0}\neq 0$.
  \item If $x\in\projQ$ and $\gamma\in\slZ$ satisfying $\gamma(\rmi\infty)=x$, then set $\div_{Gx}(f)=\ord_{\rmi\infty}(f\vert_k\widetilde{\gamma})\cdot w_{Gx}$.
\end{enumerate}
\end{deff}
One can verify that $\div(f)$ is well-defined, which is not totally trivial but straightforward. The usefulness of this notion relies on the following two simple facts:
\begin{itemize}
  \item If $g$ is another nonzero meromorphic modular form of weight $k'\in\frac{1}{D}\numZ$ for $G$ with character $\chi'$, then we have $\div(fg)=\div(f)+\div(g)$ in $\mathscr{D}_\numQ(X_G)$ (see Remark \ref{rema:multiplicationModularForm}).
  \item If $k=0$ and $\chi$ is the trivial character, then $f$ descends to a meromorphic function $\widetilde{f}$ on $X_G$. We have $\div(f)=\div(\widetilde{f})$ where $\div(\widetilde{f})$ is the usual divisor of a meromorphic function.
\end{itemize}
Note that the factors $1/\abs{\overline{G}_\tau}$ and $w_{Gx}$ in Definition \ref{deff:divisorModularForm} are chosen to let the latter fact hold (to see this one must dive into the atlas of $X_G$ which we have omitted). Also note that the former fact holds because the factors $1/\abs{\overline{G}_\tau}$ and $w_{Gx}$ remain unchanged for all $k$.

Similarly, we have $\div(f/g)=\div(f)-\div(g)$ and in particular $\div(1/g)=-\div(g)$.

A fundamental theorem on modular forms of rational weight is the following one, which in cases of integral or half-integral weights is sometimes called the \emph{valence formula}.
\begin{thm}
\label{thm:valence}
Let us use the notation of Definition \ref{deff:divisorModularForm} and set $m=[\overline{\slZ}\colon\overline{G}]$. Then we have
\begin{equation*}
\deg(\div f)=\frac{1}{12}mk.
\end{equation*}
\end{thm}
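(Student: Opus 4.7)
The plan is to reduce the rational-weight valence formula to the classical one for meromorphic modular forms of even integer weight with trivial character on $G$, which is a standard consequence of applying Riemann-Roch to the compact Riemann surface $X_G$.

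First, since $f$ is a meromorphic modular form in the sense of Definition~\ref{deff:modularForm}, the character $\chi$ has some finite order $n_0$. I will consider $f^N$ with $N := 2Dn_0$. By Remark~\ref{rema:multiplicationModularForm}, $f^N$ is a meromorphic modular form for $G$ of weight $Nk$ with character $\chi^N$. Since $k \in \frac{1}{D}\numZ$ we get $Nk = 2n_0(Dk) \in 2\numZ$, while $\chi^N = (\chi^{n_0})^{2D}$ is identically $1$ on $\Dcover{G}{D}$. Because $Nk$ is an integer and $\varepsilon^D = 1$, the factor $\varepsilon^{-D\cdot Nk}$ appearing in the slash operator is trivially $1$, so the transformation law for $f^N$ descends from $\Dcover{G}{D}$ to $G$ in the ordinary integer-weight sense. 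Thus $f^N$ is a nonzero meromorphic modular form on $G$ of even integer weight $Nk$ with trivial character, and the classical valence formula yields $\deg(\div f^N) = \frac{Nk}{12}\,m$.

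Next, the multiplicativity of $\div$ under products (the first bullet following Definition~\ref{deff:divisorModularForm}, itself a direct consequence of the additivity of orders at both interior points and cusps) gives $\div(f^N) = N\,\div(f)$. Combining with the previous step yields $N\deg(\div f) = \frac{Nk}{12}\,m$, and dividing by $N$ produces the claimed identity $\deg(\div f) = \frac{km}{12}$.

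The only step demanding real care is matching the author's normalization of $\div$ (the factors $1/\abs{\overline{G}_\tau}$ at elliptic orbits and $w_{Gx}$ at cusps) with the one under which the classical integer-weight valence formula is stated. As emphasized in the second bullet after Definition~\ref{deff:divisorModularForm}, both factors are precisely those forced by the requirement that $\div(f) = \div(\widetilde f)$ when $f$ is a weight-$0$ modular function and $\widetilde f$ its descent to $X_G$; this is exactly the normalization under which the classical formula takes the shape $\deg(\div f) = \frac{km}{12}$ used above. An alternative self-contained route, which the author may well prefer, is to integrate $\frac{1}{2\pi\rmi}\,d\log f$ around the boundary of a fundamental domain for $G$, tracking the contributions from small arcs around elliptic points and cusps together with the boundary identifications; but the reduction above already reveals why the formula has the stated shape and locates the single point where convention-checking is needed.
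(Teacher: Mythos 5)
Your reduction (kill the character and the fractional weight in one stroke by raising $f$ to the power $N=2Dn_0$, then quote the even-integer-weight valence formula on $G$ itself) is genuinely different from the paper's proof. The paper instead forms the norm $g=\prod_{1\leq j\leq m}f\vert_k\widetilde{\gamma_j}$ over coset representatives of $\overline{G}$ in $\overline{\slZ}$, notes that the induced character on $\Dcover{\slZ}{D}$ automatically has order dividing $12D$, applies the valence formula only for the full modular group to $g^{12D}$, and then transfers the degree back to $X_G$ via the pushforward identity \eqref{eq:valenceToProveDivisor}. In the finite-order setting your route is correct and shorter: the local bookkeeping at elliptic points and cusps that the paper does by hand in \eqref{eq:valenceToProveDivisor} is absorbed into your citation of the classical formula for a general finite-index (possibly noncongruence) subgroup, which is legitimate but means you are taking as known essentially the even-weight case of the very statement being proved. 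Two small points you should still make explicit: a \emph{uniform} exponent $n_0$ with $\chi^{n_0}=1$ exists because $\Dcover{G}{D}$ is finitely generated (Definition \ref{deff:modularForm} only postulates that each value $\chi(\gamma)$ has finite order), and, as you yourself flag, the classical formula must be read in the normalization of Definition \ref{deff:divisorModularForm}.

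There is, however, a genuine gap relative to the theorem as stated. Theorem \ref{thm:valence} is posed ``in the notation of Definition \ref{deff:divisorModularForm}'', where $\chi$ is explicitly \emph{not} assumed to be of finite order, and the remark immediately following the theorem asserts that the same proof covers non-unitary $\chi$ in the sense of Knopp--Mason generalized modular forms. For such $\chi$ no power $\chi^N$ is trivial, so your opening step fails and cannot be repaired while staying on $G$. The paper's detour through $\slZ$ is precisely what circumvents this: whatever $\chi$ is, the character $\gamma\mapsto c_\gamma$ of $\Dcover{\slZ}{D}$ attached to the normed form $g$ factors through the finite abelianization of $\Dcover{\slZ}{D}$ and hence has order dividing $12D$, so only a bounded power of $g$ is ever needed. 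If you want your argument to cover the stated generality, prepend this norm step; after that, your ``raise to a power'' idea becomes essentially the paper's passage to $g^{12D}$.
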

\begin{proof}
Suppose that $\overline{\slZ}=\bigcup_{1\leq j\leq m}\overline{G}\overline{\gamma_j}$ is a disjoint union where $\gamma_j\in\slZ$ and $\overline{\gamma_j}=\gamma_j\cdot\{\pm I\}$. Set $g=\prod_{1 \leq j \leq m}f\vert_k\widetilde{\gamma_j}$. Then $g\vert_{mk}\gamma=c_\gamma g$ for any $\gamma\in\Dcover{\slZ}{D}$ where $c_\gamma\in\numC^\times$. Since $g$ is not identically zero, the map $\gamma\mapsto c_\gamma$ is a linear character on $\Dcover{\slZ}{D}$. According to a presentation of $\Dcover{\slZ}{D}$ (cf. \cite[Lemma 5.2]{Zhu23}) the order of this character divides $12D$ and hence $g^{12D}$ transforms like a modular form of weight $12Dmk$ for $\slZ$ with trivial character. Since $f$ is meromorphic at cusps, so is $g^{12D}$. Therefore $g^{12D}$ is a meromorphic modular form of weight $12Dmk$ for $\slZ$ with trivial character. Applying the usual valence formula for even weights (cf. \cite[Theorem 5.6.1]{CS17}) we obtain $\deg(\div g^{12D})=Dmk$. It remains to prove that $12D\deg(\div f)=\deg(\div g^{12D})$, which is equivalent to $\deg(\div f)=\deg(\div g)$. We now prove a stronger assertion, i.e.
\begin{equation}
\label{eq:valenceToProveDivisor}
\div\nolimits_{x}(g)=\sum_{y\in p^{-1}(x)}\div\nolimits_{y}(f)
\end{equation}
for any $x\in X_{\slZ}$, where $p\colon X_G\rightarrow X_{\slZ}$ is the natural projection $G\cdot \tau \mapsto \slZ\cdot \tau$. In the case that $x\in \slZ\backslash\uhp$, we write $x=\slZ\cdot\tau$ and set $w_\tau=[\overline{\slZ}_\tau\colon\overline{G}_\tau]$ (similar to the notion of the width of a cusp in the group-theoretical aspect). Notice that $\ord_\tau f\vert_k\widetilde{\gamma_j}=\ord_{\gamma_j\tau}f$ where $\ord_\tau f$ is the integer $n_0$ such that $\lim_{z\to\tau}f(z)(z-\tau)^{-n_0}$ is nonzero. Thus we have
\begin{equation*}
\div\nolimits_x(g)=\abs{\overline{\slZ}_\tau}^{-1}\cdot\sum_{1 \leq j \leq m}\ord\nolimits_{\gamma_j\tau} f.
\end{equation*}
The underlying set of the multiset $\{G\cdot\gamma_j\tau\colon 1 \leq j \leq m\}$ equals $p^{-1}(x)$ and the multiplicity of each element $G\cdot\gamma_j\tau$ equals $w_{\gamma_j\tau}$. It follows that
\begin{equation*}
\div\nolimits_x(g)=\abs{\overline{\slZ}_\tau}^{-1}\cdot\sum_{y=G\cdot\tau \in p^{-1}(x)}w_\tau\cdot\ord\nolimits_{\tau} f=\sum_{y\in p^{-1}(x)}\div\nolimits_{y}(f),
\end{equation*}
i.e., the desired assertion \eqref{eq:valenceToProveDivisor} holds. In the other case $x\in \slZ\backslash\projQ$, we have $x=\slZ\cdot\rmi\infty$ and its width is $1$. Therefore
\begin{equation*}
\div\nolimits_x(g)=\sum_{1 \leq j \leq m}\ord\nolimits_{\rmi\infty}(f\vert_k\widetilde{\gamma_j}).
\end{equation*}
The underlying set of the multiset $\{G\cdot\gamma_j\rmi\infty\colon 1 \leq j \leq m\}$ equals $p^{-1}(x)$ and the multiplicity of each element $G\cdot\gamma_j\rmi\infty$ equals $w_{\gamma_j\rmi\infty}$. It follows that
\begin{equation*}
\div\nolimits_x(g)=\sum_{y=G\cdot\gamma\rmi\infty\in p^{-1}(x)}w_{\gamma\rmi\infty}\ord\nolimits_{\rmi\infty}(f\vert_k\widetilde{\gamma})=\sum_{y\in p^{-1}(x)}\div\nolimits_{y}(f),
\end{equation*}
i.e., the desired assertion \eqref{eq:valenceToProveDivisor} holds. This concludes the proof.
\end{proof}

\begin{rema}
The above theorem holds even when $\chi$ is non-unitary and the proof remains unchanged. For instance, one can apply it to the generalized modular forms constructed in \cite{KM03}.
\end{rema}

In the rest we need to know information about the divisor of an eta-quotient times a product of Eisenstein series on $\slZ$. We let
\begin{align*}
E_4(\tau)&=1+240\sum_{n\in\numgeq{Z}{1}}\sigma_3(n)q^n,\\
E_6(\tau)&=1-504\sum_{n\in\numgeq{Z}{1}}\sigma_5(n)q^n,
\end{align*}
where $\sigma_k(n)=\sum_{d\mid n}d^k$. It is well known that $E_4\in M_4({\slZ})$ and $E_6\in M_6({\slZ})$.
\begin{prop}
\label{prop:etaEisenstein}
Let $N\in\numgeq{Z}{1}$, $r_n\in\numQ$ for any $n\mid N$ and $t_1,t_2\in\numZ$. Let $\chi$ be the character \eqref{eq:charEtaQuotient} where $D\in\numgeq{Z}{1}$ is chosen so that $Dr_n\in2\numZ$ for any $n \mid N$. Set
\begin{equation*}
f(\tau)=\prod_{n \mid N}\eta(n\tau)^{r_n}\cdot E_4(\tau)^{t_1}\cdot E_6(\tau)^{t_2}.
\end{equation*}
Then $f$ is a nonzero meromorphic modular form of weight $k=\frac{1}{2}\sum_{n}r_n+4t_1+6t_2$ for $\Gamma_0(N)$ with character $\chi$ and
\begin{equation*}
\deg\left([\div f]\right)=\sum_{c\mid N}\phi(c,N/c)\cdot\left[\frac{N}{24(N,c^2)}\sum_{n \mid N}\frac{(n,c)^2}{n}r_n\right]+\left(\frac{t_1}{3}+\frac{t_2}{2}\right)m-\left\{\frac{t_2}{2}\right\}\varepsilon_2-\left\{\frac{t_1}{3}\right\}\varepsilon_3,
\end{equation*}
where $m,\varepsilon_2$ and $\varepsilon_3$ are given in \eqref{eq:mGamma0N}, \eqref{eq:e2Gamma0N} and \eqref{eq:e3Gamma0N}.
\end{prop}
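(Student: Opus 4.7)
The plan is to decompose $\div(f)$ as a sum of three pieces with disjoint supports, then apply $[\,\cdot\,]$ to each piece independently and add degrees.

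\textbf{Step 1 (Modular form structure).} By Remark \ref{rema:multiplicationModularForm}, the product of three meromorphic modular forms of weights $k_1,k_2,k_3$ with characters $\chi_1,\chi_2,\chi_3$ is a meromorphic modular form of weight $k_1+k_2+k_3$ with character $\chi_1\chi_2\chi_3$. The eta-quotient $\prod_{n\mid N}\eta(n\tau)^{r_n}$ is a weakly holomorphic modular form of weight $\frac{1}{2}\sum r_n$ for $\Dcover{\Gamma_0(N)}{D}$ with character $\chi$ as in \eqref{eq:charEtaQuotient}. Since $E_4$ and $E_6$ are holomorphic on $\slZ$ with \emph{integer} weights, their characters on $\Dcover{\Gamma_0(N)}{D}$ are trivial (because $\varepsilon^{-Dk}=1$ when $k\in\numZ$ and $\varepsilon^D=1$). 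Hence $f$ is a nonzero meromorphic modular form of the stated weight and character, and $\div(f)=\div(\text{eta})+t_1\div(E_4)+t_2\div(E_6)$ by the additivity noted after Definition \ref{deff:divisorModularForm}.

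\textbf{Step 2 (Disjoint supports).} The eta-quotient has no zeros or poles on $\uhp$, so its divisor is supported in $\Gamma_0(N)\backslash\projQ$. The divisors of $E_4$ and $E_6$ on $X_0(N)$ are supported on the preimages of $\slZ\cdot\rho$ and $\slZ\cdot\rmi$ under $p\colon X_0(N)\to X_{\slZ}$ respectively. These three sets are pairwise disjoint in $X_0(N)$, so $[\div f]=[\div(\text{eta})]+[t_1\div(E_4)]+[t_2\div(E_6)]$ pointwise, and $\deg$ is additive across the three pieces.

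\textbf{Step 3 (Cusp contribution).} Every cusp of $\Gamma_0(N)$ is equivalent to some $a/c$ with $c\mid N$, and for each such $c$ there are $\phi(c,N/c)$ inequivalent cusps of width $w=N/(N,c^2)$. Picking $\gamma=\tbtmat{a}{*}{c}{*}\in\slZ$, formula \eqref{eq:orderEtaQuotient} and Definition \ref{deff:divisorModularForm}(b) give
\begin{equation*}
\div\nolimits_{Gx}(\text{eta})=\frac{N}{24(N,c^2)}\sum_{n\mid N}\frac{(n,c)^2}{n}r_n=:a_c,
\end{equation*}
independent of the choice of representative. Hence $\deg[\div(\text{eta})]=\sum_{c\mid N}\phi(c,N/c)\,[a_c]$, which reproduces the first term of the claim.

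\textbf{Step 4 ($E_4$ and $E_6$ contributions).} On $X_{\slZ}$ the form $E_4$ vanishes to order $1$ only at $\rho$, so on $X_0(N)$ it has divisor value $1/|\overline{\Gamma_0(N)}_{\gamma\rho}|$ at each orbit above $\slZ\cdot\rho$: this equals $1/3$ at the $\varepsilon_3$ elliptic points of period $3$, and $1$ at the remaining non-elliptic preimages. The width identity $\sum_{\text{orbits}}[\overline{\slZ}_\rho:\overline{\Gamma_0(N)}_{\gamma\rho}]=m$ yields $(m-\varepsilon_3)/3$ non-elliptic preimages. Thus
\begin{equation*}
\deg[t_1\div(E_4)]=\varepsilon_3[t_1/3]+\frac{m-\varepsilon_3}{3}\cdot t_1=\frac{t_1}{3}m-\varepsilon_3\{t_1/3\}.
\end{equation*}
A parallel computation with $E_6$, $\rmi$, $\varepsilon_2$ and the count $(m-\varepsilon_2)/2$ gives $\deg[t_2\div(E_6)]=\frac{t_2}{2}m-\varepsilon_2\{t_2/2\}$. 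Summing the three pieces produces the stated formula.

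The main obstacle will be the cusp bookkeeping in Step 3: correctly justifying that every cusp of $\Gamma_0(N)$ admits a representative $a/c$ with $c\mid N$, that the width equals $N/(N,c^2)$, and that $a_c$ depends only on $c$. The $E_4$/$E_6$ computation in Step 4 is tidy once one notices that the fractional parts appearing in $[\,\cdot\,]$ at elliptic points are exactly compensated by the valence-formula identities $\deg\div(E_4)=m/3$ and $\deg\div(E_6)=m/2$.
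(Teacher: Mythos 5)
Your proposal is correct and follows essentially the same route as the paper: decompose $[\div f]$ into the three disjointly supported pieces (cusps, points above $\rho$, points above $\rmi$), evaluate the cusp sum via the standard representatives $a/c$ with $c\mid N$ and the width $N/(N,c^2)$, and evaluate the elliptic sums by counting elliptic versus non-elliptic preimages so that the floor at the elliptic points produces exactly the $-\varepsilon_3\{t_1/3\}$ and $-\varepsilon_2\{t_2/2\}$ corrections. The only cosmetic difference is that the paper organizes the elliptic-point count through coset representatives $\gamma_j$ and multiplicities $3/e_{\gamma_j\rho}$ rather than your direct orbit count $(m-\varepsilon_3)/3$; the two bookkeepings are equivalent.
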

\begin{proof}
The fact that $f$ is a nonzero meromorphic modular form follows from Remark \ref{rema:multiplicationModularForm}. It remains to compute $\deg\left([\div f]\right)$. The divisors of $E_4$ and $E_6$ on $X_{\Gamma_0(N)}$ are known (cf. e.g. \cite[Proposition 5.6.5]{CS17}) as follows:
\begin{equation*}
\div\nolimits_x(E_u)=\begin{dcases}
1/e_\tau & \text{ if } x=\Gamma_0(N)\cdot\tau\in\Gamma_0(N)\backslash\slZ\cdot\tau_0,\\
0 & \text{ otherwise},
\end{dcases}
\end{equation*}
where $u=4$ or $6$, $e_\tau=\abs{\overline{\Gamma_0(N)}_\tau}$ and $\tau_0=\rho=\frac{-1+\sqrt{3}\rmi}{2}$ if $u=4$; $\tau_0=\rmi$ if $u=6$. On the other hand, if we set $f_0(\tau)=\prod_{n \mid N}\eta(n\tau)^{r_n}$, then $\div_x(f_0)=0$ for $x\in\Gamma_0(N)\backslash\uhp$ since $f_0$ has no poles or zeros on $\uhp$. For $x\in\Gamma_0(N)\backslash\projQ$, we can find $\tbtmat{a}{b}{c}{d}\in\slZ$ such that $x=\Gamma_0(N)\cdot\tbtmat{a}{b}{c}{d}\rmi\infty$. It is known that $w_x=N/(N,c^2)$ (cf. \cite[Corollary 6.3.24(a)]{CS17}). It follows from this and \eqref{eq:orderEtaQuotient} that
\begin{equation}
\label{eq:divEtaQuotient}
\div\nolimits_x(f_0)=\frac{N}{24(N,c^2)}\sum_{n \mid N}\frac{(n,c)^2}{n}r_n.
\end{equation}
Therefore
\begin{equation}
\label{eq:threeSumsDegDiv}
\deg\left([\div f]\right)=\sum_{x\in\Gamma_0(N)\backslash\slZ\rho}[t_1\cdot\div\nolimits_x(E_4)]+\sum_{x\in\Gamma_0(N)\backslash\slZ\rmi}[t_2\cdot\div\nolimits_x(E_6)]+\sum_{x\in\Gamma_0(N)\backslash\projQ}[\div\nolimits_x(f_0)],
\end{equation}
where the three sums in the right-hand side are denoted by $S_1$, $S_2$ and $S_3$ respectively hereafter.
To compute $S_1$, let $\slZ=\cup_{1 \leq j \leq m}\Gamma_0(N)\gamma_j$ be a disjoint union\footnote{Since $-I\in\Gamma_0(N)$, this is equivalent to $\overline{\slZ}=\cup_{1 \leq j \leq m}\overline{\Gamma_0(N)}\overline{\gamma_j}$, and as well to $\widetilde{\slZ}=\cup_{1 \leq j \leq m}\widetilde{\Gamma_0(N)}\widetilde{\gamma_j}$ both of which are disjoint unions.}. Then the underlying set of the multiset $\{\Gamma_0(N)\cdot\gamma_j\rho\colon 1 \leq j \leq m\}$ equals $\Gamma_0(N)\backslash\slZ\rho$ and the multiplicity of each element $\Gamma_0(N)\cdot\gamma_j\rho$ equals $\abs{\overline{\slZ}_{\gamma_j\rho}}/\abs{\overline{\Gamma_0(N)}_{\gamma_j\rho}}=3/e_{\gamma_j\rho}$. Hence we have
\begin{equation*}
S_1=\frac{1}{3}\sum_{1 \leq j \leq m\colon e_{\gamma_j\rho}=1}\left[\frac{t_1}{e_{\gamma_j\rho}}\right] + \sum_{1 \leq j \leq m\colon e_{\gamma_j\rho}=3}\left[\frac{t_1}{e_{\gamma_j\rho}}\right]=\frac{mt_1}{3}-\left\{\frac{t_1}{3}\right\}\varepsilon_3.
\end{equation*}
Similarly, $S_2=\frac{mt_2}{2}-\left\{\frac{t_2}{2}\right\}\varepsilon_2$. For $S_3$, we need a complete set of representatives of $\Gamma_0(N)\backslash\projQ$, i.e., the set of $a/c\in\projQ$ where $c\mid N$ and for each $a_0$ such that $0 \leq a_0 < (c,N/c)$ with $(a_0,c,N/c)=1$, $a$ is chosen to satisfy $a\equiv a_0 \bmod (c,N/c)$ and $(a,c)=1$ (cf. \cite[Corollary 6.3.23]{CS17}). It follows from this and \eqref{eq:divEtaQuotient} that
\begin{equation*}
S_3=\sum_{c\mid N}\phi(c,N/c)\cdot\left[\frac{N}{24(N,c^2)}\sum_{n \mid N}\frac{(n,c)^2}{n}r_n\right].
\end{equation*}
Inserting the expressions for $S_1$, $S_2$ and $S_3$ into \eqref{eq:threeSumsDegDiv} gives the desired formula.
\end{proof}

\section{The main theorem: Dimension formulas for rational weights}
\label{sec:Dimension formulas}
We recall the Riemann-Roch theorem here for proving the main theorem. Let $X$ be a compact Riemann surface of genus $g$ and $D\in\mathscr{D}_\numQ(X)$. Then the \emph{Riemann-Roch space} is defined by $\mathscr{L}(D)=\{f\in\mathscr{M}(X)\colon\div(f)\geq-D\}$ with the convention that $0\in\mathscr{L}(D)$. It is immediate that $\mathscr{L}(D)$ is a complex vector space and that $\mathscr{L}(D)=\mathscr{L}([D])$. Similarly we define $\mathscr{K}(D)=\{\omega\in\mathscr{K}(X)\colon\div(\omega)\geq D\}\cup\{0\}$. Then the Riemann-Roch theorem states that
\begin{equation}
\label{eq:RiemannRoch}
\dim_\numC\mathscr{L}(D)=\dim_\numC\mathscr{K}(D)+\deg(D)-g+1,
\end{equation}
provided that $D$ is integral, i.e., $D\in\mathscr{D}(X)$. Note that it is tacitly understood that $\dim_\numC\mathscr{L}(D)<+\infty$. For a proof based on the existence theorems for harmonic functions on Riemann surfaces, see \cite[p. 249]{Fri11}. Set $D=0$; we find that $\dim_\numC\mathscr{K}(0)=g$, that is, the dimension of the space of all holomorphic differentials is $g$. Let $\omega_0$ be any nonzero meromorphic differential (necessarily exists) and $K=\div(\omega_0)$. Then $\mathscr{L}(K-D)\to\mathscr{K}(D)$, $f\mapsto f\cdot\omega_0$ is a $\numC$-linear isomorphism. Now set $D=K$ in \eqref{eq:RiemannRoch}; we find that $\deg(K)=2g-2$. Finally, if $D\in\mathscr{D}(X)$ and $\deg(D)>2g-2$, then $\deg(K-D)<0$ and hence $\mathscr{L}(K-D)=\mathscr{K}(D)=\{0\}$ since the degree of a nonzero meromorphic function is $0$. It follows that
\begin{equation}
\label{eq:RiemannRochSpecial}
\dim_\numC\mathscr{L}(D)=\deg(D)-g+1,\quad\text{if } D\in\mathscr{D}(X),\,\deg(D)>2g-2.
\end{equation}

Our proof of the main theorem rests on the following general lemma, which is the key point for using the Riemann-Roch theorem.
\begin{lemm}
\label{lemm:LMcorrespond}
Let $G$ be a subgroup of $\slZ$ of finite index, $D\in\numgeq{Z}{1}$, $k\in\frac{1}{D}\numZ$ and $v\colon\widetilde{G}^D\to\numC^\times$ be a character. Let $p\colon\uhp\to G\backslash\uhp$ be the natural projection. Suppose there exists a nonzero meromorphic modular form $f_0$ of weight $k$ for group $G$ with character $v$. Then the map
\begin{align*}
\mathscr{L}([\div f_0]) &\to M_k(G,v)\\
f &\mapsto (f\vert_{G\backslash\uhp})\circ p \cdot f_0
\end{align*}
is a $\numC$-linear isomorphism. Moreover, let
\begin{equation}
\label{eq:R}
R=\{x\in G\backslash\projQ\colon \div\nolimits_x(f_0)\in\numZ\}.
\end{equation}
Then the map
\begin{align*}
\mathscr{L}\left([\div f_0]-\sum\nolimits_{x\in R}(x)\right) &\to S_k(G,v)\\
f &\mapsto (f\vert_{G\backslash\uhp})\circ p \cdot f_0
\end{align*}
is also a $\numC$-linear isomorphism.
\end{lemm}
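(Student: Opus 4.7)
The plan is to construct the inverse of the stated map explicitly and verify bijectivity by a divisor comparison on $X_G$. The essential tool is the additivity $\div(fg) = \div(f) + \div(g)$ in $\mathscr{D}_\numQ(X_G)$ recorded after Definition \ref{deff:divisorModularForm}, together with the elementary observation that the divisor of any meromorphic function on $X_G$ is \emph{integer-valued}; this is what allows us to round the inequality $\div(f)\geq -\div(f_0)$ up to $\div(f)\geq -[\div f_0]$ componentwise.

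First I would verify that the map $\Phi\colon f \mapsto (f\vert_{G\backslash\uhp})\circ p \cdot f_0$ lands in $M_k(G,v)$. Linearity is immediate. The transformation law $\Phi(f)\vert_k\gamma = v(\gamma)\Phi(f)$ for $\gamma\in\Dcover{G}{D}$ holds because $f\circ p$ is $G$-invariant on $\uhp$ (so it behaves like a weight $0$ form with trivial character) and $f_0\vert_k\gamma = v(\gamma)f_0$. To verify holomorphy on $\uhp$ and at cusps it suffices to check $\div(\Phi(f))\geq 0$ in $\mathscr{D}_\numQ(X_G)$; using the additivity $\div(\Phi(f)) = \div(f)+\div(f_0)$ and the hypothesis $\div\nolimits_x(f)\geq -[\div\nolimits_x(f_0)]$ at every $x$, we obtain
\begin{equation*}
\div\nolimits_x(\Phi(f)) \;\geq\; -[\div\nolimits_x(f_0)] + \div\nolimits_x(f_0) \;=\; \{\div\nolimits_x(f_0)\} \;\geq\; 0.
\end{equation*}

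Next I would construct the inverse. Given $g\in M_k(G,v)$, set $f = g/f_0\in\mathscr{M}(\uhp)$. Because $g$ and $f_0$ carry the same weight and character, the weight-$0$ slash action of $\Dcover{G}{D}$ on $f$ is trivial, hence $f$ is $G$-invariant; combined with meromorphy at cusps (inherited from the Fourier expansions of $g$ and $f_0$) this means $f$ descends to a meromorphic function $\widetilde{f}\in\mathscr{M}(X_G)$. The key point is showing $\widetilde{f}\in\mathscr{L}([\div f_0])$. From $\div(g)\geq 0$ and $\div(g) = \div(\widetilde{f}) + \div(f_0)$ we deduce $\div\nolimits_x(\widetilde{f})\geq -\div\nolimits_x(f_0)$; since $\div\nolimits_x(\widetilde{f})\in\numZ$, this forces $\div\nolimits_x(\widetilde{f})\geq \lceil-\div\nolimits_x(f_0)\rceil = -[\div\nolimits_x(f_0)]$. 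As $g\mapsto g/f_0$ is a two-sided inverse to $\Phi$, the first isomorphism follows.

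For the cusp form statement I would run the same argument with strict inequalities at cusps. A form $g\in S_k(G,v)$ is characterized by $\div\nolimits_x(g)>0$ for every $x\in G\backslash\projQ$, i.e.\ $\div\nolimits_x(\widetilde{f}) > -\div\nolimits_x(f_0)$. The integrality of $\div\nolimits_x(\widetilde{f})$ then yields two different roundings according to whether $\div\nolimits_x(f_0)\in\numZ$: if $x\in R$, both sides are integers and one gets $\div\nolimits_x(\widetilde{f}) \geq -[\div\nolimits_x(f_0)] + 1$, whereas if $x\notin R$ the right-hand side is not an integer and one only gets $\div\nolimits_x(\widetilde{f}) \geq -[\div\nolimits_x(f_0)]$. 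Combining these produces $\div(\widetilde{f}) \geq -[\div f_0] + \sum_{x\in R}(x)$, and the forward direction is the analogous computation. The main obstacle I anticipate is exactly this case analysis at cusps and the bookkeeping between $\div(f_0)$ and $[\div f_0]$: overlooking the distinction between $R$ and its complement would introduce an off-by-one error in the cusp form subspace and hence spoil the dimension count in Theorem \ref{thm:main}.
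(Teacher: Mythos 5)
Your proposal is correct and follows essentially the same route as the paper: establish well-definedness via the additivity $\div(ff_0)=\div(f)+\div(f_0)$, invert by $g\mapsto g/f_0$ descending to a meromorphic function on $X_G$ whose integer-valued divisor lets you round $-\div(f_0)$ up to $-[\div f_0]$, and handle the cusp form case by the same $x\in R$ versus $x\notin R$ dichotomy the paper uses. No gaps.
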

\begin{proof}
Let $f\in\mathscr{L}([\div f_0])$ be arbitrary. Since $p$ is holomorphic, $(f\vert_{G\backslash\uhp})\circ p$ is a modular function for group $G$ with trivial character. Thus $(f\vert_{G\backslash\uhp})\circ p \cdot f_0$ is a meromorphic modular form of weight $k$ for group $G$ with character $v$ by Remark \ref{rema:multiplicationModularForm}. Now assume $f\neq 0$ and let $x\in X_G$ be arbitrary. Since $f\in\mathscr{L}([\div f_0])$ we have $\div_x((f\vert_{G\backslash\uhp})\circ p)=\div_x(f)\geq -\div_x(f_0)$. It follows that
\[
\div\nolimits_x((f\vert_{G\backslash\uhp})\circ p \cdot f_0)=\div\nolimits_x((f\vert_{G\backslash\uhp})\circ p)+\div\nolimits_x(f_0) \geq 0.
\]
Hence $(f\vert_{G\backslash\uhp})\circ p \cdot f_0\in M_k(G,v)$. It is obvious that the considered map is a $\numC$-linear injection. To prove the surjectivity, let $0\neq g\in M_k(G,v)$ be arbitrary, then by Remark \ref{rema:multiplicationModularForm} $g/f_0$ is a modular function for group $G$ with trivial character and hence it descends to some nonzero $f\in\mathscr{M}(X_G)$. For any $x\in X_G$ we have
\[
\div\nolimits_x(f)=\div\nolimits_x(g/f_0)=\div\nolimits_x(g)-\div\nolimits_x(f_0)\geq-\div\nolimits_x(f_0).
\]
Therefore $f\in \mathscr{L}(\div f_0)=\mathscr{L}([\div f_0])$ and the image of $f$ is $g$ from which the surjectivity follows.

The assertion on the second map can be proved in a similar manner and the only thing that need to explain is that $(f\vert_{G\backslash\uhp})\circ p \cdot f_0\in S_k(G,v)$ if and only if $f\in\mathscr{L}\left([\div f_0]-\sum\nolimits_{x\in R}(x)\right)$. If $0\neq f\in\mathscr{L}\left([\div f_0]-\sum\nolimits_{x\in R}(x)\right)$ and $x\in R$, then
\[
\div\nolimits_x((f\vert_{G\backslash\uhp})\circ p \cdot f_0)=\div\nolimits_x(f)+\div\nolimits_x(f_0) \geq 1-[\div\nolimits_x(f_0)]+\div\nolimits_x(f_0)>0.
\]
On the other hand, if $x\not\in R$ then $-[\div\nolimits_x(f_0)]+\div\nolimits_x(f_0)>0$ and hence
\[
\div\nolimits_x((f\vert_{G\backslash\uhp})\circ p \cdot f_0)=\div\nolimits_x(f)+\div\nolimits_x(f_0) \geq -[\div\nolimits_x(f_0)]+\div\nolimits_x(f_0)>0.
\]
It follows that $(f\vert_{G\backslash\uhp})\circ p \cdot f_0\in S_k(G,v)$. The converse can be proved similarly.
\end{proof}

Now we state and prove the main theorem.
\begin{thm}
\label{thm:main}
Let $N\in\numgeq{Z}{1}$, $r_n\in\numQ$ for any $n \mid N$ and $t\in\numgeq{Z}{0}$. Let $D$ be a positive integer such that $Dr_n\in2\numZ$ for any $n \mid N$. Let $\chi\colon\Dcover{\Gamma_0(N)}{D}\to\numC^\times$ be the character \eqref{eq:charEtaQuotient} and $m$, $\varepsilon_2$, $\varepsilon_3$ are given by \eqref{eq:mGamma0N}, \eqref{eq:e2Gamma0N}, \eqref{eq:e3Gamma0N} respectively. Set $k=\frac{1}{2}\sum_n r_n+2t$ and
\begin{equation}
\label{eq:rntoxc}
x_c=\sum_{n\mid N}\frac{N}{(N,c^2)}\cdot\frac{(n,c)^2}{n}r_n
\end{equation}
with $c\mid N$. Suppose either
\begin{equation}
\label{eq:mainThmCondtion1}
\sum_{c\mid N}\phi(c,N/c)\cdot\left(\left[\frac{x_c}{24}\right]+1\right)>0,\, t \geq 1
\end{equation}
or
\begin{equation}
\label{eq:mainThmCondtion2}
k>2-\frac{6}{m}\varepsilon_2-\frac{8}{m}\varepsilon_3-\frac{12}{m}\sum_{c\mid N}\phi(c,N/c)\cdot\left(1-\left\{\frac{x_c}{24}\right\}\right),\,t=0.
\end{equation}
Then we have
\begin{multline}
\label{eq:dimFormula}
\dim_\numC M_k(\Gamma_0(N), \chi)=\frac{k-1}{12}m+\left(\frac{1}{4}-\left\{\frac{t}{2}\right\}\right)\varepsilon_2+\left(\frac{1}{3}-\left\{-\frac{t}{3}\right\}\right)\varepsilon_3\\
+\sum_{c\mid N}\phi(c,N/c)\cdot\left(\frac{1}{2}-\left\{\frac{x_c}{24}\right\}\right).
\end{multline}
\end{thm}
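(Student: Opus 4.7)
The plan is to invoke the Riemann-Roch theorem via Lemma \ref{lemm:LMcorrespond}, using as auxiliary form an eta-quotient multiplied by a meromorphic ratio of $E_4$ and $E_6$. Since $\gcd(2,3)=1$, I can pick integers $t_1,t_2\in\numZ$ (not necessarily positive) with $2t_1+3t_2=t$, for example $t_1=-t$ and $t_2=t$. Setting
\[
f_0(\tau)=\prod_{n\mid N}\eta(n\tau)^{r_n}\cdot E_4(\tau)^{t_1}E_6(\tau)^{t_2},
\]
Proposition \ref{prop:etaEisenstein} guarantees that $f_0$ is a nonzero meromorphic modular form of weight $\tfrac12\sum_n r_n+4t_1+6t_2=k$ for $\Gamma_0(N)$ with character $\chi$, since $E_4$ and $E_6$ are fully modular on $\slZ$ with trivial character.

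Lemma \ref{lemm:LMcorrespond} then identifies $M_k(\Gamma_0(N),\chi)$ with $\mathscr{L}([\div f_0])$, and as soon as $\deg([\div f_0])>2g-2$, the Riemann-Roch specialization \eqref{eq:RiemannRochSpecial} gives $\dim_\numC M_k(\Gamma_0(N),\chi)=\deg([\div f_0])-g+1$. I would compute $\deg([\div f_0])$ from Proposition \ref{prop:etaEisenstein}; the identity $2t_1+3t_2=t$ yields $t_2\equiv t\pmod 2$ and $t_1\equiv -t\pmod 3$, so the fractional-part terms collapse to $\{t/2\}$ and $\{-t/3\}$, while $t_1/3+t_2/2=t/6$. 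Substituting this together with \eqref{eq:genusGamma0N} for $g$ and \eqref{eq:einftyGamma0N} for $\varepsilon_\infty$, and rewriting $[x_c/24]=x_c/24-\{x_c/24\}$, one gets an expression whose $m$-coefficient becomes $\tfrac{k-1}{12}$ precisely after applying the arithmetic identity $\sum_{c\mid N}\phi(c,N/c)\cdot x_c=m\sum_n r_n$ (which is the content of the auxiliary Proposition \ref{prop:rnxc}). What remains after this collapse is exactly the right-hand side of \eqref{eq:dimFormula}.

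The remaining task is to check that each of the two alternative hypotheses is equivalent to the Riemann-Roch threshold $\deg([\div f_0])>2g-2$. Using $\varepsilon_\infty=\sum_c\phi(c,N/c)$ this inequality becomes
\[
\sum_{c\mid N}\phi(c,N/c)\bigl([x_c/24]+1\bigr)+\tfrac{t-1}{6}m+\bigl(\tfrac12-\{t/2\}\bigr)\varepsilon_2+\bigl(\tfrac23-\{-t/3\}\bigr)\varepsilon_3>0.
\]
For $t\geq 1$ the last three summands are nonnegative (note $\{t/2\}\leq\tfrac12$ and $\{-t/3\}\leq\tfrac23$), so \eqref{eq:mainThmCondtion1} is sufficient. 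For $t=0$, rewriting the inequality using the same identity from Proposition \ref{prop:rnxc} and solving for $k=\tfrac12\sum_n r_n$ reproduces \eqref{eq:mainThmCondtion2} exactly.

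I expect the main obstacles to be bookkeeping rather than conceptual: carrying the floor-versus-fractional-part splittings through Proposition \ref{prop:etaEisenstein}, matching the two branches $t\geq 1$ and $t=0$ to the two stated hypotheses, and justifying the arithmetic identity $\sum_c\phi(c,N/c)x_c=m\sum_n r_n$ cleanly. The last item is the genuinely number-theoretic step and is where care is most needed; however, it is packaged separately into Proposition \ref{prop:rnxc} and need not be reproved inside the argument above.
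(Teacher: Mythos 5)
Your proposal is correct and takes essentially the same route as the paper: the same auxiliary form $f_0=\prod_{n\mid N}\eta(n\tau)^{r_n}E_4^{-t}E_6^{t}$, the same appeal to Lemma \ref{lemm:LMcorrespond}, Proposition \ref{prop:etaEisenstein} and the Riemann--Roch specialization, and the same algebraic collapse to \eqref{eq:dimFormula}; your direct nonnegativity check of the three extra summands for $t\geq1$ even streamlines the paper's separate treatment of $N=1$ versus $N\geq2$ and its monotonicity-in-$t$ argument. One small citation slip: the identity $\sum_{c\mid N}\phi(c,N/c)x_c=m\sum_{n\mid N}r_n$ is not Proposition \ref{prop:rnxc} (that is the inversion formula for $A_N$) but rather Remark \ref{rema:ANcolumn}, or equivalently \eqref{eq:mkxc}, obtained by applying the valence formula (Theorem \ref{thm:valence}) to the eta-quotient.
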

\begin{proof}
Set
\begin{equation*}
f_0(\tau)=\prod_{n \mid N}\eta(n\tau)^{r_n}\cdot E_4(\tau)^{-t}\cdot E_6(\tau)^{t}.
\end{equation*}
According to Proposition \ref{prop:etaEisenstein} $f_0$ is a nonzero meromorphic modular form of weight $k$ for group $\Gamma_0(N)$ with character $\chi$. Hence $\dim_\numC M_k(\Gamma_0(N), \chi)=\dim_\numC\mathscr{L}([\div f_0])$ by Lemma \ref{lemm:LMcorrespond}. Using Proposition \ref{prop:etaEisenstein} again we find that
\begin{equation}
\label{eq:degdevf0}
\deg\left([\div f_0]\right)=\sum_{c\mid N}\phi(c,N/c)\cdot\left[\frac{x_c}{24}\right]+\frac{t}{6}m-\left\{\frac{t}{2}\right\}\varepsilon_2-\left\{-\frac{t}{3}\right\}\varepsilon_3.
\end{equation}
We now prove either \eqref{eq:mainThmCondtion1} or \eqref{eq:mainThmCondtion2} implies that $\deg\left([\div f_0]\right)>2g-2$ where $g$ is the genus of $X_{\Gamma_0(N)}$ (see \eqref{eq:genusGamma0N}). First suppose \eqref{eq:mainThmCondtion1} holds. If $N=1$, then $g=0$, $r_1\geq 0$ and hence
\begin{equation*}
\deg\left([\div f_0]\right)=\left[\frac{r_1}{24}\right]+\frac{t}{6}-\left\{\frac{t}{2}\right\}-\left\{-\frac{t}{3}\right\}>-2=2g-2.
\end{equation*}
If $N \geq 2$, set
\begin{equation*}
\gamma(t)=\frac{t}{6}m-\left\{\frac{t}{2}\right\}\varepsilon_2-\left\{-\frac{t}{3}\right\}\varepsilon_3.
\end{equation*}
We have
\begin{equation*}
\gamma(t+1)-\gamma(t)\geq\frac{1}{6}m-\frac{1}{2}\varepsilon_2-\frac{2}{3}\varepsilon_3=2g-2+\varepsilon_\infty
\end{equation*}
for $t \geq 1$. Since $N \geq 2$ we have $\varepsilon_\infty \geq 2$ and hence $\gamma(t+1)-\gamma(t)\geq 0$. Therefore $\deg\left([\div f_0]\right)$ as a function of $t\in\numgeq{Z}{1}$ is increasing which means we need only to prove
\begin{equation*}
\sum_{c\mid N}\phi(c,N/c)\cdot\left[\frac{x_c}{24}\right]+\frac{1}{6}m-\frac{1}{2}\varepsilon_2-\frac{2}{3}\varepsilon_3>2g-2.
\end{equation*}
This is actually equivalent to \eqref{eq:mainThmCondtion1} according to \eqref{eq:genusGamma0N} and \eqref{eq:einftyGamma0N}. Thereby we have shown that \eqref{eq:mainThmCondtion1} implies $\deg\left([\div f_0]\right)>2g-2$. Next suppose \eqref{eq:mainThmCondtion2} holds, which is equivalent to
\begin{equation}
\label{eq:mainThmCondtion2Equiv}
\frac{1}{12}mk>\frac{1}{6}m-\frac{1}{2}\varepsilon_2-\frac{2}{3}\varepsilon_3-\varepsilon_\infty+\sum_{c\mid N}\phi(c,N/c)\left\{\frac{x_c}{24}\right\}=2g-2+\sum_{c\mid N}\phi(c,N/c)\left\{\frac{x_c}{24}\right\}.
\end{equation}
Applying Theorem \ref{thm:valence} and \eqref{eq:divEtaQuotient} to $f_0$ (noting $t=0$ here) and using the complete set of representatives of $\Gamma_0(N)\backslash\projQ$ described in the proof of Proposition \ref{prop:etaEisenstein} we find that
\begin{equation}
\label{eq:mkxc}
\frac{1}{12}m\cdot\frac{1}{2}\sum_nr_n=\deg(\div f_0)=\sum_{c \mid N}\phi(c,N/c)\cdot\frac{x_c}{24}.
\end{equation}
Inserting this into \eqref{eq:mainThmCondtion2Equiv} we obtain $\deg\left([\div f_0]\right)>2g-2$ as required.

Thus, in both cases we can apply \eqref{eq:RiemannRochSpecial} to $D=[\div f_0]$, $X=X_{\Gamma_0(N)}$. It follows that
\begin{equation*}
\dim_\numC M_k(\Gamma_0(N), \chi)=\dim_\numC\mathscr{L}([\div f_0])=\deg([\div f_0])-g+1.
\end{equation*}
Inserting \eqref{eq:degdevf0}, \eqref{eq:genusGamma0N} and \eqref{eq:mkxc} into the above identity gives the desired formula.
\end{proof}
\begin{rema}
\label{rema:uboundlbound}
If neither \eqref{eq:mainThmCondtion1} nor \eqref{eq:mainThmCondtion2} holds, then the difference between the left-hand side and the right-hand side of $\eqref{eq:dimFormula}$ equals $\dim_\numC\mathscr{K}([\div f_0])$ according to \eqref{eq:RiemannRoch}. It follows that in all cases we have
\begin{multline*}
\dim_\numC M_k(\Gamma_0(N), \chi)\geq\frac{k-1}{12}m+\left(\frac{1}{4}-\left\{\frac{t}{2}\right\}\right)\varepsilon_2+\left(\frac{1}{3}-\left\{-\frac{t}{3}\right\}\right)\varepsilon_3\\
+\sum_{c\mid N}\phi(c,N/c)\cdot\left(\frac{1}{2}-\left\{\frac{x_c}{24}\right\}\right),
\end{multline*}
where the right-hand side is always an integer. We can derive an upper bound\footnote{This upper bound is still valid if $\Gamma_0(N)$ is replaced by any subgroup $G$ of finite index in $\slZ$.} for $\dim_\numC M_k(\Gamma_0(N), \chi)$ as well. Suppose that $k\geq 0$. (When $k<0$ $M_k(\Gamma_0(N), \chi)=\{0\}$ by Theorem \ref{thm:valence}.) Then
\begin{equation*}
\dim_\numC M_k(\Gamma_0(N), \chi) \leq \left[\frac{1}{12}mk\right]+1,
\end{equation*}
for if $f_1,\dots,f_j\in M_k(\Gamma_0(N), \chi)$ with $j>\left[\frac{1}{12}mk\right]+1$, then by linear algebra we can find a nontrivial linear combination $g=\sum_n c_nf_n$ such that $\div_x(g)\geq \left[\frac{1}{12}mk\right]+1$ where $x\in \Gamma_0(N)\backslash\uhp$ is not an elliptic point. If $g \neq 0$ then applying Theorem \ref{thm:valence} to $g$ we reach a contradiction. Hence $g=0$ which means $f_1,\dots,f_j$ are linearly dependent.
\end{rema}
\begin{rema}
We can derive a formula for $\dim_\numC S_k(\Gamma_0(N), \chi)$ as well, using the second map in Lemma \ref{lemm:LMcorrespond}. However, it may happen that for certain $r_n$ and $t$ \eqref{eq:dimFormula} holds while the corresponding formula for $\dim_\numC S_k(\Gamma_0(N), \chi)$ is not applicable. This happens precisely when $\deg\left([\div f_0]\right)>2g-2$ but $\deg\left([\div f_0]\right)-\abs{R}\leq2g-2$. Besides, when $k>2$, we always have $\dim_\numC S_k(\Gamma_0(N), \chi)=\dim_\numC M_k(\Gamma_0(N), \chi)-\abs{R}$ which is a well known fact when $k$ is integral and $\chi$ is induced by a Dirichlet character. In the rational weight and arbitrary multiplier system case, this holds as well since the Petersson inner product can also be defined. The details are omitted here.
\end{rema}

To conclude this section, we write down the $\slZ$ (i.e. $N=1$) case of Theorem \ref{thm:main}, and an example illustrating the philosophy that sometimes one need to work with rational $q$-powers, instead of traditional integral $q$-powers, to make \eqref{eq:dimFormula} applicable.

\begin{coro}
Let $D$ be a positive integer, and $r\in\frac{2}{D}\numZ$ with $r\geq-24$. Let $\chi$ be the character on $\Dcover{\slZ}{D}$ that maps $\elesltRaDs{a}{b}{c}{d}{\varepsilon}$ to $\varepsilon^{-Dr/2}\etp{\frac{r}{24}\Psi\tbtmat{a}{b}{c}{d}}$. Then
\begin{equation}
\label{eq:dimSlZ}
\dim_\numC M_{r/2}(\slZ, \chi)=\left[\frac{r}{24}\right]+1.
\end{equation}
\end{coro}
\begin{proof}
Setting $N=1$, $r_1=r$, and $t=0$ in Theorem \ref{thm:main}, we have $x_1=r$, $k=r/2$, and $m=\varepsilon_2=\varepsilon_3=1$. Since $r\geq-24$, \eqref{eq:mainThmCondtion2} holds. Therefore, \eqref{eq:dimFormula} is true, which becomes \eqref{eq:dimSlZ} in the current setting.
\end{proof}

\begin{examp}
We consider
\begin{align*}
\eta(\tau)^{1/3}&=q^{1/72}\cdot\left(\prod_{n\geq1}(1-q^n)\right)^{1/3}\\
&=q^{1/72}\cdot\left(1-\frac{1}{3}q-\frac{4}{9}q^2-\frac{23}{81}q^3-\frac{82}{243}q^4-\frac{34}{729}q^5-\frac{1711}{6561}q^6+\cdots\right).
\end{align*}
Note that there are three holomorphic branches of the third roots of $\prod_{n\geq1}(1-q^n)$. By our definitions of fractional powers and of $\log\eta$ (see \eqref{eq:logeta}), we should choose the branch whose first coefficient is $1$. See \cite[Remark 4.3]{Zhu23} for a proof. Let $\chi$ be the character on $\Dcover{\slZ}{6}$ that maps $\elesltRaDs{a}{b}{c}{d}{\varepsilon}$ to $\varepsilon^{-1}\etp{\frac{1}{72}\Psi\tbtmat{a}{b}{c}{d}}$. We have $\eta^{1/3}\in M_{1/6}(\slZ, \chi)$ (see Section \ref{sec:Modular forms of rational weight}) and $\dim_\numC M_{1/6}(\slZ, \chi)=1$ (see \eqref{eq:dimSlZ}). It follows that
\begin{equation}
\label{eq:basisM16slZ}
M_{1/6}(\slZ, \chi)=\numC\eta^{1/3}.
\end{equation}
This has some potential applications. For example, if we could establish a framework of Harmonic Maass forms of rational weight (cf. \cite[Section 2]{LL23}, where some aspects of the theory of Harmonic Maass forms have been developed for real weights), and define the corresponding Poincar\'e-Maass series, then there might exist a Poincar\'e-Maass series whose shadow lies in $M_{1/6}(\slZ, \chi)=S_{1/6}(\slZ, \chi)$, making it proportional to $\eta^{1/3}$. This would yield a formula for the Fourier coefficients of $\eta^{1/3}$. Once obtained, we could further estimate both the magnitude and the sign of the coefficients.

Traditionally, one tends to consider $\eta(72\tau)^{1/3}$ instead of $\eta(\tau)^{1/3}$, as the former has the feature that its Fourier expansion involves only integral powers of $q$. We have $\eta(72\tau)^{1/3}\in M_{1/6}(\Gamma_0(72), \chi_1)$, where $\chi_1\colon\Dcover{\Gamma_0(72)}{6}\rightarrow\numC^\times$ is the character given by $\chi_1\elesltRaDs{a}{b}{c}{d}{\varepsilon}=\varepsilon^{-1}\etp{\frac{1}{72}\Psi\tbtmat{a}{72b}{c/72}{d}}$. See \eqref{eq:charEtaQuotient}. Now we check whether Theorem \ref{thm:main} is applicable to $M_{1/6}(\Gamma_0(72), \chi_1)$. For $N=72$, we have $m=144$ and $\varepsilon_2=\varepsilon_3=0$ by \eqref{eq:mGamma0N}--\eqref{eq:e3Gamma0N}. By \eqref{eq:rntoxc}, we have
\begin{equation*}
(x_1, x_2, x_3, x_4, x_6, x_8, x_9, x_{12}, x_{18}, x_{24}, x_{36}, x_{72})=3^{-1}\cdot(1,1,1,2,1,8,9,2,9,8,18,72).
\end{equation*}
Inserting these quantities into \eqref{eq:mainThmCondtion2} we find that the right-hand side of it equals $3/4$. Therefore, \eqref{eq:mainThmCondtion2} does not hold, making Theorem \ref{thm:main} not applicable to $M_{1/6}(\Gamma_0(72), \chi_1)$. This means the potential applications mentioned above no longer exist if one works with $\eta(72\tau)^{1/3}$.
\end{examp}

\section{Order-character relations}
\label{sec:Order-character relations}
Let us assume \eqref{eq:mainThmCondtion2} holds and consider Theorem \ref{thm:main}. The sequence $(x_c)_{c\mid N}$ occurs in the formula for $\dim_\numC M_k(\Gamma_0(N),\chi)$ and is determined by the sequence $(r_n)_{n \mid N}$ via \eqref{eq:rntoxc}. Note that $(r_n)_{n \mid N}$ represents an eta-quotient (or its character) and $(x_c)_{c\mid N}$ represents the orders of this eta-quotient at cusps (or its divisor). Therefore \eqref{eq:rntoxc} can be regarded as a map that sends the character represented by $(r_n)_{n \mid N}$ to the orders at cusps $(x_c)_{c\mid N}$. In some potential applications, the sequence of orders $(x_c)_{c\mid N}$ is first given, e.g., when one want to known $\dim_\numC M_k(\Gamma_0(N),\chi)$ for fixed $N$ and $k$, and then one works out $(r_n)_{n \mid N}$ via \eqref{eq:rntoxc}. Motivated by this, we need an inverse formula of \eqref{eq:rntoxc}. This formula is an enhancement of a statement in \cite[p. 129]{KM08} and is equivalent to \cite[Eq. (5.13)]{Bha17}.

Let us fix some notations in linear algebra. For $N\in\numgeq{Z}{1}$, $\mathscr{D}(N)$ denotes the set of positive divisors of $N$ and $\sigma_0(N)=\abs{\mathscr{D}(N)}$. Let $V$ be a $\numQ$-vector space of dimension $\sigma_0(N)$, then a $\mathscr{D}(N)$-indexed basis of $V$ is a sequence $(\mathbf{b}_n)_{n\in \mathscr{D}(N)}$ so that $\{\mathbf{b}_n\}$ is a basis of $V$. Let $W$ be a $\numQ$-vector space of dimension $\sigma_0(M)$ with a $\mathscr{D}(M)$-indexed basis $(\mathbf{b}'_m)_{m \mid M}$ where $M$ is another positive integer and let $f\colon V\to W$ be a linear map. The matrix of $f$ with respect to the pair $((\mathbf{b}_n),(\mathbf{b}'_m))$ is the $\mathscr{D}(M)\times\mathscr{D}(N)$-indexed sequence $(a_{m,n})_{m\mid M,n\mid N}$  with the property $f(\mathbf{b}_n)=\sum_{m \mid M}a_{m,n}\mathbf{b}'_m$. All linear algebra machinery works for such kind of matrices. The difference is just the underlying index sets: we use $\mathscr{D}(N)$ and the ordinary one is $\{1,2,\dots,\sigma_0(N)\}$. We call the matrix $(a_{m,n})_{m\mid M,n\mid N}$ a $\mathscr{D}(M)\times\mathscr{D}(N)$-indexed matrix and when $M=N$ a $\mathscr{D}(N)$-indexed matrix.

Let $N_1,N_2\in\numgeq{Z}{1}$ such that $\gcd(N_1,N_2)=1$ and let $(a_{c_1,n_1})_{c_1,n_1\mid N_1}$, $(b_{c_2,n_2})_{c_2,n_2\mid N_2}$ be $\mathscr{D}(N_1)$-indexed and $\mathscr{D}(N_2)$-indexed matrices respectively. Define
\begin{equation*}
(a_{c_1,n_1})_{c_1,n_1\mid N_1} \otimes(b_{c_2,n_2})_{c_2,n_2\mid N_2}=(a_{c_1,n_1}\cdot b_{c_2,n_2})_{c_1c_2,n_1n_2\mid N_1N_2}.
\end{equation*}
It is immediate that this is a well defined $\numQ$-bilinear map from $\numQ^{\mathscr{D}(N_1)\times\mathscr{D}(N_1)}\times\numQ^{\mathscr{D}(N_2)\times\mathscr{D}(N_2)}$ to $\numQ^{\mathscr{D}(N_1N_2)\times\mathscr{D}(N_1N_2)}$. Moreover it is universal among all bilinear maps on $\numQ^{\mathscr{D}(N_1)\times\mathscr{D}(N_1)}\times\numQ^{\mathscr{D}(N_2)\times\mathscr{D}(N_2)}$ and hence is a tensor product. We call it the \emph{Kronecker product}.

Following Bhattacharya \cite{Bha17} we define $A_N=\left(\frac{N}{(N,c^2)}\cdot\frac{(n,c)^2}{n}\right)_{c,n\mid N}$ so that \eqref{eq:rntoxc} becomes $x_c=\sum_{n\mid N}A_N(c,n)r_n$. It is immediate that $A_{N_1N_2}=A_{N_1}\otimes A_{N_2}$ whenever $\gcd(N_1,N_2)=1$.
\begin{prop}
\label{prop:rnxc}
Let $N\in\numgeq{Z}{1}$, $(x_c)_{c\mid N}$ and $(r_n)_{n\mid N}$ be in $\numQ^{\mathscr{D}(N)}$. Then the following two relations are equivalent:
\begin{align}
x_c&=\sum_{n\mid N}\frac{N}{(N,c^2)}\cdot\frac{(n,c)^2}{n}r_n,\quad \forall c\mid N, \label{eq:rntoxc2}\\
r_n&=\frac{1}{N}\prod_{p \mid N}\frac{p}{p^2-1}\cdot\sum_{c\mid N}\left(\prod_{p^\alpha \parallel N}b_{p,\alpha}(v_p(n),v_p(c))\right)x_c,\quad \forall n\mid N, \label{eq:xctorn}
\end{align}
where $p$ denotes a prime, $v_p(n)$ is the integer $\beta$ such that $p^\beta\parallel n$ and
\begin{equation*}
b_{p,\alpha}(i,j)=\begin{dcases}
p &\text{if } i=j=0\text{ or }\alpha,\\
(p^2+1)p^{\min(i,\alpha-i)-1} &\text{if } 0<i=j<\alpha,\\
-p^{\min(j,\alpha-j)} &\text{if } \abs{i-j}=1,\\
0 &\text{otherwise}.
\end{dcases}
\end{equation*}
\end{prop}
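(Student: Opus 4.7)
The plan is to recognize the equivalence of \eqref{eq:rntoxc2} and \eqref{eq:xctorn} as the statement that the two $\mathscr{D}(N)$-indexed matrices
\begin{equation*}
A_N = \left(\frac{N}{(N,c^2)}\cdot\frac{(n,c)^2}{n}\right)_{c,n\mid N}, \quad B_N = \left(\frac{1}{N}\prod_{p\mid N}\frac{p}{p^2-1}\cdot\prod_{p^\alpha\parallel N} b_{p,\alpha}(v_p(n), v_p(c))\right)_{n,c\mid N}
\end{equation*}
are inverses of one another. Since both relations are $\numQ$-linear and these are finite square matrices, proving $A_N B_N = I$ gives both implications simultaneously.

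First I would exploit the multiplicative structure. The text already notes $A_{N_1N_2} = A_{N_1}\otimes A_{N_2}$ for $(N_1,N_2)=1$, and the defining product over primes gives $B_{N_1N_2} = B_{N_1}\otimes B_{N_2}$ in the same way. Using the fundamental identity $(P_1\otimes Q_1)(P_2\otimes Q_2) = (P_1P_2)\otimes(Q_1Q_2)$ for Kronecker products, the problem reduces to verifying $A_{p^\alpha} B_{p^\alpha} = I$ for each prime power. The case $N=1$ is trivial, as all products are empty.

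Second, for a prime power $p^\alpha$, a direct computation from the definition gives
\begin{equation*}
A_{p^\alpha}(p^i, p^j) = p^{\max(i,\alpha - i) - \lvert i - j\rvert},
\end{equation*}
while $b_{p,\alpha}(i,j)$ is tridiagonal by definition. After absorbing the scalar $\frac{1}{p^\alpha}\cdot\frac{p}{p^2-1}$, the desired identity becomes
\begin{equation*}
\sum_{j=0}^{\alpha} A_{p^\alpha}(p^i, p^j)\,b_{p,\alpha}(j, l) = p^{\alpha-1}(p^2-1)\,\delta_{i,l},
\end{equation*}
which is a sum of at most three terms ($j\in\{l-1,l,l+1\}$). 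I would then check this by splitting according to whether $l$ lies in the interior or on the boundary of $\{0,\dots,\alpha\}$, and within the interior case, comparing $i$ to $l$. For the diagonal $i=l$ (interior), the middle term contributes $p^{\max(l,\alpha-l)}(p^2+1)p^{\min(l,\alpha-l)-1} = (p^2+1)p^{\alpha-1}$ while the two flanking terms each contribute $-p^{\max(l,\alpha-l)-1}\cdot p^{\min(l,\alpha-l)} = -p^{\alpha-1}$, summing to $p^{\alpha-1}(p^2-1)$; at the boundary $l=0$ or $l=\alpha$, the analogous two-term sum gives $p\cdot p^{\max(l,\alpha-l)} - p^{\max(l,\alpha-l)-1}\cdot p^{\min(l,\alpha-l)}$, which again reduces to $p^{\alpha-1}(p^2-1)$. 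For $i\neq l$, the three (or two) contributions must cancel, which one verifies by observing that the exponent $\max(i,\alpha-i) - \lvert i-j\rvert$ is constant across $j\in\{l-1,l,l+1\}$ whenever $l$ separates $i$ from its reflection $\alpha-i$, and is piecewise shifted by $\pm 1$ otherwise in a way that is matched by the corresponding $\min$-exponent in $b_{p,\alpha}$.

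The main obstacle is not conceptual but bookkeeping: one must simultaneously track whether $i \lessgtr \alpha/2$ and $l\lessgtr\alpha/2$, and whether $l$ sits at a boundary of $\{0,\dots,\alpha\}$, to ensure the three-term cancellation (or the diagonal total $p^{\alpha-1}(p^2-1)$) works out uniformly. A compact way to organize this is to substitute the closed form $A_{p^\alpha}(p^i,p^j) = p^{\max(i,\alpha-i)-\lvert i-j\rvert}$ into the three-term sum and factor out the common power $p^{\max(i,\alpha-i)-\lvert i-l\rvert}$, reducing each case to an elementary identity in $p$. Once the prime-power identity is established, assembly via the Kronecker product formula gives $A_N B_N = I$ for general $N$, completing the proof.
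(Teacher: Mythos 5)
Your proposal is correct and follows essentially the same route as the paper: reduce to prime powers via the Kronecker product identity $A_{N_1N_2}=A_{N_1}\otimes A_{N_2}$, then verify the tridiagonal inverse relation $\sum_j A_{p^\alpha}(p^i,p^j)\,b_{p,\alpha}(j,l)=(p^2-1)p^{\alpha-1}\delta_{i,l}$ by a short case analysis (the paper likewise checks the diagonal entries explicitly and omits the off-diagonal cancellation as routine). Your closed form $A_{p^\alpha}(p^i,p^j)=p^{\max(i,\alpha-i)-\lvert i-j\rvert}$ is a correct and slightly tidier packaging of the paper's four-case formula.
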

In another words, we have
\begin{equation*}
A_N^{-1}(n,c)=\frac{1}{N}\prod_{p \mid N}\frac{p}{p^2-1}\cdot\prod_{p^\alpha \parallel N}b_{p,\alpha}(v_p(n),v_p(c)).
\end{equation*}
This proposition is due to Bhattacharya (cf. \cite[Eq. (5.13)]{Bha17}).
\begin{proof}
Suppose $N_1,N_2\in\numgeq{Z}{1}$ such that $\gcd(N_1,N_2)=1$. Then $A_{N_1}$ and $A_{N_2}$ are both invertible if and only if $A_{N_1}\otimes A_{N_2}$ is and in this case $(A_{N_1}\otimes A_{N_2})^{-1}=A_{N_1}^{-1}\otimes A_{N_2}^{-1}$ by basic properties of the Kronecker product. Therefore it is sufficient to prove that
\begin{equation*}
A_{p^\alpha}^{-1}(p^i,p^j)=\frac{1}{p^{\alpha-1}(p^2-1)}\cdot b_{p,\alpha}(i,j),
\end{equation*}
where $p$ is a prime, $\alpha\in\numgeq{Z}{1}$ and $0\leq i,j\leq\alpha$. This is equivalent to
\begin{align}
&\sum_{j=0}^{\alpha}A_{p^\alpha}(p^i,p^j)\cdot b_{p,\alpha}(j,i)=(p^2-1)p^{\alpha-1}, \label{eq:toProveInverse1}\\
&\sum_{j=0}^{\alpha}A_{p^\alpha}(p^{i_1},p^j)\cdot b_{p,\alpha}(j,i_2)=0 \text{  if } i_1\neq i_2. \label{eq:toProveInverse2}
\end{align}
Note that
\begin{equation*}
A_{p^\alpha}(p^i,p^j)=\frac{p^\alpha}{(p^\alpha,p^{2i})}\cdot\frac{(p^j,p^i)^2}{p^j}=\begin{dcases}
p^{\alpha-j} &\text{  if }i\leq j \text{ and } i\leq\alpha/2,\\
p^{2i-j} &\text{  if }i\leq j \text{ and } i>\alpha/2,\\
p^{\alpha-2i+j} &\text{  if }i\geq j \text{ and } i\leq\alpha/2,\\
p^{j} &\text{  if }i\geq j \text{ and } i>\alpha/2.
\end{dcases}
\end{equation*}
Now we begin to prove \eqref{eq:toProveInverse1}. In the case $i=0$, we have
\begin{align*}
\sum_{j=0}^{\alpha}A_{p^\alpha}(p^i,p^j)\cdot b_{p,\alpha}(j,i)&=A_{p^\alpha}(p^0,p^0)\cdot b_{p,\alpha}(0,0)+A_{p^\alpha}(p^0,p^1)\cdot b_{p,\alpha}(1,0)\\
&=p^\alpha\cdot p + p^{\alpha-1}\cdot(-1)=(p^2-1)p^{\alpha-1}
\end{align*}
as required. In the case $i=\alpha$, we have
\begin{align*}
\sum_{j=0}^{\alpha}A_{p^\alpha}(p^i,p^j)\cdot b_{p,\alpha}(j,i)&=A_{p^\alpha}(p^\alpha,p^{\alpha-1})\cdot b_{p,\alpha}(\alpha-1,\alpha)+A_{p^\alpha}(p^\alpha,p^\alpha)\cdot b_{p,\alpha}(\alpha,\alpha)\\
&=p^{\alpha-1}\cdot(-1) + p^{\alpha}\cdot p=(p^2-1)p^{\alpha-1}
\end{align*}
as required. In the case $0<i \leq \alpha/2$ we have
\begin{align*}
&\sum_{j=0}^{\alpha}A_{p^\alpha}(p^i,p^j)\cdot b_{p,\alpha}(j,i)\\
=&A_{p^\alpha}(p^i,p^{i-1})\cdot b_{p,\alpha}(i-1,i)+A_{p^\alpha}(p^i,p^{i})\cdot b_{p,\alpha}(i,i)+A_{p^\alpha}(p^i,p^{i+1})\cdot b_{p,\alpha}(i+1,i)\\
=&p^{\alpha-i-1}\cdot(-p^{\min(i,\alpha-i)}) + p^{\alpha-i}\cdot (p^2+1)p^{\min(i,\alpha-i)-1}+p^{\alpha-i-1}\cdot(-p^{\min(i,\alpha-i)})\\
=&(p^2-1)p^{\alpha-1}
\end{align*}
as required. In the remaining case $\alpha/2<i<\alpha$ we have
\begin{align*}
&\sum_{j=0}^{\alpha}A_{p^\alpha}(p^i,p^j)\cdot b_{p,\alpha}(j,i)\\
=&A_{p^\alpha}(p^i,p^{i-1})\cdot b_{p,\alpha}(i-1,i)+A_{p^\alpha}(p^i,p^{i})\cdot b_{p,\alpha}(i,i)+A_{p^\alpha}(p^i,p^{i+1})\cdot b_{p,\alpha}(i+1,i)\\
=&p^{i-1}\cdot(-p^{\min(i,\alpha-i)}) + p^{i}\cdot (p^2+1)p^{\min(i,\alpha-i)-1}+p^{i-1}\cdot(-p^{\min(i,\alpha-i)})\\
=&(p^2-1)p^{\alpha-1}
\end{align*}
which concludes the proof of \eqref{eq:toProveInverse1}. We can prove \eqref{eq:toProveInverse2} in a similar manner which is tedious (there are so many cases) so we omit the details.
\end{proof}
\begin{rema}
\label{rema:ANcolumn}
There is another useful formula concerning the matrix $A_N$:
\begin{equation*}
\sum_{c\mid N}\phi(c,N/c)\frac{N}{(N,c^2)}\frac{(n,c)^2}{n}=N\prod_{p \mid N}\left(1+\frac{1}{p}\right).
\end{equation*}
In another words, the Euclidean inner product of each column of $A_N$ and $(\phi(c,N/c))_{c\mid N}$ is equal to $N\prod_{p \mid N}\left(1+\frac{1}{p}\right)$. This can be proved by applying Theorem \ref{thm:valence} to $\eta(n\tau)$. As a consequence, $\sum_{c\mid N}\phi(c,N/c)x_c=N\prod_{p \mid N}\left(1+\frac{1}{p}\right)\cdot\sum_{n\mid N}r_n$.
\end{rema}

\begin{examp}
We present some examples concerning dimensions of spaces of weight $2$ for infinite many unitary characters. Suppose that $N$ is square-free for simplicity. Let $\mathbf{x}=(x_c)_{c\mid N}\in\numQ^{\mathscr{D}(N)}$ such that $\sum_{c\mid N}x_c=0$. Then \eqref{eq:mainThmCondtion1} with $t=1$ holds since $\phi(c,N/c)=1$ in the current case. Thus \eqref{eq:dimFormula} holds. According to the above remark we have $\sum_{n\mid N}r_n=m^{-1}\sum_{c\mid N}x_c=0$ and hence $k=2$. Therefore \eqref{eq:dimFormula} is equivalent to
\begin{equation}
\label{eq:dimFormulak2}
\dim_\numC M_2(\Gamma_0(N), \chi)=g-1+\sigma_0(N)+\sum_{c\mid N}\left[\frac{x_c}{24}\right],
\end{equation}
where $g$ is the genus of $X_{\Gamma_0(N)}$. The multiplier system $\chi$ is the character \eqref{eq:charEtaQuotient} where $(r_n)_{n\mid N}$ is determined by $\mathbf{x}$ via \eqref{eq:xctorn} which in the current case is
\begin{equation}
\label{eq:xctornSpecial}
r_n=\prod_{p\mid N}(p^2-1)^{-1}\cdot \sum_{c\mid N}\left((-1)^{\#\{p\mid N\colon v_p(n)\neq v_p(c)\}}\cdot\prod_{p\mid N,\,v_p(n)=v_p(c)}p\right)x_c.
\end{equation}
More generally, if $N\in\numgeq{Z}{1}$ is not divisible by $16$ and has no odd square factor except $1$, then we have $\phi(c,N/c)=1$ for any $c\mid N$ as well and hence \eqref{eq:dimFormulak2} is still valid. However in this case, \eqref{eq:xctornSpecial} should be modified slightly at the prime factor $2$.

Now we try to find out one-dimensional spaces among the above spaces. Note that
\begin{equation*}
g-1<\dim_\numC M_2(\Gamma_0(N), \chi)\leq g-1+\sigma_0(N),
\end{equation*}
so it is necessary $g=0$ or $1$ and $N>1$. When $g=0$, since $N\in\numgeq{Z}{2}$ is not divisible by $16$ and has no odd square factor except $1$, then $N\in\{2,3,4,5,6,7,8,10,12,13\}$. For such $N$ $\dim_\numC M_2(\Gamma_0(N), \chi)=1$ if $\mathbf{x}$ satisfies
\begin{equation*}
\sum_{c\mid N}x_c=0,\quad \sum_{c\mid N}\left[\frac{x_c}{24}\right]=2-\sigma_0(N).
\end{equation*}
When $g=1$, we have $N\in\{11,14,15,17,19,20,21,24\}$. For such $N$, $\dim_\numC M_2(\Gamma_0(N), \chi)=1$ if $\mathbf{x}$ satisfies
\begin{equation*}
\sum_{c\mid N}x_c=0,\quad \sum_{c\mid N}\left[\frac{x_c}{24}\right]=1-\sigma_0(N).
\end{equation*}

Let us look at a very simple case $N=11$. Since $\sigma_0(11)=2$, $\dim_\numC M_2(\Gamma_0(11), \chi)=1$ if and only if $x_1\not\in24\numZ$ (of course $x_{11}=-x_1$). We have $r_1=-r_{11}=x_1/10$ so $\chi$ is the multiplier system of $\left(\eta(\tau)/\eta(11\tau)\right)^{x_1/10}$ which descends to a unitary character on $\Gamma_0(11)$. If $x_1\in24\numZ$, then $\dim_\numC M_2(\Gamma_0(11), \chi)=2$. There are two completely different subcases in this case: $x_1\in120\numZ$ and $x_1\not\in120\numZ$. If $x_1\in120\numZ$, then $\chi$ is trivial and the formula reads $\dim_\numC M_2(\Gamma_0(11))=2$ which is well known (cf. \cite[Corollary 7.4.3]{CS17}). Otherwise, if $x_1\not\in120\numZ$ then $x_1\not\in10\numZ$, which implies that $\chi$ is the character of an eta-quotient of rational exponents. To our best knowledge, the dimension formulas in such situation are new. Moreover, it seems that $\ker\chi$ is a noncongruence subgroup of $\slZ$ so that our formulas may be applied to the theory of noncongruence modular forms.
\end{examp}

\section{The classification of characters induced by eta-quotients}
\label{sec:The classification of characters induced by eta-quotients}
In this section we give a complete classification of the characters of eta-quotients of level $N$ and cover index $2$ for any $N\in\numgeq{Z}{1}$. The motivation is that, with this classification in hand, we can know exactly the set of $\chi$ for which $\dim_\numC M_k(\Gamma_0(N), \chi)$ can be calculated using Theorem \ref{thm:main}. Since we only deal with the cover index $D=2$, set $\widetilde{\Gamma_0(N)}=\Dcover{\Gamma_0(N)}{2}$ in this section.

Now we establish some notations. Each $(r_n)_{n\mid N}\in\numZ^{\mathscr{D}(N)}$ represents an eta-quotient $\prod_{n\mid N}\eta(n\tau)^{r_n}$ of level $N$ and cover index $2$ bijectively and $(r_n)_{n\mid N}$ with $\sum_{n}r_n=2k$ represents an eta-quotient of weight $k$. Thus, set
\begin{equation*}
S(N,k)=\left\{(r_n)_{n\mid N}\in\numZ^{\mathscr{D}(N)}\colon\sum\nolimits_{n\mid N}r_n=2k\right\},\quad N\in\numgeq{Z}{1},\, k\in\frac{1}{2}\numZ,
\end{equation*}
and $S(N)=S(N,0)$.
Then $\numZ^{\mathscr{D}(N)}=\cup_{2k\in\numZ}S(N,k)$ which is a disjoint union. In fact, $S(N)$ is a submodule of $\numZ^{\mathscr{D}(N)}$ and each $S(N,k)$ is a coset in $\numZ^{\mathscr{D}(N)}/S(N)$. The element $(r_n)_{n\mid N}$ in $\numZ^{\mathscr{D}(N)}$ is customary to be written as $\mathbf{r}=\sum_{n\mid N}r_ne_n$ where $\{e_n\colon n\mid N\}$ is the standard basis of $\numZ^{\mathscr{D}(N)}$. Moreover, the character \eqref{eq:charEtaQuotient} with $D=2$ is denoted by $\chi_{\mathbf{r}}$ to emphasize its dependence upon $\mathbf{r}\in S(N,k)$.

Two different vectors $\mathbf{r}$, $\mathbf{r}'$ in $S(N,k)$ may correspond to the same character $\chi_{\mathbf{r}}=\chi_{\mathbf{r}'}$. The following lemma of Newman \cite{New59} tells us precisely when this happens.
\begin{lemm}
\label{lemm:whenCharSame}
Let $N\in\numgeq{Z}{1}$, $k\in\frac{1}{2}\numZ$. Let $\mathbf{r}=(r_n)_{n\mid N}$ and $\mathbf{r}'=(r'_n)_{n\mid N}$ be vectors in $S(N,k)$. Then $\chi_{\mathbf{r}}=\chi_{\mathbf{r}'}$ if and only if
\begin{gather*}
\sum_{n\mid N}n\cdot(r_n-r'_n)\equiv0\pmod{24},\\
\sum_{n\mid N}\frac{N}{n}\cdot(r_n-r'_n)\equiv0\pmod{24},\\
\prod_{2\nmid r_n-r'_n}n \text{ is a perfect square}.
\end{gather*}
\end{lemm}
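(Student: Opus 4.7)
The plan is to reduce the lemma to characterizing when the character $\chi_{\mathbf s}$ of $\mathbf s=\mathbf r-\mathbf r'\in S(N)$ is trivial. Since $\mathbf r$ and $\mathbf r'$ share weight $k$, the $\varepsilon^{-2k}$ factor in \eqref{eq:charEtaQuotient} cancels in $\chi_{\mathbf r}/\chi_{\mathbf r'}$, so $\chi_{\mathbf r}=\chi_{\mathbf r'}$ becomes equivalent to the congruence
\[
\sum_{n\mid N}s_n\cdot \Psi\tbtmat{a}{bn}{c/n}{d}\equiv 0\pmod{24}\qquad\text{for every }\tbtmat{a}{b}{c}{d}\in\Gamma_0(N),
\]
where $s_n=r_n-r'_n$. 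Label the three conditions of the lemma (1), (2), (3) in order.

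For necessity I would evaluate the above sum at three carefully chosen matrices. First, for $\gamma=T$ the $c=0,\,a>0$ branch of \eqref{eq:Psi} gives $\Psi\tbtmat{1}{n}{0}{1}=n$, which produces (1). Second, for $\gamma=\tbtmat{1}{0}{N}{1}$, combining the classical evaluation $s(1,k)=(k-1)(k-2)/(12k)$ with $s(-h,k)=-s(h,k)$ yields $\Psi\tbtmat{1}{0}{N/n}{1}=-N/n$, which delivers (2). Third, to extract (3), I would appeal to Rademacher's explicit formula expressing $\chi_\eta$ on matrices with $c>0$ as a Kronecker-Jacobi symbol against $d$ (or $c$) times an explicit root of unity, applied to each $\gamma'_n=\tbtmat{a}{bn}{c/n}{d}$. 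Modulo (1) and (2), the root-of-unity phases telescope, leaving a residual $\{0,12\}$-valued contribution governed by $\legendre{\prod_n n^{s_n}}{d}$; running $d$ through residues that distinguish squares from non-squares forces $\prod_n n^{s_n}$ to be a rational square, which, using $n^{s_n}\equiv n^{s_n\bmod 2}$ modulo squares, is equivalent to $\prod_{2\nmid s_n}n$ being a perfect square.

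For sufficiency I would take arbitrary $\gamma\in\Gamma_0(N)$ with $c>0$, write $\Psi(\gamma'_n)=n(a+d)/c+12s(-d,c/n)-3$, and apply Dedekind reciprocity $12hk(s(h,k)+s(k,h))=h^2+k^2-3hk+1$ to each summand. Using $\sum_n s_n=0$ to kill the constant $-3$, and $N\mid c$ to recombine the ``linear'' pieces into rational multiples of $\sum_n ns_n$ and $\sum_n(N/n)s_n$, the whole sum $\sum_n s_n\Psi(\gamma'_n)$ collapses into an integer combination of these two quantities (handled by (1) and (2)) plus a single residual Jacobi-symbol piece (handled by (3)). The case $c<0$ reduces to $c>0$ by left-multiplication by $-I$, using $\Psi(-I)=-6$ and $\sum_n s_n=0$, and $c=0$ is handled by (1) alone. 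The main obstacle throughout is isolating the ``square-class'' information about $\prod_n n^{s_n}$ from the mod-$24$ congruences carried by (1) and (2); the cleanest route is via Rademacher's closed formula for $\chi_\eta$ in terms of Kronecker-Jacobi symbols together with the reciprocity laws for these symbols listed in the introduction, rather than a direct manipulation of $s(-d,c/n)$.
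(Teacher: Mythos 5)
Your argument is mathematically viable, but it is a genuinely different route from the paper's: the paper proves Lemma \ref{lemm:whenCharSame} in one line by specializing the compatibility criterion of \cite[Theorem 3.9]{ZZ23} to $l=1$ (the result itself going back to Newman), whereas you reprove it from scratch by testing $\chi_{\mathbf s}$, $\mathbf s=\mathbf r-\mathbf r'$, on explicit matrices. Your reduction is right (the $\varepsilon^{-2k}$ factors cancel because $\mathbf r,\mathbf r'$ have the same weight), and your two test computations check out: $\Psi\tbtmat{1}{n}{0}{1}=n$ gives condition (1), and $\Psi\tbtmat{1}{0}{N/n}{1}=-N/n$ via $s(-1,c)=-\frac{(c-1)(c-2)}{12c}$ gives condition (2). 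For the remaining equivalence, the Petersson formula \eqref{eq:etaCharPetersson} applied to $\gamma'_n=\tbtmat{a}{bn}{c/n}{d}$ does work as you predict: in the $2\mid c/n$ branch the exponential contributes $\tfrac{1}{24}\bigl((a-2d)\tfrac{c}{N}\sum_n\tfrac{N}{n}s_n - bd\,c\tfrac{c}{N}\sum_n\tfrac{N}{n}s_n + bd\sum_n n s_n\bigr)$, which vanishes mod $1$ given (1) and (2) because $N\mid c$, leaving exactly $\prod_n\legendre{c/n}{d}^{s_n}=\legendre{\prod_{2\nmid s_n}n}{d}$ since $\sum_n s_n=0$. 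What each approach buys: the paper's citation is short but opaque; yours makes the three congruences visibly correspond to the cusp $\rmi\infty$, the cusp $0$, and the quadratic-residue obstruction.

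Two places in your sketch still carry real work and should not be elided. First, for a general $\gamma\in\Gamma_0(N)$ the entries $c/n$ have different parities for different $n\mid N$, so the two branches of \eqref{eq:etaCharPetersson} get mixed and you must convert $\legendre{d}{\abs{c/n}}$ into $\legendre{c/n}{d}$ via the reciprocity law, tracking the resulting sign $(-1)^{\frac{d-1}{2}\cdot\frac{c/n-1}{2}}$ across the product over $n$; this is exactly the bookkeeping your phrase ``the phases telescope'' hides, and it is where the mod-$8$ part of conditions (1)--(2) gets used. Second, for necessity of (3) you need a matrix in $\Gamma_0(N)$ whose entry $d$ satisfies $\legendre{M}{d}=-1$ for $M=\prod_{2\nmid s_n}n$ non-square; since every prime factor of $M$ divides $N$, any $d$ with $(d,N)=1$ is admissible (take $c=N$ and solve $ad-bN=1$), and the character $d\mapsto\legendre{M}{d}$ is nontrivial on residues coprime to $4M$, so a CRT argument produces such a $d$ coprime to $N$. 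With those two points written out, your proof is complete and independent of \cite{ZZ23}.
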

\begin{proof}
This is a special case of the last assertion of \cite[Theorem 3.9]{ZZ23}. Setting $l=1$ in that assertion and noting that $\chi_{\mathbf{r}}=\chi_{\mathbf{r}'}$ if and only if they are $\widetilde{I}$-compatible give the desired equivalence.
\end{proof}
\begin{rema}
It seems that this method does not work in the case $D\geq3$ since we can not find any generalized double coset operator (cf. \cite[Definition 1]{ZZ23}) between characters of eta-quotients of cover index $D\geq3$.
\end{rema}

As a corollary, for $\mathbf{r}=(r_n)_{n\mid N}\in S(N)$, $\chi_{\mathbf{r}}$ is trivial if and only if $\sum_{n}n\cdot r_n\equiv0\bmod{24}$, $\sum_{n}\frac{N}{n}\cdot r_n\equiv0\bmod{24}$ and $\prod_{2\nmid r_n}n$ is a perfect square. The set of all such $\mathbf{r}$ is denoted by $Z(N)$ which is a subgroup of $S(N)$. We define
\begin{equation*}
\mathscr{S}(N,k)=S(N,k)/Z(N)=\left\{\mathbf{r}+Z(N)\colon\mathbf{r}\in S(N,k)\right\},
\end{equation*}
and $\mathscr{S}(N)=\mathscr{S}(N,0)$.
Thus $\numZ^{\mathscr{D}(N)}/Z(N)=\cup_{2k\in\numZ}\mathscr{S}(N,k)$, which is a disjoint union. Note that elements of $\mathscr{S}(N,k)$ are in one-to-one correspondence with characters of eta-quotients of level $N$, cover index $2$ and weight $k$ according to Lemma \ref{lemm:whenCharSame}, so to classify such characters we need only to obtain a complete system of representatives of $\mathscr{S}(N,k)$. Moreover, the map $\mathscr{S}(N)\to\mathscr{S}(N,k)$ that sends $\sum_{n\mid N}r_ne_n+Z(N)$ to $2ke_1+\sum_{n\mid N}r_ne_n+Z(N)$ is a bijection. Therefore, we have reduced our task to the study of $\mathscr{S}(N)$. (For the result, see Theorem \ref{thm:SNrepr}.)
\begin{rema}
\label{rema:SNsubgroup}
Let $\chi_{\mathbf{r}}\colon\widetilde{\Gamma_0(N)}\to\numC^\times$ be a character given by \eqref{eq:charEtaQuotient} where $\mathbf{r}\in S(N)$. Then $\chi_{\mathbf{r}}$ is the multiplier system of an eta-quotient of weight $0$ and cover index $2$. According to the third property in Remark \ref{rema:mulSysProp}, $\chi_{\mathbf{r}}$ descends to a character on $\Gamma_0(N)$. Thus $\mathscr{S}(N)$ can be regarded as a subgroup of the group of unitary linear characters of $\Gamma_0(N)$.
\end{rema}

The sets given in the following definition are subsets of $\mathscr{D}(N)$ that occur in the complete system of representatives of $\mathscr{S}(N)$.
\begin{deff}
Let $N$ be a positive integer and $p$ be a prime divisor of $N$. If $p\neq2,3$ then define $\mathscr{B}_N^p=\{p\}$. For $p=2$, we define
\begin{equation*}
\mathscr{B}_N^2=\begin{dcases}
%\emptyset  &\text{ if }v_2(N)=0,\\
\{2^1\}  &\text{ if }v_2(N)=1,\\
\{2^1,2^2\}  &\text{ if }v_2(N)=2,\\
\{2^{\alpha-2},2^{\alpha-1},2^{\alpha}\}  &\text{ if }v_2(N)=\alpha\geq3.\\
\end{dcases}
\end{equation*}
For $p=3$, we define
\begin{equation*}
\mathscr{B}_N^3=\begin{dcases}
%\emptyset  &\text{ if }v_3(N)=0,\\
\{3^1\}  &\text{ if }v_3(N)=1,\\
\{3^{\alpha-1},3^{\alpha}\}  &\text{ if }v_3(N)=\alpha\geq2.\\
\end{dcases}
\end{equation*}
Finally, set $\mathscr{B}_N=\cup_{p\mid N}\mathscr{B}_N^p$. Note that $\mathscr{B}_1=\emptyset$.
\end{deff}
We introduce a useful notation which will be used in the proofs below: if $\mathbf{r}_1,\mathbf{r}_2\in S(N)$, then $\mathbf{r}_1\sim_N\mathbf{r}_2$, or simply $\mathbf{r}_1\sim\mathbf{r}_2$, means $\mathbf{r}_1-\mathbf{r}_2\in Z(N)$. This is an equivalence relation compatible with the addition of $S(N)$.
\begin{lemm}
\label{lemm:EquivZN}
Let $N=2^{\alpha_0}\cdot 3^{\alpha_1}\cdot N_1$ with $\gcd(N_1,6)=1$ and $\alpha_0,\alpha_1\in\numgeq{Z}{0}$.
\begin{itemize}
  \item For any $0\leq\beta_0\leq\alpha_0$, $0\leq\beta_1\leq\alpha_1$ and $n_1,n_2\mid N_1$ with $\gcd(n_1,n_2)=1$, there exist $x,y,z\equiv1\bmod 2$ such that
  \begin{equation}
  \label{eq:lemmEquivZN1}
  e_{2^{\beta_0}3^{\beta_1}n_1n_2}\sim xe_{2^{\beta_0}3^{\beta_1}n_1}+ye_{2^{\beta_0}3^{\beta_1}n_2}+ze_{2^{\beta_0}3^{\beta_1}}.
  \end{equation}
  \item For any $0\leq\beta_0\leq\alpha_0$, $0\leq\beta_1\leq\alpha_1$ and $n\mid N_1$, there exist $x,y\equiv1\bmod 2$ and $z\in\numZ$ such that
  \begin{equation}
  \label{eq:lemmEquivZN2}
  e_{2^{\beta_0}3^{\beta_1}n}\sim xe_{2^{\beta_0}3^{\beta_1}}+ye_{n}+ze_{1}.
  \end{equation}
  \item For any $0\leq\beta_0\leq\alpha_0$, $0\leq\beta_1\leq\alpha_1$ , there exist $x,y,z\in\numZ$ such that
  \begin{equation}
  \label{eq:lemmEquivZN3}
  e_{2^{\beta_0}3^{\beta_1}}\sim xe_{2^{\beta_0}}+ye_{3^{\beta_1}}+ze_{1}.
  \end{equation}
\end{itemize}
\end{lemm}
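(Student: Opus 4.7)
The strategy for each of the three equivalences is to construct the difference $\mathbf{r} := \mathrm{LHS} - \mathrm{RHS}$ explicitly and to verify that it lies in $Z(N)$ by checking the three congruences in Lemma \ref{lemm:whenCharSame} together with the weight-zero constraint $\sum_n r_n = 0$. The plan is to exploit the freedom in choosing odd integers $x, y, z$ so that the perfect-square condition becomes automatic, and then to use the Chinese Remainder Theorem to reduce the two divisibility-mod-$24$ conditions to easily solvable systems.

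For statement (1), write $a = 2^{\beta_0} 3^{\beta_1}$ and set $\mathbf{r} = e_{an_1 n_2} - x e_{an_1} - y e_{an_2} - z e_a$. The weight-zero condition becomes $x + y + z = 1$, consistent with $x, y, z$ all odd. When they are odd, all four nonzero coordinates of $\mathbf{r}$ have odd value, so
\[
\prod_{2 \nmid r_n} n = (a n_1 n_2)(a n_1)(a n_2)(a) = a^4 n_1^2 n_2^2
\]
is automatically a perfect square. It remains to arrange
\[
a\bigl(n_1 n_2 - x n_1 - y n_2 - z\bigr) \equiv 0 \pmod{24}, \qquad c\bigl(1 - x n_2 - y n_1 - z n_1 n_2\bigr) \equiv 0 \pmod{24},
\]
where $c = N / (a n_1 n_2)$. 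I would split each congruence by CRT into its mod-$8$ and mod-$3$ components; since $\gcd(n_1 n_2, 6) = 1$, the residues of $n_1, n_2 \bmod 24$ are heavily constrained, and the two-parameter family $\{(x, y, 1 - x - y) : x, y \textrm{ odd}\}$ has enough freedom to meet the remaining constraints in each residue case.

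For statement (2), take $\mathbf{r} = e_{an} - x e_a - y e_n - z e_1$ with $x, y$ odd; then $z = 1 - x - y$ is automatically odd, the product $(an)(a)(n)(1) = a^2 n^2$ is a square, and the two mod-$24$ conditions become linear congruences in $x, y$, solvable by the same CRT analysis. For statement (3), write $\mathbf{r} = e_a - x e_{2^{\beta_0}} - y e_{3^{\beta_1}} - z e_1$ with $x + y + z = 1$; here no parity is forced, so the perfect-square condition must be enforced by first fixing parities of $x, y, z$ (with odd sum) and then adjusting within that parity class to satisfy the mod-$24$ congruences. The main obstacle will be the bookkeeping for the mod-$24$ arithmetic, particularly at boundary values $\beta_0 \in \{0, \alpha_0\}$ and $\beta_1 \in \{0, \alpha_1\}$ where one of $a$ or $c$ becomes coprime to $2$ or $3$ and the congruence becomes nontrivial at that prime; I expect a finite case analysis indexed by $(v_2(a), v_3(a))$ together with the residues of $n_1, n_2$ modulo $24$ to suffice.
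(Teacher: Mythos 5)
Your proposal is correct and follows essentially the same route as the paper: form the difference, observe that taking $x,y,z$ odd makes the perfect-square condition automatic, and reduce membership in $Z(N)$ to linear congruences modulo $24$. Two remarks: the simplification you stop just short of is that $n^2\equiv 1\pmod{24}$ for every $n$ coprime to $6$, which makes the two congruences in case (1) equivalent and collapses the verification to the single congruence $(n_1-1)x+(n_2-1)y\equiv n_1n_2-1\pmod{24}$ (solvable in odd $x,y$ since $2\gcd(n_1-1,n_2-1,12)$ divides $(n_1-1)(n_2-1)$); and your explicit insistence on $x+y+z=1$ is in fact necessary, since $Z(N)\subseteq S(N)$, whereas the paper's own proof leaves this weight constraint implicit by letting $x,y$ be arbitrary odd integers.
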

\begin{proof}
First we prove \eqref{eq:lemmEquivZN1}. If $n_1=1$ or $n_2=1$ then \eqref{eq:lemmEquivZN1} is immediate. Thus assume that $n_1,n_2>1$. By the definition of $Z(N)$, a solution $(x,y,z)\in\numZ^3$ of the system
\begin{equation}
\label{eq:prooflemmEquivZN1}
\begin{dcases}
2^{\beta_0}3^{\beta_1}n_1x+2^{\beta_0}3^{\beta_1}n_2y+2^{\beta_0}3^{\beta_1}z \equiv 2^{\beta_0}3^{\beta_1}n_1n_2 \pmod{24}\\
\frac{N}{2^{\beta_0}3^{\beta_1}n_1}x+\frac{N}{2^{\beta_0}3^{\beta_1}n_2}y+\frac{N}{2^{\beta_0}3^{\beta_1}}z \equiv \frac{N}{2^{\beta_0}3^{\beta_1}n_1n_2} \pmod{24}\\
x,y,z\equiv 1 \pmod 2
\end{dcases}
\end{equation}
is also a solution of \eqref{eq:lemmEquivZN1}. Moreover, a solution of the system
\begin{equation}
\label{eq:prooflemmEquivZN2}
\begin{dcases}
n_1x+n_2y+z \equiv n_1n_2 \pmod{24}\\
n_2x+n_1y+n_1n_2z \equiv 1 \pmod{24}\\
x,y,z\equiv 1 \pmod 2
\end{dcases}
\end{equation}
is a solution of \eqref{eq:prooflemmEquivZN1}. Since $2,3\nmid n_1,n_2$ we have $n_1^2\equiv 1\bmod 24$ and $n_2^2\equiv 1\bmod 24$. Thus the first equation of \eqref{eq:prooflemmEquivZN2} is equivalent to the second one. Therefore let $x,y\equiv 1 \bmod 2$ be arbitrary and set $z\equiv n_1n_2-n_1x-n_2y$; we obtain a solution of \eqref{eq:prooflemmEquivZN2} and hence of \eqref{eq:lemmEquivZN1}.

The proofs of \eqref{eq:lemmEquivZN2} and \eqref{eq:lemmEquivZN3} are similar of which we omit the details.
\end{proof}

\begin{lemm}
\label{lemm:SNsimpleForm}
Any element in $\mathscr{S}(N)$ can be represented as
\begin{equation}
\label{eq:elementSNrepr}
\sum_{n\in\mathscr{B}_N}c_ne_n-\left(\sum_{n\in\mathscr{B}_N}c_n\right)e_1+Z(N),
\end{equation}
where $c_n\in\numZ$ and $0\leq c_n<24$ for any $n\in\mathscr{B}_N$.
\end{lemm}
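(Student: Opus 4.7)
The plan is to reduce any $\mathbf{r}=\sum_{m\mid N}r_me_m\in S(N)$ modulo $Z(N)$ to the desired form in two phases: first shrinking the support of a representative of $\mathbf{r}+Z(N)$ down to $\mathscr{B}_N\cup\{1\}$, and then reducing the surviving coefficients modulo $24$.

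For the support reduction, I would apply Lemma \ref{lemm:EquivZN} systematically. Writing $m=2^a 3^b m_1$ with $\gcd(m_1,6)=1$, equation \eqref{eq:lemmEquivZN2} expresses $e_m$ modulo $Z(N)$ as an integer combination of $e_{2^a 3^b}$, $e_{m_1}$, and $e_1$; equation \eqref{eq:lemmEquivZN3} further decomposes $e_{2^a 3^b}$ into contributions from $e_{2^a}$, $e_{3^b}$, and $e_1$; and iterating \eqref{eq:lemmEquivZN1} with $\beta_0=\beta_1=0$ on a coprime factorisation of $m_1$ splits $e_{m_1}$ into a combination of $e_{p^{v_p(m_1)}}$ over the primes $p\mid m_1$, plus $e_1$. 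In each step one can choose the coefficient triple to sum to $1$ (e.g.\ $(x,y,z)=(1,n_1,-n_1)$ solves \eqref{eq:lemmEquivZN1}, which one verifies directly against Lemma \ref{lemm:whenCharSame}), so the substitution preserves the zero-sum condition defining $S(N)$. After these reductions $\mathbf{r}$ is equivalent modulo $Z(N)$ to an integer combination of $e_{p^c}$ for prime-power divisors $p^c$ of $N$ with $1\leq c\leq v_p(N)$, together with $e_1$.

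Next I must pull each $e_{p^c}$ down to an integer combination of $e_n$ with $n\in\mathscr{B}_N^p\cup\{1\}$. For a prime $p\nmid 6$ this is immediate from $p^2\equiv 1\pmod{24}$: Lemma \ref{lemm:whenCharSame} is easily checked for $e_{p^c}-e_p\in Z(N)$ when $c$ is odd and $e_{p^c}-e_1\in Z(N)$ when $c$ is even (the odd-support product is a single $p^c$ or $p^{c+1}$, which is a perfect square in the correct parity of $c$), matching $\mathscr{B}_N^p=\{p\}$. For $p=2$ and $p=3$ the situation requires more work, and the specific sets $\mathscr{B}_N^2$ and $\mathscr{B}_N^3$ are defined precisely so the analogous reductions succeed: three consecutive powers $2^{\alpha-2},2^{\alpha-1},2^\alpha$ when $v_2(N)=\alpha\geq 3$, and two consecutive powers $3^{\alpha-1},3^\alpha$ when $v_3(N)=\alpha\geq 2$. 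Deriving and verifying these explicit reductions by a case analysis in $v_2(N)$ and $v_3(N)$ will be the main technical obstacle: for each $c$ one seeks integers $(a_n)_{n\in\mathscr{B}_N^p\cup\{1\}}$ with $\sum_n a_n=1$ such that $e_{p^c}-\sum_n a_n e_n$ fulfils all three conditions of Lemma \ref{lemm:whenCharSame}, and the chosen sizes of $\mathscr{B}_N^p$ are dictated by the ranks of these conditions modulo $24$.

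Once Phase 1 is complete, $\mathbf{r}\sim\sum_{n\in\mathscr{B}_N}c_n e_n+c_0 e_1$ with integer $c_n$ and $c_0$, and the zero-sum constraint forces $c_0=-\sum_{n\in\mathscr{B}_N}c_n$, yielding the shape in \eqref{eq:elementSNrepr}. For Phase 2 I verify directly by Lemma \ref{lemm:whenCharSame} that $24(e_n-e_1)\in Z(N)$ for every $n\in\mathscr{B}_N$: the coordinate sum is zero, the two linear forms evaluate to $24(n-1)$ and $24N(n^{-1}-1)$ (both divisible by $24$ since $n\mid N$), and the odd-support product is empty because all nonzero coefficients are even. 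Subtracting a suitable multiple of $24(e_n-e_1)$ from each term then forces $c_n\in[0,24)\cap\numZ$, completing the proof.
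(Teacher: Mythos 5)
Your overall strategy coincides with the paper's: reduce the support to prime powers via Lemma \ref{lemm:EquivZN}, treat each prime locally, and control the coefficients modulo $24$. Your reorganization into two phases is legitimate, and the pieces you actually verify are correct: the triple $(1,n_1,-n_1)$ does solve \eqref{eq:prooflemmEquivZN2} (and any relation modulo $Z(N)\subseteq S(N)$ forces the coefficient sum to equal $1$ automatically); for $p\geq5$ the memberships $e_{p^c}-e_p\in Z(N)$ for $c$ odd and $e_{p^c}-e_1\in Z(N)$ for $c$ even follow from $p^2\equiv1\bmod{24}$ exactly as you say; and the observation $24(e_n-e_1)\in Z(N)$ gives a clean final reduction of each $c_n$ into $[0,24)$, arguably tidier than the paper's device of building the bound $0\leq c_n<24$ directly into the solvability statement \eqref{eq:toProveLocalatp}.

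The proof is nevertheless incomplete: the cases $p=2$ and $p=3$ are precisely where the content of the lemma lies, and you defer them (``the main technical obstacle'') rather than prove them. For $p\geq5$ the set $\mathscr{B}_N^p=\{p\}$ works for essentially trivial reasons, but for $p=2,3$ the assertion that every $e_{p^c}$ with $1\leq c\leq v_p(N)$ is equivalent modulo $Z(N)$ to an integer combination of $\{e_n\colon n\in\mathscr{B}_N^p\cup\{1\}\}$ is exactly the claim that justifies the specific, $v_p(N)$-dependent definitions of $\mathscr{B}_N^2$ and $\mathscr{B}_N^3$. It amounts to the solvability of the congruence systems the paper displays (two congruences modulo $24$ together with a parity condition), which must be checked by splitting modulo $8$ and modulo $3$ and by a case analysis on $\alpha=v_p(N)$; nothing in your write-up rules out, say, that a larger set of powers of $2$ would be needed for some $\alpha$. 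Until that verification is supplied, your argument establishes the lemma only for $N$ coprime to $6$. The paper is itself terse at this point, but it at least exhibits the systems and indicates the subcases; your proposal stops one step earlier, at the place where the real work begins.
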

As a consequence, $\mathscr{S}(N)$ is a finite group and $\abs{\mathscr{S}(N)}\leq 24^{\abs{\mathscr{B}_N}}$.
\begin{proof}
Let $\mathbf{r}+Z(N)=\sum r_ne_n+Z(N)\in\mathscr{S}(N)$ be arbitrary. According to Lemma \ref{lemm:EquivZN} we can assume that $r_n=0$ unless $n$ is a prime power or $n=1$. Therefore there exist $\mathbf{r}^{(p)}+Z(N)=\sum_{0\leq\beta\leq v_p(N)}r^{(p)}_{p^\beta}e_{p^\beta}+Z(N)\in\mathscr{S}(N)$ such that $\mathbf{r}\sim\sum_{p\mid N}\mathbf{r}^{(p)}$. (Explicitly, let $r^{(p)}_{p^\beta}=r_{p^\beta}$ for $\beta>0$ and $r^{(p)}_{p^0}=-\sum_{\beta>0}r_{p^\beta}$.) It remains to find $c_n\in\numZ$ such that
\begin{equation}
\label{eq:toProveLocalatp}
\mathbf{r}^{(p)}\sim\sum_{n\in\mathscr{B}_N^p}c_ne_n-\left(\sum_{n\in\mathscr{B}_N^p}c_n\right)e_1
\end{equation}
and $0\leq c_n < 24$ for each $p\mid N$. If $p\neq2,3$ then \eqref{eq:toProveLocalatp} is equivalent to
\begin{equation*}
\begin{dcases}
\sum_{1\leq\beta\leq v_p(N)}(p^\beta-1)r^{(p)}_{p^\beta}\equiv (p-1)c_p \pmod{24}\\
\sum_{1\leq\beta\leq v_p(N)}(Np^{-\beta}-N)r^{(p)}_{p^\beta}\equiv (Np^{-1}-N)c_p \pmod{24}\\
c_p\equiv \sum_{\beta \geq1,\,r^{(p)}_{p^\beta}\equiv1\bmod2}\beta \pmod 2.
\end{dcases}
\end{equation*}
Since $p^2\equiv 1\bmod{24}$, the solution of this system is $c_p\equiv\sum_{2\nmid \beta}r^{(p)}_{p^\beta}\bmod{24/(12,p-1)}$. If $p=2$ and $v_2(N)=\alpha\geq3$, then \eqref{eq:toProveLocalatp} is equivalent to
\begin{equation*}
\begin{dcases}
\sum_{1\leq\beta\leq \alpha}(2^\beta-1)r^{(2)}_{2^\beta}\equiv (2^\alpha-1)c_{2^\alpha}+(2^{\alpha-1}-1)c_{2^{\alpha-1}}+(2^{\alpha-2}-1)c_{2^{\alpha-2}} \pmod{24}\\
\sum_{1\leq\beta\leq \alpha}(N2^{-\beta}-N)r^{(2)}_{2^\beta}\equiv (N2^{-\alpha}-N)c_{2^\alpha}+(N2^{-\alpha+1}-N)c_{2^{\alpha-1}}+(N2^{-\alpha+2}-N)c_{2^{\alpha-2}} \pmod{24}\\
\sum_{\twoscript{\beta\in\{\alpha-2,\alpha-1,\alpha\}}{c_{2^\beta}\equiv r^{(2)}_{2^\beta}+1\bmod2}}\beta\equiv \sum_{1\leq\beta<\alpha-2,\,r^{(2)}_{2^\beta}\equiv1\bmod2}\beta \pmod 2.
\end{dcases}
\end{equation*}
By considering the subcases $2\mid \alpha\geq6$, $2\nmid \alpha\geq5$ and $\alpha=3,4$ separately and splitting the congruences modulo $24$ to congruences modulo $3$ and $8$, one can verify that this system is solvable. We omit the proof of the case $p=2$, $v_2(N)=1,2$. If $p=3$ and $v_3(N)=\alpha\geq2$, then 
\eqref{eq:toProveLocalatp} is equivalent to
\begin{equation*}
\begin{dcases}
\sum_{1\leq\beta\leq \alpha}(3^\beta-1)r^{(3)}_{3^\beta}\equiv (3^\alpha-1)c_{3^\alpha}+(3^{\alpha-1}-1)c_{3^{\alpha-1}} \pmod{24}\\
\sum_{1\leq\beta\leq \alpha}(N3^{-\beta}-N)r^{(3)}_{3^\beta}\equiv (N3^{-\alpha}-N)c_{3^\alpha}+(N3^{-\alpha+1}-N)c_{3^{\alpha-1}} \pmod{24}\\
\sum_{\twoscript{\beta\in\{\alpha-1,\alpha\}}{c_{3^\beta}\equiv r^{(3)}_{3^\beta}+1\bmod2}}\beta\equiv \sum_{1\leq\beta<\alpha-1,\,r^{(3)}_{3^\beta}\equiv1\bmod2}\beta \pmod 2.
\end{dcases}
\end{equation*}
As in the above case, by considering the subcases $2\mid \alpha$ and $2\nmid\alpha$ separately and splitting the congruences modulo $24$ to congruences modulo $3$ and $8$, one can verify that this system is solvable. The case $p=3$, $v_3(N)=1$ is obvious. Thereby we have shown \eqref{eq:toProveLocalatp} which concludes the proof.
\end{proof}

To state the main theorem of this section, let us introduce the system of congruences
\begin{equation}
\label{eq:congruencesZN}
\begin{dcases}
\sum_{n\in\mathscr{B}_N}(n-1)c_n\equiv0 \pmod{24}\\
\sum_{n\in\mathscr{B}_N}(Nn^{-1}-N)c_n\equiv0 \pmod{24}\\
c_p\equiv0\pmod{2},\,\forall p\mid N,p\neq2,3\\
\sum_{2^\beta\in\mathscr{B}_N\colon c_{2^\beta}\equiv1\bmod2}\beta\equiv0\pmod{2},\quad \sum_{3^\beta\in\mathscr{B}_N\colon c_{3^\beta}\equiv1\bmod2}\beta\equiv0\pmod{2}.
\end{dcases}
\end{equation}
Then the element \eqref{eq:elementSNrepr} is the zero in $\mathscr{S}(N)$ if and only if $(c_n)_{n\in\mathscr{B}_N}$ satisfies \eqref{eq:congruencesZN} according to the definition of $Z(N)$.
\begin{deff}
\label{deff:DeltaSequence}
Let $N\in\numgeq{Z}{2}$. For any ordering $b_1,b_2,\dots,b_t$ of $\mathscr{B}_N$ (which means $\mathscr{B}_N=\{b_1,b_2,\dots,b_t\}$ and $b_i\neq b_j$ for $i\neq j$), we define a sequence of positive integers $\Delta_1,\Delta_2,\dots,\Delta_t$ as follows:
\begin{equation*}
\Delta_i=\min\{1\leq m\leq 24\colon\exists(c_n)_{n\in\mathscr{B}_N}\text{ s.t. \eqref{eq:congruencesZN} holds and }c_{b_j}=0\,(j<i),\,c_{b_i}=m\}.
\end{equation*}
\end{deff}
Note that $\Delta_i$ is well defined since the set in the right-hand side always contains $24$. In addition, be careful that the definition \emph{does not} imply $\sum_{1\leq j\leq t}\Delta_{j}e_{j}-\left(\sum_{1\leq j\leq t}\Delta_{j}\right)e_1\in Z(N)$ unless $\abs{\mathscr{B}_N}=1$.
\begin{thm}
\label{thm:SNrepr}
With the notation of Definition \ref{deff:DeltaSequence}, a complete system of representatives of $\mathscr{S}(N)$ is given by
\begin{equation}
\label{eq:SNrepr}
\sum_{1\leq j\leq t}c_{b_j}e_{b_j}-\left(\sum_{1\leq j\leq t}c_{b_j}\right)e_1,\quad 0\leq c_{b_j}<\Delta_j.
\end{equation}
\end{thm}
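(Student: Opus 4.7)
The plan is to identify $\mathscr{S}(N)$ with the quotient $\numZ^t/K$ of a free abelian group by a sublattice and then recognize the theorem as essentially the Hermite-normal-form description of a complete set of residues. Concretely, let $t = \abs{\mathscr{B}_N}$ and fix the ordering $b_1,\dots,b_t$ of $\mathscr{B}_N$. Consider the surjective group homomorphism
\begin{equation*}
\pi\colon\numZ^t\longrightarrow\mathscr{S}(N),\qquad (c_1,\dots,c_t)\longmapsto\sum_{j=1}^t c_je_{b_j}-\left(\sum_{j=1}^t c_j\right)e_1+Z(N),
\end{equation*}
which is surjective by Lemma \ref{lemm:SNsimpleForm}. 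Its kernel $K=\pi^{-1}(0)$ consists of exactly those tuples $(c_{b_j})$ for which \eqref{eq:congruencesZN} is satisfied, in view of the definition of $Z(N)$. The theorem then reduces to the claim that the box $\prod_{j=1}^t\{0,1,\dots,\Delta_j-1\}$ is a complete set of coset representatives for $\numZ^t/K$.

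For existence, I would start from an arbitrary representative supplied by Lemma \ref{lemm:SNsimpleForm} and reduce coordinate by coordinate. The key observation is that, by the very definition of $\Delta_j$, there is a vector
\begin{equation*}
\mathbf{z}^{(j)}=(0,\dots,0,\Delta_j,z_{j+1},\dots,z_t)\in K
\end{equation*}
whose first $j-1$ coordinates vanish and whose $j$-th coordinate equals $\Delta_j$. Sweep $j=1,2,\dots,t$: at stage $j$, subtract $\lfloor c_{b_j}/\Delta_j\rfloor\cdot\mathbf{z}^{(j)}$ from the current representative. This forces the $j$-th coordinate into $[0,\Delta_j)$ without disturbing the coordinates already placed into their prescribed ranges, because the leading $j-1$ entries of $\mathbf{z}^{(j)}$ are zero. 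After all $t$ stages, the resulting tuple lies in the required box and represents the same class as the original.

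For uniqueness, suppose $(c_{b_j})$ and $(c'_{b_j})$ are two tuples in $\prod_j\{0,\dots,\Delta_j-1\}$ with $\pi(c_{b_j})=\pi(c'_{b_j})$. Then $\mathbf{d}:=(c_{b_j}-c'_{b_j})_{j}\in K$. If $\mathbf{d}\neq 0$, let $i$ be the smallest index with $d_i\neq0$. Since $K$ is a group, either $\mathbf{d}$ or $-\mathbf{d}$ is an element of $K$ whose first $i-1$ coordinates are zero and whose $i$-th coordinate is a positive integer strictly smaller than $\Delta_i$ (as both $c_{b_i}$ and $c'_{b_i}$ lie in $[0,\Delta_i)$). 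This contradicts the minimality in Definition \ref{deff:DeltaSequence}; hence $\mathbf{d}=0$, proving uniqueness.

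There is no substantial obstacle hidden here: once Lemma \ref{lemm:SNsimpleForm} is in hand, what remains is the standard Hermite-normal-form argument for a sublattice of $\numZ^t$, where the $\Delta_j$ play the role of the diagonal entries. The only place that requires a small care is verifying that the leading-zero structure of $\mathbf{z}^{(j)}$ makes the coordinate-by-coordinate reduction non-destructive, but this is built into the definition of $\Delta_j$. As an incidental corollary, the argument yields the exact count $\abs{\mathscr{S}(N)}=\prod_{j=1}^t\Delta_j$, which refines the crude bound $\abs{\mathscr{S}(N)}\leq 24^{\abs{\mathscr{B}_N}}$ following Lemma \ref{lemm:SNsimpleForm}.
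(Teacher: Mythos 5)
Your proof is correct and is essentially the same as the paper's: the coordinate-by-coordinate reduction using vectors in $Z(N)$ with leading zeros and $j$-th entry $\Delta_j$, and the uniqueness argument via the first differing coordinate contradicting the minimality of $\Delta_J$, are exactly the two steps in the paper's proof, merely repackaged in the language of sublattices and Hermite normal form. The only addition is the incidental count $\abs{\mathscr{S}(N)}=\prod_{j}\Delta_j$, which is a valid (and useful) consequence not stated explicitly in the theorem.
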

\begin{proof}
Let $\mathbf{r}+Z(N)\in\mathscr{S}(N)$ be arbitrary. According to Lemma \ref{lemm:SNsimpleForm} we can assume that $\mathbf{r}=\sum_{n\in\mathscr{B}_N}r_ne_n-\left(\sum_{n\in\mathscr{B}_N}r_n\right)e_1$ with $r_n\in\numZ$. By the definition of $\Delta_1$, there exists $\mathbf{d}_1=\sum_{n\in\mathscr{B}_N}d_ne_n-\left(\sum_{n\in\mathscr{B}_N}d_n\right)e_1\in Z(N)$ with $d_{b_1}=\Delta_1$. Hence subtracting a multiple of $\mathbf{d}_1$ from $\mathbf{r}$ we may assume that $0\leq r_{b_1}<\Delta_1$. Inductively, suppose we have adjusted $\mathbf{r}$ such that $0\leq r_{b_j}<\Delta_j$ for $j=1,2,\dots,J$. Let $\mathbf{d}_{J+1}=\sum_{n\in\mathscr{B}_N}d_ne_n-\left(\sum_{n\in\mathscr{B}_N}d_n\right)e_1\in Z(N)$ with $d_{b_j}=0$ ($j\leq J$) and $d_{b_{J+1}}=\Delta_{J+1}$. This exists by the definition of $\Delta_{J+1}$. Subtracting a multiple of $\mathbf{d}_{J+1}$ from $\mathbf{r}$ we can assume that $0\leq r_{b_j}<\Delta_j$ for $j=1,2,\dots,J+1$. Therefore by induction $\mathbf{r}$ takes the form of \eqref{eq:SNrepr} modulo $Z(N)$. We have proved that each coset in $\mathscr{S}(N)$ can be represented by an element in \eqref{eq:SNrepr}.

Let $\mathbf{r}_1=\sum_{1\leq j\leq t}c_{b_j}e_{b_j}-\left(\sum_{1\leq j\leq t}c_{b_j}\right)e_1$ and $\mathbf{r}_2=\sum_{1\leq j\leq t}d_{b_j}e_{b_j}-\left(\sum_{1\leq j\leq t}d_{b_j}\right)e_1$ be two different representatives of the form \eqref{eq:SNrepr}. Let $1\leq J \leq t$ satisfy $c_{b_j}=d_{b_j}$ for $j<J$ and $c_{b_J}\neq d_{b_J}$. Without loss of generality assume that $c_{b_J}<d_{b_J}$. It follows that $0<d_{b_J}-c_{b_J}<\Delta_J$. Then $\mathbf{r}_2-\mathbf{r}_1\not\in Z(N)$ by the definition of $\Delta_J$. We have proved that the vectors in \eqref{eq:SNrepr} are pairwise inequivalent modulo $Z(N)$, which concludes the whole proof.
\end{proof}
\begin{coro}
\label{coro:AllEtaQuoChars}
Let $N\in\numgeq{Z}{1}$ and $k\in\frac{1}{2}\numZ$. Then the characters \eqref{eq:charEtaQuotient} of eta-quotients of level $N$, weight $k$ and cover index $2$ are in one-to-one correspondence with the vectors
\begin{equation*}
\sum_{1\leq j\leq t}r_{b_j}e_{b_j}+\left(2k-\sum_{1\leq j\leq t}r_{b_j}\right)e_1,\quad 0\leq r_{b_j}<\Delta_j.
\end{equation*}
\end{coro}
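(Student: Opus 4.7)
The plan is to observe that this corollary is an immediate packaging of Theorem \ref{thm:SNrepr} together with two bijections that have already been established in the text. First, by Lemma \ref{lemm:whenCharSame}, two vectors $\mathbf{r},\mathbf{r}'\in S(N,k)$ satisfy $\chi_{\mathbf{r}}=\chi_{\mathbf{r}'}$ if and only if $\mathbf{r}-\mathbf{r}'\in Z(N)$. Thus the set of characters of eta-quotients of level $N$, weight $k$ and cover index $2$ is in canonical bijection with $\mathscr{S}(N,k)=S(N,k)/Z(N)$, so it suffices to exhibit a complete system of representatives of $\mathscr{S}(N,k)$.

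Next, I would invoke the bijection
\begin{equation*}
\varphi_k\colon\mathscr{S}(N)\longrightarrow\mathscr{S}(N,k),\qquad \mathbf{r}+Z(N)\longmapsto 2ke_1+\mathbf{r}+Z(N),
\end{equation*}
noted in the paragraph just before Remark \ref{rema:SNsubgroup}. Bijectivity is immediate: both $S(N,k)\to S(N)$ via $\mathbf{s}\mapsto\mathbf{s}-2ke_1$ and $S(N)\to S(N,k)$ via $\mathbf{r}\mapsto\mathbf{r}+2ke_1$ are $Z(N)$-equivariant translations on the level of cosets, hence descend to mutually inverse maps on the quotients. Under $\varphi_k$, a representative
\begin{equation*}
\sum_{1\leq j\leq t}c_{b_j}e_{b_j}-\left(\sum_{1\leq j\leq t}c_{b_j}\right)e_1
\end{equation*}
of $\mathscr{S}(N)$ produced by Theorem \ref{thm:SNrepr} is sent to
\begin{equation*}
2ke_1+\sum_{1\leq j\leq t}c_{b_j}e_{b_j}-\left(\sum_{1\leq j\leq t}c_{b_j}\right)e_1=\sum_{1\leq j\leq t}c_{b_j}e_{b_j}+\left(2k-\sum_{1\leq j\leq t}c_{b_j}\right)e_1,
\end{equation*}
which is precisely the expression claimed in the corollary after relabelling $c_{b_j}$ as $r_{b_j}$.

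Combining these observations yields the desired one-to-one correspondence: as $(c_{b_j})$ ranges over $\prod_{j}\{0,1,\dots,\Delta_j-1\}$, Theorem \ref{thm:SNrepr} guarantees that the resulting cosets exhaust $\mathscr{S}(N)$ without repetition, the bijection $\varphi_k$ transports this to a complete, non-redundant listing of the cosets in $\mathscr{S}(N,k)$, and the character map $\mathbf{r}\mapsto\chi_{\mathbf{r}}$ finally identifies these cosets with the desired eta-quotient characters. There is no genuine obstacle: the statement is a corollary in the strict sense, and the only care needed is to verify that the translation $\varphi_k$ really does intertwine the $Z(N)$-equivalence and to check the algebraic simplification of the representative vector—both straightforward.
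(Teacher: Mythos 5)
Your proposal is correct and follows exactly the paper's own route: identify characters with cosets in $\mathscr{S}(N,k)$ via Lemma \ref{lemm:whenCharSame}, transport the representatives of $\mathscr{S}(N)$ from Theorem \ref{thm:SNrepr} through the translation bijection $\mathbf{r}+Z(N)\mapsto\mathbf{r}+2ke_1+Z(N)$, and simplify. The extra detail you supply (checking the translation is $Z(N)$-equivariant) is a harmless elaboration of what the paper leaves implicit.
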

\begin{proof}
According to the discussion followed by Remark \ref{rema:SNsubgroup}, the characters of eta-quotients of level $N$, weight $k$ and cover index $2$ can be regarded as the set $\mathscr{S}(N,k)$ and the map $\mathscr{S}(N)\to\mathscr{S}(N,k)$, $\mathbf{r}+Z(N)\mapsto\mathbf{r}+2ke_1+Z(N)$ is a bijection. The corollary follows from this and Theorem \ref{thm:SNrepr}.
\end{proof}

In the remainder of this section, we give examples of concrete sequences $(\Delta_i)_{1\leq i\leq t}$ ($t=\abs{\mathscr{B}_N}$) for some special $N$.
\begin{examp}
\label{examp:classifyChar1}
Let $N=p^\alpha$ where $p\geq 5$ is a prime and $\alpha\in\numgeq{Z}{1}$. Then $\mathscr{B}_N=\{p\}$ and the sequence $(\Delta_i)_{1\leq i\leq t}$ ($t=\abs{\mathscr{B}_N}$) contains a single element $\Delta_1$. The system of congruences \eqref{eq:congruencesZN} is equivalent to $(p-1)c_p\equiv0\bmod{24}$ and $c_p\equiv0\bmod{2}$ in this case. Thus, $\Delta_1=\frac{24}{(12,p-1)}$ and a complete system of representatives of $\mathscr{S}(p^\alpha)$ is given by
\begin{equation*}
r_{p}(e_{p}-e_1),\quad 0\leq r_{p}<\frac{24}{(12,p-1)}.
\end{equation*}
As a consequence, for any $k\in\frac{1}{2}\numZ$ there are exactly $\frac{24}{(12,p-1)}$ characters $\chi\colon\widetilde{\Gamma_0(p^\alpha)}\to\numC^\times$ of eta-quotients of level $p^\alpha$, weight $k$ and cover index $2$. It should be noted that, the character of an eta-quotient, say, of cover index $4$ may also be regarded as a character on $\widetilde{\Gamma_0(p^\alpha)}$ (see the third fact of Remark \ref{rema:mulSysProp}). For instance, the character $\chi\colon\Dcover{\Gamma_0(p^\alpha)}{8}\to\numC^\times$ of $\eta(\tau)^{1/4}\eta(p\tau)^{3/4}$ descends to a character on $\widetilde{\Gamma_0(p^\alpha)}=\Dcover{\Gamma_0(p^\alpha)}{2}$. Such characters are not in the list above.
\end{examp}

\begin{examp}
\label{examp:classifyChar2}
Let $N=2^\alpha$ with $\alpha \geq3$. Then $\mathscr{B}_N=\{2^{\alpha-2},2^{\alpha-1},2^{\alpha}\}$ which is arranged in the increasing order. We need to calculate the sequence $\Delta_1,\Delta_2,\Delta_3$. The system of congruences \eqref{eq:congruencesZN} is equivalent to
\begin{equation*}
\begin{dcases}
(2^{\alpha-2}-1)c_{2^{\alpha-2}}+(2^{\alpha-1}-1)c_{2^{\alpha-1}}+(2^{\alpha}-1)c_{2^{\alpha}}\equiv0\pmod{24}\\
(4-2^{\alpha})c_{2^{\alpha-2}}+(2-2^{\alpha})c_{2^{\alpha-1}}+(1-2^{\alpha})c_{2^{\alpha}}\equiv0\pmod{24}\\
\sum_{\twoscript{\beta\in\{\alpha-2,\alpha-1,\alpha\}}{c_{2^\beta}\equiv1\bmod 2}}\beta\equiv0\pmod{2}.
\end{dcases}
\end{equation*}
One can verify directly that
\begin{equation*}
(\Delta_1,\Delta_2,\Delta_3)=\begin{dcases}
(2,24,8)  &\text{ if }\alpha\geq3\text{ and }2\mid\alpha\\
(2,8,24)  &\text{ if }\alpha\geq3\text{ and }2\nmid\alpha.
\end{dcases}
\end{equation*}
Therefore, a complete system of representatives of $\mathscr{S}(2^\alpha)$ with $\alpha\geq3$ is given by
\begin{equation*}
r_{2^{\alpha-2}}e_{2^{\alpha-2}}+r_{2^{\alpha-1}}e_{2^{\alpha-1}}+r_{2^{\alpha}}e_{2^{\alpha}}-(r_{2^{\alpha-2}}+r_{2^{\alpha-1}}+r_{2^{\alpha}})e_1,
\end{equation*}
where $r_{\alpha-2}\in\{0,1\}$ and $0\leq r_{2^{\alpha-1}}<24$, $0\leq r_{2^{\alpha}}<8$ if $2\mid\alpha$; $0\leq r_{2^{\alpha-1}}<8$, $0\leq r_{2^{\alpha}}<24$ if $2\nmid\alpha$. As a consequence, for any $k\in\frac{1}{2}\numZ$ there are exactly $384$ characters $\chi\colon\widetilde{\Gamma_0(2^\alpha)}\to\numC^\times$ of eta-quotients of level $2^\alpha$, weight $k$ and cover index $2$ when $\alpha\geq3$. In addition, it is immediate that for $N=2^2$ we have $\mathscr{B}_{4}=\{2^1,2^2\}$ and $(\Delta_1,\Delta_2)=(24,8)$ and for $N=2^1$ we have $\mathscr{B}_{2}=\{2^1\}$ and $\Delta_1=24$. Thus, there are exactly $192$ ($24$ respectively) characters of eta-quotients of level $4$ ($2$ respectively), cover index $2$ and any fixed weight $\in\frac{1}{2}\numZ$.
\end{examp}

\begin{examp}
\label{examp:classifyChar3}
Let $N=3^\alpha$ with $\alpha \geq2$. Then $\mathscr{B}_N=\{3^{\alpha-1},3^{\alpha}\}$ which is arranged in the increasing order. We need to calculate the sequence $\Delta_1,\Delta_2$. The system of congruences \eqref{eq:congruencesZN} is equivalent to
\begin{equation*}
\begin{dcases}
(3^{\alpha-1}-1)c_{3^{\alpha-1}}+(3^{\alpha}-1)c_{3^{\alpha}}\equiv0\pmod{24}\\
(3-3^{\alpha})c_{3^{\alpha-1}}+(1-3^{\alpha})c_{3^{\alpha}}\equiv0\pmod{24}\\
\sum_{\twoscript{\beta\in\{\alpha-1,\alpha\}}{c_{3^\beta}\equiv1\bmod 2}}\beta\equiv0\pmod{2}.
\end{dcases}
\end{equation*}
One can verify directly that
\begin{equation*}
(\Delta_1,\Delta_2)=\begin{dcases}
(12,3)  &\text{ if }\alpha\geq2\text{ and }2\mid\alpha\\
(3,12)  &\text{ if }\alpha\geq2\text{ and }2\nmid\alpha.
\end{dcases}
\end{equation*}
Therefore, a complete system of representatives of $\mathscr{S}(3^\alpha)$ with $\alpha\geq2$ is given by
\begin{equation*}
r_{3^{\alpha-1}}e_{3^{\alpha-1}}+r_{3^{\alpha}}e_{3^{\alpha}}-(r_{3^{\alpha-1}}+r_{3^{\alpha}})e_1,
\end{equation*}
where $0\leq r_{3^{\alpha-1}}<12$, $0\leq r_{3^{\alpha}}<3$ if $2\mid\alpha$; $0\leq r_{3^{\alpha-1}}<3$, $0\leq r_{3^{\alpha}}<12$ if $2\nmid\alpha$. As a consequence, for any $k\in\frac{1}{2}\numZ$ there are exactly $36$ characters $\chi\colon\widetilde{\Gamma_0(3^\alpha)}\to\numC^\times$ of eta-quotients of level $3^\alpha$, weight $k$ and cover index $2$ when $\alpha\geq2$. In addition, it is direct that for $N=3^1$ we have $\mathscr{B}_{3}=\{3^1\}$ and $\Delta_1=12$. Thus, there are exactly $12$ characters of eta-quotients of level $3$, cover index $2$ and any fixed weight $\in\frac{1}{2}\numZ$.
\end{examp}

\begin{examp}
\label{examp:Nis4palpha}
Let $N=4p^\alpha$ where $p\geq5$ is a prime and $\alpha\in\numgeq{Z}{1}$. Then $\mathscr{B}_N=\{2^1,2^2,p\}$ which is arranged in the increasing order. We need to calculate the sequence $\Delta_1,\Delta_2,\Delta_3$. The system of congruences \eqref{eq:congruencesZN} is equivalent to
\begin{equation*}
\begin{dcases}
c_2+3c_4+(p-1)c_p\equiv0\pmod{24}\\
-2p^\alpha c_2-3p^\alpha c_4+(4p^{\alpha-1}-4p^\alpha)c_p\equiv0\pmod{24}\\
c_2,\,c_p\equiv0\pmod{2}.
\end{dcases}
\end{equation*}
Using elementary number theory one can show that
\begin{equation*}
\Delta_1=2\cdot(12,p-1),\quad \Delta_2=8,\quad \Delta_3=\frac{24}{(12,p-1)}.
\end{equation*}
Therefore, a complete system of representatives of $\mathscr{S}(4p^\alpha)$ with $p\geq5$ and $\alpha\geq1$ is given by
\begin{equation*}
r_{2}e_{2}+r_{4}e_{4}+r_pe_p-(r_2+r_4+r_p)e_1
\end{equation*}
with
\begin{equation*}
0\leq r_2<2\cdot(12,p-1),\quad 0\leq r_{4}<8,\quad 0\leq r_{p}<\frac{24}{(12,p-1)}.
\end{equation*}
As a consequence, for any $k\in\frac{1}{2}\numZ$ there are exactly $384$ characters $\chi\colon\widetilde{\Gamma_0(4p^\alpha)}\to\numC^\times$ of eta-quotients of level $4p^\alpha$, weight $k$ and cover index $2$ when $p\geq5$ and $\alpha\geq1$.
\end{examp}

\begin{examp}
\label{examp:classifyChar5}
Let $N=p_1^{\alpha_1}p_2^{\alpha_2}$ where $p_1,p_2\geq5$ are distinct primes and $\alpha_1,\alpha_2\in\numgeq{Z}{1}$. Then $\mathscr{B}_N=\{p_1,p_2\}$. We need to calculate the sequence $\Delta_1,\Delta_2$. The system of congruences \eqref{eq:congruencesZN} is equivalent to
\begin{equation*}
\begin{dcases}
(p_1-1)c_{p_1}+(p_2-1)c_{p_2}\equiv0\pmod{24}\\
(p_1^{\alpha_1-1}p_2^{\alpha_2}-p_1^{\alpha_1}p_2^{\alpha_2})c_{p_1}+(p_1^{\alpha_1}p_2^{\alpha_2-1}-p_1^{\alpha_1}p_2^{\alpha_2})c_{p_2}\equiv0\pmod{24}\\
c_{p_1},\,c_{p_2}\equiv0\pmod{2}.
\end{dcases}
\end{equation*}
Using elementary number theory one can show that
\begin{align*}
\Delta_1&=\frac{24\cdot(12,p_2-1)}{(12,(p_1-1)(p_2-1))}\cdot\left(\frac{12\cdot(12,p_2-1)}{(12,(p_1-1)(p_2-1))},p_1-p_2\right)^{-1},\\
\Delta_2&=\frac{24}{(12,p_2-1)}.
\end{align*}
The expression for $\Delta_1$ seems to be complicated but it actually depends only on $p_1\bmod{12}$ and $p_2\bmod{12}$. For instance, if $p_2\equiv1\bmod{12}$, then $\Delta_1$ is simplified to $\frac{24}{(12,p_1-1)}$. A complete system of representatives of $\mathscr{S}(p_1^{\alpha_1}p_2^{\alpha_2})$ is given by
\begin{equation*}
r_{p_1}e_{p_1}+r_{p_2}e_{p_2}-(r_{p_1}+r_{p_2})e_1
\end{equation*}
with
\begin{equation*}
0\leq r_{p_1}<\Delta_1,\quad 0\leq r_{p_2}<\Delta_2.
\end{equation*}
As a consequence, for any $k\in\frac{1}{2}\numZ$ there are exactly $\Delta_1\Delta_2$ characters $\chi\colon\widetilde{\Gamma_0(p_1^{\alpha_1}p_2^{\alpha_2})}\to\numC^\times$ of eta-quotients of level $p_1^{\alpha_1}p_2^{\alpha_2}$, weight $k$ and cover index $2$ when $p_1, p_2\geq5$ are distinct primes and $\alpha_1,\alpha_2\geq1$.
\end{examp}

\section{Dimension formulas for weights $1/2$, $1$, $3/2$ and small levels}
\label{sec:Dimension formulas for weight 1/2, 1, 3/2 and small levels}
In this section, we give values of all $\dim_\numC M_{k}(\Gamma_0(N),\chi)$ that can be calculated by Theorem \ref{thm:main} (throughout this section, $t$ in this theorem always takes the value $0$) for $k=1/2$, $1$ and $3/2$, $N\in\numgeq{Z}{1}$ and $\chi$ being the character of an eta-quotient of level $N$, weight $k$ and cover index $2$. There are only finitely many such formulas since the right-hand side of \eqref{eq:mainThmCondtion2} tends to $2$ when $N\to+\infty$ and the set of $\chi$ is finite for each $N$ and $k$ (see Corollary \ref{coro:AllEtaQuoChars}). The following lemma gives an upper bound of $N$ that should be considered.
\begin{lemm}
\label{lemm:NupperBound}
Let $N$ be a positive integer and let $m$, $\varepsilon_2$, $\varepsilon_3$ and $\varepsilon_\infty$ be defined in \eqref{eq:mGamma0N}, \eqref{eq:e2Gamma0N}, \eqref{eq:e3Gamma0N} and \eqref{eq:einftyGamma0N} respectively. If $N\geq1024$, we have
\begin{equation*}
\varepsilon_2<\frac{m}{96},\quad\varepsilon_3<\frac{m}{128},\quad\varepsilon_\infty\leq\frac{m}{32}.
\end{equation*}
Consequently, in Theorem \ref{thm:main}, if $k\leq3/2$, $t=0$ and $N\geq1024$, then \eqref{eq:mainThmCondtion2} does not hold.
\end{lemm}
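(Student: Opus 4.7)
The plan is to prove the three inequalities $\varepsilon_2 < m/96$, $\varepsilon_3 < m/128$, $\varepsilon_\infty \leq m/32$ in turn, and then combine them to show that the right-hand side of \eqref{eq:mainThmCondtion2} exceeds $3/2$. Throughout I will use the crude lower bound $m \geq N \geq 1024$, which follows from $\prod_{p \mid N}(1 + 1/p) \geq 1$.

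For $\varepsilon_2 < m/96$, I dispose of the trivial case $\varepsilon_2 = 0$ and then assume $\varepsilon_2 > 0$. Formula \eqref{eq:e2Gamma0N} then forces $4 \nmid N$ and every odd prime divisor of $N$ to be $\equiv 1 \pmod 4$, so $\varepsilon_2 = 2^k$ where $k$ is the number of odd prime divisors. Writing the primes $\equiv 1 \pmod 4$ in increasing order as $q_1 = 5, q_2 = 13, q_3 = 17, q_4 = 29, \ldots$, one has $N \geq q_1 \cdots q_k$, and a short case analysis on $k$ will finish the job: for $k \leq 3$ it is enough that $m \geq N \geq 1024 > 96 \cdot 2^3$; for $k = 4$ one checks $q_1 q_2 q_3 q_4 = 32045 > 96 \cdot 16$; and for $k \geq 5$ the estimate $q_i/2 \geq 5/2$ gives $\prod q_i / 2^k \geq (5/2)^k \geq (5/2)^5 > 96$. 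The inequality $\varepsilon_3 < m/128$ is established in exactly the same fashion: $2 \mid N$ forces $\varepsilon_3 = 0$ since $\legendre{-3}{2} = -1$, and otherwise the primes $7, 13, 19, 31, \ldots$ (i.e.\ those $\equiv 1 \pmod 3$) play the role of the $q_i$. The borderline case is $k = 3$, where $7 \cdot 13 \cdot 19 = 1729 > 1024 = 128 \cdot 8$, and $(7/2)^4 > 128$ handles $k \geq 4$.

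For $\varepsilon_\infty \leq m/32$, I will use that both $\varepsilon_\infty$ and $m$ are multiplicative on $N$. A direct telescoping computation on each prime power yields
\begin{equation*}
\frac{\varepsilon_\infty(p^\alpha)}{m(p^\alpha)} \leq p^{-\alpha/2},
\end{equation*}
with equality when $\alpha$ is even; for $\alpha$ odd the inequality reduces to $2\sqrt{p} \leq p + 1$, i.e.\ $(\sqrt{p}-1)^2 \geq 0$. Taking the product over $p^\alpha \parallel N$ gives $\varepsilon_\infty/m \leq N^{-1/2} \leq 1/32$, with equality precisely at $N = 2^{10} = 1024$, which is why the constant $32$ in the statement is sharp.

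Finally, to deduce the failure of \eqref{eq:mainThmCondtion2}, note that $1 - \{x_c/24\} \leq 1$ and $\sum_{c \mid N} \phi(c, N/c) = \varepsilon_\infty$ by \eqref{eq:einftyGamma0N}, so the right-hand side $R$ of \eqref{eq:mainThmCondtion2} satisfies
\begin{equation*}
R \geq 2 - \frac{6\varepsilon_2}{m} - \frac{8\varepsilon_3}{m} - \frac{12\varepsilon_\infty}{m} > 2 - \frac{1}{16} - \frac{1}{16} - \frac{3}{8} = \frac{3}{2},
\end{equation*}
the strict inequality coming from the strict bounds on $\varepsilon_2$ and $\varepsilon_3$. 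Hence any $k \leq 3/2$ gives $k < R$, i.e.\ \eqref{eq:mainThmCondtion2} fails. I expect no serious obstacle: everything is elementary, and the only point requiring care is the borderline case $k = 3$ in the analyses of $\varepsilon_2$ and $\varepsilon_3$, where one must invoke the explicit product of small primes rather than merely $N \geq 1024$.
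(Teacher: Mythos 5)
Your proof is correct and follows essentially the same route as the paper: the identical multiplicative local computation giving $\varepsilon_\infty/m\leq N^{-1/2}$, an elementary case analysis on the number of prime divisors forced to be $\equiv1\bmod 4$ (resp.\ $\equiv1\bmod 3$) for $\varepsilon_2$ (resp.\ $\varepsilon_3$), and the same final estimate showing the right-hand side of \eqref{eq:mainThmCondtion2} exceeds $3/2$. The only difference is bookkeeping—you bound $N$ from below by products of the smallest admissible primes where the paper bounds the ratio $\varepsilon_2/m$ factor by factor—and your numerical checks at the borderline cases ($5\cdot13\cdot17\cdot29$ and $7\cdot13\cdot19$) are all correct.
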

For $\varepsilon_\infty$, a stronger estimate\footnote{This is the best estimate if we want it to hold for all $N$ since it becomes equality when $N$ is a square.} is $\varepsilon_\infty\leq m\cdot N^{-1/2}$ with $N\in\numgeq{Z}{1}$ which we will prove below.
\begin{proof}
Suppose $N=\prod_{1\leq j\leq s}p_j^{\alpha_j}$ where $p_1<p_2<\dots<p_s$ are primes, $s\in\numgeq{Z}{0}$ and $\alpha_j\in\numgeq{Z}{1}$. Note that for $p=p_j$, $\alpha=\alpha_j$ we have
\begin{equation*}
\frac{p^{[\alpha/2]}+p^{[\alpha/2-1/2]}}{p^{\alpha}+p^{\alpha-1}}=\begin{dcases}
\frac{2}{p^{\alpha/2}(p^{1/2}+p^{-1/2})} &\text{ if } 2\nmid\alpha,\\
\frac{1}{p^{\alpha/2}} &\text{ if } 2\mid\alpha.
\end{dcases}
\end{equation*}
Thereby, according to \cite[Corollary 6.3.24(b)]{CS17}
\begin{equation}
\label{eq:einftymN}
\frac{\varepsilon_\infty}{m}=\prod_{1\leq j\leq s}\frac{p_j^{[\alpha_j/2]}+p_j^{[\alpha_j/2-1/2]}}{p_j^{\alpha_j}+p_j^{\alpha_j-1}}\leq\prod_{1\leq j \leq s}\frac{1}{p_j^{\alpha_j/2}}=N^{-1/2}.
\end{equation}

Now we begin to prove the inequalities for $\varepsilon_2$ and $\varepsilon_3$, so suppose $N\geq1024$. If $s=1,2,3$, then $\varepsilon_2/m\leq2^3/1025<1/96$, $\varepsilon_3/m\leq2^3/1025<1/128$. If $s\geq4$, we consider two subcases: $\varepsilon_2=0$ and $\varepsilon_2\neq0$. If $\varepsilon_2=0$, obviously $\varepsilon_2/m<1/96$. If $\varepsilon_2\neq0$, then $\legendre{-4}{p_j}\neq-1$ and hence $p_j=2$ or $p_j\equiv1\bmod 4$ for $1\leq j\leq s$. If $p_1\neq2$, then $p_1^{\alpha_1}\geq 5$, $p_2^{\alpha_2}\geq 13$ and $p_j^{\alpha_j}\geq 17$ for $j\geq3$. Therefore
\begin{equation*}
\frac{\varepsilon_2}{m}\leq\frac{2}{5+1}\cdot\frac{2}{13+1}\cdot\left(\frac{2}{17+1}\right)^{s-2}<\frac{1}{96}.
\end{equation*}
If $p_1=2$, then $p_2^{\alpha_2}\geq 5$, $p_3^{\alpha_3}\geq 13$ and $p_j^{\alpha_j}\geq 17$ for $j\geq4$. Therefore
\begin{equation*}
\frac{\varepsilon_2}{m}\leq\frac{1}{2+1}\cdot\frac{2}{5+1}\cdot\frac{2}{13+1}\cdot\left(\frac{2}{17+1}\right)^{s-3}<\frac{1}{96}.
\end{equation*}
This proves the inequality for $\varepsilon_2$. To prove the inequality for $\varepsilon_3$ when $s\geq4$, we consider the subcases $\varepsilon_3=0$ and $\varepsilon_3\neq0$ separately and proceed as in the proof for $\varepsilon_2$.

Finally, with the notation of Theorem \ref{thm:main} suppose that $k\leq3/2$, $t=0$ and $N\geq1024$. Assume by contradiction that \eqref{eq:mainThmCondtion2} holds, then
\begin{equation*}
\frac{3}{2} \geq k > 2-\frac{6}{m}\varepsilon_2-\frac{8}{m}\varepsilon_3-\frac{12}{m}\varepsilon_\infty>2-\frac{6}{96}-\frac{8}{128}-\frac{12}{32}
\end{equation*}
which is absurd. Hence \eqref{eq:mainThmCondtion2} does not hold.
\end{proof}

The algorithm for presenting all $\dim_\numC M_{k}(\Gamma_0(N),\chi)$ described above is as follows:
\begin{enumerate}
  \item Let $k$ range over $\{1/2,1,3/2\}$ and let $N$ range over the positive integers $\leq1023$. We do not miss anything according to Lemma \ref{lemm:NupperBound}.
  \item Given $k,N$, we arrange $\mathscr{B}_N$ in any order and calculate the sequence $\Delta_1,\dots,\Delta_t$ (see Definition \ref{deff:DeltaSequence}).
  \item Let $\mathbf{r}=(r_n)_{n\mid N}$ range over the vectors described in Corollary \ref{coro:AllEtaQuoChars}. According to this corollary, the corresponding $\chi_\mathbf{r}$ ranges over exactly all characters of eta-quotients of level $N$, weight $k$ and cover index $2$.
  \item Given $k,N$ and $\mathbf{r}$, we calculate $\mathbf{x}=(x_c)_{c\mid N}$ via \eqref{eq:rntoxc}.
  \item We determine whether \eqref{eq:mainThmCondtion2} holds. If it holds, then we calculate $\dim_\numC M_{k}(\Gamma_0(N),\chi_{\mathbf{r}})$ by \eqref{eq:dimFormula} where $t=0$. If it does not hold, $\dim_\numC M_{k}(\Gamma_0(N),\chi_{\mathbf{r}})$ can not be calculated by Theorem \ref{thm:main}. Nevertheless we can obtain an upper bound and a lower bound of $\dim_\numC M_{k}(\Gamma_0(N),\chi_{\mathbf{r}})$ (see Remark \ref{rema:uboundlbound}).
\end{enumerate}
\begin{rema}
\label{rema:rangeN}
For a fixed $k$, one can shrink the range of $N$ by picking out those $N\leq1023$ such that $k>2-\frac{6}{m}\varepsilon_2-\frac{8}{m}\varepsilon_3-\frac{12}{m}\varepsilon_\infty$. Hence, for $k=1/2$, $N$ need only to range over $\{1\leq N\leq21\}\cup\{24,25,27,32,36,49,50\}$; for $k=1$, $N$ need only to range over $\{1\leq N \leq 22\}\cup\{24\leq N \leq 32\}\cup\{34,36,37,39,40,45,48,49,50,54,64,72,75,81,98,100,121,169\}$; for $k=3/2$, $N$ need only to range over a subset of $\{1\leq N \leq529\}$.
\end{rema}
All the above steps can be easily implemented in any computer algebra system. We program them using SageMath. See Appendix \ref{apx:Usage of SageMath code} for the usage of the code.

\subsection{The weight $1/2$}
\label{subsec:weight1/2}
The dimensions obtained by the above algorithm are summarized in Table \ref{table:wt1/2}.
\begin{table}[ht]
\centering
\caption{For each $N$, $a$ means the total number of characters (see Corollary \ref{coro:AllEtaQuoChars}) and $v$ means the number of characters $\chi$ such that $\dim_\numC M_{1/2}(\Gamma_0(N),\chi)$ can be computed using Theorem \ref{thm:main}, $d_j$ means the number of spaces of dimension $j$ for $j=0,1,2$. \label{table:wt1/2}}
\begin{tabular}{llllll|llllll}
\toprule
$N$ & $a$ & $v$ & $d_0$ & $d_1$ & $d_2$ & $N$ & $a$ & $v$ & $d_0$ & $d_1$ & $d_2$ \\
\midrule
$1$ & $1$ & $1$ & $0$ & $1$ & $0$ & $2$ & $24$ & $24$ & $20$ & $4$ & $0$\\
$3$ & $12$ & $12$ & $10$ & $2$ & $0$ & $4$ & $192$ & $146$ & $136$ & $10$ & $0$\\
$5$ & $6$ & $6$ & $4$ & $2$ & $0$ & $6$ & $288$ & $160$ & $148$ & $12$ & $0$\\
$7$ & $4$ & $4$ & $2$ & $2$ & $0$ & $8$ & $384$ & $214$ & $198$ & $16$ & $0$\\
$9$ & $36$ & $18$ & $15$ & $3$ & $0$ & $10$ & $48$ & $32$ & $24$ & $8$ & $0$\\
$12$ & $1152$ & $318$ & $286$ & $32$ & $0$ & $13$ & $2$ & $2$ & $0$ & $2$ & $0$\\
$16$ & $384$ & $127$ & $108$ & $18$ & $1$ & $18$ & $288$ & $56$ & $42$ & $12$ & $2$\\
$20$ & $384$ & $6$ & $0$ & $6$ & $0$ & $24$ & $2304$ & $24$ & $0$ & $24$ & $0$\\
$25$ & $6$ & $3$ & $1$ & $1$ & $1$ & $27$ & $36$ & $2$ & $0$ & $2$ & $0$\\
$32$ & $384$ & $10$ & $0$ & $8$ & $2$ & $36$ & $1152$ & $12$ & $0$ & $6$ & $6$\\
$49$ & $4$ & $1$ & $0$ & $0$ & $1$ & $50$ & $48$ & $4$ & $0$ & $0$ & $4$\\
\bottomrule
\end{tabular}
\end{table}
\begin{examp}
Let us consider the case $N=50$. We have $\mathscr{B}_{50}=\{2,5\}$ and $(\Delta_1,\Delta_2)=(8,6)$ as one can verify directly. Thus, there are totally $48$ characters of eta-quotients of level $50$, weight $1/2$ and cover index $2$. They are exactly the characters on $\Dcover{\Gamma_0(50)}{2}$ of the following eta-quotients:
\begin{equation*}
\eta(\tau)^{1-r_2-r_5}\eta(2\tau)^{r_2}\eta(5\tau)^{r_5},\quad 0\leq r_2<8,\,0\leq r_5<6.
\end{equation*}
Among the $48$ characters, for precisely the four characters $\chi$ given by
\begin{equation}
\label{eq:wt1/2gamma50fourChar}
(r_2,r_5)=(0,0),\,(1,0),\,(2,0),\,(7,4)
\end{equation}
one can compute $\dim_\numC M_{1/2}(\Gamma_0(50),\chi)$ using Theorem \ref{thm:main} (that is, \eqref{eq:mainThmCondtion2} holds with $t=0$) and the four dimensions all equal $2$. For the other $44$ characters $\chi$ our method only gives that $0\leq\dim_\numC M_{1/2}(\Gamma_0(50),\chi)\leq4$ (see Remark \ref{rema:uboundlbound}). Let $\chi_1,\chi_2,\chi_3,\chi_4$ denote respectively the characters given by \eqref{eq:wt1/2gamma50fourChar}. Then according to \eqref{eq:divEtaQuotient} and Lemma \ref{lemm:whenCharSame} we have
\begin{align*}
M_{1/2}(\Gamma_0(50),\chi_1)&=\numC\eta(\tau)\oplus\numC\eta(25\tau),\\
M_{1/2}(\Gamma_0(50),\chi_2)&=\numC\eta(2\tau)\oplus\numC\eta(50\tau),\\
M_{1/2}(\Gamma_0(50),\chi_3)&=\numC\eta(\tau)^{-1}\eta(2\tau)^2\oplus\numC\eta(25\tau)^{-1}\eta(50\tau)^{2}.
\end{align*}
Finding explicit generators of $M_{1/2}(\Gamma_0(50),\chi_4)$ seems to be a bit harder since $\eta(\tau)^{-10}\eta(2\tau)^7\eta(5\tau)^4$ is only a weakly holomorphic modular form. However, we have $7e_2+4e_5-11e_1\sim-e_2+e_1$ module $Z(50)$ which can be verified by \eqref{eq:congruencesZN}. It follows that
\begin{equation*}
M_{1/2}(\Gamma_0(50),\chi_4)=\numC\eta(\tau)^{2}\eta(2\tau)^{-1}\oplus\numC\eta(25\tau)^{2}\eta(50\tau)^{-1}.
\end{equation*}
Finally note that $M_{1/2}(\Gamma_0(50),\chi_j)=S_{1/2}(\Gamma_0(50),\chi_j)$ for $j=1,2$, while the Eisenstein subspace of $M_{1/2}(\Gamma_0(50),\chi_j)$ is nontrivial for $j=3,4$.
\end{examp}

\begin{examp}
We consider another example $N=20$ in which there are totally $384$ characters. They have been described in Example \ref{examp:Nis4palpha} with $p=5$, $\alpha=1$. We have $\mathscr{B}_{20}=\{2,4,5\}$ and $(\Delta_1,\Delta_2,\Delta_3)=(8,8,6)$. Therefore the characters are exactly those on $\Dcover{\Gamma_0(20)}{2}$ of the following eta-quotients:
\begin{equation*}
\eta(\tau)^{1-r_2-r_4-r_5}\eta(2\tau)^{r_2}\eta(4\tau)^{r_4}\eta(5\tau)^{r_5},\quad 0\leq r_2<8,\,0\leq r_4<8,\,0\leq r_5<6.
\end{equation*}
\end{examp}
Among the $384$ characters, for precisely the six characters $\chi$ given by
\begin{equation}
\label{eq:wt1/2gamma20sixChar}
(r_2,r_4,r_5)=(1,6,1),\,(3,0,5),\,(3,2,5),\,(5,6,0),\,(7,0,4),\,(7,2,4)
\end{equation}
one can compute $\dim_\numC M_{1/2}(\Gamma_0(20),\chi)$ using Theorem \ref{thm:main} and the six dimensions all equal $1$. For the other $378$ characters $\chi$ our method only gives that $0\leq\dim_\numC M_{1/2}(\Gamma_0(20),\chi)\leq2$ (see Remark \ref{rema:uboundlbound}). Let $\chi_1,\chi_2,\chi_3,\chi_4,\chi_5,\chi_6$ denote respectively the characters given by \eqref{eq:wt1/2gamma20sixChar}. One can verify using \eqref{eq:divEtaQuotient} and Lemma \ref{lemm:whenCharSame} that
\begin{align*}
M_{1/2}(\Gamma_0(20),\chi_1)&=\numC\eta(5\tau)^{-2}\eta(10\tau)^5\eta(20\tau)^{-2},\quad &M_{1/2}(\Gamma_0(20),\chi_2)&=\numC\eta(5\tau)^{2}\eta(10\tau)^{-1},\\
M_{1/2}(\Gamma_0(20),\chi_3)&=\numC\eta(10\tau)^{-1}\eta(20\tau)^2,\quad &M_{1/2}(\Gamma_0(20),\chi_4)&=\numC\eta(\tau)^{-2}\eta(2\tau)^5\eta(4\tau)^{-2},\\
M_{1/2}(\Gamma_0(20),\chi_5)&=\numC\eta(\tau)^{2}\eta(2\tau)^{-1},\quad &M_{1/2}(\Gamma_0(20),\chi_6)&=\numC\eta(2\tau)^{-1}\eta(4\tau)^{2}.
\end{align*}

\begin{rema}
There are totally $188$ nonzero spaces $M_{1/2}(\Gamma_0(N),\chi)$ whose dimensions can be computed by Theorem \ref{thm:main} according to Table \ref{table:wt1/2}. Note that we restrict ourselves to the characters $\chi$ of eta-quotients of cover index $2$, so the dimension formulas summarized in Table \ref{table:wt1/2} are not direct consequences of the well known theorem of Serre and Stark \cite{SS77} that gives an explicit basis of $M_{1/2}(\Gamma_0(N),\chi_1\chi)$ where $4 \mid N$, $\chi_1$ is the character of Euler theta function $\theta(\tau)=\eta(\tau)^{-2}\eta(2\tau)^5\eta(4\tau)^{-2}$ and $\chi$ is the character induced by any even Dirichlet character $(\numZ/N\numZ)^\times\to\numC^\times$. For instance, when $N=12$, there are two even Dirichlet characters and hence the theorem of Serre and Stark gives explicit bases of two spaces $M_{1/2}(\Gamma_0(12),\chi)$. As a comparison, there are totally $1152$ characters of eta-quotients of weight $1/2$, level $12$ and cover index $2$ among which the dimensions of $318$ spaces can be computed by Theorem \ref{thm:main}---$286$ spaces are zero-dimensional and $32$ spaces are one-dimensional.
%Nevertheless, even when $\chi$ is the character of an eta-quotient of weight $1/2$, level $N$ and cover index $2$, any function $f$ in $M_{1/2}(\Gamma_0(N),\chi)$ is still a linear combination of theta series with characters since $f(24\tau)\in M_{1/2}(\Gamma_0(24N),\chi_1)$ where $\chi_1$ is induced by a Dirichlet character modulo $24N$ and hence we can apply the result of Serre and Stark \cite{SS77} to $f(24\tau)$.
\end{rema}
\begin{rema}
Zagier observed that there are exactly $14$ primitive (which means the greatest common divisor of all the $n$ with $r_n\neq 0$ is equal to $1$) eta-quotients of integral exponents that are modular forms of weight $1/2$. This and the similar conjectures on higher weights were proved by Mersmann in his Master's thesis. See \cite{Bha17} for a simpler proof. It seems that all the generators of the $188$ nonzero spaces $M_{1/2}(\Gamma_0(N),\chi)$ whose dimensions can be computed by Theorem \ref{thm:main} (see Table \ref{table:wt1/2}) take the form $f(m\tau)$ where $f$ is one of the $14$ primitive eta-quotients and $m$ is a suitable positive integer. This is verified by the SageMath program for $N=4,20,50$. See Appendix \ref{apx:Usage of SageMath code} for how to find the code doing this.
\end{rema}

%Let us provide some examples of cover index $>2$ as well. The modular forms of this kind are essentially modular forms on noncongruence subgroups so even those of weight $1/2$ seem impossible to be written as linear combinations of theta series as described by Serre and Stark \cite{SS77} if we assume the unbounded denominator conjecture (cf. \cite{FF22}) holds.
%\begin{examp}
%First, we list all characters $\chi\colon\Dcover{\Gamma_0(4)}{6}\to\numC^\times$ of eta-quotients of weight $1/2$, level $4$ and cover index $6$.
%\end{examp}

\subsection{The weight $1$}
\label{subsec:weight1}
The dimensions obtained by the algorithm described above are summarized in Table \ref{table:wt1}. See Appendix \ref{apx:Usage of SageMath code} for the SageMath code that generates this kind of tables. Note that the characters are originally defined on $\Dcover{\Gamma_0(N)}{2}$ since we are considering eta-quotients of cover index $2$ but they actually descend to characters on the matrix groups $\Gamma_0(N)$ since the weight is $1$.
\begin{longtable}{llllllllllll}
\caption{For each $N$, $a$ means the total number of characters (see Corollary \ref{coro:AllEtaQuoChars}) and $v$ means the number of characters $\chi$ such that $\dim_\numC M_{1}(\Gamma_0(N),\chi)$ can be computed using Theorem \ref{thm:main}, $d_j$ means the number of spaces of dimension $j$ for $0\leq j \leq 8$. \label{table:wt1}}\\
\toprule
$N$ & $a$ & $v$ & $d_0$ & $d_1$ & $d_2$ & $d_3$ & $d_4$ & $d_5$ & $d_6$ & $d_7$ & $d_8$ \\
\midrule
\endfirsthead
$N$ & $a$ & $v$ & $d_0$ & $d_1$ & $d_2$ & $d_3$ & $d_4$ & $d_5$ & $d_6$ & $d_7$ & $d_8$ \\
\midrule
\endhead
$1$ & $1$ & $1$ & $0$ & $1$ & $0$ & $0$ & $0$ & $0$ & $0$ & $0$ & $0$\\
$2$ & $24$ & $24$ & $17$ & $7$ & $0$ & $0$ & $0$ & $0$ & $0$ & $0$ & $0$\\
$3$ & $12$ & $12$ & $7$ & $5$ & $0$ & $0$ & $0$ & $0$ & $0$ & $0$ & $0$\\
$4$ & $192$ & $173$ & $142$ & $31$ & $0$ & $0$ & $0$ & $0$ & $0$ & $0$ & $0$\\
$5$ & $6$ & $6$ & $3$ & $3$ & $0$ & $0$ & $0$ & $0$ & $0$ & $0$ & $0$\\
$6$ & $288$ & $250$ & $189$ & $60$ & $1$ & $0$ & $0$ & $0$ & $0$ & $0$ & $0$\\
$7$ & $4$ & $4$ & $1$ & $3$ & $0$ & $0$ & $0$ & $0$ & $0$ & $0$ & $0$\\
$8$ & $384$ & $331$ & $247$ & $83$ & $1$ & $0$ & $0$ & $0$ & $0$ & $0$ & $0$\\
$9$ & $36$ & $29$ & $17$ & $11$ & $1$ & $0$ & $0$ & $0$ & $0$ & $0$ & $0$\\
$10$ & $48$ & $46$ & $20$ & $22$ & $4$ & $0$ & $0$ & $0$ & $0$ & $0$ & $0$\\
$11$ & $12$ & $1$ & $0$ & $1$ & $0$ & $0$ & $0$ & $0$ & $0$ & $0$ & $0$\\
$12$ & $1152$ & $947$ & $617$ & $305$ & $24$ & $1$ & $0$ & $0$ & $0$ & $0$ & $0$\\
$13$ & $2$ & $2$ & $0$ & $1$ & $1$ & $0$ & $0$ & $0$ & $0$ & $0$ & $0$\\
$14$ & $96$ & $21$ & $0$ & $20$ & $1$ & $0$ & $0$ & $0$ & $0$ & $0$ & $0$\\
$15$ & $72$ & $18$ & $0$ & $16$ & $2$ & $0$ & $0$ & $0$ & $0$ & $0$ & $0$\\
$16$ & $384$ & $293$ & $166$ & $108$ & $18$ & $1$ & $0$ & $0$ & $0$ & $0$ & $0$\\
$17$ & $6$ & $4$ & $0$ & $4$ & $0$ & $0$ & $0$ & $0$ & $0$ & $0$ & $0$\\
$18$ & $288$ & $217$ & $113$ & $75$ & $25$ & $3$ & $1$ & $0$ & $0$ & $0$ & $0$\\
$19$ & $4$ & $3$ & $0$ & $3$ & $0$ & $0$ & $0$ & $0$ & $0$ & $0$ & $0$\\
$20$ & $384$ & $96$ & $0$ & $72$ & $24$ & $0$ & $0$ & $0$ & $0$ & $0$ & $0$\\
$21$ & $24$ & $14$ & $0$ & $8$ & $6$ & $0$ & $0$ & $0$ & $0$ & $0$ & $0$\\
$22$ & $96$ & $1$ & $0$ & $0$ & $1$ & $0$ & $0$ & $0$ & $0$ & $0$ & $0$\\
$24$ & $2304$ & $658$ & $0$ & $480$ & $176$ & $0$ & $2$ & $0$ & $0$ & $0$ & $0$\\
$25$ & $6$ & $4$ & $1$ & $1$ & $1$ & $1$ & $0$ & $0$ & $0$ & $0$ & $0$\\
$26$ & $48$ & $4$ & $0$ & $0$ & $4$ & $0$ & $0$ & $0$ & $0$ & $0$ & $0$\\
$27$ & $36$ & $15$ & $0$ & $9$ & $5$ & $1$ & $0$ & $0$ & $0$ & $0$ & $0$\\
$28$ & $384$ & $11$ & $0$ & $0$ & $10$ & $1$ & $0$ & $0$ & $0$ & $0$ & $0$\\
$30$ & $576$ & $2$ & $0$ & $0$ & $0$ & $0$ & $2$ & $0$ & $0$ & $0$ & $0$\\
$32$ & $384$ & $146$ & $0$ & $100$ & $36$ & $8$ & $2$ & $0$ & $0$ & $0$ & $0$\\
$36$ & $1152$ & $412$ & $0$ & $198$ & $168$ & $30$ & $15$ & $0$ & $1$ & $0$ & $0$\\
$37$ & $2$ & $1$ & $0$ & $0$ & $1$ & $0$ & $0$ & $0$ & $0$ & $0$ & $0$\\
$40$ & $768$ & $2$ & $0$ & $0$ & $0$ & $0$ & $2$ & $0$ & $0$ & $0$ & $0$\\
$45$ & $72$ & $2$ & $0$ & $0$ & $0$ & $0$ & $2$ & $0$ & $0$ & $0$ & $0$\\
$48$ & $2304$ & $58$ & $0$ & $0$ & $0$ & $16$ & $40$ & $0$ & $2$ & $0$ & $0$\\
$49$ & $4$ & $2$ & $0$ & $0$ & $1$ & $0$ & $1$ & $0$ & $0$ & $0$ & $0$\\
$50$ & $48$ & $26$ & $0$ & $0$ & $19$ & $0$ & $0$ & $7$ & $0$ & $0$ & $0$\\
$54$ & $288$ & $1$ & $0$ & $0$ & $0$ & $0$ & $0$ & $0$ & $1$ & $0$ & $0$\\
$64$ & $384$ & $18$ & $0$ & $0$ & $0$ & $0$ & $16$ & $0$ & $2$ & $0$ & $0$\\
$72$ & $2304$ & $2$ & $0$ & $0$ & $0$ & $0$ & $0$ & $0$ & $0$ & $0$ & $2$\\
$75$ & $72$ & $2$ & $0$ & $0$ & $0$ & $0$ & $0$ & $0$ & $2$ & $0$ & $0$\\
$81$ & $36$ & $1$ & $0$ & $0$ & $0$ & $0$ & $0$ & $0$ & $1$ & $0$ & $0$\\
$98$ & $96$ & $1$ & $0$ & $0$ & $0$ & $0$ & $0$ & $0$ & $0$ & $0$ & $1$\\
$121$ & $12$ & $1$ & $0$ & $0$ & $0$ & $0$ & $0$ & $0$ & $1$ & $0$ & $0$\\
\bottomrule
\end{longtable}

We focus on the level $98$ in the rest of this subsection. According to Table \ref{table:wt1}, there are totally $96$ characters $\chi$ of eta-quotients of weight $1$, level $98$ and cover index $2$. They correspond to $96$ independent spaces $M_1(\Gamma_0(98),\chi)$ among which the dimension of exactly one space can be computed by Theorem \ref{thm:main}, namely, the space where $\chi$ is the character of $\eta(\tau)^{-15}\eta(2\tau)^{16}\eta(7\tau)$. For this $\chi$ we have $\dim_\numC M_1(\Gamma_0(98),\chi)=8$. One can show that $\chi\tbtmat{a}{b}{c}{d}=\legendre{-7}{d}$ by, for instance, a result of Gordon, Hughes and Newman (cf. \cite[Theorem 1.64]{Ono04}) or by checking directly for $\tbtmat{a}{b}{c}{d}$ being in a set of generators of $\Gamma_0(98)$. Thus, an explicit basis can be given using Eisenstein series constructed by Weisinger \cite{Wei77}. See also \cite[Theorem 8.5.23]{CS17}. This shows there is no cusp form in $M_1(\Gamma_0(98),\chi)$, which can also be proved using the tools developed in \cite{DS74}.

One may ask: is the space $M_1(\Gamma_0(98),\chi)$ generated by holomorphic eta-quotients? The answer is no. According to \eqref{eq:divEtaQuotient}, the functions
\begin{align*}
\eta(\tau)^{-1}\eta(2\tau)^2\eta(7\tau)^{-1}\eta(14\tau)^{2}&=q+q^2+q^4+q^7+q^8+q^9+2q^{11}+q^{14}+\dots\\
\eta(\tau)^{2}\eta(2\tau)^{-1}\eta(7\tau)^{2}\eta(14\tau)^{-1}&=1-2q+2q^4-2q^7+4q^8-2q^9-4q^{11}+6q^{16}+\dots\\
\eta(7\tau)^{-1}\eta(14\tau)^2\eta(49\tau)^{-1}\eta(98\tau)^{2}&=q^7+q^{14}+q^{28}+q^{49}+q^{56}+q^{63}+2q^{77}+q^{98}+\dots\\
\eta(7\tau)^{2}\eta(14\tau)^{-1}\eta(49\tau)^{2}\eta(98\tau)^{-1}&=1-2q^7+2q^{28}-2q^{49}+4q^{56}-2q^{63}-4q^{77}+6q^{112}+\dots
\end{align*}
are holomorphic eta-quotients of level $98$. Using Lemma \ref{lemm:whenCharSame} the characters of these four modular forms are equal to $\chi$ and hence they belong to $M_1(\Gamma_0(98),\chi)$. By their $q$-coefficients these four functions are linearly independent. However there are no other holomorphic eta-quotients of cover index $2$ in $M_1(\Gamma_0(98),\chi)$ so this space can not be generated by a set consisting of all eta-quotients. This can be seen as follows. Let $f(\tau)=\prod_{n\mid98}\eta(n\tau)^{r_n}$ be in $M_1(\Gamma_0(98),\chi)$ with $r_n\in\numZ$. Then $x_c\in\numgeq{Z}{0}$ for any $c\mid 98$ where $x_c$ is given by \eqref{eq:rntoxc}. According to Theorem \ref{thm:valence} (applied to $f$), \eqref{eq:divEtaQuotient} and the complete set of representatives of $\Gamma_0(98)\backslash\projQ$ described in the proof of Proposition \ref{prop:etaEisenstein} we have $x_1+x_2+6x_7+6x_{14}+x_{49}+x_{98}=336$ of which there are only finitely many solutions $(x_c)_{c\mid 98}$. For each solution, we obtain $(r_n)_{n\mid98}$ via \eqref{eq:xctorn} and then check whether $r_n\in\numZ$ and the character corresponding to $(r_n)_{n\mid98}$ is the same as $\chi$ using Lemma \ref{lemm:whenCharSame}. In this way we find that there are exactly four holomorphic eta-quotients of cover index $2$ in $M_1(\Gamma_0(98),\chi)$ (totally $69$ holomorphic eta-quotients, $57$ of which are primitive).% Let $\mathscr{A}$ denote the subspace generated by these four eta-quotients hereafter.

\subsection{The weight $3/2$}
\label{subsec:wt3/2}
There are totally $17862$ spaces $M_{3/2}(\Gamma_0(N),\chi)$ whose dimensions can be calculated using Theorem \ref{thm:main}. Among these spaces, the largest level is $N=400$ and the largest dimension is $48$. See Appendix \ref{apx:Usage of SageMath code} for the usage of SageMath code that gives these data. The following observation will be used later: the dimensions of these $17862$ spaces are all greater than $1$ when $N>36$.

\section{Application: one-dimensional spaces and Eisenstein series of rational weights}
\label{sec:one-dimensional spaces and Eisenstein series of rational weights}
In \cite[Section 7]{Zhu23}, the author gives infinitely many identities whose left-hand sides are Eisenstein series and right-hand sides are eta-quotients of rational weights for the levels $2$ and $3$. Using Theorem \ref{thm:main} (with $t=0$), we find new identities of this kind for the level $4$ in this section.

First let us rewrite Theorem \ref{thm:main} more explicitly for $N=4$. We have $m=6$, $\varepsilon_2=\varepsilon_3=0$. Let $r_1$, $r_2$, $r_4$ be rational numbers and set $k=\frac{1}{2}(r_1+r_2+r_4)$. Let
\begin{equation}
\label{eq:x1x2x4r1r2r4}
\begin{pmatrix}
x_1 \\ x_2 \\ x_4
\end{pmatrix}=
\begin{pmatrix}
4 & 2 & 1 \\
1 & 2 & 1\\
1 & 2 & 4
\end{pmatrix}\cdot
\begin{pmatrix}
r_1 \\ r_2 \\ r_4
\end{pmatrix}.
\end{equation}
Our aim is to find an Eisenstein series $E(\tau)$ of weight $k$ equal to $f(\tau)=\eta(\tau)^{r_1}\eta(2\tau)^{r_2}\eta(4\tau)^{r_4}$. We only consider the Eisenstein series defined at the cusp $\rmi\infty$, so assume $x_4=0$ which means $f$ does not vanish at the cusp $\rmi\infty$. Since $f$ must be a holomorphic eta-quotient, we assume $x_1,x_2\geq0$. Therefore,
\begin{equation*}
x_1=-6r_2-15r_4\geq0,\quad x_2=-3r_4\geq0,\quad\text{hence }k=\frac{1}{2}(-r_2-3r_4)\geq0.
\end{equation*}
Now the first inequality in \eqref{eq:mainThmCondtion2} becomes $k>-4+2\left(\left\{\frac{x_1}{24}\right\}+\left\{\frac{x_2}{24}\right\}\right)$, which holds since $k\geq0$. Consequently, \eqref{eq:dimFormula} with $t=0$ is applicable. We summarize:
\begin{prop}
\label{prop:level4x4zero}
Let $r_2$ and $r_4$ be rational numbers satisfying $-2r_2-5r_4\geq0$ and $-r_4\geq0$. Let $D\in\numgeq{Z}{1}$ such that $Dr_2,\,Dr_4\in2\numZ$. Then we have
\begin{equation*}
\dim_\numC M_k(\Gamma_0(4), \chi)=\left[\frac{-2r_2-5r_4}{8}\right]+\left[\frac{-r_4}{8}\right]+1,
\end{equation*}
where $\chi\colon\Dcover{\Gamma_0(4)}{D}\to\numC^\times$ is the character of $\eta(\tau)^{-2r_2-4r_4}\eta(2\tau)^{r_2}\eta(4\tau)^{r_4}$ and $k=\frac{1}{2}(-r_2-3r_4)$.
\end{prop}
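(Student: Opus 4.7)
The plan is to specialize Theorem \ref{thm:main} to $N=4$ with $t=0$, read off the quantities on both sides, and verify that the hypothesis \eqref{eq:mainThmCondtion2} is automatic under the stated holomorphicity conditions.

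First I would record the inputs. For $N=4$, formulas \eqref{eq:mGamma0N}--\eqref{eq:einftyGamma0N} give $m=6$, $\varepsilon_2=\varepsilon_3=0$, and $\phi(c,4/c)=1$ for all three divisors $c\in\{1,2,4\}$. Plugging $n,c\in\{1,2,4\}$ into \eqref{eq:rntoxc} reproduces the matrix in \eqref{eq:x1x2x4r1r2r4}, so the assumption $x_4=0$ translates via the third row to $r_1+2r_2+4r_4=0$, i.e. $r_1=-2r_2-4r_4$, which is exactly the exponent of $\eta(\tau)$ in the eta-quotient $f(\tau)=\eta(\tau)^{-2r_2-4r_4}\eta(2\tau)^{r_2}\eta(4\tau)^{r_4}$ identified in the statement. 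Consequently the weight is $k=\frac{1}{2}(r_1+r_2+r_4)=\frac{1}{2}(-r_2-3r_4)$, and the remaining rows give $x_1=-6r_2-15r_4$ and $x_2=-3r_4$. The hypotheses $-2r_2-5r_4\geq 0$ and $-r_4\geq 0$ are therefore equivalent to $x_1,x_2\geq 0$, so $f$ is a holomorphic eta-quotient.

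Next I would check that \eqref{eq:mainThmCondtion2} holds, so that Theorem \ref{thm:main} applies. With $m=6$, $\varepsilon_2=\varepsilon_3=0$, $\phi(c,N/c)=1$, and $\{x_4/24\}=0$, the right-hand side of \eqref{eq:mainThmCondtion2} simplifies to
\[
-4+2\left\{\tfrac{x_1}{24}\right\}+2\left\{\tfrac{x_2}{24}\right\}<0.
\]
From the inequalities above, $k=\frac{1}{2}(-r_2-3r_4)\geq 0$, so the required bound $k>-4+2\{x_1/24\}+2\{x_2/24\}$ is automatic. Thus Theorem \ref{thm:main} with $t=0$ gives
\[
\dim_\numC M_k(\Gamma_0(4),\chi)=\tfrac{k-1}{2}+\tfrac{3}{2}-\left\{\tfrac{x_1}{24}\right\}-\left\{\tfrac{x_2}{24}\right\}=\tfrac{k}{2}+1-\left\{\tfrac{x_1}{24}\right\}-\left\{\tfrac{x_2}{24}\right\}.
\]

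Finally I would simplify using the identity $y-\{y\}=[y]$. Since $x_1/24=(-2r_2-5r_4)/8$ and $x_2/24=-r_4/8$, their sum equals $(-2r_2-6r_4)/8=k/2$. Writing $-\{x_1/24\}-\{x_2/24\}=[x_1/24]+[x_2/24]-x_1/24-x_2/24=[(-2r_2-5r_4)/8]+[-r_4/8]-k/2$ and substituting into the dimension formula yields the claimed expression
\[
\dim_\numC M_k(\Gamma_0(4),\chi)=\left[\tfrac{-2r_2-5r_4}{8}\right]+\left[\tfrac{-r_4}{8}\right]+1,
\]
completing the proof. No real obstacle is expected beyond bookkeeping; the only thing to watch is that the identification $r_1=-2r_2-4r_4$ comes precisely from the condition $x_4=0$ that was imposed to make $f$ nonvanishing at the cusp $\rmi\infty$.
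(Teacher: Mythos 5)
Your proposal is correct and follows exactly the paper's route: specialize Theorem \ref{thm:main} with $N=4$, $t=0$, note that $m=6$, $\varepsilon_2=\varepsilon_3=0$, $x_4=0$ forces $r_1=-2r_2-4r_4$, verify that \eqref{eq:mainThmCondtion2} reduces to $k>-4+2\{x_1/24\}+2\{x_2/24\}$ which holds since $k\geq0$, and then simplify \eqref{eq:dimFormula} using $x_1/24+x_2/24=k/2$. The arithmetic checks out, so nothing further is needed.
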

\begin{proof}
We have shown that \eqref{eq:dimFormula} with $t=0$ holds. A simplification gives the desired formula.
\end{proof}
\begin{coro}
\label{coro:level4x4zeroDim0}
Let the notation and assumptions be as in Proposition \ref{prop:level4x4zero}. Then $\dim_\numC M_k(\Gamma_0(4), \chi)=1$ if and only if
\begin{equation*}
0\leq-2r_2-5r_4<8\quad\text{and}\quad0\leq-r_4<8.
\end{equation*}
Moreover, suppose $\dim_\numC M_k(\Gamma_0(4), \chi)=1$; then $k>2$ if and only if $(-2r_2-5r_4)+(-r_4)>8$.
\end{coro}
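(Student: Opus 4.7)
The plan is to deduce this corollary directly from the dimension formula $\dim_\numC M_k(\Gamma_0(4),\chi) = \left[\frac{-2r_2-5r_4}{8}\right]+\left[\frac{-r_4}{8}\right]+1$ provided by Proposition \ref{prop:level4x4zero}, together with two tiny arithmetic observations.

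For the first part, I would start by noting that the hypotheses $-2r_2-5r_4 \geq 0$ and $-r_4 \geq 0$ force both $\left[\frac{-2r_2-5r_4}{8}\right]$ and $\left[\frac{-r_4}{8}\right]$ to be nonnegative integers. Since Proposition \ref{prop:level4x4zero} expresses $\dim_\numC M_k(\Gamma_0(4),\chi)$ as $1$ plus the sum of these two nonnegative floors, the dimension equals $1$ if and only if both floors vanish. But for nonnegative reals $y$ the condition $[y/8] = 0$ is exactly $0 \leq y < 8$. Applying this to $y = -2r_2-5r_4$ and $y = -r_4$ gives the stated pair of inequalities.

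For the second part, I would simply compute
\begin{equation*}
(-2r_2 - 5r_4) + (-r_4) = -2r_2 - 6r_4 = 2(-r_2 - 3r_4) = 4k,
\end{equation*}
using the definition $k = \frac{1}{2}(-r_2-3r_4)$ from Proposition \ref{prop:level4x4zero}. Therefore $k > 2$ is equivalent to $4k > 8$, i.e.\ to $(-2r_2-5r_4)+(-r_4) > 8$, as claimed.

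There is no real obstacle here; the result is a direct unpacking of the dimension formula combined with a one-line linear identity between $k$ and the two cusp orders. The only thing worth double-checking carefully is the non-negativity of the two floor terms (which is where the hypotheses of Proposition \ref{prop:level4x4zero} are used), since without it one could not conclude that a vanishing sum of floors forces each summand to vanish.
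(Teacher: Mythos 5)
Your proof is correct and matches the paper's intent exactly: the paper states this corollary without proof as an immediate unpacking of Proposition \ref{prop:level4x4zero}, and your two observations (nonnegativity of the floors forcing each to vanish, and the identity $(-2r_2-5r_4)+(-r_4)=4k$) are precisely the right justification. Note that your computation shows the second equivalence holds without even needing the hypothesis $\dim_\numC M_k(\Gamma_0(4),\chi)=1$, which is a harmless strengthening.
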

Note that when $\dim_\numC M_k(\Gamma_0(4), \chi)=1$, then $M_k(\Gamma_0(4), \chi)=\numC\eta(\tau)^{-2r_2-4r_4}\eta(2\tau)^{r_2}\eta(4\tau)^{r_4}$. Hence if $M_k(\Gamma_0(4), \chi)$ contains an Eisenstein series, then an identity that relates the eta-quotient and the Eisenstein series follows. In the rest we only consider the case $k>2$ in which nonholomorphic Eisenstein series are avoided. Similar identities can also be derived in the cases $k<2$, which we will do in the future work.

Recall the definition of Eisenstein series of rational weights greater than $2$:
\begin{deff}[Definition 3.1, \cite{Zhu23}]
\label{deff:Eis}
Let $D,N$ be positive integers, and let $k\in\frac{1}{D}\numZ$ that is greater than $2$. Let $\chi\colon \Dcover{\Gamma_0(N)}{D} \to \numC^\times$ be a finite index character such that $\chi(\widetilde{-I})=\etp{-k/2}$. Let $s \in \projQ$ and $\gamma_s \in \slZ$ such that $\gamma_s(\rmi\infty)=s$. If $\chi(\widetilde{\gamma_s}\widetilde{T}^{w_{s}}\widetilde{\gamma_s}^{-1})=1$, then we define the Eisenstein series $E_{\gamma_s,k}$ on the group $\Dcover{\Gamma_0(N)}{D}$, of weight $k$, with character $\chi$ and at cusp $\gamma_s$ as
\begin{equation*}
E_{\gamma_s,k}(\tau)=\sum_{\gamma \in \widetilde{\gamma_s}\langle \widetilde{T}^{w_{s}}\rangle\widetilde{\gamma_s}^{-1}\backslash\Dcover{\Gamma_0(N)}{D}}\chi(\gamma)^{-1}\cdot 1\vert_k\widetilde{\gamma_s}^{-1}\gamma.
\end{equation*}
\end{deff}
In the above definition, $w_{s}$ is the width (see Section \ref{sec:Divisors of modular forms}). If $s=a/c$ where $a,c$ are coprime integers, then $w_s=\frac{N}{(N,c^2)}$. The basic properties of these $E_{\gamma_s,k}$ are collected in \cite[Section 3]{Zhu23}.

Now let $N=4$ and let $\chi$ be as in Proposition \ref{prop:level4x4zero}. Since $\chi(\widetilde{T})=1$ the Eisenstein series $E_{I,k}(\tau)$ is well-defined provided that $k>2$, that is, $(-2r_2-5r_4)+(-r_4)>8$. The main result of this section is the following:
\begin{thm}
\label{thm:etaEisIdentity}
Let $r_2$ and $r_4$ be rational numbers satisfying
\begin{equation}
\label{eq:r2r4inequality}
0\leq-2r_2-5r_4<8,\quad0\leq-r_4<8,\quad(-2r_2-5r_4)+(-r_4)>8.
\end{equation}
Let $D$ be a positive integer such that $Dr_2,\,Dr_4\in2\numZ$. Let $\chi\colon\Dcover{\Gamma_0(4)}{D}\to\numC^\times$ be the character of $\eta(\tau)^{-2r_2-4r_4}\eta(2\tau)^{r_2}\eta(4\tau)^{r_4}$. Set $k=\frac{1}{2}(-r_2-3r_4)$ and
\begin{equation*}
E_{I,k}(\tau)=E_{I,k}(\tau;r_2,r_4)=\sum_{\gamma \in \langle \widetilde{T}\rangle\backslash\Dcover{\Gamma_0(4)}{D}}\chi(\gamma)^{-1}\cdot 1\vert_k\gamma.
\end{equation*}
Then we have
\begin{equation}
\label{eq:etaEis}
\eta(\tau)^{-2r_2-4r_4}\eta(2\tau)^{r_2}\eta(4\tau)^{r_4}=\frac{1}{2D}E_{I,k}(\tau;r_2,r_4).
\end{equation}
\end{thm}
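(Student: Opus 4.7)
The plan is to apply Corollary \ref{coro:level4x4zeroDim0} to conclude that $M_k(\Gamma_0(4), \chi)$ is one-dimensional, observe that both $f(\tau) := \eta(\tau)^{-2r_2-4r_4}\eta(2\tau)^{r_2}\eta(4\tau)^{r_4}$ and $E_{I,k}(\tau)$ belong to this space, and pin down the constant of proportionality by comparing constant terms of the $q$-expansions at $\rmi\infty$.

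First I would verify membership. Hypothesis \eqref{eq:r2r4inequality} together with Corollary \ref{coro:level4x4zeroDim0} give $\dim_\numC M_k(\Gamma_0(4), \chi) = 1$ and $k > 2$. The eta-quotient $f$ is holomorphic on $\uhp$ (as $\eta$ has no zeros there), and since $x_1 = -6r_2-15r_4 \geq 0$, $x_2 = -3r_4 \geq 0$, $x_4 = 0$, formula \eqref{eq:divEtaQuotient} shows $f$ is holomorphic at every cusp, so $f \in M_k(\Gamma_0(4), \chi)$. For $E_{I,k}$ to be defined per Definition \ref{deff:Eis} we need $\chi(\widetilde T) = 1$; by \eqref{eq:charEtaQuotient} and the identity $\Psi\tbtmat{1}{n}{0}{1} = n$, this reduces to $r_1 + 2r_2 + 4r_4 \equiv 0 \pmod{24}$, which holds with equality since $r_1 = -2r_2 - 4r_4$ (i.e.\ $x_4 = 0$). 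The condition $k > 2$ ensures absolute convergence of the defining series, and standard arguments (cf.\ \cite[Section 3]{Zhu23}) put $E_{I,k}$ in $M_k(\Gamma_0(4), \chi)$.

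Consequently $E_{I,k} = C \cdot f$ for some $C \in \numC$, and it remains to determine $C$ by comparing constant Fourier coefficients at $\rmi\infty$. Since the leading $q$-power of $f$ is $q^{(r_1 + 2r_2 + 4r_4)/24} = q^0$, one has $f(\tau) = 1 + O(q)$, so the constant term of $f$ is $1$. For $E_{I,k}$, the term indexed by $\gamma = (\tbtmat{a}{b}{c}{d}, \phi)$ contributes $\chi(\gamma)^{-1} \phi(\tau)^{-Dk}$; if $c \neq 0$ then $\phi(\tau) \sim (c\tau)^{1/D}$ as $\tau \to \rmi\infty$, so this contribution vanishes in the limit, whereas if $c = 0$ it tends to a nonzero constant. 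Absolute convergence ($k > 2$) lets us exchange limit and sum, so the constant term of $E_{I,k}$ equals the sum of $\chi(\gamma)^{-1}\cdot 1\vert_k\gamma$ over the $c = 0$ cosets in $\langle \widetilde T\rangle \backslash \Dcover{\Gamma_0(4)}{D}$.

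Enumerating these, every coset representative with $c = 0$ in $\Gamma_0(4)$ is of the form $\pm T^n$, and in the $D$-cover the function factor is $\varepsilon$ for matrix $T^n$ or $\varepsilon\etp{1/(2D)}$ for matrix $-T^n$, where $\varepsilon$ ranges over the $D$-th roots of unity. Left multiplication by $\langle\widetilde T\rangle$ fixes the scalar factor, so we obtain exactly $2D$ distinct cosets, with representatives $(I,\varepsilon)$ and $(-I,\varepsilon\etp{1/(2D)})$. Using Remark \ref{rema:mulSysProp} one computes $\chi(I,\varepsilon) = \varepsilon^{-Dk}$ and $\chi(-I,\varepsilon\etp{1/(2D)}) = \varepsilon^{-Dk}\etp{-k/2}$, while the corresponding $1\vert_k\gamma(\tau)$ evaluates to the same quantities (noting $(-I)\tau = \tau$). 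Hence each coset contributes $\chi(\gamma)^{-1}\cdot 1\vert_k\gamma = 1$ and the total constant term of $E_{I,k}$ is $2D$, giving $C = 2D$ and \eqref{eq:etaEis}. The main delicate point, in my view, is tracking the cocycle on the $D$-cover while enumerating the $c = 0$ cosets and evaluating $\chi$ on $(-I,\varepsilon\etp{1/(2D)})$; everything else is either a direct application of previously established results or routine Eisenstein-series bookkeeping.
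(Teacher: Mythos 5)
Your proposal is correct and follows essentially the same route as the paper: apply Corollary \ref{coro:level4x4zeroDim0} to get $\dim_\numC M_k(\Gamma_0(4),\chi)=1$, check via the divisor data $x_1,x_2\geq0$, $x_4=0$ that the eta-quotient lies in this space alongside $E_{I,k}$, and fix the proportionality constant from the value at $\rmi\infty$. The only difference is that where the paper simply cites \cite[Theorem 3.3]{Zhu23} for $E_{I,k}\in M_k(\Gamma_0(4),\chi)$ and for its limit $2D$ at $\rmi\infty$, you carry out the enumeration of the $2D$ cosets with $c=0$ and the cocycle bookkeeping explicitly — and that computation is correct.
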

\begin{proof}
Corollary \ref{coro:level4x4zeroDim0} shows that $\dim_\numC M_k(\Gamma_0(4), \chi)=1$. Let $x_1$, $x_2$ and $x_4$ be as in \eqref{eq:x1x2x4r1r2r4} with $r_1=-2r_2-4r_4$. Then the assumptions imply that $x_1\geq0$, $x_2\geq0$ and $x_4=0$. Thus $\eta(\tau)^{-2r_2-4r_4}\eta(2\tau)^{r_2}\eta(4\tau)^{r_4}$ is holomorphic at all cusps and hence is in $M_k(\Gamma_0(4), \chi)$. By \cite[Theorem 3.3]{Zhu23} we have $E_{I,k}\in M_k(\Gamma_0(4), \chi)$. It follows that $\eta(\tau)^{-2r_2-4r_4}\eta(2\tau)^{r_2}\eta(4\tau)^{r_4}=c\cdot E_{I,k}$. Let $\Im\tau\to+\infty$; we find that $c=\frac{1}{2D}$ again by \cite[Theorem 3.3]{Zhu23}.
\end{proof}

In the rest of this section we will write out the Fourier expansion of $\frac{1}{2D}E_{I,k}(\tau)$ which will give us a more explicit form of the identity \eqref{eq:etaEis}.
\begin{lemm}
For any rational numbers $r_2,r_4$ with $k=\frac{1}{2}(-r_2-3r_4)>2$ we have
\begin{equation*}
\frac{1}{2D}E_{I,k}(\tau;r_2,r_4)=1+\sum_{\twoscript{4\mid c>0,d\in\numZ}{(c,d)=1}}\etp{-\frac{1}{24}P(c,d;r_2,r_4)}\cdot(c\tau+d)^{-k}
\end{equation*}
where
\begin{align}
P(c,d;r_2,r_4)&=r_2\cdot\left(\Psi\tbtMat{a}{2b}{c/2}{d}-2\Psi\tbtMat{a}{b}{c}{d}\right)+r_4\cdot\left(\Psi\tbtMat{a}{4b}{c/4}{d}-4\Psi\tbtMat{a}{b}{c}{d}\right)\notag\\
&=r_2\cdot\left(12s(-d,c/2)-24s(-d,c)+3\right)+r_4\cdot\left(12s(-d,c/4)-48s(-d,c)+9\right)\label{eq:Pcdr2r4}
\end{align}
with $a,b$ being any integers such that $ad-bc=1$.
\end{lemm}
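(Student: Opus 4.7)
The plan is to unfold the Eisenstein series directly, sum out the cover parameter $\varepsilon$, and then pair the contributions from $(c,d)$ and $(-c,-d)$.

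First I would parameterise the coset space $\langle \widetilde{T}\rangle\backslash\Dcover{\Gamma_0(4)}{D}$. Left multiplication by $\widetilde{T}^n=(T^n,1)$ carries $(\tbtmat{a}{b}{c}{d},\varepsilon)$ to $(\tbtmat{a+nc}{b+nd}{c}{d},\varepsilon)$ since the cocycle $(c_1'\cdot\gamma_2\tau+d_1')^{1/D}(c_2'\tau+d_2')^{1/D}$ with $c_1'=0,d_1'=1$ collapses to $(c\tau+d)^{1/D}$. Hence the bottom row $(c,d)$ and the root-of-unity $\varepsilon$ determine the coset, and the cosets are in bijection with pairs $((c,d),\varepsilon)$, where $c\in\numZ$, $4\mid c$, $(c,d)=1$, $\varepsilon\in\mu_D$. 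For each such $(c,d)$, choose any $a,b\in\numZ$ with $ad-bc=1$ and use $\gamma=(\tbtmat{a}{b}{c}{d},\varepsilon)$ as the representative.

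Next I would compute the summand. By definition of the slash operator,
$1\vert_k\gamma(\tau)=\varepsilon^{-Dk}(c\tau+d)^{-k}$; by \eqref{eq:charEtaQuotient} applied to $r_1=-2r_2-4r_4$ and the abbreviations $\Psi,\Psi_2,\Psi_4$,
$$\chi(\gamma)^{-1}=\varepsilon^{Dk}\etp{-\tfrac{1}{24}\bigl[(-2r_2-4r_4)\Psi+r_2\Psi_2+r_4\Psi_4\bigr]}=\varepsilon^{Dk}\etp{-\tfrac{1}{24}P(c,d;r_2,r_4)}.$$
Thus $\varepsilon$ cancels and, after summing $\varepsilon\in\mu_D$,
$$E_{I,k}(\tau)=D\sum_{4\mid c,\,(c,d)=1}\etp{-\tfrac{1}{24}P(c,d;r_2,r_4)}(c\tau+d)^{-k}.$$

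Now I would treat the $c=0$ and the pairs $(c,d),(-c,-d)$ for $c\neq 0$. For $c=0$, only $d=\pm1$ occur; direct substitution (using $\Psi(T^b)=b$ for the $d=1$ case) gives $P(0,1)=0$, while $\Psi(-T^b)=-b-6$ together with $-r_2-3r_4=2k$ gives $P(0,-1)=-12k$, and our branch convention forces $(-1)^{-k}=\etp{-k/2}$, so the $d=-1$ term also equals $1$. Summed and divided by $2D$, these yield the constant $1$. For $c\neq 0$, the key identity is $\Psi\tbtmat{-a}{-b}{-c}{-d}=\Psi\tbtmat{a}{b}{c}{d}+6$ (visible from the $c>0$ versus $c<0$ cases in \eqref{eq:Psi}), which gives $P(-c,-d;r_2,r_4)=P(c,d;r_2,r_4)+6(-r_2-3r_4)=P(c,d;r_2,r_4)+12k$. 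Combined with $(-c\tau-d)^{-k}=\etp{k/2}(c\tau+d)^{-k}$ (our branch with $\Im(c\tau+d)>0$ makes $\log(-(c\tau+d))=\log(c\tau+d)-i\pi$), the factors $\etp{-k/2}$ and $\etp{k/2}$ cancel, so the $(c,d)$ and $(-c,-d)$ terms are equal. Hence the sum over $4\mid c\neq 0$ equals twice the sum over $4\mid c>0$, and the factor $\tfrac{1}{2D}$ produces exactly the asserted expansion.

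Finally I would record the Dedekind sum form of $P$: substituting the $c>0$ case of \eqref{eq:Psi} and cancelling the $(a+d)/c$ terms yields $\Psi_2-2\Psi=12s(-d,c/2)-24s(-d,c)+3$ and $\Psi_4-4\Psi=12s(-d,c/4)-48s(-d,c)+9$, giving the second line of \eqref{eq:Pcdr2r4}. The main obstacle is the branch bookkeeping in the third step, because matching the $\tfrac{1}{2}$-multiplicity between the $(c,d)$ and $(-c,-d)$ cosets forces one to track the phase of $(c\tau+d)^{-k}$ against the shift in $\Psi$ under negation; once this single identity is in hand, everything else is routine.
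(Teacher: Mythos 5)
Your proof is correct and follows essentially the same route as the paper: unfold the Eisenstein series into a sum over bottom rows $(c,d)$ with the $\varepsilon$-sum contributing the factor $D$, pair $(c,d)$ with $(-c,-d)$, and substitute \eqref{eq:charEtaQuotient} and \eqref{eq:Psi}. The only differences are presentational — you derive the unfolding directly instead of citing \cite[Lemma 3.2]{Zhu23}, and you verify the pairing via the explicit identity $\Psi(-M)=\Psi(M)+6$ rather than via $\chi(\widetilde{-I})=\etp{-k/2}$ from Remark \ref{rema:mulSysProp}, which amounts to the same computation.
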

\begin{proof}
According to \cite[Lemma 3.2]{Zhu23},
\begin{equation*}
E_{I,k}(\tau;r_2,r_4)=D\cdot\sum_{\twoscript{4\mid c\in\numZ,d\in\numZ}{(c,d)=1}}\chi\widetilde{\tbtmat{a}{b}{c}{d}}^{-1}\cdot(c\tau+d)^{-k}
\end{equation*}
where $a,b$ are any integers such that $ad-bc=1$. By Remark \ref{rema:mulSysProp}, $\chi\left(\widetilde{-I}\right)=\etp{-k/2}$. It follows that
\begin{equation*}
\chi\widetilde{\tbtmat{a}{b}{c}{d}}^{-1}\cdot(c\tau+d)^{-k}=\chi\widetilde{\tbtmat{-a}{-b}{-c}{-d}}^{-1}\cdot(-c\tau-d)^{-k}.
\end{equation*}
Thus
\begin{equation*}
\frac{1}{2D}E_{I,k}(\tau;r_2,r_4)=1+\sum_{\twoscript{4\mid c>0,d\in\numZ}{(c,d)=1}}\chi\widetilde{\tbtmat{a}{b}{c}{d}}^{-1}\cdot(c\tau+d)^{-k}.
\end{equation*}
Substituting \eqref{eq:charEtaQuotient} and \eqref{eq:Psi} in the above expression gives the desired formula.
\end{proof}
\begin{prop}
\label{prop:EIkr2r4Fourier}
With the notation being as above, we have
\begin{equation*}
\frac{1}{2D}E_{I,k}(\tau;r_2,r_4)=1+\rme^{-\uppi\rmi k/2}\frac{(2\uppi)^k}{\Gamma(k)}\sum_{n \in \numgeq{Z}{1}}n^{k-1}a(n)\cdot q^n,
\end{equation*}
where
\begin{equation*}
a(n)=\sum_{c\in\numgeq{Z}{1}}\frac{1}{(4c)^k}\sum_{\twoscript{0\leq d<4c}{(d,4c)=1}}\etp{\frac{dn}{4c}-\frac{1}{24}P(4c,d;r_2,r_4)}.
\end{equation*}
\end{prop}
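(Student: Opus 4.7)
The plan is to start from the expression for $\frac{1}{2D}E_{I,k}(\tau;r_2,r_4)$ furnished by the preceding lemma and convert it to a $q$-expansion via Lipschitz summation. Since $k>2$ is assumed (this is implicit because $E_{I,k}$ is only defined in Definition \ref{deff:Eis} for $k>2$), the double sum over $(c,d)$ is absolutely convergent, so I may freely rearrange.

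First, I would reindex by substituting $c\mapsto 4c$ so that $c$ ranges over $\numgeq{Z}{1}$, rewriting the lemma as
\begin{equation*}
\frac{1}{2D}E_{I,k}(\tau;r_2,r_4)=1+\sum_{c\in\numgeq{Z}{1}}\sum_{\substack{d\in\numZ\\(d,4c)=1}}\etp{-\tfrac{1}{24}P(4c,d;r_2,r_4)}\cdot(4c\tau+d)^{-k}.
\end{equation*}
Next, I would show that $d\mapsto P(4c,d;r_2,r_4)$ is $4c$-periodic. From \eqref{eq:Pcdr2r4} this reduces to the well-known periodicities $s(-d,c)\equiv s(-(d+c),c)$, and similarly for $c/2$ and $c/4$, all of which are implied by the fact that the Dedekind sum $s(h,k)$ depends only on $h\bmod k$. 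Hence, writing each $d$ with $(d,4c)=1$ as $d=d_0+4cm$ with $0\leq d_0<4c$ and $m\in\numZ$, I split the inner sum and factor out the periodic exponential:
\begin{equation*}
\sum_{c\geq 1}(4c)^{-k}\sum_{\substack{0\leq d_0<4c\\(d_0,4c)=1}}\etp{-\tfrac{1}{24}P(4c,d_0;r_2,r_4)}\sum_{m\in\numZ}\bigl(\tau+\tfrac{d_0}{4c}+m\bigr)^{-k}.
\end{equation*}

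Then I would apply the Lipschitz summation formula, valid for $\Im\tau>0$ and $k>1$,
\begin{equation*}
\sum_{m\in\numZ}(\tau+m)^{-k}=\frac{(-2\uppi\rmi)^k}{\Gamma(k)}\sum_{n\geq 1}n^{k-1}\etp{n\tau},
\end{equation*}
after translating $\tau$ by $d_0/(4c)$. Using the branch convention $(-2\uppi\rmi)^k=(2\uppi)^k\rme^{-\uppi\rmi k/2}$ (which matches the fixed branch of the paper, since $-2\uppi\rmi$ lies in the lower half plane), the inner translated sum becomes
\begin{equation*}
\rme^{-\uppi\rmi k/2}\frac{(2\uppi)^k}{\Gamma(k)}\sum_{n\geq 1}n^{k-1}\etp{\tfrac{nd_0}{4c}}q^n.
\end{equation*}

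Finally, I would interchange the order of summation---pulling the $n$-sum outside and consolidating the $c,d_0$ sums---to identify the Fourier coefficient $n^{k-1}a(n)$ with $a(n)$ exactly the inner double sum in the statement. The main obstacle, though mild, is justifying this last interchange: since the original $(c,d)$-sum is only absolutely convergent for $k>2$, I must ensure that the resulting series in $n$ is again absolutely convergent, which follows because $|a(n)|$ is bounded by $\sum_{c\geq 1}(4c)^{-k}\cdot\phi(4c)\ll\sum_c c^{1-k}<\infty$ uniformly in $n$. All other steps are bookkeeping with Dedekind sum periodicities and the branch convention for fractional powers.
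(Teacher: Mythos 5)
Your proof is correct, and its engine is the same as the paper's: the $4c$-periodicity of $d\mapsto P(4c,d;r_2,r_4)$ (coming from the periodicity of the Dedekind sums $s(-d,c)$, $s(-d,2c)$, $s(-d,4c)$) combined with the Lipschitz summation formula, with the branch $(-2\uppi\rmi)^k=(2\uppi)^k\rme^{-\uppi\rmi k/2}$ handled correctly. The one genuine difference is how the coprimality condition $(d,4c)=1$ is managed. The paper first removes it: it extends the summand to a function $f(d,c)$ on all pairs via $f(d,c)=f(d/(d,c),c/(d,c))$, uses M\"obius inversion to write the coprime sum as $\zeta(k)^{-1}$ times the unrestricted sum, applies Lipschitz to the full sum over $d\in\numZ$, and then re-sorts by $t=(d,c)$ to cancel the $\zeta(k)$. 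You instead observe that $(d,4c)=1$ depends only on $d\bmod 4c$, so the coprime $d$'s partition exactly into the arithmetic progressions $d_0+4c\numZ$ with $0\leq d_0<4c$, $(d_0,4c)=1$, and Lipschitz applies directly to each progression. Your route is marginally shorter and avoids the $\zeta(k)$ detour entirely; the paper's device buys nothing essential here beyond matching a computation pattern used elsewhere (cf.\ Theorem 6.2 of \cite{Zhu23}). Your justification of the final interchange via $\sum_c(4c)^{-k}\phi(4c)<\infty$ for $k>2$ together with the geometric decay of $\sum_n n^{k-1}\abs{q}^n$ is adequate.
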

\begin{proof}
For $d\in\numZ$, $c\in\numgeq{Z}{1}$ set
\begin{equation*}
f(d,c)=\begin{dcases}
\etp{-\frac{1}{24}P(c,d;r_2,r_4)} & \text{ if }4\mid c \text{ and }(d,c)=1;\\
0 & \text{ if }4\nmid c \text{ and }(d,c)=1;\\
f\left(\frac{d}{(d,c)},\frac{c}{(d,c)}\right) &\text{ if } (d,c)>1.
\end{dcases}
\end{equation*}
Thus, $\frac{1}{2D}E_{I,k}(\tau;r_2,r_4)=1+\zeta(k)^{-1}\cdot\sum_{c\in\numgeq{Z}{1},d\in\numZ}f(d,c)(c\tau+d)^{-k}$ since $\sum_{t \mid (c,d)}\mu(t) = 0$ if $(c,d) > 1$ and $\sum_{t}\frac{\mu(t)}{t^k}=\zeta(k)^{-1}$, where $\zeta$ is the Riemann zeta function and $\mu$ is the M\"obius function. Note that $f(d+c,c)=f(d,c)$. This fact, together with the Lipschitz summation formula
\begin{equation*}
\sum_{n \in \numZ}\frac{1}{(\tau+n)^s}=\rme^{-\uppi\rmi s/2}\frac{(2\uppi)^s}{\Gamma(s)}\sum_{n \geq 1}n^{s-1}\rme^{2\uppi\rmi n\tau},\quad \tau\in\uhp,\,\Re(s)>1
\end{equation*}
implies that
\begin{align*}
\sum_{d \in \numZ}f(d,c)(c\tau+d)^{-k} &= c^{-k}\sum_{0 \leq d_1 < c}f(d_1,c)\sum_{d_0 \in \numZ}(\tau+\frac{d_1}{c}+d_0)^{-k} \\
&=\rme^{-\uppi\rmi k/2}\frac{(2\uppi)^k}{\Gamma(k)}c^{-k}\sum_{n \in \numgeq{Z}{1}}n^{k-1}\sum_{0 \leq d < c}f(d,c)\etp{\frac{dn}{c}}q^n.
\end{align*}
Therefore,
\begin{equation}
\label{eq:proofFourierEisInfty}
\frac{1}{2D}E_{I,k}(\tau;r_2,r_4)=1+\rme^{-\uppi\rmi k/2}\frac{(2\uppi)^k}{\Gamma(k)\zeta(k)}\sum_{n \in \numgeq{Z}{1}}n^{k-1}\sum_{c \in \numgeq{Z}{1}}\frac{1}{c^k}\sum_{0 \leq d < c}f(d,c)\etp{\frac{dn}{c}}q^n.
\end{equation}
By the definition of $f(d,c)$ we find that
\begin{align*}
\sum_{c \in \numgeq{Z}{1}}\frac{1}{c^k}\sum_{0 \leq d < c}f(d,c)\etp{\frac{dn}{c}}&=\sum_{t\in\numgeq{Z}{1}}\sum_{\twoscript{c\in\numgeq{Z}{1},0\leq d<c}{(d,c)=t}}\frac{1}{(tc)^k}f(td,tc)\etp{\frac{dn}{c}}\\
&=\zeta(k)\cdot\sum_{\twoscript{c\in\numgeq{Z}{1},0\leq d<4c}{(d,4c)=1}}\frac{1}{(4c)^k}f(d,4c)\etp{\frac{dn}{4c}}\\
&=\zeta(k)\cdot\sum_{\twoscript{c\in\numgeq{Z}{1},0\leq d<4c}{(d,4c)=1}}\frac{1}{(4c)^k}\etp{\frac{dn}{4c}-\frac{1}{24}P(4c,d;r_2,r_4)}.
\end{align*}
Inserting this into \eqref{eq:proofFourierEisInfty} gives the desired formula.
\end{proof}
\begin{rema}
The reader may compare the above proposition with \cite[Theorem 6.2]{Zhu23}.
\end{rema}

We can now give an equivalent statement of Theorem \ref{thm:etaEisIdentity} in a form that gives Fourier coefficients of certain infinite $q$-products:
\begin{coro}
\label{coro:etaQuotientCoefficient}
Let $r_2$ and $r_4$ be rational numbers satisfying \eqref{eq:r2r4inequality}. Let $D$ be a positive integer such that $Dr_2,\,Dr_4\in2\numZ$. Let $\sum_{n\in\numgeq{Z}{0}}c(n)q^n$ be the holomorphic branch of the $D$th root of the infinite product
\begin{equation*}
\prod_{n\in\numgeq{Z}{1}}(1-q^n)^{-2Dr_2-4Dr_4}\prod_{n\in\numgeq{Z}{1}}(1-q^{2n})^{Dr_2}\prod_{n\in\numgeq{Z}{1}}(1-q^{4n})^{Dr_4}
\end{equation*}
with $c(0)=1$. Then for $n\geq 1$ we have
\begin{equation*}
c(n)=\rme^{-\uppi\rmi k/2}\frac{(2\uppi)^k}{\Gamma(k)}n^{k-1}\sum_{c\in\numgeq{Z}{1}}\frac{1}{(4c)^k}\sum_{\twoscript{0\leq d<4c}{(d,4c)=1}}\etp{\frac{dn}{4c}-\frac{1}{24}P(4c,d;r_2,r_4)}.
\end{equation*}
(See \eqref{eq:Pcdr2r4} for the definition of $P(4c,d;r_2,r_4)$.)
\end{coro}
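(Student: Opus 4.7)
The plan is to recognize that this corollary is essentially a restatement of Theorem \ref{thm:etaEisIdentity} combined with the Fourier expansion given in Proposition \ref{prop:EIkr2r4Fourier}, with the only substantive content being that the $q$-expansion of the relevant eta-quotient matches the formal $D$-th root of the product displayed in the statement.

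First I would verify that the eta-quotient $f(\tau)=\eta(\tau)^{-2r_2-4r_4}\eta(2\tau)^{r_2}\eta(4\tau)^{r_4}$ coincides with $\sum_{n\ge0}c(n)q^n$ as given. Using $\eta(n\tau)^{r_n}=q^{nr_n/24}\prod_{m\ge1}(1-q^{nm})^{r_n}$, the total power of $q$ contributed by the leading factor is
\begin{equation*}
\frac{1\cdot(-2r_2-4r_4)+2\cdot r_2+4\cdot r_4}{24}=0,
\end{equation*}
so $f(\tau)$ is a power series in $q$ with constant term $1$. Raising the explicit product representation of $f$ to the $D$-th power (which is a polynomial operation on the series once the fractional exponents have been cleared, since $Dr_2,Dr_4\in2\numZ$) yields exactly the displayed infinite product, and among the $D$ holomorphic branches of the $D$-th root only the one with constant term $1$ matches the standard holomorphic branch fixed by the convention in Section \ref{sec:Modular forms of rational weight}. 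Hence $c(n)$ is precisely the $q^n$-coefficient of $f(\tau)$.

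Next I would invoke Theorem \ref{thm:etaEisIdentity}, whose hypotheses are exactly the conditions \eqref{eq:r2r4inequality} assumed here, to identify $f(\tau)=\frac{1}{2D}E_{I,k}(\tau;r_2,r_4)$. Then Proposition \ref{prop:EIkr2r4Fourier} supplies the Fourier expansion
\begin{equation*}
\frac{1}{2D}E_{I,k}(\tau;r_2,r_4)=1+\rme^{-\uppi\rmi k/2}\frac{(2\uppi)^k}{\Gamma(k)}\sum_{n\in\numgeq{Z}{1}}n^{k-1}a(n)q^n
\end{equation*}
with the explicit $a(n)$ stated. Comparing $q^n$-coefficients with the expansion $\sum c(n)q^n$ and reading off $c(n)=\rme^{-\uppi\rmi k/2}\frac{(2\uppi)^k}{\Gamma(k)}n^{k-1}a(n)$ yields the formula in the corollary.

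There is no real obstacle in the proof; the work has already been done upstream. The only point requiring a sentence of justification is the identification of the $D$-th root branch with the eta-quotient $f(\tau)$, which is a matter of checking that the leading $q^{1/24}$-contributions cancel and that the constant term equals $1$, selecting the correct branch out of $D$ possibilities. Everything else is transport of structure through the two cited results.
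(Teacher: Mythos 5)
Your proposal is correct and follows exactly the paper's own route: identify the $D$-th root branch with constant term $1$ as the eta-quotient $\eta(\tau)^{-2r_2-4r_4}\eta(2\tau)^{r_2}\eta(4\tau)^{r_4}$ (the leading $q$-powers cancel since $1\cdot(-2r_2-4r_4)+2r_2+4r_4=0$), then combine Theorem \ref{thm:etaEisIdentity} with Proposition \ref{prop:EIkr2r4Fourier} and compare coefficients. Your write-up merely spells out the branch-selection step in more detail than the paper does; there is no substantive difference.
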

\begin{proof}
There are exactly $D$ holomorphic branches and the one with the leading term $1$ is the eta-quotient $\eta(\tau)^{-2r_2-4r_4}\eta(2\tau)^{r_2}\eta(4\tau)^{r_4}$. Now the assertion follows from Theorem \ref{thm:etaEisIdentity} and Proposition \ref{prop:EIkr2r4Fourier}.
\end{proof}

\begin{examp}
\label{examp:App1Examp1}
Let us consider the special case $r_2=0$, $r_4=-\frac{3}{2}$ for which \eqref{eq:r2r4inequality} holds. Note that $k=\frac{1}{2}(-r_2-3r_4)=\frac{9}{4}$. Set $D=4$. Then Corollary \ref{coro:etaQuotientCoefficient} shows
\begin{equation*}
\frac{\prod_{n\in\numgeq{Z}{1}}(1-q^n)^6}{\sqrt{\prod_{n\in\numgeq{Z}{1}}(1-q^{4n})^3}}=1+\rme^{-9\uppi\rmi /8}\frac{(2\uppi)^{\frac{9}{4}}}{\Gamma(\frac{9}{4})}\sum_{n\in\numgeq{Z}{1}}n^{\frac{5}{4}}\left(\sum_{c\in\numgeq{Z}{1}}\frac{1}{(4c)^{\frac{9}{4}}}\sum_{\twoscript{0\leq d<4c}{(d,4c)=1}}\etp{\frac{dn}{4c}-\frac{1}{24}P(4c,d;0,-\frac{3}{2})}\right)\cdot q^n,
\end{equation*}
where the square root is the principal branch $\sqrt{\prod_{n\in\numgeq{Z}{1}}(1-q^{4n})^3}=1+o(1)$ and
\begin{equation*}
P(4c,d;0,-\frac{3}{2})=-18s(-d,c)+72s(-d,4c)-\frac{27}{2}.
\end{equation*}
(See \eqref{eq:DedekindSum} for the definition of $s(-d,c)$.) For numerical check (of this and the next examples) using SageMath programs, see Appendix \ref{apx:Usage of SageMath code}.
\end{examp}
\begin{examp}
\label{examp:N4rn274}
Corollary \ref{coro:etaQuotientCoefficient} contains as well some identities involving ordinary eta-quotients, that is, eta-quotients with integral exponents. For instance, set $r_2=7$, $r_4=-4$ and hence $k=\frac{1}{2}(-r_2-3r_4)=\frac{5}{2}$ and $D=2$. For this setting, \eqref{eq:r2r4inequality} holds. The identity reads
\begin{multline*}
\prod_{n\in\numgeq{Z}{1}}(1-q^n)^2(1-q^{2n})^7(1-q^{4n})^{-4}\\
=1+\rme^{-5\uppi\rmi/4}\frac{(2\uppi)^{\frac{5}{2}}}{\Gamma(\frac{5}{2})}\sum_{n\in\numgeq{Z}{1}}n^{\frac{3}{2}}\sum_{c\in\numgeq{Z}{1}}\frac{1}{(4c)^{\frac{5}{2}}}\sum_{\twoscript{0\leq d<4c}{(d,4c)=1}}\etp{\frac{dn}{4c}-\frac{1}{24}P(4c,d;7,-4)}\cdot q^n
\end{multline*}
where
\begin{equation*}
P(4c,d;7,-4)=-48s(-d,c)+84s(-d,2c)+24s(-d,4c)-15.
\end{equation*}
We will return to this function, i.e., $\eta(\tau)^2\eta(2\tau)^7\eta(4\tau)^{-4}$ in the next section (see Section \ref{subsuesec:closedFourier}) and show that it is, among others, a Hecke eigenform in a generalized sense. (In \cite[Section 4]{ZZ23}, the authors obtained some Hecke eigenforms in the generalized sense. However, for $N=4$, only Hecke eigenforms of weight $1/2$, $1$, $3/2$ are obtained. The current example gives a Hecke eigenform of weight $5/2$ which can not be achieved using methods in \cite{ZZ23}.)
\end{examp}

\section{Application: an extension of Martin's list of multiplicative eta-quotients}
\label{sec:an extension of Martin's list of multiplicative eta-quotients}
In 1996 Martin \cite{Mar96} obtained the complete list of integral-weight holomorphic eta-quotients that are eigenforms for all Hecke operators. This generalized a previous result of Dummit, Kisilevsky and McKay \cite{DKM85}. The term ``eta-quotient'' in \cite{Mar96} refers to one whose multiplier system is induced by a Dirichlet character (necessarily real) and the Hecke operators are in the sense of \cite{AL70}.

In this section, we provide an algorithm for finding out all holomorphic eta-quotients $f(\tau)=\prod_{n\geq1}\eta(n\tau)^{r_n}$ of integral exponents $r_n$ (integral or half-integral weights) and of arbitrary multiplier systems such that either of the following conditions holds:
\begin{enumerate}
  \item $\dim_\numC M_k(\Gamma_0(N),\chi)$ can be computed using Theorem \ref{thm:main} (with $t=0$) and is equal to $1$,
  \item $\dim_\numC M_k(\Gamma_0(N),\chi)$ can be computed using Theorem \ref{thm:main} (with $t=0$) and is greater than $1$, $f$ is a cusp form, $k>2$ and there exist $\dim_\numC M_k(\Gamma_0(N),\chi)-1$ linearly independent Eisenstein series in $M_k(\Gamma_0(N),\chi)$ (see Definition \ref{deff:Eis}),
\end{enumerate}
where $k=\frac{1}{2}\sum_nr_n$ is the weight, $N$ is the least common multiple of all $n$ with $r_n\neq 0$ (when $r_n=0$ for all $n$, let $N=1$) and $\chi$ is the character $\widetilde{\Gamma_0(N)}\rightarrow\numC^\times$ of $f$. (Note that throughout this section $\widetilde{\Gamma_0(N)}=\Dcover{\Gamma_0(N)}{2}$ as in Section \ref{sec:The classification of characters induced by eta-quotients}.)
%Without loss of generality we can assume that $N$ is the least common multiple of all $n$ with $r_n\neq 0$.

If $f$ satisfies either of the above conditions, we say it is \emph{admissible}. If $f$ satisfies the condition (a) then we say it is \emph{admissible of type I}; if it satisfies (b) then we say it is \emph{admissible of type II}. Moreover, if $f$ satisfies either of the conditions
\begin{enumerate}
  \item[(a')] $\dim_\numC M_k(\Gamma_0(N),\chi)=1$,
  \item[(b')] $\dim_\numC M_k(\Gamma_0(N),\chi)\geq2$ but $\dim_\numC S_k(\Gamma_0(N),\chi)=1$ and $f$ is a cusp form,
\end{enumerate}
we say $f$ is \emph{weakly admissible}. Thus an admissible eta-quotient must be a weakly admissible eta-quotient. Similarly, we can define the concept of weakly admissible eta-quotients of type I and type II. For each weakly admissible $f$, we will show that it is a Hecke eigenform in the sense of Wohlfahrt \cite{Woh57}; see Theorem \ref{thm:mainApp2}. Therefore, our list of admissible eta-quotients can be considered as an extension of Martin's list.

\subsection{The algorithm for finding admissible eta-quotients}
\label{subsec:The algorithm for finding admissible eta-quotients}
Throughout this subsection, let $f(\tau)=\prod_{n\geq1}\eta(n\tau)^{r_n}$ be a holomorphic eta-quotient where $r_n$ are integers and are equal to $0$ for all but finitely many $n$. Let $k=\frac{1}{2}\sum_nr_n$ be the weight and $N$, the level\footnote{The reader should notice that this usage of ``eta quotient of level $N$'' is different from that in the second half of Section \ref{sec:Modular forms of rational weight}. In the usage here any eta-quotient has a uniquely determined level while in the usage before, an eta-quotient, say $\eta(\tau)\eta(2\tau)$, may be regarded as of level $2$, or of level $4$, etc.}, be the least common multiple of $\{n\in\numgeq{Z}{1}\colon r_n\neq0\}$ (with the convention $N=1$ if $r_n=0$ for all $n$). Let $\chi\colon\widetilde{\Gamma_0(N)}\rightarrow\numC^\times$ be the character of $f$ (see \eqref{eq:charEtaQuotient} with $D=2$). Since $f$ is holomorphic at all cusps we have $x_c\geq0$ for all $c\mid N$ ($x_c$ are defined in \eqref{eq:rntoxc}).  By Remark \ref{rema:uboundlbound} we have
\begin{align*}
\dim_\numC M_k(\Gamma_0(N), \chi)&\geq\frac{k-1}{12}m+\frac{1}{4}\varepsilon_2+\frac{1}{3}\varepsilon_3+\sum_{c\mid N}\phi(c,N/c)\cdot\left(\frac{1}{2}-\left\{\frac{x_c}{24}\right\}\right)\\
&>\frac{k-1}{12}m-\frac{1}{2}\sum_{c\mid N}\phi(c,N/c)
\end{align*}
where $m$, $\varepsilon_2$, $\varepsilon_3$ are given by \eqref{eq:mGamma0N}, \eqref{eq:e2Gamma0N}, \eqref{eq:e3Gamma0N} respectively. %Since $\sum_{c\mid N}\phi(c,N/c)=\varepsilon_\infty\leq m\cdot N^{-1/2}$ by \eqref{eq:einftymN}.
\begin{lemm}
\label{lemm:admissible}
If $\left(\frac{(k-1)N}{12}-\frac{3\sqrt{N}}{2}\right)\cdot\prod_{p\mid N}\left(1+\frac{1}{p}\right)\geq1$, then $f$ is not admissible. As a consequence, there are only finitely many admissible eta-quotients.
\end{lemm}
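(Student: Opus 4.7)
My plan is to combine the dimension lower bound displayed just before the lemma with the cusp count estimate $\varepsilon_\infty \leq m/\sqrt{N}$, which is implicit in \eqref{eq:einftymN} from the proof of Lemma \ref{lemm:NupperBound}, and then rule out each of the two admissibility types separately. Substituting $\varepsilon_\infty \leq m/\sqrt{N}$ and $m = N\prod_{p\mid N}(1+1/p)$ into
\[
\dim_\numC M_k(\Gamma_0(N),\chi) > \frac{(k-1)m}{12} - \frac{\varepsilon_\infty}{2}
\]
yields $\dim_\numC M_k(\Gamma_0(N),\chi) > \prod_{p\mid N}(1+1/p)\cdot\left(\frac{(k-1)N}{12} - \frac{\sqrt{N}}{2}\right)$. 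I would then rewrite this right-hand side as
\[
\prod_{p\mid N}(1+1/p)\cdot\left(\frac{(k-1)N}{12} - \frac{3\sqrt{N}}{2}\right) + \sqrt{N}\prod_{p\mid N}(1+1/p),
\]
which the hypothesis forces to be at least $1 + \sqrt{N} \geq 2$, so $\dim_\numC M_k \geq 3$ and $f$ cannot be admissible of type~I.

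For type~II, which requires $k > 2$ by definition, the remark following Theorem \ref{thm:main} supplies $\dim_\numC S_k \geq \dim_\numC M_k - \varepsilon_\infty$. Subtracting an extra $\varepsilon_\infty \leq m/\sqrt{N}$ from the previous estimate gives
\[
\dim_\numC S_k(\Gamma_0(N),\chi) > \prod_{p\mid N}(1+1/p)\cdot\left(\frac{(k-1)N}{12} - \frac{3\sqrt{N}}{2}\right) \geq 1,
\]
so $\dim_\numC S_k \geq 2$. But condition~(b) implies $\dim_\numC S_k = 1$: the presence of $\dim_\numC M_k - 1$ linearly independent Eisenstein series together with the decomposition $M_k = S_k \oplus E_k$ (valid for $k > 2$) forces $\dim_\numC S_k \leq 1$, and $f \in S_k$ nonzero gives the reverse inequality. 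Hence $f$ fails type~II too. The main delicate point is precisely this matching of constants: the $\tfrac{3\sqrt{N}}{2}$ in the hypothesis splits as $\tfrac{\sqrt{N}}{2}+\sqrt{N}$, accounting for both the fractional-part slack in the dimension formula and the $\varepsilon_\infty$ lost in passing from $M_k$ to $S_k$, so the two deductions line up under the same inequality.

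For the finiteness consequence, I would note that for an admissible $f$ the negation of the hypothesis forces $(k-1)N < 12 + 18\sqrt{N}$ (after dropping the $\prod$ factor, which is at least $1$). When $k \geq 2$ this confines $(N,k)$ to a bounded region of the plane, e.g.\ $N \leq 347$ with $k \leq 1 + (12+18\sqrt{N})/N$ for each such $N$. The complementary half-integer weights $k \in \{1/2, 1, 3/2\}$ fall within the range where admissibility itself presupposes Theorem \ref{thm:main} to apply, so Lemma \ref{lemm:NupperBound} forces $N < 1024$. For each of the finitely many surviving pairs $(N,k)$, holomorphic eta-quotients correspond to tuples $(x_c)_{c\mid N}$ of nonnegative integers with fixed weighted sum $\sum_{c\mid N}\phi(c,N/c)\cdot x_c/24 = mk/12$ (cf.\ Theorem \ref{thm:valence} and \eqref{eq:divEtaQuotient}), from which $(r_n)_{n\mid N}$ is recovered uniquely via Proposition \ref{prop:rnxc}; only finitely many such tuples exist, completing the proof.
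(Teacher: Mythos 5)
Your proof is correct and follows essentially the same route as the paper: both cases rest on the unconditional lower bound for $\dim_\numC M_k$ combined with $\varepsilon_\infty\leq m N^{-1/2}$, with the type II case losing an extra $\varepsilon_\infty$ (you phrase this via $\dim_\numC S_k\geq\dim_\numC M_k-\varepsilon_\infty$, the paper via $n_0\leq\varepsilon_\infty$ — numerically the same $\tfrac{3}{2}\varepsilon_\infty$), and the finiteness argument via bounding $(N,k)$ and counting tuples $(x_c)$ is the paper's. The only (trivial) omission is the weight $k=0$ case, which the paper dispatches with $f=1$.
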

\begin{proof}
Assume by contradiction that $f$ is admissible. If the condition (a) holds, then
\begin{equation}
\label{eq:dimInequalityconda}
1=\dim_\numC M_k(\Gamma_0(N), \chi)>\frac{k-1}{12}m-\frac{1}{2}\sum_{c\mid N}\phi(c,N/c).
\end{equation}
By \eqref{eq:einftymN} we have $\varepsilon_\infty=\sum_{c\mid N}\phi(c,N/c)\leq m\cdot N^{-1/2}$. This, together with \eqref{eq:dimInequalityconda} and \eqref{eq:mGamma0N}, implies that $\left(\frac{(k-1)N}{12}-\frac{\sqrt{N}}{2}\right)\cdot\prod_{p\mid N}\left(1+\frac{1}{p}\right)<1$ which contradicts the assumption. On the other hand, if the condition (b) holds. Since $k>2$, there are totally\footnote{These $n_0$ functions, in the rational weight ($k>2$) situation, constitute a basis of the Eisenstein space  which, as in the integral weight case, is defined as the orthogonal complement of the space of cusp forms under the Petersson inner product. In addition, one can prove that $n_0=\abs{R}$ where $R$ is \eqref{eq:R} with $f_0$ arbitrarily chosen.} $n_0$ Eisenstein series in $M_k(\Gamma_0(N), \chi)$, where
\begin{equation}
\label{eq:n0EisSeries}
n_0=\#\left\{s \in \Gamma_0(N)\backslash\projQ\colon \chi(\widetilde{\gamma_s}\widetilde{T}^{w_{s}}\widetilde{\gamma_s}^{-1})=1 \right\},
\end{equation}
($w_s$ is the width; see Section \ref{sec:Divisors of modular forms}, $\gamma_s$ is any matrix in $\slZ$ such that $\gamma_s(\rmi\infty)=s$) and these $n_0$ functions are $\numC$-linearly independent. See \cite[Theorem 3.3]{Zhu23}. By condition (b) we have $n_0\geq\dim_\numC M_k(\Gamma_0(N), \chi)-1$. Since in the current case $f$ is a cusp form, these $n_0$ Eisenstein series and $f$ together constitute a basis of $M_k(\Gamma_0(N), \chi)$ and $n_0=\dim_\numC M_k(\Gamma_0(N), \chi)-1$. Hence
\begin{equation}
\label{eq:dimInequalitycondb}
n_0+1=\dim_\numC M_k(\Gamma_0(N), \chi)>\frac{k-1}{12}m-\frac{1}{2}\sum_{c\mid N}\phi(c,N/c).
\end{equation}
This inequality, together with \eqref{eq:einftymN}, \eqref{eq:mGamma0N} and the fact $n_0\leq\varepsilon_\infty=\sum_{c\mid N}\phi(c,N/c)$, implies that $\left(\frac{(k-1)N}{12}-\frac{3\sqrt{N}}{2}\right)\cdot\prod_{p\mid N}\left(1+\frac{1}{p}\right)<1$ which contradicts the assumption. Therefore, we have shown that both of conditions (a) and (b) lead to contradictions, so $f$ is not admissible.

Now we begin to prove that there are only finitely many admissible eta-quotients. Since $f\in M_k(\Gamma_0(N), \chi)$ is a holomorphic eta-quotient of integral exponents, $2k\in\numgeq{Z}{0}$. For $k=0$, we must have $f=1$. For $k=1/2$ and $1$, we have shown in Remark \ref{rema:rangeN} that there are only finitely many $N$ such that $\dim_\numC M_k(\Gamma_0(N), \chi)$ can be calculated using Theorem \ref{thm:main}. For each pair $(k,N)$, the holomorphic eta-quotients of level $N$ and weight $k$ constitute a finite set since the cardinality of this set does not exceed the cardinality of the solution set of $\sum_{c\mid N}\phi(c,N/c)x_c=2mk$, $x_c\in\numgeq{Z}{0}$ (see Proposition \ref{prop:rnxc} and Remark \ref{rema:ANcolumn}) which is finite. This proves that there are only finitely many admissible eta-quotients of weights $0$, $1/2$ and $1$. For each weight $k\geq 3/2$, there exists an $N_0$ such that when $N>N_0$ we have $\left(\frac{(k-1)N}{12}-\frac{3\sqrt{N}}{2}\right)\cdot\prod_{p\mid N}\left(1+\frac{1}{p}\right)\geq1$. Thus, the level of an admissible eta-quotient of weight $k$ does not exceed $N_0$ and hence there are finitely many of them. Finally, if $k\geq19$ and $N\geq2$, we have
\begin{equation*}
\left(\frac{(k-1)N}{12}-\frac{3\sqrt{N}}{2}\right)\cdot\prod_{p\mid N}\left(1+\frac{1}{p}\right)\geq\frac{3}{2}(N-\sqrt{N})\cdot\prod_{p\mid N}\left(1+\frac{1}{p}\right)\geq1,
\end{equation*}
and if $k\geq31$, $N=1$, then the left-hand side still $\geq1$. It follows that if the weight of $f$ is greater than or equal to $31$, then $f$ is not admissible. Therefore, there are only finitely many admissible eta-quotients.
\end{proof}
\begin{coro}
\label{coro:admmissible}
If $N>400$, then $f$ is not admissible. Moreover, if $N>36$, then $f$ is not admissible of type I.
\end{coro}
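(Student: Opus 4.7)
The plan is to dichotomize on the weight $k$ of $f$, using the explicit tabulations of Section \ref{sec:Dimension formulas for weight 1/2, 1, 3/2 and small levels} for small weights and Lemma \ref{lemm:admissible} for the rest. Since $f$ is a holomorphic eta-quotient with integer exponents, $2k \in \numgeq{Z}{0}$, so only the weights $k = 1/2, 1, 3/2$ need bespoke treatment before the general bound takes over.

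The small-weight cases $k \in \{1/2, 1, 3/2\}$ are uniformly handled by the enumerations of all computable dimensions in Subsections \ref{subsec:weight1/2}--\ref{subsec:wt3/2}. Inspecting the $d_1$ columns of Tables \ref{table:wt1/2} and \ref{table:wt1}, together with the observation stated at the end of Subsection \ref{subsec:wt3/2}, shows that no one-dimensional space appears for $N > 36$ in any of these three weights. Since admissibility of type II demands $k > 2$ and is therefore vacuous in these weights, both claims of the corollary are already established for $k \leq 3/2$.

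For $k \geq 2$, I would invoke Lemma \ref{lemm:admissible}. For the first claim (not admissible when $N > 400$), I would check that $\left(\frac{(k-1)N}{12} - \frac{3\sqrt{N}}{2}\right)\prod_{p \mid N}(1 + 1/p) \geq 1$ for every $N > 400$; the binding case $k = 2$ reduces to $N - 18\sqrt{N} - 12 \geq 0$, which holds as soon as $N \geq 348$. For the second claim (not admissible of type I when $N > 36$), the proof of Lemma \ref{lemm:admissible} actually furnishes the sharper sufficient condition $\left(\frac{(k-1)N}{12} - \frac{\sqrt{N}}{2}\right)\prod_{p \mid N}(1 + 1/p) \geq 1$; this holds for all $N \geq 30$ when $k \geq 5/2$, and for all $N \geq 58$ when $k = 2$.

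The remaining obstacle is the residual range $k = 2$ and $37 \leq N \leq 57$ in the type I claim, where the uniform bound $\varepsilon_\infty \leq m N^{-1/2}$ used in Lemma \ref{lemm:admissible} is too coarse. I would close this gap by substituting the direct lower bound $\dim_{\numC} M_2(\Gamma_0(N), \chi) > m/12 - \varepsilon_\infty / 2$ coming from Remark \ref{rema:uboundlbound} and inserting the exact value of $\varepsilon_\infty$ for each individual $N$ (which is small: $\varepsilon_\infty = 2$ for prime $N$, $\varepsilon_\infty = 4$ for $N$ a product of two distinct primes, and similarly in the other shapes). For every such $N$ save the borderline $N = 49$, this forces $\dim \geq 2$ and rules out type I admissibility; the single case $N = 49$ is then disposed of by enumerating the finitely many admissible profiles $(x_1, x_7, x_{49})$ arising from holomorphic eta-quotients of weight $2$ via Proposition \ref{prop:rnxc} and Remark \ref{rema:ANcolumn}, or equivalently by running the SageMath code described in Appendix \ref{apx:Usage of SageMath code}.
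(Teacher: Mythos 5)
Your proof is correct, and its skeleton --- dichotomy on the weight, Tables \ref{table:wt1/2}, \ref{table:wt1} and Subsection \ref{subsec:wt3/2} for $k\le 3/2$, Lemma \ref{lemm:admissible} for $k\ge 2$ --- coincides with the paper's. Where you genuinely diverge is in the range $k\ge 2$, $36<N\le 400$ of the type I claim: the paper simply verifies $\frac{k-1}{12}m-\frac{1}{2}\varepsilon_\infty>1$ by a SageMath computation for every such $N$ except $49$, whereas you first extract from the proof of Lemma \ref{lemm:admissible} the sharper sufficient condition $\left(\frac{(k-1)N}{12}-\frac{\sqrt{N}}{2}\right)\prod_{p\mid N}\left(1+\frac{1}{p}\right)\ge 1$ for ruling out type I, shrink the computational residue to $k=2$, $37\le N\le 57$, and dispose of those finitely many levels by hand via the exact values of $\varepsilon_\infty$. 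Your numerical thresholds ($N\ge 348$ for the binding case $k=2$ of the full admissibility bound, $N\ge 58$ for $k=2$ and $N\ge 30$ for $k\ge 5/2$ in the type I bound) all check out, and the hand-check of $m\ge 12+6\varepsilon_\infty$ over $37\le N\le 57$ does isolate $N=49$ (where $m=56$, $\varepsilon_\infty=8$) as the unique failure; so your route buys an essentially computer-free argument at the cost of a short case list. For $N=49$ itself the paper has a slicker one-line finish than your proposed enumeration of profiles: since $\varepsilon_3=2$ there, the full lower bound of Remark \ref{rema:uboundlbound} gives $\dim_\numC M_k(\Gamma_0(49),\chi)>\frac{56}{12}+\frac{2}{3}-4=\frac{4}{3}>1$, so neither an enumeration via Proposition \ref{prop:rnxc} nor code is needed; you may want to fold that in. Otherwise nothing is missing.
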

\begin{proof}
Suppose $N>400$; we have $k\neq0$. If $k=1/2$, $1$ or $3/2$, then by Tables \ref{table:wt1/2}, \ref{table:wt1} and Section \ref{subsec:wt3/2}, $\dim_\numC M_k(\Gamma_0(N), \chi)$ can not be calculated using Theorem \ref{thm:main} with $t=0$ (that is, the first inequality in \eqref{eq:mainThmCondtion2} does not hold) for any $\chi$. Thus, $f$ is not admissible\footnote{Although $f$ is possibly weakly admissible.}. Otherwise, if $k\geq2$, then obviously $\left(\frac{(k-1)N}{12}-\frac{3\sqrt{N}}{2}\right)\cdot\prod_{p\mid N}\left(1+\frac{1}{p}\right)\geq1$. It follows from this and Lemma \ref{lemm:admissible} that $f$ is not admissible.

Now we begin to prove the latter assertion; suppose $N>36$. Certainly $k\neq0$. According to Table \ref{table:wt1/2}, Table \ref{table:wt1} and Subsection \ref{subsec:wt3/2}, if $k=1/2$, $1$ or $3/2$, then $\dim_\numC M_k(\Gamma_0(N),\chi)\geq2$ provided it can be calculated using Theorem \ref{thm:main} (with $t=0$). Hence $f$ is not admissible of type I. If $k\geq2$, then for $36<N\leq400$ but $N\neq 49$ we have
\begin{equation*}
\dim_\numC M_k(\Gamma_0(N), \chi)>\frac{k-1}{12}m-\frac{1}{2}\sum_{c\mid N}\phi(c,N/c)>1.
\end{equation*}
See Appendix \ref{apx:Usage of SageMath code} for the SageMath code verifying this. If $N=49$, then $\varepsilon_3=2$ (see \eqref{eq:e3Gamma0N}), and hence
\begin{align*}
\dim_\numC M_k(\Gamma_0(49), \chi)&\geq\frac{k-1}{12}m+\frac{1}{4}\varepsilon_2+\frac{1}{3}\varepsilon_3+\sum_{c\mid N}\phi(c,N/c)\cdot\left(\frac{1}{2}-\left\{\frac{x_c}{24}\right\}\right)\\
&>\frac{2-1}{12}\cdot56+\frac{2}{3}-\frac{1}{2}\sum_{c\mid 49}\phi(c,49/c)>1.
\end{align*}
Therefore $f$ is not admissible of type I for $k\geq2$, $36<N\leq400$.
%Suppose $k\geq 19$; the assertion has been shown in the last lines of the proof of Lemma \ref{lemm:admissible}.
\end{proof}
According to the above corollary and Lemma \ref{lemm:admissible}, the following algorithm can be used to present all admissible nonconstant eta-quotients.
\begin{enumerate}
  \item Let $N$ range over $\{1,2,3,4,\dots,400\}$. %According to Corollary \ref{coro:admmissible}, $f$ is not admissible unless $N\leq400$.
  \item For a fixed $N$, set $K_N=\left\{k\in\frac{1}{2}\numZ\colon\frac{1}{2}\leq k<1+\frac{12}{m}+\frac{18}{\sqrt{N}}\right\}$, where $m$ is defined in \eqref{eq:mGamma0N}. %According to Lemma \ref{lemm:admissible}, $f$ is not admissible unless $k\in K_N$.
  \item Let $N$ be fixed as above. For any $c\mid N$, let $x_c$ range over nonnegative integers such that $\sum_{c\mid N}\phi(c,N/c)x_c\in2m\cdot K_N$.
  \item Let $N$ and $(x_c)_{c\mid N}$ be fixed. Determine the sequence $(r_n)_{n\mid N}$ by \eqref{eq:xctorn} and set $k=\frac{1}{2}\sum_{n\mid N}r_n$, $g=\prod_{n\mid N}\eta(n\tau)^{r_n}$. Since $\sum_{c\mid N}\phi(c,N/c)x_c=m\cdot\sum_{n\mid N}r_n$ (see Remark \ref{rema:ANcolumn}) we have $k\in K_N$.
  \item If $r_n$ are not all integers, or they are all integers but $N$ is not the least common multiple of $\{r_n\neq 0\colon n\mid N\}$, then we continue to the next value of $(x_c)_{c\mid N}$ in Step (d) since in these cases either $g$ is not an eta-quotient of integral exponents or $g$ has been considered in a previous step.
  \item We check whether \eqref{eq:mainThmCondtion2} with $t=0$ holds. If it does not hold, $g$ is not admissible so we go to Step (d) and consider the next value of $(x_c)_{c\mid N}$.
  \item We check whether $\dim_\numC M_k(\Gamma_0(N), \chi)=1$ via \eqref{eq:dimFormula}. If this is the case, then we find an admissible $g$. We record it, go to Step (d) and consider the next value of $(x_c)_{c\mid N}$.
  \item We check whether $k>2$, $g$ is a cusp form (equivalent to $x_c>0$ for all $c\mid N$) and $\dim_\numC M_k(\Gamma_0(N), \chi)=n_0+1$ where $n_0$ is given in \eqref{eq:n0EisSeries}. If this is the case, then $g$ is admissible which we record; otherwise $g$ is not admissible. We have completely determined whether this $g$ is admissible or not so we go to Step (d) and consider the next value of $(x_c)_{c\mid N}$.
  \item Since for each $N$ there are only finitely many values of $(x_c)_{c\mid N}$, this algorithm halts in finitely many steps.
\end{enumerate}
\begin{prop}
The above algorithm gives all admissible nonconstant eta-quotients with no repetition.
\end{prop}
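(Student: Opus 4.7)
The plan is to verify three claims: the algorithm terminates, every admissible nonconstant eta-quotient arises somewhere in its run, and no eta-quotient arises twice. Termination is essentially done in Step (i): $N$ ranges over a finite set, and for each $N$ the tuple $(x_c)_{c\mid N}$ runs over a finite set of nonnegative integer vectors because $K_N\subseteq\frac{1}{2}\numZ$ is bounded above.

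For completeness, suppose $f=\prod_{n\mid N}\eta(n\tau)^{r_n}$ is admissible and nonconstant with true level $N$ and weight $k$. By Corollary \ref{coro:admmissible}, $N\leq 400$, so $N$ is reached in Step (a). I would then show $k\in K_N$. The lower bound $k\geq 1/2$ is clear since $2k=\sum_n r_n$ is a positive integer ($f$ is nonconstant and holomorphic with integral exponents). For the upper bound, combine the inequality from Remark \ref{rema:uboundlbound},
\begin{equation*}
\dim_\numC M_k(\Gamma_0(N),\chi)>\frac{k-1}{12}m-\frac{1}{2}\varepsilon_\infty,
\end{equation*}
with the estimate $\varepsilon_\infty\leq m/\sqrt{N}$ proved in \eqref{eq:einftymN}; for type I the left-hand side equals $1$, and for type II it equals $n_0+1\leq\varepsilon_\infty+1\leq m/\sqrt{N}+1$. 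In either case one rearranges to $k<1+12/m+18/\sqrt{N}$, hence $k\in K_N$.

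Next, the associated order tuple $(x_c)_{c\mid N}$ defined via \eqref{eq:rntoxc} consists of nonnegative integers (holomorphy at cusps and integrality of $r_n$), and Remark \ref{rema:ANcolumn} gives $\sum_c\phi(c,N/c)x_c=2mk\in 2m\cdot K_N$; therefore $(x_c)_{c\mid N}$ is enumerated in Step (c). Step (d) then recovers the original $(r_n)_{n\mid N}$ by the bijective inversion of Proposition \ref{prop:rnxc}, so $g=f$. Step (e) passes because by hypothesis all $r_n$ are integers and $N$ is the least common multiple of the support of $(r_n)$, and Step (f) passes because admissibility assumes Theorem \ref{thm:main} applies with $t=0$. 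Finally $f$ is captured by Step (g) (if type I) or Step (h) (if type II; for the type II case the identity $\dim_\numC M_k=n_0+1$ to be checked in Step (h) reduces to the Eisenstein-plus-cusp basis argument carried out in the proof of Lemma \ref{lemm:admissible}).

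Non-redundancy is then straightforward: if two pairs $(N_1,(x_c^{(1)}))$ and $(N_2,(x_c^{(2)}))$ produce the same recorded $g$, then both $N_1$ and $N_2$ equal the true level of $g$ by the Step (e) filter, hence coincide; with $N_1=N_2=N$ fixed, the $\numQ$-linear bijection between $(r_n)_{n\mid N}$ and $(x_c)_{c\mid N}$ furnished by Proposition \ref{prop:rnxc} forces $(x_c^{(1)})=(x_c^{(2)})$. The main delicate point in the argument is the type II case: one must verify that the equality $\dim_\numC M_k(\Gamma_0(N),\chi)=n_0+1$ with $n_0$ as in \eqref{eq:n0EisSeries} is exactly what the existence of $\dim_\numC M_k(\Gamma_0(N),\chi)-1$ independent Eisenstein series means, so that the check coded in Step (h) faithfully captures type II admissibility.
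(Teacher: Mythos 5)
Your proposal is correct and follows essentially the same route as the paper's proof: completeness via Corollary \ref{coro:admmissible} for the level bound, the weight bound $k\in K_N$ (which you re-derive inline from Remark \ref{rema:uboundlbound} and \eqref{eq:einftymN} rather than simply citing Lemma \ref{lemm:admissible}, whose proof contains exactly this computation), Remark \ref{rema:ANcolumn} for $\sum_c\phi(c,N/c)x_c=2mk$, and the bijectivity of $A_N$ from Proposition \ref{prop:rnxc}; and non-redundancy via the level (read off from the support, as enforced by Step (e)) together with that same bijection. Your extra remarks on the type II check in Step (h) and on $k\geq 1/2$ for nonconstant holomorphic eta-quotients are fine and consistent with the argument already given in the proof of Lemma \ref{lemm:admissible}.
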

\begin{proof}
First, it is immediate that the functions $g$ recorded in Steps (g) or (h) are admissible nonconstant eta-quotients. Second, suppose $g_1$ and $g_2$ are functions recorded in Steps (g) or (h) corresponding to different pairs $\left(N_1,(x_c)_{c\mid N_1}\right)$ and $\left(N_2,(x'_c)_{c\mid N_2}\right)$. If $N_1\neq N_2$, then the exponents of $g_1$ are different from those of $g_2$ since their least common multiples are different. Hence $g_1\neq g_2$. Otherwise $N_1=N_2$ but their is some $c\mid N_1$ such that $x_{c}\neq x'_{c}$. Thus the sequences of exponents of $g_1$ and $g_2$ are different by Proposition \ref{prop:rnxc}, and hence $g_1\neq g_2$ as well. Finally, let $f=\prod_{n\mid N_0}\eta(n\tau)^{r_n}$ be any admissible nonconstant eta-quotient whose level is $N_0$ and weight is $k_0$. By Corollary \ref{coro:admmissible} we have $1\leq N_0\leq 400$; by Lemma \ref{lemm:admissible} we have $k_0\in K_{N_0}$. Define $(y_c)_{c\mid N_0}$ by \eqref{eq:rntoxc} (with $N=N_0$ and $x_c$ replaced by $y_c$). It is immediate that $y_c\in\numZ$. Moreover, $y_c\geq0$ since $f$ is holomorphic at all cusps. We have $\sum_{c\mid N_0}\phi(c,N_0/c)y_c=2mk_0$ (see Remark \ref{rema:ANcolumn}) and hence $\sum_{c\mid N_0}\phi(c,N_0/c)y_c\in2m\cdot K_{N_0}$ where $m=N_0\prod_{p\mid N_0}(1+1/p)^{-1}$. It follows that in Steps (a) and (c) the variable $N$ can actually take the value $N_0$ and $(x_c)_{c\mid N}$ can take the value $(y_c)_{c\mid N_0}$. The corresponding $g$ in Step (d) is equal to $f$ according to Proposition \ref{prop:rnxc}. Therefore, $f$ is recorded in some step of the algorithm which concludes the proof.
\end{proof}
\begin{rema}
If Step (h) is removed, then the remaining part of the algorithm gives all admissible nonconstant eta-quotients of type I. If this is the task, then we can shrink the set that $N$ ranges over in Step (a) to $\{1,2,3,\dots,36\}$ by the latter assertion of Corollary \ref{coro:admmissible}. In the following subsections, we will only consider such eta-quotients because finding those of type II need more effort and the program takes rather long time\footnote{We have provided the SageMath code that can produce admissible eta-quotients of type II. See Appendix \ref{apx:Usage of SageMath code} for the usage.} which we will do in a future paper. The simplest example of type II is $\eta(\tau)^{24}$ since it is a cusp form, $\dim_\numC M_{12}(\slZ)=2$ and there is an Eisenstein series in $M_{12}(\slZ)$. As a comparison, $\eta(\tau)^{r}$ is an admissible eta-quotient of type I for $r=0,1,2,\dots,23$ since $\dim_\numC M_{r/2}(\slZ,\chi_{\eta^r})=1$. See \cite[Section 5]{ZZ23}.
\end{rema}

\subsection{The table of admissible eta-quotients of type I}
For the SageMath program realizing the algorithm for finding admissible eta-quotients of type I in the last subsection, see Appendix \ref{apx:Usage of SageMath code}. Here (Table \ref{table:admissibleTypeI} below) we present all such eta-quotients for levels $N=1,\,p_1,\,p_1^2,\,p_1^3$ and $p_1p_2$ where $p_1$, $p_2$ are distinct primes.

% [inline block 0: 1 envs, 58311 chars -> data_tex | \begin{longtable}{llllll} \caption{All admissible eta-quotients of type I of level $N=1,p_1,\,p_1^2,\,p_1^3$ or $p_1p_2$...]


\begin{rema}
Each entry in Table \ref{table:admissibleTypeI} is different from others. For instance, although the eta-quotient $\eta(\tau)^3$, when considered as $\eta(\tau)^3\eta(2\tau)^0$, generates the one-dimensional space $M_{3/2}(\Gamma_0(2),\chi)$ where $\chi\colon\widetilde{\Gamma_0(3)}\rightarrow\numC^\times$ is the character of $\eta(\tau)^3\eta(2\tau)^0$, we list $\eta(\tau)^3$ only in the group $N=1$, not $N=2,k=3/2$ to avoid repetition. As a comparison, Tables 2 and 3 of \cite{ZZ23}, whose entries constitute a subset of Table \ref{table:admissibleTypeI} above, contain repeated eta-quotients. For example, $\eta(\tau)^{-1}\eta(2\tau)^2=\eta(\tau)^{-1}\eta(2\tau)^2\eta(4\tau)^0$ is listed both in the groups corresponding to $N=2,k=1/2$ and $N=4,k=1/2$ there.
\end{rema}

\begin{rema}
One should notice, e.g., that the formal product $1^{0}3^{0}5^{0}15^{2}$ which represents $\eta(15\tau)^2$ is in Table \ref{table:admissibleTypeI} but $1^{0}3^{0}5^{0}15^{1}$ is not. This is because $\dim_\numC M_{1/2}(\Gamma_0(15),\chi)$ can not be computed using Theorem \ref{thm:main} (with $t=0$) for any $\chi$ but $\dim_\numC M_{1}(\Gamma_0(15),\chi)$ can with $\chi$ being the character of $\eta(15\tau)^2$ as one may directly see by comparing both sides of the first inequality of \eqref{eq:mainThmCondtion2}. See also Table \ref{table:wt1/2} and Table \ref{table:wt1}. Nevertheless, we can prove that $\eta(15\tau)$ is weakly admissible, as the following proposition shows.
\end{rema}

\begin{prop}
Let $f$ be an eta-quotient (of integral exponents). If there is a positive integer $n$ such that $f^n$ is weakly admissible, then $f$ is weakly admissible. As a consequence, The $m$th root, where $m\in\numgeq{Z}{1}$, of an eta-quotient $\prod\eta(n\tau)^{r_n}$ in Table \ref{table:admissibleTypeI} is weakly admissible provided that $m\mid r_n$ for all $n$.
\end{prop}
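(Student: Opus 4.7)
The plan is to reduce the claim to an elementary observation about $n$-th roots of meromorphic functions on the compact Riemann surface $X_0(N)$, exploiting that an eta-quotient has no zeros or poles on $\uhp$.

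First, I would set up notation. Suppose $f=\prod_{n\mid N}\eta(n\tau)^{r_n}$ has weight $k$, level $N$ and character $\chi\colon\widetilde{\Gamma_0(N)}\to\numC^\times$. Since $r_n=0$ if and only if $nr_n=0$, the eta-quotient $f^n$ has the same level $N$, weight $nk$ and character $\chi^n$. For any $g\in M_k(\Gamma_0(N),\chi)$, the quotient $g/f$ lies in $\mathscr{M}(\uhp)$ and, by Remark \ref{rema:multiplicationModularForm}, transforms like a modular function of weight $0$ with trivial character; hence it descends to a meromorphic function on $X_0(N)$ (this uses crucially that $f$ has no zeros on $\uhp$, so $g/f$ is well-defined and the quotient has only finitely many poles upstairs).

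Next, I would split into the two cases in the definition of weakly admissible. In the first case, $\dim_\numC M_{nk}(\Gamma_0(N),\chi^n)=1$, so $f^n$ spans this space. For any $g\in M_k(\Gamma_0(N),\chi)$ we have $g^n\in M_{nk}(\Gamma_0(N),\chi^n)$ and therefore $g^n=c\cdot f^n$ for some $c\in\numC$. Viewing $h:=g/f$ as a meromorphic function on $X_0(N)$, we obtain $h^n=c$. If $c=0$ then $h\equiv0$; otherwise $h$ has no zeros or poles and takes values in the finite set of $n$-th roots of $c$, so by connectedness of $X_0(N)$ and continuity of $h$, $h$ is constant. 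Hence $g$ is a scalar multiple of $f$, which forces $\dim_\numC M_k(\Gamma_0(N),\chi)=1$, i.e.\ $f$ is weakly admissible of type (a').

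In the second case, $f^n$ is a cusp form with $\dim_\numC S_{nk}(\Gamma_0(N),\chi^n)=1$. Because $\div_x(f^n)=n\cdot\div_x(f)$ at each cusp $x$ by Definition \ref{deff:divisorModularForm}, $f$ itself is a cusp form, and the same $n$-th-root argument applied inside $S_k(\Gamma_0(N),\chi)$ yields $\dim_\numC S_k(\Gamma_0(N),\chi)=1$. Since $f\in S_k\subseteq M_k$, this gives condition (a') if $\dim_\numC M_k=1$ and condition (b') if $\dim_\numC M_k\geq2$, so $f$ is weakly admissible in either subcase. The consequence then follows immediately: if $\prod_{n\mid N}\eta(n\tau)^{r_n}$ appears in Table \ref{table:admissibleTypeI} and $m\mid r_n$ for every $n$, then $g:=\prod_{n\mid N}\eta(n\tau)^{r_n/m}$ is an eta-quotient of integer exponents with $g^m$ admissible (hence weakly admissible), so $g$ itself is weakly admissible by the proposition.

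There is no serious obstacle; the only subtlety worth highlighting in the write-up is verifying that the characters and automorphy factors really cancel so that $g/f$ is a genuine meromorphic function on $X_0(N)$ (not merely a section of some line bundle with monodromy), which is exactly what Remark \ref{rema:multiplicationModularForm} guarantees once one notes that $f$ is nowhere-vanishing on $\uhp$.
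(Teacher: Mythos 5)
Your proof is correct and follows essentially the same route as the paper: reduce to the observation that $g^n=c\cdot f^n$ forces $g/f$ to be an $n$-th root of a constant, hence constant by connectedness (the paper argues on $\uhp$ using that $f$ is nonvanishing there; you push the quotient down to $X_0(N)$, which is the same idea). Your explicit handling of the type II case — noting that $f$ is then a cusp form with $\dim_\numC S_k=1$ and may end up of either type — matches the paper's proof and its subsequent remark.
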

\begin{proof}
Let $N$, $k$ be the level and weight of $f$ respectively. Let $\chi\colon\widetilde{\Gamma_0(N)}\rightarrow\numC^\times$ be the character of $f$. Then $\chi^n$ is the character\footnote{Here $\chi^n$ means the character on $\widetilde{\Gamma_0(N)}$ that sends $\left(\gamma,\varepsilon\right)$ to $\chi\left(\gamma,\varepsilon\right)^n$ where $\gamma\in\Gamma_0(N)$ and $\varepsilon\in\{\pm1\}$.} of $f^n$. First suppose $f^n$ is weakly admissible of type I, which means $f^n$ is holomorphic at all cusps and $\dim_\numC M_{nk}(\Gamma_0(N),\chi^n)=1$. Then obviously $f$ is holomorphic at all cusps so $f\in M_{k}(\Gamma_0(N),\chi)$. To prove $\dim_\numC M_{k}(\Gamma_0(N),\chi)=1$, let $0\neq g\in M_{k}(\Gamma_0(N),\chi)$ be arbitrary. Then $g^n\in M_{nk}(\Gamma_0(N),\chi^n)$ and hence $g^n=c\cdot f^n$ for some $c\in\numC^\times$. It follows that $g=c_1\cdot f$ where $c_1$ is an $n$th root of $c$ since $\uhp$ is connected and $f$ is holomorphic on $\uhp$. This proves that $f$ is weakly admissible of type I. The case of type II is proved in a similar manner with spaces such as $M_{k}(\Gamma_0(N),\chi)$ replaced by $S_{k}(\Gamma_0(N),\chi)$.
\end{proof}
\begin{rema}
It should be noted that if $f^n$ is weakly admissible of type II, then we can only assert that $f$ is weakly admissible; we can say nothing about whether $f$ is of type I or II. For instance, consider $f(\tau)=\eta(\tau)$, $n=24$.
\end{rema}

\subsection{Generalized double coset operators}
We recall the theory of generalized double coset operators developed in \cite[Section 3]{ZZ23}. This is a generalization of Wohlfahrt's extension of Hecke operators; cf. \cite{Woh57}. Let $l$, $N$ be positive integers. Let $\chi$ be the character \eqref{eq:charEtaQuotient} where $D=2$ and $r_n$ are integers. Let $(r_n')_{n\mid N}$ be another sequence of integers such that $\sum_nr_n=\sum_nr_n'$ and let $\chi'$ be the character \eqref{eq:charEtaQuotient} with $r_n$ replaced by $r_n'$ (again $D=2$). We define a formal expression (see \cite[Eq. (28) and (30)]{ZZ23})
\begin{equation}
\label{eq:Tlf}
T_lf(\tau)=l^{-\frac{k}{2}}\cdot\sum_{\twoscript{a\mid l}{(N,a)=1}}a^k\sum_{\twoscript{0\leq b<d}{(a,b,d)=1}}\chi\widetilde{\tbtmat{-Nb+ax}{z}{-Nd}{y}}^{-1}\chi'\widetilde{\tbtmat{ay}{by-dz}{N}{x}}^{-1}f\left(\frac{a\tau+b}{d}\right)
\end{equation}
where $k\in\frac{1}{2}\numZ$, $d=\frac{l}{a}$ and $x,y,z$ are any integers (depending on $a$, $b$ and $d$) such that $(Nd,-Nb+ax)=1$ and $(-Nb+ax)y+Ndz=1$. If the dependence on $k$, $N$, $\chi$ and $\chi'$ is important, then we also denote $T_l$ by $T_{l;k,N,\chi,\chi'}$ or $T_{l;\chi,\chi'}$.
\begin{thm}
\label{thm:doubleCosetOperator}
For $f$ being a meromorphic modular form of weight $k\in\frac{1}{2}\numZ$ for the group $\Gamma_0(N)$ with character $\chi$, $T_lf$ is well-defined (i.e., independent of the choices of $x,y,z$) and is a meromorphic modular form of the same weight for the same group with character $\chi'$ provided that the following three conditions hold:
\begin{gather} 
l\sum_{n \mid N}\frac{N}{n}r_n \equiv \sum_{n \mid N}\frac{N}{n}r_n' \pmod{24},\label{eq:charCom1}\\
\sum_{n \mid N}nr_n \equiv l\sum_{n \mid N}nr_n' \pmod{24},\label{eq:charCom2}\\
l^{2\abs{k'}}\cdot\prod_{2 \nmid r_n-r_n'}n \text{ is a perfect square,}\label{eq:charCom3}
\end{gather}
where $k'=\frac{1}{2}\sum_nr_n$. Moreover, if the above conditions hold, then $T_l$ maps $M_k(\Gamma_0(N),\chi)$ into $M_k(\Gamma_0(N),\chi')$ and $S_k(\Gamma_0(N),\chi)$ into $S_k(\Gamma_0(N),\chi')$ respectively.
\end{thm}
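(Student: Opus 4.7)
The plan is to treat $T_l$ as a Wohlfahrt-type generalized double coset operator associated to the matrix $\tbtMat{1}{0}{0}{l}$, adapted to the double cover $\widetilde{\Gamma_0(N)}$. The key interpretation is that \eqref{eq:Tlf} is a finite sum of terms of the form $f\vert_k\widetilde{\alpha}$ with $\alpha\in\glpQ$ of determinant $l$ (coming from the matrices $\tbtmat{a}{b}{0}{d}$), decorated with character factors $\chi(\cdot)^{-1}\chi'(\cdot)^{-1}$ that arise when such $\alpha$ is rewritten as a product of an element of $\Gamma_0(N)$ and a standard representative. Since \eqref{eq:Tlf} is, term by term, a slash of $f$ by an element of $\Dcover{\glpR}{2}$, the meromorphic behaviour of $T_lf$ at cusps and the preservation of $M_k$ (resp.\ $S_k$) will be automatic once the transformation law is established, because the finite collection of $\alpha$ appearing in the sum permutes the cusps of $\Gamma_0(N)$ up to $\Gamma_0(N)$-equivalence.

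First I would verify well-definedness: fixing $(a,b,d)$, any two admissible triples $(x,y,z)$ and $(x',y',z')$ differ by integral parameters that translate into a left $\Gamma_0(N)$-action on the matrix $\tbtmat{-Nb+ax}{z}{-Nd}{y}$ and a right $\Gamma_0(N)$-action on $\tbtmat{ay}{by-dz}{N}{x}$. A direct computation using \eqref{eq:charEtaQuotient} shows that the induced change in the product $\chi(\cdot)^{-1}\chi'(\cdot)^{-1}$ vanishes exactly when \eqref{eq:charCom1}--\eqref{eq:charCom3} hold. Next I would establish the transformation law: for $\gamma\in\widetilde{\Gamma_0(N)}$, each summand transforms under $\vert_k\gamma$ into a slash by $\widetilde{\alpha}\widetilde\gamma$, which via a decomposition $\alpha\gamma=\gamma'\alpha'$ in $\glpQ$ (with $\gamma'\in\Gamma_0(N)$ and $\alpha'$ a new representative of the same shape) becomes a scalar multiple of $f\vert_k\widetilde{\alpha'}$. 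Reindexing the sum by $(a,b)\mapsto(a',b')$ and collecting the character factors should yield $\chi'(\gamma)\cdot T_lf(\tau)$, again by invoking the congruences \eqref{eq:charCom1}--\eqref{eq:charCom3}.

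The main technical obstacle lies in the bookkeeping on the double cover $\Dcover{\slR}{2}$: every slash operator carries an implicit choice of square-root branch, and these choices must be coherent across the entire sum. The three conditions correspond precisely to Lemma \ref{lemm:whenCharSame} applied to a suitable character twist of $\chi$: \eqref{eq:charCom1} and \eqref{eq:charCom2} match the resulting character with $\chi'$ on translations at the cusps $\rmi\infty$ and $0$ respectively (equivalently, they ensure that the $\Psi$-cocycle contributions from \eqref{eq:Psi} cancel), while \eqref{eq:charCom3} guarantees consistency of the sign factor $\varepsilon$ in the cover. Verifying these congruences explicitly via \eqref{eq:transLogEta} and \eqref{eq:charEtaQuotient} will be the lengthy but essentially mechanical heart of the proof, and it is this cocycle computation--rather than any geometric or analytic input--that I expect to consume most of the work.
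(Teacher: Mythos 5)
Your strategy is sound, but it is not the route the paper takes: the paper disposes of Theorem \ref{thm:doubleCosetOperator} in one sentence by citing \cite{ZZ23} --- the last assertion of Theorem 3.9 there identifies conditions \eqref{eq:charCom1}--\eqref{eq:charCom3} with the $\widetilde{\tbtmat{1}{0}{0}{l}}$-compatibility of $\chi$ and $\chi'$, while Proposition 3.2 and eqs.~(28), (30) there supply the general double coset formalism (well-definedness, the transformation law, and preservation of $M_k$ and $S_k$). What you outline is essentially a from-scratch reconstruction of that machinery, and your reading of \eqref{eq:Tlf} is the right one: one checks directly that $\tbtmat{-Nb+ax}{z}{-Nd}{y}\tbtmat{1}{0}{0}{l}\tbtmat{ay}{by-dz}{N}{x}=\tbtmat{a}{b}{0}{d}$, so each summand is $(\chi\vert_l\chi')^{-1}(\widetilde{\alpha})\,f\vert_k\widetilde{\alpha}$ for a representative $\alpha$ of $\widetilde{\Gamma_0(N)}\backslash\widetilde{\Gamma_0(N)}\widetilde{\tbtmat{1}{0}{0}{l}}\widetilde{\Gamma_0(N)}$. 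The ambiguity in $(x,y,z)$ amounts to replacing the pair $(h_1,h_2)$ by $\bigl(h_1\tbtmat{1}{0}{0}{l}g\tbtmat{1}{0}{0}{l}^{-1},\,g^{-1}h_2\bigr)$ with $g\in\tbtmat{1}{0}{0}{l}^{-1}\Gamma_0(N)\tbtmat{1}{0}{0}{l}\cap\Gamma_0(N)$ --- a coupled change rather than the independent left/right actions you describe --- and compatibility is exactly what makes $\chi(h_1)\chi'(h_2)$ insensitive to it; note that only sufficiency of the three conditions is asserted, so your ``vanishes exactly when'' is more than you need. You are also right that the real labour is the cocycle computation translating compatibility of the two eta-quotient characters into \eqref{eq:charCom1}--\eqref{eq:charCom3}; this is done with Petersson's formula \eqref{eq:etaCharPetersson} in \cite{ZZ23} and is of the same flavour as the computation the paper does carry out in full for Theorem \ref{thm:cTlfbycf}. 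The citation buys brevity and avoids duplicating a long case analysis; your route buys self-containedness at the cost of reproving \cite[Theorem 3.9]{ZZ23}.
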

\begin{proof}
This is a combination of the last assertion of \cite[Theorem 3.9]{ZZ23}, \cite[Proposition 3.2(4),(5)]{ZZ23} and \cite[Eq. (28), (30)]{ZZ23}.
\end{proof}
\begin{thm}
\label{thm:Tlf1f2}
Let $f_1=\prod_{n}\eta(n\tau)^{r_n}$ and $f_2=\prod_{n}\eta(n\tau)^{r'_n}$ be weakly admissible eta-quotients of the same type (e.g., functions listed in Table \ref{table:admissibleTypeI}) of the same weight $k$ and the same level $N$. Let $\chi$ and $\chi'$ be the characters of $f_1$ and $f_2$ on $\widetilde{\Gamma_0(N)}$ respectively. Then for any $l\in\numgeq{Z}{1}$ satisfying \eqref{eq:charCom1}, \eqref{eq:charCom2} and \eqref{eq:charCom3} (with $k'=k$), there exists a $c_l\in\numC$ such that
\begin{equation*}
T_{l;\chi,\chi'}f_1=c_l\cdot f_2.
\end{equation*}
\end{thm}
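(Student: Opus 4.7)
The proof will be a short application of the dimension results we have already collected, together with the preservation properties of the generalized double coset operator recalled in Theorem \ref{thm:doubleCosetOperator}. The plan is to argue that $T_{l;\chi,\chi'}f_1$ lies in a one-dimensional space of which $f_2$ is a generator, so the desired proportionality is automatic.

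First I would invoke Theorem \ref{thm:doubleCosetOperator}. Since $f_1$ is a holomorphic eta-quotient, it is in particular a meromorphic (indeed weakly holomorphic) modular form of weight $k$ for $\Gamma_0(N)$ with character $\chi$. The three hypotheses \eqref{eq:charCom1}, \eqref{eq:charCom2}, \eqref{eq:charCom3} on $l$ are assumed (with $k'=k$), so the theorem tells us that $T_l f_1$ is well-defined and is a meromorphic modular form of weight $k$ for $\Gamma_0(N)$ with character $\chi'$. Moreover, the last assertion of that theorem says that $T_l$ sends $M_k(\Gamma_0(N),\chi)$ into $M_k(\Gamma_0(N),\chi')$ and $S_k(\Gamma_0(N),\chi)$ into $S_k(\Gamma_0(N),\chi')$.

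Next I would split into the two types of weak admissibility. If both $f_1$ and $f_2$ are weakly admissible of type I, then $f_1\in M_k(\Gamma_0(N),\chi)$ (as $f_1$ is holomorphic at all cusps) and by condition (a'), $\dim_{\numC}M_k(\Gamma_0(N),\chi')=1$ with $f_2$ as a nonzero element, so $M_k(\Gamma_0(N),\chi')=\numC\cdot f_2$. Applying the first preservation statement gives $T_lf_1\in M_k(\Gamma_0(N),\chi')=\numC\cdot f_2$, hence $T_lf_1=c_l\cdot f_2$ for some $c_l\in\numC$. If instead both $f_1$ and $f_2$ are weakly admissible of type II, then by condition (b'), $f_1$ and $f_2$ are cusp forms and $\dim_{\numC}S_k(\Gamma_0(N),\chi')=1$ with $f_2$ as a generator. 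Applying the second preservation statement gives $T_lf_1\in S_k(\Gamma_0(N),\chi')=\numC\cdot f_2$, so again $T_lf_1=c_l\cdot f_2$.

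This is essentially all there is to it; I do not expect a genuine obstacle, because every ingredient has already been prepared: Theorem \ref{thm:doubleCosetOperator} supplies the needed map between the correct spaces, and the definition of weak admissibility of a given type supplies the one-dimensionality of the target. The only thing to watch is that one must match the types of $f_1$ and $f_2$ correctly, so that the image of $T_l$ genuinely lands inside the one-dimensional space generated by $f_2$; this is exactly what the hypothesis ``of the same type'' ensures.
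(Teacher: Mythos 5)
Your proposal is correct and matches the paper's reasoning: the paper proves this theorem in one line as "an immediate consequence of Theorem \ref{thm:doubleCosetOperator}," and your argument simply fills in the details of that deduction (using the preservation of $M_k$ or $S_k$ by $T_l$ together with the one-dimensionality supplied by weak admissibility of the appropriate type). The case split by type and the remark about matching types are exactly the right points to make explicit.
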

\begin{proof}
An immediate consequence of Theorem \ref{thm:doubleCosetOperator}.
\end{proof}
\begin{rema}
This theorem is still valid in the following two cases:
\begin{itemize}
  \item $f_1$ is weakly admissible of type II and $f_2$ is weakly admissible of type I,
  \item $f_1$ is a cusp form that is weakly admissible of type I and $f_2$ is weakly admissible of type II.
\end{itemize}
\end{rema}
\begin{rema}
In the case $N=1$ (hence $r_1=r'_1$, $k=\frac{1}{2}r_1$ and $\chi=\chi'$) the identity reads $T_{l}\eta^{r_1}=c_l\cdot\eta^{r_1}$ where $r_1=0,1,2,\dots24$. This has been investigated in detail in \cite[Section 5]{ZZ23} where $c_l$ and \eqref{eq:Tlf} are given in explicit forms.
\end{rema}

We are mainly interested in the case $f_1=f_2$ in Theorem \ref{thm:Tlf1f2}.
\begin{coro}
\label{coro:Lfmf}
Let $f(\tau)=\prod_{n}\eta(n\tau)^{r_n}$ be a weakly admissible eta-quotient (e.g., a function listed in Table \ref{table:admissibleTypeI}) of weight $k=\frac{1}{2}\sum_nr_n$ and level $N$. We define a submonoid of the multiplicative monoid of positive integers as follows:
\begin{equation*}
L_f=\begin{dcases}
\left\{l\in\numgeq{Z}{1}\colon l\equiv1\bmod{m_f}\right\} & \text{ if } k\in\numZ,\\
\left\{l\in\numgeq{Z}{1}\colon l\equiv1\bmod{m_f}\text{ and }l\text{ is a perfect square}\right\} & \text{ if } k\in\frac{1}{2}+\numZ,
\end{dcases}
\end{equation*}
where $$m_f=\frac{24}{(24,\sum_n(N/n)r_n,\sum_nnr_n)}.$$
Then for any $l\in L_f$ we have $T_lf=c_l\cdot f$ for some $c_l\in\numC$.
\end{coro}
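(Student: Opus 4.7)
The plan is to reduce the assertion to Theorem \ref{thm:Tlf1f2} applied with $f_1 = f_2 = f$ and $\chi' = \chi$. Under this specialization the three compatibility conditions \eqref{eq:charCom1}, \eqref{eq:charCom2} and \eqref{eq:charCom3} collapse: setting $A = \sum_{n \mid N} (N/n) r_n$ and $B = \sum_{n \mid N} n r_n$, condition \eqref{eq:charCom1} becomes $(l-1)A \equiv 0 \pmod{24}$, condition \eqref{eq:charCom2} becomes $(l-1)B \equiv 0 \pmod{24}$, and since $r_n - r_n' = 0$ condition \eqref{eq:charCom3} reduces to the requirement that $l^{2\abs{k}}$ be a perfect square. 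So I would first write down these three simplified constraints explicitly and then show that the set of $l \in \numgeq{Z}{1}$ satisfying all of them is exactly $L_f$.

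For the first two, the divisibility $(l-1)A \equiv 0 \pmod{24}$ is equivalent to $l \equiv 1 \pmod{24/(24,A)}$, and similarly for $B$; so both hold together iff
\begin{equation*}
l \equiv 1 \pmod{\mathrm{lcm}\bigl(24/(24,A),\,24/(24,B)\bigr)}.
\end{equation*}
The key elementary identity is that $\mathrm{lcm}(n/a,\,n/b) = n/(a,b)$ whenever $a,b \mid n$, applied with $n=24$, $a=(24,A)$, $b=(24,B)$; this collapses the modulus to $24/(24,A,B) = m_f$. For the third condition I would split on parity: if $k \in \numZ$ then $2\abs{k}$ is a nonnegative even integer and $l^{2\abs{k}}$ is automatically a square; if $k \in \tfrac{1}{2} + \numZ$ then $2\abs{k}$ is odd and $l^{2\abs{k}}$ is a square iff $l$ is. Matching these alternatives with the two branches of the definition of $L_f$ shows the set of admissible $l$ is exactly $L_f$, whence Theorem \ref{thm:Tlf1f2} furnishes a constant $c_l \in \numC$ with $T_l f = c_l \cdot f$ for every $l \in L_f$.

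A small secondary point to record is that $L_f$ is indeed a submonoid, which is immediate: the condition $l \equiv 1 \pmod{m_f}$ is preserved under multiplication, and so is the condition of being a perfect square. I do not anticipate any genuine obstacle in this argument, since Theorem \ref{thm:Tlf1f2} has already absorbed the hard content (Wohlfahrt's formalism and Lemma \ref{lemm:whenCharSame}); the only nontrivial step is the number-theoretic manipulation that consolidates the two congruences $(l-1)A \equiv 0$ and $(l-1)B \equiv 0 \pmod{24}$ into the single condition $l \equiv 1 \pmod{m_f}$, and this is precisely the $\mathrm{lcm}$ identity noted above.
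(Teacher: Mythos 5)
Your proposal is correct and follows essentially the same route as the paper: the paper's own proof simply states that "elementary number theory shows that $l\in L_f$ if and only if \eqref{eq:charCom1}, \eqref{eq:charCom2} and \eqref{eq:charCom3} with $r'_n=r_n$ and $k'=k$ hold" and then invokes Theorem \ref{thm:Tlf1f2} with $f_1=f_2=f$, which is exactly your reduction. The only difference is that you spell out the elementary number theory (the $\mathrm{lcm}$ identity consolidating the two congruences into $l\equiv1\bmod{m_f}$, and the parity split for $l^{2\abs{k}}$), and these details check out.
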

\begin{proof}
Elementary number theory shows that $l\in L_f$ if and only if \eqref{eq:charCom1}, \eqref{eq:charCom2} and \eqref{eq:charCom3} with $r'_n=r_n$ and $k'=k$ hold. Therefore the assertion follows from Theorem \ref{thm:Tlf1f2} with $f_1=f_2=f$.
\end{proof}
\begin{rema}
\label{rema:mrLr}
The monoid $L_f$ and the integer $m_f$ actually depend only on $\mathbf{r}=(r_n)_{n\mid N}$, so we sometimes write $L_\mathbf{r}$ and $m_\mathbf{r}$ instead.
\end{rema}
In another words, any weakly admissible eta-quotient $f$ is an eigenform for the operators $T_l$ indexed by the infinite monoid $L_f$. When $k\in\numZ$ and the character of $f$ is induced by some Dirichlet character, then by a result of Gordon, Hughes and Newman (cf. \cite[Theorem 1.64]{Ono04}) we have $m_f=1$ and hence $L_f=\numgeq{Z}{1}$. Moreover, in this case, $T_l$ is proportional to the usual Hecke operator of index $l$. Consequently, the functions in Table \ref{table:admissibleTypeI} are all Hecke eigenforms for appropriate infinite operator monoids in a generalized sense.

The identities $T_lf=c_l\cdot f$ become interesting only when we can express \eqref{eq:Tlf} in a more explicit form, at least one where $x,y,z$ do not appear. The following theorem gives such a formula which is key to the current topic. The reader may compare this with a previous result of Wohlfahrt \cite[Eq. (7$\vert$3) and (10$\vert$3)]{Woh57}.
\begin{thm}
\label{thm:cTlfbycf}
Let $N\in\numgeq{Z}{1}$. For each $n\mid N$, let $r_n$ be an integer and set 
\begin{equation}
\label{eq:notationsrn}
\mathbf{r}=(r_n)_{n\mid N},\quad k'=\frac{1}{2}\sum_{n\mid N}r_n,\quad x_N=\sum_{n\mid N}nr_n,\quad \varPi=\prod_{n\mid N}(Nn^{-1})^{\abs{r_n}}.
\end{equation}
Let $\chi_\mathbf{r}\colon\widetilde{\Gamma_0(N)}\rightarrow\numC^\times$ be the character of $\prod_{n\mid N}\eta(n\tau)^{r_n}$, i.e. \eqref{eq:charEtaQuotient}. Let $f$ be a meromorphic modular form on $\Gamma_0(N)$ of weight $k\in k'+2\numZ$ with character $\chi_{\mathbf{r}}$ and let $l\in L_\mathbf{r}$. Then
\begin{enumerate}
  \item we have the expansions (on $\Im\tau>Y_0$)
  \begin{equation*}
  f(\tau)=\sum_{n\in\frac{x_N}{24}+\numZ}c_f(n)q^n,\quad T_lf(\tau)=\sum_{n\in\frac{x_N}{24}+\numZ}c_{T_lf}(n)q^n,
  \end{equation*}
  where $Y_0$ is some nonnegative number, $c_f(n),\,c_{T_lf}(n)\in\numC$ are uniquely determined and $T_l$ is the operator defined in \eqref{eq:Tlf} with $\chi=\chi'=\chi_\mathbf{r}$;
  \item for $n\in\frac{x_N}{24}+\numZ$ we have
  \begin{equation}
  \label{eq:cTlfbycf}
  c_{T_lf}(n)=l^{-\frac{k}{2}}\sum_{\twoscript{a\mid l,\,d=l/a}{(a,N)=1}}\legendre{a}{\varPi}a^k\cdot c_f\left(\frac{ln}{a^2}\right)\sum_{\twoscript{0\leq b<d}{(a,b,d)=1}}\legendre{-Nb}{(a,d)}^{2k}\etp{bd\left(\frac{n}{l}-\frac{x_N}{24}\right)}\psi_{l,\mathbf{r}}(a,b)
  \end{equation}
  where
  \begin{equation*}
  \psi_{l,\mathbf{r}}(a,b)=\begin{dcases}
  \etp{-\frac{k+\delta}{4}(d-1)+\frac{(k+\delta)(l-1)(N-1)}{4}} &\text{ if } 2\nmid l,\\
  \etp{-\frac{k+\delta}{4}(a-1)-\frac{N(k+\delta)(1+\delta_1)}{4}b} &\text{ if } 2\mid l,\,2\mid N,\\
  1 &\text{ if } 2\mid l,\,2\nmid N,
  \end{dcases}
  \end{equation*}
  with $\delta=0$ ($\delta=1$ respectively) if $\varPi$ takes the form $2^\alpha\cdot(4m+1)$ ($2^\alpha\cdot(4m+3)$ respectively), $\alpha,\,m\in\numgeq{Z}{0}$, $\delta_1=1$ if $2\mid l$, $4\mid N$, $k\in\frac{1}{2}+\numZ$, $v_2(\varPi)\equiv1\bmod{2}$ and $\delta_1=0$ otherwise.
\end{enumerate}
\end{thm}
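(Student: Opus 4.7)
\medskip

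\noindent\textbf{Proof proposal for Theorem \ref{thm:cTlfbycf}.}

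For part (a), I will first compute $\chi_{\mathbf{r}}(\widetilde{T})$ directly from the definition \eqref{eq:charEtaQuotient}: using $\Psi\tbtmat{1}{n}{0}{1}=n$ (the $c=0,\,a>0$ case in \eqref{eq:Psi}) with $b$ there replaced by the $n$-th diagonal of the summand in \eqref{eq:charEtaQuotient}, one obtains $\chi_{\mathbf{r}}(\widetilde{T})=\etp{\frac{1}{24}\sum_{n\mid N}n\cdot r_n}=\etp{x_N/24}$. Since $f$ is meromorphic at $\rmi\infty$, the usual Fourier/Laurent expansion argument (already used in Section \ref{sec:Modular forms of rational weight}) yields $f(\tau)=\sum_{m\in x_N/24+\numZ}c_f(m)q^m$ on some $\uhp_{Y_0}$. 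By Theorem \ref{thm:doubleCosetOperator} and the definition of $L_{\mathbf{r}}$ (Corollary \ref{coro:Lfmf} and Remark \ref{rema:mrLr}), $T_l f$ is a meromorphic modular form with the same character $\chi_{\mathbf{r}}$ and weight $k$, so the same reasoning gives the analogous expansion for $T_l f$.

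For part (b), the plan is to substitute the Fourier expansion of $f$ into the defining formula \eqref{eq:Tlf} and collect the $q^n$-coefficient. A direct computation shows $f\!\left(\frac{a\tau+b}{d}\right)=\sum_m c_f(m)\etp{m(a\tau+b)/d}$, so matching the $\tau$-exponent $n$ forces $m=nd/a=nl/a^{2}$, and the remaining factor is $\etp{mb/d}=\etp{nb/a}=\etp{bdn/l}$. Hence
\begin{equation*}
c_{T_l f}(n)=l^{-k/2}\!\!\sum_{\substack{a\mid l\\(a,N)=1}}\!a^k\,c_f(nl/a^2)\!\sum_{\substack{0\le b<d\\(a,b,d)=1}}\!\chi_{\mathbf{r}}\widetilde{\tbtmat{-Nb+ax}{z}{-Nd}{y}}^{-1}\!\chi_{\mathbf{r}}\widetilde{\tbtmat{ay}{by-dz}{N}{x}}^{-1}\!\etp{\tfrac{bdn}{l}}.
\end{equation*}
The heart of the proof is to evaluate the product of two character values and show that it equals $\legendre{a}{\varPi}\legendre{-Nb}{(a,d)}^{2k}\etp{-bdx_N/24}\psi_{l,\mathbf{r}}(a,b)$, after which the statement falls out.

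The bulk of the argument will therefore be devoted to this explicit character evaluation. Using \eqref{eq:charEtaQuotient} and the piecewise definition \eqref{eq:Psi}, each character value reduces to $\etp{-\frac{1}{24}\sum_n r_n \Psi\!\tbtmat{\ast}{\ast}{\ast}{\ast}}$, where the relevant matrices have lower-left entries $-Nd/n$ and $N/n$ respectively. Substituting the case formulas in \eqref{eq:Psi} introduces Dedekind sums $s(y,Nd/n)$ and $s(-x,N/n)$. I will then apply the Dedekind reciprocity law
\begin{equation*}
s(h,k)+s(k,h)=-\tfrac{1}{4}+\tfrac{1}{12}\bigl(\tfrac{h}{k}+\tfrac{k}{h}+\tfrac{1}{hk}\bigr),
\end{equation*}
together with the identity $(-Nb+ax)y+Ndz=1$ that ties $x,y,z$ via a B\'ezout relation, in order to eliminate $x,y,z$ modulo $24$ and convert the surviving pieces into simple rational expressions in $a,b,d,N$. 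The congruences defining $L_{\mathbf{r}}$ ensure that the $24$-periodicity of the exponential absorbs the remaining $(l-1)$-factors, producing precisely the exponent $-bdx_N/24$ plus a correction depending only on the parity patterns of $l$, $N$ and $v_2(\varPi)$; this correction is what $\psi_{l,\mathbf{r}}(a,b)$ records.

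The final step is to identify the surviving sign factors with Kronecker-Jacobi symbols. The prime-by-prime product defining $\varPi$ matches the index of the second Dedekind-sum contribution, so quadratic reciprocity and the listed multiplicativity properties of $\legendre{\cdot}{\cdot}$ transform $\etp{\frac{1}{8}(\cdot)}$ and $\etp{\frac{1}{4}(\cdot)}$ contributions into $\legendre{a}{\varPi}$ and $\legendre{-Nb}{(a,d)}^{2k}$; when $k\in\numZ$ the latter disappears, matching the statement. The main obstacle will be the bookkeeping in the three-case split (odd $l$; even $l$ and even $N$; even $l$ and odd $N$), because the Dedekind-sum-to-Jacobi-symbol conversion depends on the $2$-adic valuation of $\varPi$, and in the $4\mid N$, $k\in\tfrac{1}{2}+\numZ$, odd $v_2(\varPi)$ regime an extra sign must be tracked, which is exactly the role of $\delta_1$. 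Careful organization of these parity cases—rather than any single deep idea—is what makes the computation long but elementary.
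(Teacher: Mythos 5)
Your part (a) and the reduction of part (b) to the evaluation of the product
$\chi_{\mathbf r}\widetilde{\tbtmat{-Nb+ax}{z}{-Nd}{y}}^{-1}\chi_{\mathbf r}\widetilde{\tbtmat{ay}{by-dz}{N}{x}}^{-1}$
are correct and agree with the paper: $\chi_{\mathbf r}(\widetilde T)=\etp{x_N/24}$ gives the support of the Fourier expansions, and substituting $f\left(\frac{a\tau+b}{d}\right)=\sum_m c_f(m)\etp{m(a\tau+b)/d}$ into \eqref{eq:Tlf} produces the factors $c_f(ln/a^2)$ and $\etp{bdn/l}$ exactly as you say. The gap is in your plan for the character evaluation itself. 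You propose to evaluate the two character values \emph{separately} via \eqref{eq:Psi}, which introduces the Dedekind sums $s(y,Nd/n)$ and $s(-x,N/n)$, and then to eliminate $x,y,z$ using Dedekind reciprocity. But reciprocity trades $s(y,Nd/n)$ for $-s(Nd/n,y)$ plus a rational term, and the new sum has modulus $y$ — an auxiliary B\'ezout parameter, not an invariant of $(a,b,d,N)$ — so the promised elimination does not go through; each Dedekind sum individually has no reason to reduce to anything explicit. Likewise, your final step of converting residual $\etp{\cdot/8}$ terms into $\legendre{a}{\varPi}$ via quadratic reciprocity runs in the wrong direction: passing from exponentials to Kronecker--Jacobi symbols in a uniform way is precisely the content of Petersson's closed formula for $\chi_\eta$, which your argument would have to rediscover.

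The paper's proof avoids both problems by two moves you are missing. First, it uses the homomorphism property to multiply the two metaplectic elements \emph{before} evaluating, so that only one character value $R=\chi_{\mathbf r}$ of the single product matrix $\tbtmat{a-N(l-1)z}{b-(l-1)xz}{-N(l-1)y}{d-(l-1)xy}$ must be computed; the identity $ad=l$ and the B\'ezout relation \eqref{eq:xyz1} make the entries carry an explicit factor $(l-1)$, which is what lets the congruences $24\mid(l-1)\sum_n nr_n$ and $24\mid(l-1)\sum_n (N/n)r_n$ coming from $l\in L_{\mathbf r}$ kill most of the exponential contributions. Second, it evaluates $R=\prod_n\chi_\eta^{r_n}$ using Petersson's formula \eqref{eq:etaCharPetersson}, which is already expressed in Kronecker--Jacobi symbols with no Dedekind sums, so the elimination of $x,y,z$ reduces to periodicity and reciprocity of the symbol $\legendre{\cdot}{\cdot}$ (e.g. $\legendre{y}{d}=\legendre{-Nb}{(a,d)}$) rather than to manipulations of Dedekind sums. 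Without these two steps your computation stalls at the Dedekind-sum stage, so the proposal as written does not constitute a proof.
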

A few words of caution are in order. Note that $\legendre{a}{\varPi}$ and $\legendre{-Nb}{(a,d)}$ refer to Kronecker-Jacobi symbols; they are not fractions. Also note that in \eqref{eq:cTlfbycf} we have set $c_f\left(\frac{ln}{a^2}\right)=0$ if $\frac{ln}{a^2}\not\in\frac{x_N}{24}+\numZ$. In addition, $k$ can be replaced by $k'$ in the definition of $\psi_{l,\mathbf{r}}(a,b)$ and if $f$ is the eta-quotient $\prod_{n\mid N}\eta(n\tau)^{r_n}$ itself, which is the case we will use below, then $k=k'$. Finally, $Y_0=0$ if $f$ is holomorphic on $\uhp$.
\begin{rema}
\label{rema:Tlfexplicit}
The formula \eqref{eq:cTlfbycf} is equivalent to
\begin{equation}
\label{eq:Tlfexplicit}
T_lf(\tau)=l^{-\frac{k}{2}}\sum_{\twoscript{a\mid l,\,d=l/a}{(a,N)=1}}\legendre{a}{\varPi}a^k\sum_{\twoscript{0\leq b<d}{(a,b,d)=1}}\legendre{-Nb}{(a,d)}^{2k}\etp{-\frac{x_N}{24}bd}\psi_{l,\mathbf{r}}(a,b)\cdot f\left(\frac{a\tau+b}{d}\right)
\end{equation}
which can be considered as an explicit form of \eqref{eq:Tlf} with $\chi=\chi'$.
\end{rema}
\begin{proof}[Proof of Theorem \ref{thm:cTlfbycf} and Remark \ref{rema:Tlfexplicit}]
We begin with the proof of \eqref{eq:Tlfexplicit}. Set
$$
R:=\chi\widetilde{\tbtmat{-Nb+ax}{z}{-Nd}{y}}\chi'\widetilde{\tbtmat{ay}{by-dz}{N}{x}}=\chi_\mathbf{r}\widetilde{\tbtmat{(-Nb+ax)ay+Nz}{(-Nb+ax)(by-dz)+xz}{-Nady+Ny}{-Nd(by-dz)+xy}}
$$
in \eqref{eq:Tlf}. (Theorem \ref{thm:doubleCosetOperator} and the assumption $l\in L_\mathbf{r}$ ensure that $T_l$ is well-defined.) Since $ad=l$ and
\begin{equation}
\label{eq:xyz1}
(-Nb+ax)y+Ndz=1
\end{equation}
we have
\begin{equation*}
R=\chi_\mathbf{r}\widetilde{\tbtmat{a-N(l-1)z}{b-(l-1)xz}{-N(l-1)y}{d-(l-1)xy}}.
\end{equation*}
To derive \eqref{eq:Tlfexplicit} from \eqref{eq:Tlf}, we need only to prove
\begin{equation}
\label{eq:toProveIncTlfbycf}
R^{-1}=\legendre{a}{\varPi}\legendre{-Nb}{(a,d)}^{2k}\etp{-\frac{x_N}{24}bd}\psi_{l,\mathbf{r}}(a,b)
\end{equation}
for which we need the Petersson's formula of $\chi_\eta$:
\begin{equation}
\label{eq:etaCharPetersson}
\chi_\eta\left(\tbtmat{a}{b}{c}{d},\varepsilon\right)=\begin{cases}
\varepsilon\cdot\legendre{d}{\abs{c}}\etp{\frac{1}{24}\left((a-2d)c-bd(c^2-1)+(3d-3)c\right)}   & \text{if }2 \nmid c, \\
\varepsilon\cdot\legendre{c}{d}\etp{\frac{1}{24}\left((a-2d)c-bd(c^2-1)+3d-3\right)}   & \text{if }2 \mid c.
\end{cases}
\end{equation}
For a proof of \eqref{eq:etaCharPetersson} see \cite{Kno70}. Since $r_n\in\numZ$ we have
\begin{equation}
\label{eq:Rprod}
R=\prod_{n\mid N}\chi_\eta^{r_n}\widetilde{\tbtmat{a-N(l-1)z}{n(b-(l-1)xz)}{-\frac{N}{n}(l-1)y}{d-(l-1)xy}}.
\end{equation}
The strategy\footnote{The $N=1$ case has been successfully worked out in the proof of \cite[Lemma 5.2]{ZZ23}. The strategy here is essentially the one there.} is to insert \eqref{eq:etaCharPetersson} into \eqref{eq:Rprod} and then eliminate $x,\,y,\,z$ using properties of Kronecker-Jacobi symbols. Inserting \eqref{eq:etaCharPetersson} into \eqref{eq:Rprod} gives
\begin{multline}
\label{eq:Rexpression}
R=\prod_{\twoscript{n\mid N}{2\nmid\frac{N}{n}(l-1)y}}\legendre{d-(l-1)xy}{Nn^{-1}(l-1)\abs{y}}^{r_n}\cdot\prod_{\twoscript{n\mid N}{2\mid\frac{N}{n}(l-1)y}}\legendre{-Nn^{-1}(l-1)y}{d-(l-1)xy}^{r_n}\cdot\etp{\frac{x_N}{24}bd}\\
\cdot\prod_{\twoscript{n\mid N}{2\nmid\frac{N}{n}(l-1)y}}\etp{\frac{r_n}{8}(d-1-(l-1)xy)(-Nn^{-1})(l-1)y}\cdot\prod_{\twoscript{n\mid N}{2\mid\frac{N}{n}(l-1)y}}\etp{\frac{r_n}{8}(d-1-(l-1)xy)}
\end{multline}
where we have used the fact
\begin{equation}
\label{eq:24midxNandx1}
24\mid(l-1)\sum_{n\mid N}nr_n,\quad24\mid(l-1)\sum_{n\mid N}Nn^{-1}r_n
\end{equation}
which are consequences of the assumption $l\in L_{\mathbf{r}}$. Now the proof splits into three cases.

\textbf{The case $2\nmid l$.} We must have $2\mid\frac{N}{n}(l-1)y$ for any $n\mid N$. Therefore
\begin{equation}
\label{eq:inProofOfThmctlf_1}
R=\etp{\frac{x_N}{24}bd}\prod_{n\mid N}\legendre{-Nn^{-1}(l-1)y}{d-(l-1)xy}^{r_n}\prod_{n\mid N}\etp{\frac{r_n}{8}(d-1-(l-1)xy)}.
\end{equation}
Note that if $k'\in\frac{1}{2}+\numZ$ then $l$ is an odd square and hence
\begin{equation*}
\legendre{-Nn^{-1}(l-1)y}{d-(l-1)xy}=\legendre{Nn^{-1}}{d-(l-1)xy}\cdot\legendre{-(l-1)y}{d-(l-1)xy}=\legendre{Nn^{-1}}{d-(l-1)xy}\cdot\legendre{y}{d}.
\end{equation*}
Inserting this into \eqref{eq:inProofOfThmctlf_1} we find that
\begin{equation}
\label{eq:inProofOfThmctlf_2}
R=\etp{\frac{x_N}{24}bd}\cdot\legendre{\varPi}{d-(l-1)xy}\legendre{y}{d}^{2k'}\etp{\frac{k'}{4}(d-1-(l-1)xy)}.
\end{equation}
If $k'\in\numZ$, the above expression is still valid. By \eqref{eq:xyz1} if $k'\in\frac{1}{2}+\numZ$ we have
\begin{equation}
\label{eq:inProofOfThmctlf_3}
\legendre{y}{d}=\legendre{-Nb+ax}{d}=\legendre{-Nb}{(a,d)}.
\end{equation}
For the last equality, see \cite[p. 567, line -3]{ZZ23}. If $k'\in\numZ$, then trivially $\legendre{y}{d}^{2k'}=\legendre{-Nb}{(a,d)}^{2k'}$. For the calculation of $\legendre{\varPi}{d-(l-1)xy}$, we set
\begin{equation}
\label{eq:defvarPip}
\varPi'=\prod_{p\mid \varPi,\,2\nmid v_p(\varPi)}p.
\end{equation}
There are four cases to consider. For the first case, $\varPi'\equiv1\bmod{4}$, we have
\begin{equation}
\label{eq:inProofOfThmctlf_3h}
\legendre{\varPi}{d-(l-1)xy}=\legendre{\varPi'}{d-(l-1)xy}=\legendre{d-(l-1)xy}{\varPi'}=\legendre{a}{\varPi'}=\legendre{a}{\varPi}.
\end{equation}
The third equality above follows from
\begin{equation*}
\legendre{d-(l-1)xy}{p}=\legendre{d-lxy+xy}{p}=\legendre{xy}{p}=\legendre{a}{p},\quad p\mid \varPi'
\end{equation*}
since by \eqref{eq:xyz1} we have $axy\equiv1\bmod{p}$ and $lxy\equiv d\bmod{p}$. Moreover, we have
\begin{equation}
\label{eq:inProofOfThmctlf_4}
\etp{-\frac{k'(l-1)xy}{4}}=\etp{-\frac{k'(l-1)(N-1)}{4}}.
\end{equation}
(When $k'\in\numZ$, $l\equiv3\bmod{4}$ and $2\nmid N$, \eqref{eq:inProofOfThmctlf_4} does not necessarily hold, but we can always choose $x,y,z$ such that $2\mid y$ for which \eqref{eq:inProofOfThmctlf_4} holds. We always make such a choice.) Inserting \eqref{eq:inProofOfThmctlf_3}, \eqref{eq:inProofOfThmctlf_3h} and \eqref{eq:inProofOfThmctlf_4} into \eqref{eq:inProofOfThmctlf_2} and replacing $k'$ by $k$ we obtain \eqref{eq:toProveIncTlfbycf} as required. For the other three cases $\varPi'\equiv3\bmod{4}$, $\varPi'\equiv2\bmod{8}$ and $\varPi'\equiv6\bmod{8}$, we proceed similarly and obtain \eqref{eq:toProveIncTlfbycf} likewise. However, if $2\mid \varPi'$ there is an extra difficulty: a factor $\legendre{2}{ad-(l-1)axy}=\legendre{2}{1+(l-1)N(dz-by)}$ appears and we should prove it equals $1$. This is immediate if $l\equiv1\bmod{4}$ since $2\mid \varPi'$ implies $2\mid N$. Now suppose $l\equiv3\bmod{4}$. Assume by contradiction that $4\nmid N$; we have $N\equiv2\bmod{4}$. By the first relation of \eqref{eq:24midxNandx1} we have $4\mid\sum_nnr_n$ from which $2\mid\sum_{2\nmid n}r_n$. Since $2\mid\varPi'$ we have
\begin{equation*}
v_2(\varPi)=\sum_{n\mid N}v_2(N/n)\cdot\abs{r_{n}}=\sum_{2\nmid n}\abs{r_n}\equiv1\bmod{2}
\end{equation*}
which contradicts $2\mid\sum_{2\nmid n}r_n$. Therefore, $4\mid N$ and hence $\legendre{2}{1+(l-1)N(dz-by)}=1$.

\textbf{The case $2\mid l$, $2\nmid N$.} By \eqref{eq:xyz1} if $2\mid d$ then $2\nmid y$ and if $2\nmid d$ then we can choose $x,y,z$ such that $2\nmid y$. (If $(x,y,z)$ is a solution to \eqref{eq:xyz1} with $2\mid y$, then $(x,y+Nd,z-(-Nb+ax))$ is another solution with $2\nmid y+Nd$.) We always make such a choice. Thus \eqref{eq:Rexpression} becomes
\begin{equation*}
R=\etp{\frac{x_N}{24}bd}\prod_{n\mid N}\legendre{d-(l-1)xy}{Nn^{-1}(l-1)\abs{y}}^{r_n}\prod_{n\mid N}\etp{\frac{r_n}{8}(d-1-(l-1)xy)(-Nn^{-1})(l-1)y}.
\end{equation*}
Since $l\equiv1\bmod{m_{\mathbf{r}}}$ we have $2\nmid m_{\mathbf{r}}$. By the definition of $m_{\mathbf{r}}$ (see Remark \ref{rema:mrLr}) we have
\begin{equation}
\label{eq:8midx1xN}
8\mid\sum_nnr_n,\quad 8\mid\sum_nNn^{-1}r_n.
\end{equation}
It follows that $k'=\frac{1}{2}\sum_nr_n\in\numZ$ and
\begin{equation}
\label{eq:inProofOfThmctlf_5}
R=\etp{\frac{x_N}{24}bd}\prod_{n\mid N}\legendre{d-(l-1)xy}{Nn^{-1}(l-1)\abs{y}}^{r_n}.
\end{equation}
Note that
\begin{equation*}
\legendre{d-(l-1)xy}{Nn^{-1}(l-1)\abs{y}}=\legendre{d-(l-1)xy}{Nn^{-1}}\cdot\legendre{d-(l-1)xy}{(l-1)\abs{y}}=\legendre{d-(l-1)xy}{Nn^{-1}}\cdot\legendre{d}{(l-1)\abs{y}}
\end{equation*}
where the last equality follows from the fact $(l-1)\abs{y}$ is odd and positive. Inserting this and $2k'\equiv0\bmod{2}$ into \eqref{eq:inProofOfThmctlf_5} we obtain
\begin{equation*}
R=\etp{\frac{x_N}{24}bd}\cdot\legendre{d-(l-1)xy}{\varPi}=\etp{\frac{x_N}{24}bd}\cdot\legendre{a}{\varPi}
\end{equation*}
where the last equality follows just as in the deduction of \eqref{eq:inProofOfThmctlf_3h}. We thus have arrived at \eqref{eq:toProveIncTlfbycf}.

\textbf{The case $2\mid l$, $2\mid N$.} As in the last case, we have $8\mid\sum_nnr_n$ and $8\mid\sum_nNn^{-1}r_n$. Moreover, $y$ must be odd by \eqref{eq:xyz1}. For any $n\mid N$ with $2\nmid\frac{N}{n}(l-1)y$, that is, with $2\nmid\frac{N}{n}$, we have
\begin{equation*}
\legendre{d-(l-1)xy}{Nn^{-1}(l-1)\abs{y}}=\legendre{Nn^{-1}(l-1)\abs{y}}{d-(l-1)xy}\cdot\etp{-\frac{1}{8}(d-1-(l-1)xy)\cdot(Nn^{-1}(l-1)\abs{y}-1)}.
\end{equation*}
Choosing a solution $(x,y,z)$ to \eqref{eq:xyz1} with $y<0$ and inserting the above identity into \eqref{eq:Rexpression}, we find that
\begin{multline*}
R=\etp{\frac{x_N}{24}bd}\cdot\prod_{n\mid N}\legendre{-Nn^{-1}(l-1)y}{d-(l-1)xy}^{r_n}\cdot\prod_{\twoscript{n\mid N}{2\nmid\frac{N}{n}}}\etp{-\frac{r_n}{8}(d-1-(l-1)xy)(-Nn^{-1}(l-1)y-1)}\\
\cdot\prod_{\twoscript{n\mid N}{2\nmid\frac{N}{n}}}\etp{\frac{r_n}{8}(d-1-(l-1)xy)(-Nn^{-1})(l-1)y}\cdot\prod_{\twoscript{n\mid N}{2\mid\frac{N}{n}}}\etp{\frac{r_n}{8}(d-1-(l-1)xy)}
\end{multline*}
We deal with the factor $\prod_{n\mid N}\legendre{-Nn^{-1}(l-1)y}{d-(l-1)xy}^{r_n}$ as in previous cases and thus simplify the expression for $R$ as
\begin{equation}
\label{eq:R2midl2midN}
R=\etp{\frac{x_N}{24}bd}\cdot\legendre{\varPi}{d-(l-1)xy}\cdot\legendre{d}{-(l-1)y}^{2k'}\cdot\etp{\frac{k'}{4}(d-1-(l-1)xy)\cdot(-(l-1)y)}. 
\end{equation}
There are six subcases: (see \eqref{eq:defvarPip} for $\varPi'$)
\begin{enumerate}
  \item $2\mid l$, $4\mid N$, $k'\in\numZ$ and $2\nmid\varPi'$.
  \item $2\mid l$, $4\mid N$, $k'\in\numZ$ and $2\mid\varPi'$.
  \item $2\mid l$, $4\mid N$, $k'\in\frac{1}{2}+\numZ$ and $2\nmid\varPi'$.
  \item $2\mid l$, $4\mid N$, $k'\in\frac{1}{2}+\numZ$ and $2\mid\varPi'$. (Only in this subcase we have $\delta_1=1$.)
  \item $2\mid l$, $N\equiv2\bmod{4}$ and $2\nmid\varPi'$. (We must have $k'\in\numZ$ in this subcase.)
  \item $2\mid l$, $N\equiv2\bmod{4}$ and $2\mid\varPi'$. (We must have $k'\in\numZ$ in this subcase.)
\end{enumerate}
We will only give the proof of the more difficult subcases (b) and (d) and omit the other four since the strategies are the same. For (b), we first prove $8\mid N$. Assume by contradiction that $8\nmid N$; then $2^2\parallel N$. It follows that
\begin{equation*}
v_2(\varPi)\equiv\sum_{\twoscript{n\mid N}{v_2(n)=1}}v_2(N/n)\cdot\abs{r_{n}}=\sum_{\twoscript{n\mid N}{v_2(n)=1}}\abs{r_n}\equiv1\bmod{2}.
\end{equation*}
On the other hand, \eqref{eq:8midx1xN} still holds in this case, and hence
\begin{equation*}
\sum_{\twoscript{n\mid N}{v_2(n)=0}}r_n\equiv\sum_{\twoscript{n\mid N}{v_2(n)=2}}r_n\equiv0\bmod{2}.
\end{equation*}
Therefore $\sum_{n\mid N}r_n\equiv1\bmod{2}$, that is, $k'\in\frac{1}{2}+\numZ$, which contradicts the assumption in (b). Hence $8\mid N$. Note that both $\varPi'/2$ and $d-(l-1)xy$ are odd and positive, from which we deduce that
\begin{align}
\legendre{\varPi}{d-(l-1)xy}&=\legendre{\varPi'}{d-(l-1)xy}\notag\\
&=\legendre{2}{d-(l-1)xy}\legendre{d-(l-1)xy}{\varPi'/2}\etp{\frac{1}{4}(d-1-(l-1)xy)\cdot\frac{\varPi'/2-1}{2}}\notag\\
&=\legendre{2}{ad-(l-1)axy}\legendre{a}{\varPi'}\etp{\frac{1}{4}(d-1-(l-1)xy)\cdot\frac{\varPi'/2-1}{2}}\notag\\
&=\legendre{2}{1+(l-1)N(dz-by)}\legendre{a}{\varPi}\etp{\frac{1}{4}(d-1-(l-1)xy)\cdot\frac{\varPi'/2-1}{2}}\notag\\
&=\legendre{a}{\varPi}\etp{\frac{1}{4}(a-1)\cdot\frac{\varPi'/2-1}{2}}.\label{eq:inProofOfThmctlf_6}
\end{align}
The last equality holds since by \eqref{eq:xyz1} and $8\mid N$ we have $axy\equiv1\bmod{4}$ and hence
\begin{equation*}
(a-1)-(d-1-(l-1)xy)\equiv a-d+(l-1)a=(a^2-1)d\equiv0\bmod{4}.
\end{equation*}
An argument similar to the above deduction shows that
$$
\etp{\frac{k'}{4}(d-1-(l-1)xy)\cdot(-(l-1)y)}=\etp{\frac{k'}{4}(a-1)(l-1)}.
$$
Inserting this and \eqref{eq:inProofOfThmctlf_6} into \eqref{eq:R2midl2midN} and noting that $a$ is odd, we obtain \eqref{eq:toProveIncTlfbycf} in the subcase (b). For the subcase (d), $l$ is a square. Since $(N,a)=1$ and $ad=l$ we have $v_2(d)=v_2(l)\equiv0\bmod{2}$ and hence $4\mid d$. Thus if we let $d'=d/2^{v_2(d)}$ then
\begin{equation*}
\legendre{d}{-(l-1)y}=\legendre{d}{y}=\legendre{d'}{y}=\etp{\frac{y-1}{4}\cdot\frac{d'-1}{2}}\cdot\legendre{y}{d}.
\end{equation*}
Combining this and
\begin{equation*}
\legendre{-Nb+ax}{d}\legendre{y}{d}=\legendre{(-Nb+ax)y}{d}=\legendre{1-Ndz}{d}=1
\end{equation*}
we find that
\begin{equation}
\label{eq:inProofOfThmctlf_7}
\legendre{d}{-(l-1)y}=\etp{\frac{y-1}{4}\cdot\frac{d'-1}{2}}\cdot\legendre{-Nb+ax}{d}=\etp{\frac{y-1}{4}\cdot\frac{d'-1}{2}}\cdot\legendre{-Nb}{(a,d)}.
\end{equation}
The calculation of $\legendre{\varPi}{d-(l-1)xy}$ is similar to that in above cases (e.g. \eqref{eq:inProofOfThmctlf_3h} or \eqref{eq:inProofOfThmctlf_6}) and the result is
\begin{equation}
\label{eq:inProofOfThmctlf_8}
\legendre{\varPi}{d-(l-1)xy}=\legendre{2}{1+Nb}\legendre{a}{\varPi}\etp{\frac{1}{4}(l-1)(a-1)\cdot\frac{\varPi'/2-1}{2}}.
\end{equation}
For the factor $\etp{\frac{k'}{4}(d-1-(l-1)xy)\cdot(-(l-1)y)}$ in \eqref{eq:R2midl2midN}, note that $l-d=(a-1)d\equiv0\bmod{8}$, $2\nmid y$ and hence
\begin{align*}
\etp{\frac{k'}{4}(d-1-(l-1)xy)\cdot(-(l-1)y)}&=\etp{\frac{k'}{4}(l-1)^2(1-xy)(-y)}\\
&=\etp{\frac{k'}{4}(1-xy)(-y)}=\etp{\frac{k'}{4}(x-y)}.
\end{align*}
Inserting this equality, \eqref{eq:inProofOfThmctlf_7} and \eqref{eq:inProofOfThmctlf_8} into \eqref{eq:R2midl2midN} we obtain
\begin{multline}
\label{eq:inProofOfThmctlf_8hh}
R=\legendre{a}{\varPi}\legendre{-Nb}{(a,d)}\etp{\frac{x_N}{24}bd}\legendre{2}{1+Nb}\\
\cdot\etp{\frac{1}{4}(l-1)(a-1)\cdot\frac{\varPi'/2-1}{2}}\etp{\frac{y-1}{4}\cdot\frac{d'-1}{2}+\frac{k'}{4}(x-y)}.
\end{multline}
Now we simplify the factor $\etp{\frac{y-1}{4}\cdot\frac{d'-1}{2}+\frac{k'}{4}(x-y)}$ in above. By \eqref{eq:xyz1} and the facts $2\mid d$, $4\mid N$ we find that $(-Nb+ax)y\equiv1\bmod{8}$. This, together with $2\nmid y$, implies $x\equiv ay+Nab\bmod{8}$. It follows that
\begin{align}
\etp{\frac{y-1}{4}\cdot\frac{d'-1}{2}+\frac{k'}{4}(x-y)}&=\etp{\frac{-2k'}{4}(y-1)\cdot\frac{d'-1}{2}+\frac{k'}{4}(ay-y+Nab)}\notag\\
&=\etp{\frac{k'}{4}Nab}\etp{\frac{k'}{4}y(a-d')}\etp{\frac{k'}{4}(d'-1)}.\label{eq:inProofOfThmctlf_8h}
\end{align}
Since $ad'=l/2^{v_2(d)}=l/2^{v_2(l)}$ is a square, $2\nmid a$ and $2\nmid d'$ we have
\begin{equation}
\label{eq:inProofOfThmctlf_9}
a-d'\equiv0\bmod{8}.
\end{equation}
In addition, since $k'\in\frac{1}{2}+\numZ$, $2\nmid a$ and $4\mid N$,
\begin{equation}
\label{eq:inProofOfThmctlf_10}
\etp{\frac{k'}{4}Nab}=\etp{\frac{1}{8}Nab}=\etp{\frac{1}{8}Nb}.
\end{equation}
Inserting \eqref{eq:inProofOfThmctlf_9} and \eqref{eq:inProofOfThmctlf_10} into \eqref{eq:inProofOfThmctlf_8h}, then combining \eqref{eq:inProofOfThmctlf_8hh} and noting that $\etp{\frac{1}{8}Nb}\legendre{2}{1+Nb}=1$, we arrive at \eqref{eq:toProveIncTlfbycf} as required. This concludes the proof of \eqref{eq:Tlfexplicit}.

Finally, we begin to prove Theorem \ref{thm:cTlfbycf}. For Theorem \ref{thm:cTlfbycf}(a), elementary Fourier analysis shows that
\begin{equation*}
f(\tau)=\sum_{n\in\frac{1}{24}\numZ}c_f(n)q^n,\quad T_lf(\tau)=\sum_{n\in\frac{1}{24}\numZ}c_{T_lf}(n)q^n,
\end{equation*}
which converge on $\Im\tau>Y_0$ for some $Y_0\geq0$ since $f(\tau+24)=f(\tau)$ and $T_lf(\tau+24)=T_lf(\tau)$. By modularity $f\vert_k\widetilde{\tbtmat{1}{1}{0}{1}}=\chi_{\mathbf{r}}\widetilde{\tbtmat{1}{1}{0}{1}}\cdot f=\etp{\frac{x_N}{24}}\cdot f$ and $T_lf\vert_k\widetilde{\tbtmat{1}{1}{0}{1}}=\chi_{\mathbf{r}}\widetilde{\tbtmat{1}{1}{0}{1}}\cdot T_lf=\etp{\frac{x_N}{24}}\cdot T_lf$. In terms of Fourier coefficients,
\begin{equation*}
c_f(n)\etp{n}=c_f(n)\etp{\frac{x_N}{24}},\quad c_{T_lf}(n)\etp{n}=c_{T_lf}(n)\etp{\frac{x_N}{24}}.
\end{equation*}
It follows that $c_f(n)=0$ unless $n\in\frac{x_N}{24}+\numZ$ and so is $c_{T_lf}(n)$. Theorem \ref{thm:cTlfbycf}(b), that is, \eqref{eq:cTlfbycf}, follows immediately from \eqref{eq:Tlfexplicit} whose details are omitted.
\end{proof}

In the important case $x_N\equiv0\mod{24}$ we have $c_f(n)=0$ and $c_{T_lf}(n)=0$ unless $n\in\numZ$. Therefore, \eqref{eq:cTlfbycf} becomes
\begin{equation}
\label{eq:cTlfbycfxN0}
c_{T_lf}(n)=l^{-\frac{k}{2}}\sum_{\twoscript{a\mid l,\,a^2\mid ln}{(a,N)=1,\,d=l/a}}\legendre{a}{\varPi}a^k\cdot c_f\left(\frac{ln}{a^2}\right)\sum_{\twoscript{0\leq b<d}{(a,b,d)=1}}\legendre{-Nb}{(a,d)}^{2k}\etp{\frac{bdn}{l}}\psi_{l,\mathbf{r}}(a,b)
\end{equation}
for $n\in\numZ$. From this some elegant formulas for special cases follow:
\begin{coro}
We keep the notations and assumptions of Theorem \ref{thm:cTlfbycf} and assume that $x_N\equiv0\mod{24}$. Then
\begin{equation*}
c_{T_lf}(0)=c_f(0)\cdot l^{-\frac{k}{2}}\sum_{\twoscript{a\mid l,\,d=l/a}{(a,N)=1}}\legendre{a}{\varPi}a^k\sum_{\twoscript{0\leq b<d}{(a,b,d)=1}}\legendre{-Nb}{(a,d)}^{2k}\psi_{l,\mathbf{r}}(a,b).
\end{equation*}
Moreover, if for any prime $p$ not dividing $N$ we have $p^2\nmid l$ (e.g. $l$ is square-free or $\rad(l)\mid\rad(N)$), then
\begin{equation}
\label{eq:cTlfwhenlSquareFree}
c_{T_lf}(1)=c_f(l)\cdot l^{-\frac{k}{2}}\sum_{0\leq b<l}\psi_{l,\mathbf{r}}(1,b).
\end{equation}
Finally, if $l=p^\alpha$ where $p$ is a prime not dividing $N$ and $\alpha\in\numgeq{Z}{1}$, then
\begin{equation*}
c_{T_{p^\alpha}f}(1)=p^{-\frac{\alpha k}{2}}\sum_{0\leq\beta\leq[\alpha/2]}\legendre{p}{\varPi}^\beta p^{k\beta}\cdot c_f(p^{\alpha-2\beta})\sum_{\twoscript{0\leq b<p^{\alpha-\beta}}{(p^\beta,b)=1}}\legendre{-Nb}{p^\beta}^{2k}\etp{\frac{b}{p^\beta}}\psi_{l,\mathbf{r}}(p^\beta,b).
\end{equation*}
\end{coro}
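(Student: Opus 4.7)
The corollary is a sequence of three specializations of the closed formula \eqref{eq:cTlfbycfxN0}, so my plan is to take that identity as a black box and argue that, in each of the three regimes considered, the sum on the right-hand side collapses to what is claimed. I would simply verify that the summation conditions on $a$ and $b$ reduce appropriately and that the two non-trivial factors $c_f(ln/a^2)$ and $\etp{bdn/l}$ simplify.

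First I would handle $c_{T_lf}(0)$. Setting $n=0$ in \eqref{eq:cTlfbycfxN0}, the divisibility condition $a^2\mid ln$ is automatic, the argument $ln/a^2$ of $c_f$ is $0$ for every $a$, and $\etp{bdn/l}=1$. Pulling the constant $c_f(0)$ out of the sum yields the stated expression verbatim.

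Next I would handle $c_{T_lf}(1)$ under the hypothesis that $p^2\nmid l$ for every prime $p\nmid N$. Setting $n=1$ forces $a^2\mid l$. Since $(a,N)=1$, every prime divisor $p$ of $a$ satisfies $p\nmid N$, so the hypothesis gives $p^2\nmid l$; the only way to accommodate $a^2\mid l$ is $a=1$, in which case $d=l$. With $a=1$ one has $(a,d)=1$, so $\legendre{-Nb}{(a,d)}^{2k}=1$; the coprimality condition $(a,b,d)=1$ becomes vacuous, so $b$ ranges over $\{0,1,\dots,l-1\}$; and $\etp{bdn/l}=\etp{b}=1$. Substituting gives \eqref{eq:cTlfwhenlSquareFree}.

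Finally I would handle $c_{T_{p^\alpha}f}(1)$. Writing $l=p^\alpha$, the condition $(a,N)=1$ together with $a\mid l$ forces $a=p^\beta$ for some $0\leq\beta\leq\alpha$ (here one uses $p\nmid N$), and $a^2\mid l$ imposes $\beta\leq[\alpha/2]$. Then $d=p^{\alpha-\beta}$; since $\beta\leq\alpha-\beta$, the gcd conditions simplify as $(a,d)=p^\beta$ and $(a,b,d)=(p^\beta,b)$; and $\etp{bdn/l}=\etp{b/p^\beta}$. Also $\legendre{a}{\varPi}=\legendre{p}{\varPi}^\beta$ by complete multiplicativity in the upper entry and $a^k=p^{k\beta}$. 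Substituting these into \eqref{eq:cTlfbycfxN0} and indexing the outer sum by $\beta$ produces the claimed formula.

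No step is expected to pose a genuine obstacle: everything reduces to careful bookkeeping of the conditions $a\mid l$, $(a,N)=1$, $a^2\mid ln$, $(a,b,d)=1$, and $0\leq b<d$, together with routine use of multiplicativity of the Kronecker--Jacobi symbol. The only point where one has to be mildly attentive is in the second formula, where the combination of $(a,N)=1$ and $p^2\nmid l$ for $p\nmid N$ must be used simultaneously to exclude all $a\neq 1$; this is the part I would write out most explicitly.
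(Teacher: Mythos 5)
Your proof is correct and is exactly the intended argument: the paper states this corollary as an immediate specialization of \eqref{eq:cTlfbycfxN0} to $n=0$, $n=1$, and $l=p^\alpha$, and your bookkeeping of the conditions $a^2\mid ln$, $(a,N)=1$, $(a,b,d)=1$ and of the factors $\legendre{-Nb}{(a,d)}$, $\etp{bdn/l}$ matches what the paper leaves implicit. No gaps.
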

\begin{rema}
The sum over $b$ in the above formulas, \eqref{eq:cTlfbycfxN0} or \eqref{eq:cTlfbycf} is either a partial sum of a geometric sequence or a Gauss sum according to whether $k$ is integral or half-integral. It is not hard to derive an explicit formula for this sum but we exclude this task here. For the $N=1$ case the reader may consult \cite[Eq. (58)]{ZZ23} and \cite[Lemma 5.6]{ZZ23} for such formulas.
\end{rema}

\subsection{Identities involving Fourier coefficients of weakly admissible eta-quotients}
\label{subsec:identitiesFourier}
The main theorem of Section \ref{sec:an extension of Martin's list of multiplicative eta-quotients} is the following:
\begin{thm}
\label{thm:mainApp2}
Let $f(\tau)=\prod_{n\mid N}\eta(n\tau)^{r_n}=\sum_{n}c_f(n)q^n$ be a weakly admissible eta-quotient (e.g., a function listed in Table \ref{table:admissibleTypeI}) where $N$ is the level. We keep the notations in \eqref{eq:notationsrn} with $k'=k$. Then for any $l\in L_f$ (see Corollary \ref{coro:Lfmf}) there exists a unique $c_l\in\numC$ such that
\begin{equation}
\label{eq:mainApp2}
l^{-\frac{k}{2}}\sum_{\twoscript{a\mid l,\,d=l/a}{(a,N)=1}}\legendre{a}{\varPi}a^k\cdot c_f\left(\frac{ln}{a^2}\right)\sum_{\twoscript{0\leq b<d}{(a,b,d)=1}}\legendre{-Nb}{(a,d)}^{2k}\etp{bd\left(\frac{n}{l}-\frac{x_N}{24}\right)}\psi_{l,\mathbf{r}}(a,b)=c_l\cdot c_f(n)
\end{equation}
holds for any $n\in\frac{x_N}{24}+\numZ$.
Moreover, the number $c_l$ is equal to the left-hand side of the above identity with $n$ replaced by $\frac{x_N}{24}$.
\end{thm}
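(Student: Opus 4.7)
The plan is to recognize Theorem \ref{thm:mainApp2} as nothing more than the Fourier-coefficient reformulation of the operator identity $T_lf = c_l\cdot f$, so that the proof amounts to chaining together two already-established results. As a first step, I would invoke Corollary \ref{coro:Lfmf}: since $f$ is weakly admissible and $l\in L_f$, that corollary produces the required scalar $c_l\in\numC$ such that $T_{l;\chi_{\mathbf{r}},\chi_{\mathbf{r}}}f = c_l\cdot f$ as meromorphic modular forms on $\uhp$. Consequently their Fourier expansions agree coefficient by coefficient, i.e.\ $c_{T_lf}(n) = c_l\cdot c_f(n)$ for every $n$ in the support $\frac{x_N}{24}+\numZ$ furnished by Theorem \ref{thm:cTlfbycf}(a).

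The second step is to bring Theorem \ref{thm:cTlfbycf}(b) to bear directly on $f$. Its hypotheses apply to the eta-quotient $f$ with the choice $k=k'$ (so the weight-compatibility condition $k\in k'+2\numZ$ is automatic), and formula \eqref{eq:cTlfbycf} then rewrites $c_{T_lf}(n)$ as exactly the left-hand side of \eqref{eq:mainApp2}. Combining this identification with the previous step yields \eqref{eq:mainApp2}. For uniqueness of $c_l$ and the closing assertion of the theorem, I would use the infinite-product expansion of $\eta$ to note that $f(\tau) = q^{x_N/24}+O(q^{x_N/24+1})$, so $c_f(x_N/24)=1$; substituting $n=x_N/24$ into \eqref{eq:mainApp2} then simultaneously pins down $c_l$ as the value of the left-hand side at $n=x_N/24$ and shows that $c_l$ is uniquely determined.

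I do not anticipate a genuine obstacle, because the two substantive ingredients have already been established elsewhere: Corollary \ref{coro:Lfmf} delivers proportionality of $T_lf$ and $f$ via Theorem \ref{thm:Tlf1f2} and the one-dimensionality packaged into weak admissibility, while Theorem \ref{thm:cTlfbycf} provides the explicit Fourier expansion of $T_lf$ via the lengthy computation with Petersson's formula for $\chi_\eta$. The only loose end to tie is that membership in $L_f$ coincides with the applicability conditions \eqref{eq:charCom1}--\eqref{eq:charCom3} of Theorem \ref{thm:doubleCosetOperator} specialized to $\mathbf{r}'=\mathbf{r}$ and $k'=k$; this is elementary number theory, and is the reason $L_f$ was defined as it was in Corollary \ref{coro:Lfmf}.
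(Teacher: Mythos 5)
Your proposal is correct and follows exactly the paper's own argument: apply Corollary \ref{coro:Lfmf} to get $T_lf=c_l\cdot f$, hence $c_{T_lf}(n)=c_l\cdot c_f(n)$, substitute the explicit expansion \eqref{eq:cTlfbycf} from Theorem \ref{thm:cTlfbycf}(b), and then evaluate at $n=\frac{x_N}{24}$ using $c_f\bigl(\frac{x_N}{24}\bigr)=1$ to identify $c_l$ and establish uniqueness. No gaps.
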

\begin{proof}
By Corollary \ref{coro:Lfmf} $T_lf=c_l\cdot f$ and hence $c_{T_lf}(n)=c_l\cdot c_f(n)$ for any $n\in\frac{x_N}{24}+\numZ$. Inserting \eqref{eq:cTlfbycf} into this identity gives the desired formula. Letting $n=\frac{x_N}{24}$ and noting that $c_f\left(\frac{x_N}{24}\right)=1$ we obtain the formula for $c_l$.
\end{proof}

The identities \eqref{eq:mainApp2} have been verified numerically for $l\leq121$, $n-\frac{x_N}{24}\leq40$ and $f$ being any function listed in Table \ref{table:admissibleTypeI} by a SageMath program. See Appendix \ref{apx:Usage of SageMath code} for the code. The rest of this section is devoted to useful consequences of Theorem \ref{thm:mainApp2} and concrete examples.

\subsubsection{Closed formulas for Fourier coefficients of eta-quotients.}\label{subsuesec:closedFourier} Let $f(\tau)=\prod_{n\mid N}\eta(n\tau)^{r_n}$ be a weakly admissible eta-quotient (e.g., a function listed in Table \ref{table:admissibleTypeI}) where $N$ is the level. We keep the notations in \eqref{eq:notationsrn} with $k'=k$. Here let us consider the situation $x_N=0$ where we have
\begin{equation*}
f(\tau)=\prod_{n\mid N}\prod_{m\in\numgeq{Z}{1}}(1-q^{nm})^{r_n}=\sum_{n\in\numgeq{Z}{0}}c_f(n)q^n,\quad\tau\in\uhp.
\end{equation*}
We have given a series expression of $c_f(n)$ involving Dedekind sums for $N=4$, $r_1=2$, $r_2=7$, $r_4=-4$ in Example \ref{examp:N4rn274}. This example is of course also listed in Table \ref{table:admissibleTypeI}. In fact, there exist very simple closed formulas for $c_f(n)$ only involving Kronecker-Jacobi symbols at least for $n$ square-free---thanks to Theorem \ref{thm:mainApp2}.
\begin{thm}
\label{thm:closedFormulaclf}
Let the notations and assumptions be as above. Let $l\in L_f$ (see Corollary \ref{coro:Lfmf}) satisfy
\begin{equation}
\label{eq:pnp2l}
p\nmid N\implies p^2\nmid l,\quad\text{for any prime } p.
\end{equation}
(e.g. $l$ is square-free or $\rad(l)\mid\rad(N)$.) We require that at least one of $l$ and $N$ is odd. Then
\begin{equation*}
c_f(l)=\begin{dcases}
-r_1\cdot\sum_{a\mid l,\,(a,N)=1}\legendre{a}{\varPi}a^{k-1} &\text{ if } 2\mid l,\,2\nmid N,\\
-r_1\cdot\sum_{a\mid l,\,(a,N)=1}\legendre{a}{\varPi}a^{k-1}\etp{\frac{(k+\delta)(l-d)}{4}} &\text{ if } 2\nmid l,
\end{dcases}
\end{equation*}
where $d=l/a$, $\varPi=\prod_{n\mid N}(Nn^{-1})^{\abs{r_n}}$ and $\delta=0$ ($\delta=1$ respectively) if $\varPi$ takes the form $2^\alpha\cdot(4m+1)$ ($2^\alpha\cdot(4m+3)$ respectively). In particular, if $\rad(l)\mid\rad(N)$, then $c_f(l)=-r_1$.
\end{thm}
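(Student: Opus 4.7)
The plan is to apply Theorem \ref{thm:mainApp2} twice to the hypothesis $x_N=0$: once at $n=0$ to obtain a closed form for $c_l$, and once at $n=1$ to express $c_f(l)$ in terms of $c_l$. Both $n=0$ and $n=1$ lie in $\frac{x_N}{24}+\numZ=\numZ$, so the identity \eqref{eq:mainApp2} applies. A direct logarithmic expansion of the defining product shows $c_f(0)=1$ and $c_f(1)=-r_1$, which I would record at the outset.

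The next step is to exploit the hypothesis \eqref{eq:pnp2l} to establish $(a,l/a)=1$ for every divisor $a\mid l$ with $(a,N)=1$: any prime $p$ dividing both $a$ and $d=l/a$ would give $v_p(l)\geq 2$, while $p\mid a$ and $(a,N)=1$ force $p\nmid N$, contradicting \eqref{eq:pnp2l}. Two consequences follow. First, $\legendre{-Nb}{(a,d)}^{2k}=\legendre{-Nb}{1}^{2k}=1$, and the coprimality condition $(a,b,d)=1$ is automatic, so the $b$-sum in \eqref{eq:mainApp2} simply gives $d$ copies of $\psi_{l,\mathbf r}(a,b)$. Second, when $n=1$ the term $c_f(l/a^2)$ vanishes unless $a^2\mid l$, which by the same divisibility argument forces $a=1$. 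Thus on the left side of \eqref{eq:mainApp2} at $n=1$ only the single divisor $a=1$, $d=l$ survives, while the factor $\etp{bd/l}=\etp{b}=1$ drops out.

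In both of the admissible regimes $2\mid l,\,2\nmid N$ and $2\nmid l$ (the middle case of $\psi_{l,\mathbf r}$, where $b$-dependence genuinely occurs, is precisely the case $2\mid l$ and $2\mid N$, which is excluded by hypothesis), the expression for $\psi_{l,\mathbf r}(a,b)$ provided by Theorem \ref{thm:cTlfbycf} is independent of $b$. Combining the collapses, the $n=0$ identity yields
\begin{equation*}
c_l=l^{1-k/2}\sum_{\substack{a\mid l\\(a,N)=1}}\legendre{a}{\varPi}a^{k-1}\,\psi_{l,\mathbf r}(a,0),
\end{equation*}
while the $n=1$ identity reduces to $l^{-k/2}\cdot c_f(l)\cdot l\cdot \psi_{l,\mathbf r}(1,0)=-r_1 c_l$, i.e.\ $c_f(l)=-r_1 c_l l^{k/2-1}\psi_{l,\mathbf r}(1,0)^{-1}$. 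Substituting and simplifying the ratio $\psi_{l,\mathbf r}(a,0)/\psi_{l,\mathbf r}(1,0)$ gives $1$ when $2\mid l,\,2\nmid N$ and $\etp{\frac{k+\delta}{4}(l-d)}$ when $2\nmid l$, which are exactly the two advertised formulas. The final "in particular" clause is immediate: if $\rad(l)\mid\rad(N)$, then any prime divisor of $a\mid l$ also divides $N$, so $(a,N)=1$ forces $a=1$, leaving only the trivial term $-r_1$.

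The main obstacle is the bookkeeping around $\psi_{l,\mathbf r}$: I must check that the parity hypothesis "at least one of $l$ and $N$ is odd" precisely excludes the $b$-dependent case and, correspondingly, that the ratio $\psi_{l,\mathbf r}(a,0)/\psi_{l,\mathbf r}(1,0)$ simplifies to the clean phase $\etp{(k+\delta)(l-d)/4}$ in the odd-$l$ case (and to $1$ in the other case). Aside from this, the argument is essentially a telescoping of Theorem \ref{thm:mainApp2} enabled entirely by the almost-squarefree hypothesis \eqref{eq:pnp2l}.
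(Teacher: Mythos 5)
Your proposal is correct and follows essentially the same route as the paper: the paper likewise obtains $c_l$ from the last assertion of Theorem \ref{thm:mainApp2} (the $n=x_N/24=0$ case), relates $c_f(l)$ to $c_l$ via the $n=1$ instance (quoting \eqref{eq:cTlfwhenlSquareFree}, which you re-derive from \eqref{eq:pnp2l} exactly as stated), and then expands the $\psi$-sums. Your write-up merely makes explicit the collapse of the $b$-sum and the ratio $\psi_{l,\mathbf r}(a,0)/\psi_{l,\mathbf r}(1,0)$, details the paper leaves to the reader.
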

\begin{proof}
A combination of \eqref{eq:cTlfwhenlSquareFree} and \eqref{eq:mainApp2} (with $n=1$) gives $c_f(l)\cdot\sum_{0\leq b<l}\psi_{l,\mathbf{r}}(1,b)=l^{\frac{k}{2}}c_l\cdot c_f(1)$. By the last assertion of Theorem \ref{thm:mainApp2} we have
$$
c_l=l^{-\frac{k}{2}}\sum_{a\mid l,\,(a,N)=1}\legendre{a}{\varPi}a^k\cdot c_f(0)\sum_{0\leq b<d}\psi_{l,\mathbf{r}}(a,b).
$$
Since $c_f(0)=1$ and $c_f(1)=-r_1$ we find that
\begin{equation}
\label{eq:cflIdentityxN0}
c_f(l)\cdot\sum_{0\leq b<l}\psi_{l,\mathbf{r}}(1,b)=-r_1\cdot\sum_{a\mid l,\,(a,N)=1}\legendre{a}{\varPi}a^k\cdot\sum_{0\leq b<d}\psi_{l,\mathbf{r}}(a,b).
\end{equation}
The assumption at least one of $l$ and $N$ is odd ensures that $\sum_{0\leq b<l}\psi_{l,\mathbf{r}}(1,b)\neq0$. Therefore the desired formulas follow by expanding $\sum_{0\leq b<l}\psi_{l,\mathbf{r}}(1,b)$ and $\sum_{0\leq b<d}\psi_{l,\mathbf{r}}(a,b)$ in \eqref{eq:cflIdentityxN0}.
\end{proof}
\begin{rema}
The formulas for $c_f(l)$ in the above theorem seem like coefficients of Eisenstein series. This is actually the case since $x_N=0$ implies $f$ is not a cusp form. Since $f$ is weakly admissible, then it must be of type I, that is, $\dim_\numC M_k(\Gamma_0(N),\chi_\mathbf{r})=1$. Therefore $f$ is in the Eisenstein space.
\end{rema}
\begin{rema}
In the case of half-integral weights, Theorem \ref{thm:closedFormulaclf} can be used only for those $l$ with $\rad(l)\mid\rad(N)$. However, there is no half-integral weight eta-quotient $f$ in Table \ref{table:admissibleTypeI} such that $c_f(l)$ ($l>1$) can be computed using Theorem \ref{thm:closedFormulaclf}. Nevertheless, it is still worthwhile to include the case of half-integral weights in Theorem \ref{thm:closedFormulaclf} since there may exist weakly admissible eta-quotients not listed in Table \ref{table:admissibleTypeI} whose coefficients $c_f(l)$ can be computed using this theorem.
\end{rema}
\begin{examp}
\label{examp:level12}
Let us consider an example of level $12$ which is not listed in Table \ref{table:admissibleTypeI}. The reader may use SageMath programs described in Appendix \ref{apx:Usage of SageMath code} to produce this eta-quotient and check the assertions given here. Set
\begin{equation*}
f(\tau)=\eta(\tau)^{1}\eta(2\tau)^{-1}\eta(3\tau)^{-1}\eta(4\tau)^{1}\eta(6\tau)^4\eta(12\tau)^{-2}=\sum_{n\in\numgeq{Z}{0}}c_f(n)q^n.
\end{equation*}
Let $(x_c)_{c\mid 12}$ be as in Proposition \ref{prop:rnxc}. Hence $\div_{a/c}f=\frac{x_c}{24}$ for $c\mid 12$ and $a$ coprime to $c$; see \eqref{eq:divEtaQuotient}. We have
\begin{equation*}
\begin{pmatrix}
x_1 \\ x_2 \\ x_3 \\ x_4 \\ x_6 \\ x_{12}
\end{pmatrix}=
\begin{pmatrix}
12 & 6 & 4 & 3 & 2 & 1 \\
3  & 6 & 1 & 3 & 2 & 1 \\
4 & 2 & 12 & 1 & 6 & 3 \\
3 & 6 & 1 & 12 & 2 & 4 \\
1 & 2 & 3 & 1  & 6 & 3 \\
1 & 2 & 3 & 4  & 6 & 12
\end{pmatrix}
\begin{pmatrix}
r_1 \\ r_2 \\ r_3 \\ r_4 \\ r_6 \\ r_{12}
\end{pmatrix}=
\begin{pmatrix}
11 \\ 5 \\ 9 \\ 8 \\ 15 \\ 0
\end{pmatrix},
\end{equation*}
from which we see that $f$ is a holomorphic eta-quotient. Moreover, $f$ is admissible of type I. Thus Theorem \ref{thm:closedFormulaclf} can be applied to $f$. Note that $m_f$ in Corollary \ref{coro:Lfmf} is equal to $24$ and hence
$$
L_f=\{l\in\numgeq{Z}{1}\colon l\equiv1\bmod{24}\}.
$$
Therefore $(a,12)=1$ for any $a\mid l$ where $l\in L_f$. Moreover,
\begin{equation*}
k=1,\quad\varPi=2^9\cdot3^3=\text{square}\cdot6,\quad\delta=1.
\end{equation*}
It follows from Theorem \ref{thm:closedFormulaclf} that
\begin{equation*}
c_f(l)=-\sum_{a\mid l}\legendre{a}{6}=-\prod_{p\mid l}\left(1+\legendre{p}{6}\right)
\end{equation*}
where $1\leq l\equiv1\bmod{24}$ is square-free. Consequently, for such $l$, $-c_f(l)$ is either zero or a power of $2$. Moreover, for any $\alpha\in\numgeq{Z}{1}$ there exist infinitely many $l$ such that $c_f(l)=-2^\alpha$.
\end{examp}

\subsubsection{Multiplicativity of weakly admissible eta-quotients.} The Fourier coefficients $c_f(n)$ of an eta-quotient $f$ in Martin's list \cite[Table I]{Mar96} satisfy $c_f(nm)c_f(1)=c_f(n)c_f(m)$ whenever $n$, $m$ are coprime positive integers. This is the reason they are called multiplicative eta-quotients. The proper generalization to weakly admissible eta-quotients is stated in the following theorem.
\begin{thm}
\label{thm:multiplicativeEtaQuotients}
Let us keep the notations and assumptions of Theorem \ref{thm:mainApp2}. Then the map $L_f\rightarrow\numC$ that sends $l$ to $c_l$ is a multiplicative function, that is, we have $c_{l_1l_2}=c_{l_1}\cdot c_{l_2}$ for any $l_1,l_2\in L_f$ with\footnote{The gcd $(l_1,l_2)$ is the usual greatest common divisor in the monoid of positive integers, \emph{not} in the monoid $L_f$. These two concepts are different. For instance in the monoid $L=\{l\in\numgeq{Z}{1}\colon l\equiv1\bmod{24}\}$, $(5\cdot29,5\cdot53)=5$ in the usual sense, but the unique common divisor of $5\cdot29$ and $5\cdot53$ in $L$ is $1$.} $(l_1,l_2)=1$.
\end{thm}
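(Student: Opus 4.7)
The plan is to deduce multiplicativity from an operator-composition identity. Since $f \neq 0$ and $T_l f = c_l\cdot f$ for all $l \in L_f$ by Corollary \ref{coro:Lfmf}, the scalars $c_l$ are uniquely determined by $f$, and it suffices to show that $T_{l_1l_2} f = T_{l_1}(T_{l_2} f)$ whenever $l_1,l_2\in L_f$ are coprime: then
\[
c_{l_1l_2}\cdot f \;=\; T_{l_1l_2}f \;=\; T_{l_1}(T_{l_2}f) \;=\; T_{l_1}(c_{l_2}\cdot f) \;=\; c_{l_2}\cdot T_{l_1}f \;=\; c_{l_1}c_{l_2}\cdot f,
\]
and cancelling $f$ yields the theorem.

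Before that, I would verify that $L_f$ is genuinely a multiplicative monoid, so that $l_1l_2\in L_f$ is in scope. From the definition in Corollary \ref{coro:Lfmf}, $L_f$ consists either of integers $\equiv 1\bmod m_f$, or (when $k\in\tfrac{1}{2}+\numZ$) of perfect squares $\equiv 1\bmod m_f$; both sets are closed under multiplication, and in particular $l_1l_2\in L_f$. Consequently Corollary \ref{coro:Lfmf} gives $T_{l_1l_2}f = c_{l_1l_2}\cdot f$. Moreover, because $l_1,l_2,l_1l_2\in L_f$, the three operators $T_{l_1}$, $T_{l_2}$, $T_{l_1l_2}$ all map $M_k(\Gamma_0(N),\chi_\mathbf{r})$ into itself (source and target characters coincide), so composing them makes sense.

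Next I would establish the composition formula $T_{l_1l_2} = T_{l_1}\circ T_{l_2}$ as operators on $M_k(\Gamma_0(N),\chi_\mathbf{r})$. The idea is the classical Hecke argument: if one writes $T_{l_i}$ as a sum over the coset representatives $\tbtmat{a_i}{b_i}{0}{d_i}$ with $a_id_i=l_i$, $(a_i,N)=1$, $0\le b_i<d_i$ and $(a_i,b_i,d_i)=1$ that appear in \eqref{eq:Tlf} or equivalently \eqref{eq:Tlfexplicit}, then the composite $T_{l_1}\circ T_{l_2}$ is a sum over products of such matrices. Because $(l_1,l_2)=1$, the standard bijection
\[
\bigl\{(a_1,b_1,d_1)\bigr\}\times\bigl\{(a_2,b_2,d_2)\bigr\}\;\longleftrightarrow\;\bigl\{(a,b,d)\ \colon\ ad=l_1l_2,\ (a,N)=1,\ 0\le b<d,\ (a,b,d)=1\bigr\}
\]
arising from the $\slZ$-equivalence of upper triangular matrices of determinant $l_1l_2$ with coprime first column shows that the underlying coset sums match. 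What must still be checked is that the coefficients $\legendre{a}{\varPi}a^k\legendre{-Nb}{(a,d)}^{2k}\psi_{l,\mathbf{r}}(a,b)$ appearing in \eqref{eq:Tlfexplicit} are multiplicative in $l$: i.e., for $(l_1,l_2)=1$ the product of the coefficients attached to $(a_1,b_1)$ and $(a_2,b_2)$ coincides with the coefficient attached to the combined representative $(a,b)$. For $\legendre{a}{\varPi}a^k$ this is immediate from multiplicativity of the Kronecker-Jacobi symbol in the top entry. For $\psi_{l,\mathbf{r}}$, one splits into the cases of Theorem \ref{thm:cTlfbycf}, using that $(l_1,l_2)=1$ forces at most one of $l_1,l_2$ to be even, together with elementary identities for $(a-1),(d-1)\bmod 4$. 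The factor $\legendre{-Nb}{(a,d)}^{2k}$ requires the most attention since $b$ and $(a,d)$ depend on the decomposition; here I would use the coprimality $(l_1,l_2)=1$ to show that the new $(a,d)$ factors as $(a_1,d_1)(a_2,d_2)$ and that the bookkeeping of $b\bmod(a,d)$ is consistent.

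The main obstacle will be precisely this verification that the multiplier factors in \eqref{eq:Tlfexplicit} behave multiplicatively under the coset bijection. In principle one could instead argue entirely on the level of Fourier coefficients, directly rearranging the double sum in \eqref{eq:cTlfbycf} evaluated at $n=\tfrac{x_N}{24}$, but this reduces to the same combinatorics. An alternative, cleaner route is to realise $T_l$ as a generalised double coset operator $[\widetilde{\Gamma_0(N)}\,\widetilde{\sigma_l}\,\widetilde{\Gamma_0(N)}]$ in the sense of \cite[Section 3]{ZZ23}; then $[\widetilde{\Gamma_0(N)}\sigma_{l_1}\widetilde{\Gamma_0(N)}]\cdot[\widetilde{\Gamma_0(N)}\sigma_{l_2}\widetilde{\Gamma_0(N)}]=[\widetilde{\Gamma_0(N)}\sigma_{l_1l_2}\widetilde{\Gamma_0(N)}]$ for $(l_1,l_2)=1$ reduces to the same classical coset decomposition, but with the character compatibility built into the double coset formalism. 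I would adopt this abstract viewpoint to encapsulate the multiplier bookkeeping, reserving the explicit computation for the verification of the base case $(a_1,b_1,d_1)(a_2,b_2,d_2)\leftrightarrow(a,b,d)$ of the combinatorial bijection.
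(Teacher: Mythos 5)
Your proposal is correct and, in the route you ultimately recommend, coincides with the paper's: the paper reduces the theorem to the composition identity $T_{l_2}\circ T_{l_1}=T_{l_1l_2}$ for coprime $l_1,l_2$ (Proposition \ref{prop:compositionTl}), proved in the double-coset formalism of \cite{ZZ23} exactly so as to avoid the explicit multiplier computation you rightly flag as the obstacle in the first route. Concretely, the paper first checks $\widetilde{\tbtmat{1}{0}{0}{l_1l_2}}$-compatibility of the characters, establishes the coset bijection $A_1\times A_2\to\widetilde{\Gamma_0(N)}\backslash\widetilde{\Gamma_0(N)}\widetilde{\tbtmat{1}{0}{0}{l_1l_2}}\widetilde{\Gamma_0(N)}$ via the classical fact that the fibers of $\sigma$ have constant cardinality and the fiber over the distinguished coset is a singleton, and then disposes of the ``multiplier bookkeeping'' by introducing the function $\xi(x)=\chi_1\vert_{l_1}\chi_2(x\widetilde{\gamma_2}^{-1})\,\chi_2\vert_{l_2}\chi_3(\widetilde{\gamma_2})$, which satisfies the bi-equivariance \eqref{eq:xiProperty} and hence must equal $\chi_1\vert_{l_1l_2}\chi_3$ — this is precisely the encapsulation you were hoping the double-coset viewpoint would provide, so no case-by-case verification of the factors $\legendre{a}{\varPi}a^k\legendre{-Nb}{(a,d)}^{2k}\psi_{l,\mathbf{r}}(a,b)$ is ever needed.
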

%Alternatively, if we attach a formal L-function $L(f;s)=\sum_{l\in L_f}\frac{c_l}{l^s}$ to $f$, then $L(f;s)$ has an Euler product
%$$
%L(f;s)=\prod_{p}\sum_{\alpha\in\numgeq{Z}{0}}\frac{c_{p^\alpha}}{p^{\alpha s}}
%$$
%where $p$ runs through all rational primes and $c_{p^\alpha}=0$ if $p^\alpha\not\in L_f$.
This theorem is an immediate consequence of the following general fact which in the special case $N=1$ is due to Wohlfahrt \cite{Woh57}.
\begin{prop}
\label{prop:compositionTl}
Let $N\in\numgeq{Z}{1}$ and $\chi_1$, $\chi_2$, $\chi_3$ be complex linear characters on the double cover $\widetilde{\Gamma_0(N)}$. Let $l_1,l_2\in\numgeq{Z}{1}$ such that $\chi_1$ and $\chi_2$ are $\widetilde{\tbtmat{1}{0}{0}{l_1}}$-compatible, $\chi_2$ and $\chi_3$ are $\widetilde{\tbtmat{1}{0}{0}{l_2}}$-compatible. Then $\chi_1$ and $\chi_3$ are $\widetilde{\tbtmat{1}{0}{0}{l_1l_2}}$-compatible. Moreover, if $(l_1,l_2)=1$ then
\begin{equation}
\label{eq:Tl1l2Multiplicative}
T_{l_2;\chi_2,\chi_3}\circ T_{l_1;\chi_1,\chi_2}f=T_{l_1l_2;\chi_1,\chi_3}f
\end{equation}
for any meromorphic modular form $f$ on $\Gamma_0(N)$ of weight $k\in\frac{1}{2}\numZ$ and with character $\chi_1$.
\end{prop}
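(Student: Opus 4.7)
My plan is to reduce both assertions to the algebraic theory of generalized double coset operators from \cite[Section 3]{ZZ23}, in which $T_{l;\chi,\chi'}$ arises from the generalized double coset $\widetilde{\Gamma_0(N)}\,\widetilde{\tbtmat{1}{0}{0}{l}}\,\widetilde{\Gamma_0(N)}$ equipped with a pair of characters $(\chi,\chi')$ that prescribes the intertwining.

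For the first assertion, the compatibility of $\chi_i$ and $\chi_{i+1}$ at $\widetilde{\tbtmat{1}{0}{0}{l_i}}$ is, by the definition in \cite[Section 3]{ZZ23}, an identity of the shape $\chi_{i+1}(\widetilde{\gamma})=\chi_i(\widetilde{\tbtmat{1}{0}{0}{l_i}}\,\widetilde{\gamma}\,\widetilde{\tbtmat{1}{0}{0}{l_i}}^{-1})$ holding for all $\widetilde{\gamma}$ in the relevant intersection subgroup of $\widetilde{\Gamma_0(N)}$. Since $\tbtmat{1}{0}{0}{l_1 l_2}=\tbtmat{1}{0}{0}{l_1}\tbtmat{1}{0}{0}{l_2}$ and since, in the branch convention fixed in Section~\ref{sec:Modular forms of rational weight}, a direct check shows $\widetilde{\tbtmat{1}{0}{0}{l_1 l_2}}=\widetilde{\tbtmat{1}{0}{0}{l_1}}\cdot\widetilde{\tbtmat{1}{0}{0}{l_2}}$, chaining the two given compatibilities through $\chi_2$ yields the compatibility of $\chi_1$ with $\chi_3$ at $\widetilde{\tbtmat{1}{0}{0}{l_1 l_2}}$.

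For the composition identity, I would expand both sides using the explicit formula \eqref{eq:Tlf}. The left-hand side $T_{l_2;\chi_2,\chi_3}(T_{l_1;\chi_1,\chi_2}f)$ unfolds as a double sum indexed by decompositions $a_i d_i=l_i$ with $(a_i,N)=1$ and residues $b_i \bmod d_i$ satisfying $(a_i,b_i,d_i)=1$, while $T_{l_1 l_2;\chi_1,\chi_3}f$ is a single sum over $ad=l_1 l_2$, $(a,N)=1$, $b\bmod d$, $(a,b,d)=1$. Because $(l_1,l_2)=1$, the Chinese remainder theorem identifies these index sets: writing uniquely $a=a_1 a_2$ and $d=d_1 d_2$ with $a_i,d_i\mid l_i$, the coprimalities $(a_1,d_2)=(a_2,d_1)=(d_1,d_2)=1$ imply that $b \bmod d_1 d_2$ corresponds bijectively to a pair $(b_1\bmod d_1, b_2\bmod d_2)$ via $b\equiv a_1 b_2 + b_1 d_2 \pmod{d_1 d_2}$, and the condition $(a,b,d)=1$ separates cleanly into $(a_i,b_i,d_i)=1$. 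The matrix identity
\[
\tbtmat{a_1}{b_1}{0}{d_1}\tbtmat{a_2}{b_2}{0}{d_2}=\tbtmat{a_1 a_2}{a_1 b_2+b_1 d_2}{0}{d_1 d_2}
\]
then matches the nested substitution $f\bigl((a_1(a_2\tau+b_2)/d_2+b_1)/d_1\bigr)$ on the left with $f((a\tau+b)/d)$ on the right, while the three character factors collapse to two thanks to the compatibility relations passing through $\chi_2$.

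The main obstacle will be controlling the branch data in the double cover together with the Kronecker--Jacobi symbols appearing inside \eqref{eq:Tlf}. In half-integral weight each slash operator carries a factor of the form $\sqrt{c\tau+d}$, and the product of two such factors can a priori differ from the single factor associated with $\widetilde{\tbtmat{1}{0}{0}{l_1 l_2}}$ by a sign. Under the hypothesis $(l_1,l_2)=1$ the auxiliary integers $x,y,z$ in \eqref{eq:Tlf} can be chosen compatibly for the two factors and for their product, and the branches conspire to match; verifying this via reciprocity for the Kronecker--Jacobi symbol and Petersson's formula \eqref{eq:etaCharPetersson} is where the bulk of the delicate work lies. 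Note that without coprimality an extra contribution from a nontrivial Hecke product relation would appear, so the hypothesis $(l_1,l_2)=1$ is essential.
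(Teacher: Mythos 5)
Your reduction of the first assertion to chaining the two compatibilities through $\chi_2$ is exactly the paper's argument, and your CRT matching of the index sets for the composition (writing $a=a_1a_2$, $d=d_1d_2$, $b\equiv a_1b_2+b_1d_2\pmod{d_1d_2}$ and checking that the condition $(a,b,d)=1$ splits) correctly reproduces the bijection that the paper instead extracts from the classical Hecke-algebra fact that the product of the two double cosets is the single double coset for $l_1l_2$ with multiplicity one. Up to that point you are fine.

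The gap is the step you defer as ``the branches conspire to match.'' Once the index sets are identified, what remains to prove is precisely the identity
\[
\chi_1\vert_{l_1}\chi_2(\widetilde{\gamma_1})\cdot\chi_2\vert_{l_2}\chi_3(\widetilde{\gamma_2})=\chi_1\vert_{l_1l_2}\chi_3(\widetilde{\gamma_1}\widetilde{\gamma_2})
\]
for coset representatives $\gamma_i$, i.e.\ that the two pairs of character values with auxiliary integers $x,y,z$ in \eqref{eq:Tlf} multiply out to the single pair for $l_1l_2$. This is the entire analytic content of the composition law, and your proposed route --- verifying it ``via reciprocity for the Kronecker--Jacobi symbol and Petersson's formula \eqref{eq:etaCharPetersson}'' --- cannot work as stated: the proposition is asserted for \emph{arbitrary} linear characters $\chi_1,\chi_2,\chi_3$ on $\widetilde{\Gamma_0(N)}$, not only eta-quotient characters, so Petersson's formula and quadratic reciprocity give no handle on the $\chi_i$ at all. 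The paper closes this gap with a short formal argument that avoids every branch and symbol computation: define $\xi$ on the double coset $\widetilde{\Gamma_0(N)}\widetilde{\tbtmat{1}{0}{0}{l_1l_2}}\widetilde{\Gamma_0(N)}$ by $\xi(x)=\chi_1\vert_{l_1}\chi_2\bigl(x\widetilde{\gamma_2}^{-1}\bigr)\,\chi_2\vert_{l_2}\chi_3(\widetilde{\gamma_2})$, where $(\gamma_1,\gamma_2)$ is the unique pair whose product represents the left coset of $x$; check the bi-equivariance $\xi(gxh)=\chi_1(g)\xi(x)\chi_3(h)$ together with the normalization $\xi\bigl(\widetilde{\tbtmat{1}{0}{0}{l_1l_2}}\bigr)=1$; and note that these two properties characterize $\chi_1\vert_{l_1l_2}\chi_3$ uniquely, whence $\xi=\chi_1\vert_{l_1l_2}\chi_3$. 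You should replace your deferred computation with an argument of this kind, or else restrict the statement to eta-quotient characters and actually carry out the verification, which would be both far longer and strictly less general.
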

For the concept of compatibility of multiplier systems, see \cite[Lemma 3.1]{ZZ23} for details. We say $\chi$ and $\chi'$ are $\widetilde{\tbtmat{1}{0}{0}{l}}$-compatible if for any $\gamma\in\widetilde{\tbtmat{1}{0}{0}{l}}^{-1}\widetilde{\Gamma_0(N)}\widetilde{\tbtmat{1}{0}{0}{l}}\cap\widetilde{\Gamma_0(N)}$ we have $\chi\left(\widetilde{\tbtmat{1}{0}{0}{l}}\gamma\widetilde{\tbtmat{1}{0}{0}{l}}^{-1}\right)=\chi'(\gamma)$.
\begin{proof}[Proof of Proposition \ref{prop:compositionTl} and Theorem \ref{thm:multiplicativeEtaQuotients}]
First let us prove $\chi_1$ and $\chi_3$ are $\widetilde{\tbtmat{1}{0}{0}{l_1l_2}}$-compatible. Let $g\in\widetilde{\tbtmat{1}{0}{0}{l_1l_2}}^{-1}\widetilde{\Gamma_0(N)}\widetilde{\tbtmat{1}{0}{0}{l_1l_2}}\cap\widetilde{\Gamma_0(N)}$ be arbitrary. We have
\begin{align*}
\chi_1\left(\widetilde{\tbtmat{1}{0}{0}{l_1l_2}}g\widetilde{\tbtmat{1}{0}{0}{l_1l_2}}^{-1}\right)&=\chi_1\left(\widetilde{\tbtmat{1}{0}{0}{l_1}}\widetilde{\tbtmat{1}{0}{0}{l_2}}g\widetilde{\tbtmat{1}{0}{0}{l_2}}^{-1}\widetilde{\tbtmat{1}{0}{0}{l_1}}^{-1}\right)\\
&=\chi_2\left(\widetilde{\tbtmat{1}{0}{0}{l_2}}g\widetilde{\tbtmat{1}{0}{0}{l_2}}^{-1}\right)\\
&=\chi_3(g).
\end{align*}
In the second equality we have used the assumption $\chi_1$ and $\chi_2$ are $\widetilde{\tbtmat{1}{0}{0}{l_1}}$-compatible and in the third equality the assumption $\chi_2$ and $\chi_3$ are $\widetilde{\tbtmat{1}{0}{0}{l_2}}$-compatible. This proves the $\widetilde{\tbtmat{1}{0}{0}{l_1l_2}}$-compatibility of $\chi_1$ and $\chi_3$.

Assume that $(l_1,l_2)=1$. To prove \eqref{eq:Tl1l2Multiplicative} we need an equivalent definition of $T_lf$ via double coset actions. For the proof of this equivalence see \cite[Eq. (28) and (30)]{ZZ23}. Write
\begin{equation*}
\widetilde{\Gamma_0(N)}\widetilde{\tbtmat{1}{0}{0}{l_i}}\widetilde{\Gamma_0(N)}=\bigsqcup_{\gamma\in A_i}\widetilde{\Gamma_0(N)}\widetilde{\gamma}
\end{equation*}
for $i=1,2$. Then
$$
T_{l_1;\chi_1,\chi_2}f=\sum_{\gamma\in A_1}(\chi_1\vert_{l_1}\chi_2)^{-1}(\widetilde{\gamma})\cdot f\vert_k\widetilde{\gamma},
$$
where $\chi_1\vert_{l_1}\chi_2(h)=\chi_1(h_1)\chi_2(h_2)$ if we write $h=h_1\widetilde{\tbtmat{1}{0}{0}{l_1}}h_2$ with $h_1,h_2\in\widetilde{\Gamma_0(N)}$. The compatibility ensures that $\chi_1\vert_{l_1}\chi_2(h)$ is well-defined. Therefore
\begin{equation}
\label{eq:TcompExpand}
T_{l_2;\chi_2,\chi_3}\circ T_{l_1;\chi_1,\chi_2}f=\sum_{\gamma_1\in A_1,\gamma_2\in A_2}(\chi_2\vert_{l_2}\chi_3)^{-1}(\widetilde{\gamma_2})(\chi_1\vert_{l_1}\chi_2)^{-1}(\widetilde{\gamma_1})\cdot f\vert_k\widetilde{\gamma_1}\widetilde{\gamma_2}.
\end{equation}
By \cite[Eq. (25)]{ZZ23} with $m=1$ we can choose the following $A_i$:
\begin{equation}
\label{eq:Airepr}
A_i=\left\{\tbtmat{a}{b}{0}{d}\colon a,d>0,\,0\leq b<d,\,ad=l_i,\,(N,a)=1,\,(a,b,d)=1\right\}.
\end{equation}
It follows from the assumption $(l_1,l_2)=1$ and elementary number theory that $\widetilde{\gamma_1}\widetilde{\gamma_2}\in\widetilde{\Gamma_0(N)}\widetilde{\tbtmat{1}{0}{0}{l_1l_2}}\widetilde{\Gamma_0(N)}$ if $\gamma_1\in A_1$ and $\gamma_2\in A_2$. Therefore the map
\begin{align*}
\sigma\colon A_1\times A_2&\rightarrow\widetilde{\Gamma_0(N)}\backslash\widetilde{\Gamma_0(N)}\widetilde{\tbtmat{1}{0}{0}{l_1l_2}}\widetilde{\Gamma_0(N)}\\
(\gamma_1,\gamma_2) &\mapsto \widetilde{\Gamma_0(N)}\cdot\widetilde{\gamma_1}\widetilde{\gamma_2}
\end{align*}
is well-defined. By the theory of classical Hecke algebras, or more precisely, by Theorem 2.8.1(2) and Eq. (2.7.2) of \cite{Miy06}, $\sigma$ is a surjection and each inverse image $\sigma^{-1}(x)$ has the same cardinality. Since $\sigma^{-1}\left(\widetilde{\Gamma_0(N)}\widetilde{\tbtmat{1}{0}{0}{l_1l_2}}\right)=\left\{(\tbtmat{1}{0}{0}{l_1},\tbtmat{1}{0}{0}{l_2})\right\}$ is a singleton, $\sigma$ is a bijection which means the range of summation in the right-hand side of \eqref{eq:TcompExpand} is essentially a set of representatives of the coset space $\widetilde{\Gamma_0(N)}\backslash\widetilde{\Gamma_0(N)}\widetilde{\tbtmat{1}{0}{0}{l_1l_2}}\widetilde{\Gamma_0(N)}$. Now we can define a map, which is key to the proof,
\begin{align*}
\xi\colon\widetilde{\Gamma_0(N)}\widetilde{\tbtmat{1}{0}{0}{l_1l_2}}\widetilde{\Gamma_0(N)}&\rightarrow\numC\\
x&\mapsto\chi_1\vert_{l_1}\chi_2(x\widetilde\gamma_2^{-1})\cdot\chi_2\vert_{l_2}\chi_3(\widetilde\gamma_2)
\end{align*}
where $(\gamma_1,\gamma_2)$ is the uniquely determined pair in $A_1\times A_2$ such that $\widetilde{\Gamma_0(N)}x=\widetilde{\Gamma_0(N)}\widetilde{\gamma_1}\widetilde{\gamma_2}$. It is not hard to see that
\begin{equation}
\label{eq:xiProperty}
\xi(gxh)=\chi_1(g)\xi(x)\chi_3(h),\quad g,h\in\widetilde{\Gamma_0(N)},\,x\in\widetilde{\Gamma_0(N)}\widetilde{\tbtmat{1}{0}{0}{l_1l_2}}\widetilde{\Gamma_0(N)},\quad\xi\widetilde{\tbtmat{1}{0}{0}{l_1l_2}}=1
\end{equation}
and that
\begin{equation}
\label{eq:xig1g2}
\xi(\widetilde{\gamma_1}\widetilde{\gamma_2})=\chi_1\vert_{l_1}\chi_2(\widetilde{\gamma_1})\chi_2\vert_{l_2}\chi_3(\widetilde{\gamma_2}).
\end{equation}
Since the map $x\mapsto\chi_1\vert_{l_1l_2}\chi_3(x)$ also satisfies \eqref{eq:xiProperty}, we have $\xi=\chi_1\vert_{l_1l_2}\chi_3$. Inserting this and \eqref{eq:xig1g2} into \eqref{eq:TcompExpand} gives
\begin{equation*}
T_{l_2;\chi_2,\chi_3}\circ T_{l_1;\chi_1,\chi_2}f=\sum_{\gamma_1\in A_1,\gamma_2\in A_2}\xi^{-1}(\widetilde{\gamma_1}\widetilde{\gamma_2})\cdot f\vert_k\widetilde{\gamma_1}\widetilde{\gamma_2}=\sum_{\gamma_1\in A_1,\gamma_2\in A_2}(\chi_1\vert_{l_1l_2}\chi_3)^{-1}(\widetilde{\gamma_1}\widetilde{\gamma_2})\cdot f\vert_k\widetilde{\gamma_1}\widetilde{\gamma_2}.
\end{equation*}
Since $\sigma$ is a bijection, we obtain \eqref{eq:Tl1l2Multiplicative} from the above equalities.

Finally we prove Theorem \ref{thm:multiplicativeEtaQuotients}. Let $f$ be as in this theorem. Then $T_{l_1}f=c_{l_1}\cdot f$, $T_{l_2}f=c_{l_2}\cdot f$ and $T_{l_1l_2}f=c_{l_1l_2}\cdot f$ by Corollary \ref{coro:Lfmf}. Since $(l_1,l_2)=1$ we have
\begin{equation*}
c_{l_1l_2}\cdot f=T_{l_1l_2}f=T_{l_2}(T_{l_1}f)=T_{l_2}(c_{l_1}\cdot f)=c_{l_1}c_{l_2}\cdot f
\end{equation*}
where we have used Proposition \ref{prop:compositionTl} in the second equality. Since $f$ is not identically zero we find $c_{l_1l_2}=c_{l_1}\cdot c_{l_2}$ as required.
\end{proof}

Before looking at examples, let us simplify some notations. Set $x_N'=\frac{x_N}{24}$ and
\begin{equation*}
S_{l,\mathbf{r}}(a)=\sum_{\twoscript{0\leq b<d}{(a,b,d)=1}}\legendre{-Nb}{(a,d)}^{2k}\etp{-\frac{(l-1)x_N'b}{a}}\psi_{l,\mathbf{r}}(a,b).
\end{equation*}
Then according to Theorem \ref{thm:mainApp2} we have
\begin{equation*}
c_l=l^{-\frac{k}{2}}\sum_{a\mid l,\,(a,N)=1}\legendre{a}{\varPi}a^k\cdot c_f\left(\frac{lx_N'}{a^2}\right)S_{l,\mathbf{r}}(a),\quad l\in L_f.
\end{equation*}
We recall that here and below $f$ must be a weakly admissible eta-quotient.
\begin{examp}
If $x_N'=0$, that is, $x_N=\sum_{n\mid N}nr_n=0$, then $c_f(0)=1$ and hence Theorem \ref{thm:multiplicativeEtaQuotients} gives
\begin{equation*}
\sum_{a\mid l_1,\,(a,N)=1}\legendre{a}{\varPi}a^k\cdot S_{l_1,\mathbf{r}}(a)\cdot\sum_{a\mid l_2,\,(a,N)=1}\legendre{a}{\varPi}a^k\cdot S_{l_2,\mathbf{r}}(a)=\sum_{a\mid l_1l_2,\,(a,N)=1}\legendre{a}{\varPi}a^k\cdot S_{l_1l_2,\mathbf{r}}(a)
\end{equation*}
for $l_1,l_2\in L_f$ with $(l_1,l_2)=1$.
\end{examp}

\begin{examp}
If $x_N'\in\numgeq{Z}{0}$ and $(l,x_N')=1$, then $c_f\left(\frac{lx_N'}{a^2}\right)=0$ unless $\frac{lx_N'}{a^2}\in\numZ$ by Theorem \ref{thm:cTlfbycf}(a). Therefore we have
\begin{multline*}
\sum_{a^2\mid l_1,\,(a,N)=1}\legendre{a}{\varPi}a^k\cdot c_f\left(\frac{l_1x_N'}{a^2}\right)S_{l_1,\mathbf{r}}(a)\cdot\sum_{a^2\mid l_2,\,(a,N)=1}\legendre{a}{\varPi}a^k\cdot c_f\left(\frac{l_2x_N'}{a^2}\right)S_{l_2,\mathbf{r}}(a)\\
=\sum_{a^2\mid l_1l_2,\,(a,N)=1}\legendre{a}{\varPi}a^k\cdot c_f\left(\frac{l_1l_2x_N'}{a^2}\right)S_{l_1l_2,\mathbf{r}}(a)
\end{multline*}
if $l_1,l_2\in L_f$ with $(l_1,l_2)=1$, $(l_1,x_N')=1$ and $(l_2,x_N')=1$. If in addition \eqref{eq:pnp2l} holds for $l=l_1,l_2$, then the above identity becomes
\begin{equation}
\label{eq:cfl1l2xNDecomp}
c_f(l_1x_N')S_{l_1,\mathbf{r}}(1)\cdot c_f(l_2x_N')S_{l_2,\mathbf{r}}(1)=c_f(l_1l_2x_N')S_{l_1l_2,\mathbf{r}}(1).
\end{equation}
From this we obtain a property of $c_f(n)$ very closed to the multiplicativity property. (We exclude the half-integral case below for simplicity.)
\end{examp}

\begin{prop}
\label{prop:kZmultiplicativecfl}
Let $f(\tau)=\prod_{n\mid N}\eta(n\tau)^{r_n}=\sum_{n}c_f(n)q^n$ be a weakly admissible eta-quotient (e.g., a function listed in Table \ref{table:admissibleTypeI} or Example \ref{examp:level12}) where $N$ is the level. We keep the notations in \eqref{eq:notationsrn} with $k'=k$. Assume that $x_N'=\frac{x_N}{24}\in\numgeq{Z}{0}$ and $k\in\numZ$. Let $l_1,l_2\in L_f$ satisfy $(l_1,l_2)=1$, $(l_1,x_N')=1$ and $(l_2,x_N')=1$. We require \eqref{eq:pnp2l} holds for $l=l_1$ and $l_2$. Moreover, if $N\equiv0\bmod{2}$ and $N(k+\delta)\equiv2\bmod{4}$, then we require $l_1\equiv l_2\equiv1\bmod{2}$. (See Theorem \ref{thm:cTlfbycf}(b) for the definition of $\delta$.) Then
\begin{equation*}
c_f(l_1x_N')\cdot c_f(l_2x_N')=\varepsilon\cdot c_f(l_1l_2x_N'),
\end{equation*}
where $\varepsilon=u_{l_1,\mathbf{r}}\cdot u_{l_2,\mathbf{r}}\cdot u_{l_1l_2,\mathbf{r}}$ with
\begin{equation*}u_{l,\mathbf{r}}=
\begin{dcases}
1, &\text{ if   }2\mid l,\quad2\nmid N \text{ or }4\mid N(k+\delta)\\
1, &\text{ if   }2\nmid l,\quad4\mid N(k+\delta)(l-1)\\
-1, &\text{ if   }2\nmid l,\quad4\nmid N(k+\delta)(l-1).
\end{dcases}
\end{equation*}
\end{prop}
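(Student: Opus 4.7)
The strategy I would follow is to evaluate explicitly the quantities $S_{l, \mathbf{r}}(1)$ for $l \in \{l_1, l_2, l_1 l_2\}$, show that in every case permitted by the hypotheses one has $S_{l, \mathbf{r}}(1) = l \cdot u_{l, \mathbf{r}}$ with $u_{l, \mathbf{r}} \in \{\pm 1\}$ matching the sign defined in the statement, and then substitute back into the pre-derived identity \eqref{eq:cfl1l2xNDecomp}, which is already valid under the present hypotheses.

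First I would simplify $S_{l, \mathbf{r}}(1)$ from its definition. Setting $a = 1$ and $d = l$, the coprimality condition $(1, b, l) = 1$ is automatic, the Kronecker-Jacobi factor $\legendre{-Nb}{(1,l)}^{2k} = 1$, and the phase $\etp{-(l-1)x_N' b}$ is trivial since its exponent is an integer. Consequently $S_{l, \mathbf{r}}(1) = \sum_{b=0}^{l-1} \psi_{l, \mathbf{r}}(1, b)$, and I would then evaluate this sum in each of the three branches of the definition of $\psi_{l, \mathbf{r}}$ from Theorem \ref{thm:cTlfbycf}(b). For $l$ odd, $\psi_{l, \mathbf{r}}(1, b)$ is independent of $b$, and the fact that $(k+\delta)(l-1)/2 \in \numZ$ (since $l-1$ is even and $k+\delta \in \numZ$) reduces the phase to $\etp{(k+\delta)(l-1)N/4}$, giving $S_{l, \mathbf{r}}(1) = l \cdot \etp{(k+\delta)(l-1)N/4}$. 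For $l$ even with $N$ odd, $\psi_{l, \mathbf{r}}(1, b) = 1$, yielding $S_{l, \mathbf{r}}(1) = l$. For $l$ even with $N$ even, the hypothesis $k \in \numZ$ forces $\delta_1 = 0$, so $\psi_{l, \mathbf{r}}(1, b) = \etp{-N(k+\delta)b/4}$; the side hypothesis ruling out $N(k+\delta) \equiv 2 \pmod{4}$ (which is precisely what the restriction to odd $l_1, l_2$ enforces in the remaining subcase) guarantees $4 \mid N(k+\delta)$, and the phase collapses to $1$, yielding $S_{l, \mathbf{r}}(1) = l$ again.

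Next I would match the scalars $S_{l, \mathbf{r}}(1)/l$ with $u_{l, \mathbf{r}}$. For even $l$ we have just seen $S_{l, \mathbf{r}}(1)/l = 1 = u_{l, \mathbf{r}}$. For odd $l$, since $l-1$ is even the product $(k+\delta)(l-1)N$ is even, so $\etp{(k+\delta)(l-1)N/4} \in \{\pm 1\}$, equal to $+1$ precisely when $4 \mid N(k+\delta)(l-1)$ and $-1$ otherwise---which is exactly the dichotomy defining $u_{l, \mathbf{r}}$ in the statement. Finally, substituting $S_{l_i, \mathbf{r}}(1) = l_i u_{l_i, \mathbf{r}}$ and $S_{l_1 l_2, \mathbf{r}}(1) = l_1 l_2\, u_{l_1 l_2, \mathbf{r}}$ into \eqref{eq:cfl1l2xNDecomp}, cancelling the common factor $l_1 l_2$, and multiplying through by $u_{l_1 l_2, \mathbf{r}}$ (which squares to $1$), would produce the claimed identity with $\varepsilon = u_{l_1, \mathbf{r}} u_{l_2, \mathbf{r}} u_{l_1 l_2, \mathbf{r}}$.

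The main---and essentially only---obstacle is the bookkeeping across the three branches of $\psi_{l, \mathbf{r}}$, particularly the parity tracking of $\delta$ and $\delta_1$, to confirm that the signs line up exactly as advertised; once this verification is complete, the proposition is a formal consequence of the multiplicativity of $l \mapsto c_l$ from Theorem \ref{thm:multiplicativeEtaQuotients}. The failure of $\etp{(k+\delta)(l-1)N/4}$ to be multiplicative in $l$ is precisely what forces the correction factor $\varepsilon$ to appear rather than the cleaner relation $c_f(l_1 x_N') c_f(l_2 x_N') = c_f(l_1 l_2 x_N')$.
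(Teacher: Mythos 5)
Your proposal is correct and follows essentially the same route as the paper: reduce $S_{l,\mathbf{r}}(1)$ to $\sum_{0\leq b<l}\psi_{l,\mathbf{r}}(1,b)$, evaluate it branch by branch as $l\cdot u_{l,\mathbf{r}}$ (with the vanishing case $2\mid l$, $2\mid N$, $N(k+\delta)\equiv2\bmod 4$ excluded by hypothesis), and substitute into \eqref{eq:cfl1l2xNDecomp}, cancelling the nonzero factors. Your branch computations (in particular the reduction of the odd-$l$ phase to $\etp{(k+\delta)(l-1)N/4}$ and the observation that $k\in\numZ$ forces $\delta_1=0$) match what the paper leaves implicit.
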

\begin{proof}
According to \eqref{eq:cfl1l2xNDecomp} we need to prove
\begin{equation}
\label{eq:needtoProveSl1l2}
S_{l_1l_2,\mathbf{r}}(1)=u_{l_1,\mathbf{r}}\cdot u_{l_2,\mathbf{r}}\cdot u_{l_1l_2,\mathbf{r}}\cdot S_{l_1,\mathbf{r}}(1)\cdot S_{l_2,\mathbf{r}}(1).
\end{equation}
By the definition of $\psi_{l,\mathbf{r}}$ in Theorem \ref{thm:cTlfbycf}, we have, since $k\in\numZ$,
\begin{equation*}S_{l,\mathbf{r}}(1)=
\begin{dcases}
l, &\text{ if   }2\mid l,\quad2\nmid N \text{ or }4\mid N(k+\delta)\\
0, &\text{ if   }2\mid l,\quad2\mid N\text{ and }  N(k+\delta)\equiv2\bmod{4}\\
l, &\text{ if   }2\nmid l,\quad4\mid N(k+\delta)(l-1)\\
-l, &\text{ if   }2\nmid l,\quad4\nmid N(k+\delta)(l-1).
\end{dcases}
\end{equation*}
From this \eqref{eq:needtoProveSl1l2} follows. Substituting \eqref{eq:needtoProveSl1l2} in \eqref{eq:cfl1l2xNDecomp} we obtain
$$
c_f(l_1x_N')\cdot c_f(l_2x_N')S_{l_1,\mathbf{r}}(1)S_{l_2,\mathbf{r}}(1)=\varepsilon\cdot c_f(l_1l_2x_N')S_{l_1,\mathbf{r}}(1)S_{l_2,\mathbf{r}}(1).
$$
By the assumption we have $S_{l_1,\mathbf{r}}(1)$ and $S_{l_2,\mathbf{r}}(1)$ are nonzero and hence the required relation follows.
\end{proof}

If $x_N'=0$, then the above proposition holds trivially. We believe there are many weakly admissible eta-quotients with $x_N'\in\numgeq{Z}{1}$ to which we can apply the above proposition. The most well-known example is the discriminant function $\Delta(\tau)=\eta(\tau)^{24}$, which in our terminology is an admissible eta-quotient of type II. The only example in Table \ref{table:admissibleTypeI} is the one presented below.
\begin{examp}
\label{examp:level27Multi}
Set $f=\eta(3\tau)^2\eta(9\tau)^{-1}\eta(27\tau)=\sum_{n\in\numgeq{Z}{1}}c_f(n)q^n$. This is an admissible eta-quotient of type I. We have
\begin{equation*}
N=27,\quad k=1,\quad x_N=24,\quad\varPi=3^5,\quad\delta=1.
\end{equation*}
Since (see \eqref{eq:rntoxc})
\begin{equation*}
\begin{pmatrix}
x_1 \\ x_3 \\ x_9 \\ x_{27}
\end{pmatrix}=
\begin{pmatrix}
27 & 9 & 3 & 1 \\
3  & 9 & 3 & 1 \\
1 & 3 & 9 & 3 \\
1 & 3 & 9 & 27
\end{pmatrix}
\begin{pmatrix}
r_1 \\ r_3 \\ r_9 \\ r_{27}
\end{pmatrix}=
\begin{pmatrix}
16 \\ 16 \\ 0 \\ 24
\end{pmatrix},
\end{equation*}
we have $m_f=3$ (see Corollary \ref{coro:Lfmf}) and hence $L_f=\{3n+1\colon n\in\numgeq{Z}{0}\}$. It follows from Proposition \ref{prop:kZmultiplicativecfl} that $c_f(l_1)c_f(l_2)=c_f(l_1l_2)$ where $l_1,l_2\equiv1\bmod{3}$ are square-free positive integers with $(l_1,l_2)=1$.
\end{examp}

\begin{examp}
Let us give an example of admissible eta-quotient of type II. Set
$$
f=\eta(\tau)\eta(2\tau)\eta(3\tau)\eta(6\tau)^3=\sum_{n\in\numgeq{Z}{1}}c_f(n)q^n.
$$
We have
\begin{equation*}
N=6,\quad k=3,\quad x_N=24,\quad\varPi=2^3\cdot3^2,\quad\delta=0.
\end{equation*}
Since
\begin{equation*}
\begin{pmatrix}
x_1 \\ x_2 \\ x_3 \\ x_{6}
\end{pmatrix}=
\begin{pmatrix}
6 & 3 & 2 & 1 \\
3  & 6 & 1 & 2 \\
2 & 1 & 6 & 3 \\
1 & 2 & 3 & 6
\end{pmatrix}
\begin{pmatrix}
r_1 \\ r_2 \\ r_3 \\ r_{6}
\end{pmatrix}=
\begin{pmatrix}
14 \\ 16 \\ 18 \\ 24
\end{pmatrix},
\end{equation*}
we have $m_f=12$ and hence $L_f=\{12n+1\colon n\in\numgeq{Z}{0}\}$. By Theorem \ref{thm:main}, $\dim_\numC M_3(\Gamma_0(6),\chi)=2$ where $\chi\colon\widetilde{\Gamma_0(6)}\rightarrow\numC^\times$ is the character of $f$. (Of course this descends to a character of the matrix group $\Gamma_0(6)$ since $k\in\numZ$.) Recall that the number of Eisenstein series in $M_3(\Gamma_0(6),\chi)$ is denoted by $n_0$ in \eqref{eq:n0EisSeries}. Since $\chi(\widetilde{\gamma_{a/c}}\widetilde{T}^{w_{a/c}}\widetilde{\gamma_{a/c}}^{-1})=1$ if and only if $x_c\equiv0\bmod{24}$ where $c\mid 6$ and $(a,c)=1$, among the four cusps in $\Gamma_0(6)\backslash\projQ$ there is only one Eisenstein series which is defined at the cusp $\rmi\infty$. Therefore $n_0=1$. Since $f$ is a cusp form, we find that it is admissible of type II. It follows from Proposition \ref{prop:kZmultiplicativecfl} (applied to this $f$) that $c_f(l_1)c_f(l_2)=c_f(l_1l_2)$ whenever $l_1,l_2\equiv1\bmod{12}$ are square-free positive integers with $(l_1,l_2)=1$. It is worthwhile to notice that for other pair $(l_1,l_2)$, the multiplicativity does not necessarily hold. For instance, $c_f(2)=-1$, $c_f(3)=-2$ but $c_f(6)=4$.
\end{examp}

\subsubsection{An example of half-integral weight} The examples presented above are all of integral weights. To conclude this section we supplement an example of half-integral weight where we show how the Gauss sum $\sum_b$ in \eqref{eq:cTlfbycfxN0} can be simplified. Let
\begin{equation}
\label{eq:N4halfintegralExamp}
f(\tau)=\eta(\tau)^{-7}\eta(2\tau)^{17}\eta(4\tau)^{-3}=\sum_{n\in\frac{5}{8}+\numgeq{Z}{0}}c_f(n)q^n.
\end{equation}
The statistics of $f$ are
\begin{equation*}
N=4,\quad k=\frac{7}{2},\quad x_1=3,\quad,x_2=24,\quad x_4=15,\quad\varPi=2^{31},\quad\delta=0,\quad m_f=8.
\end{equation*}
Thus $L_f=\{l^2\colon l\equiv1\bmod{2}\}$. By Theorem \ref{thm:main} we have $\dim_\numC M_{7/2}(\Gamma_0(4),\chi)=2$ where $\chi\colon\widetilde{\Gamma_0(4)}\rightarrow\numC^\times$ is the character of $f$ and by Definition \ref{deff:Eis} the Eisenstein series can only be given at the cusp $1/2$. Thus, $f$ is admissible of type II and Theorems \ref{thm:mainApp2}, \ref{thm:multiplicativeEtaQuotients} can be applied.

Now we calculate $c_{l^2}$ for $l^2\in L_f$ (see Theorem \ref{thm:mainApp2}). We have, by definition,
\begin{equation}
\label{eq:cl2example}
c_{l^2}=l^{-\frac{7}{2}}\sum_{a\mid l^2}\legendre{a}{2}a^{\frac{7}{2}}c_f\left(\frac{5l^2}{8a^2}\right)\etp{-\frac{7l^2}{8a}+\frac{7}{8}}\sum_{\twoscript{0\leq b<l^2/a}{(a,b,l^2/a)=1}}\legendre{-b}{(a,l^2/a)}\etp{-\frac{5(l^2-1)b}{8a}}.
\end{equation}
Note that $c_f\left(\frac{5l^2}{8a^2}\right)=0$ unless $\frac{5l^2}{8a^2}\in\frac{5}{8}+\numZ$, that is, unless $a\mid l$ since $a,\,l$ are odd. It follows that for such $a$ we have $(a,l^2/a)=a$,
$$
t:=\frac{5(l^2-1)\cdot(a,l^2/a)}{8a}=\frac{5(l^2-1)}{8}
$$
is an integer (depending on $a$ and $l$) and
\begin{align}
\sum_{\twoscript{0\leq b<l^2/a}{(a,b,l^2/a)=1}}\legendre{-b}{(a,l^2/a)}\etp{-\frac{5(l^2-1)b}{8a}}&=\sum_{0\leq b<l^2/a}\legendre{-b}{a}\etp{\frac{-tb}{a}}\notag\\
&=\frac{l^2}{a^2}\sum_{0\leq b<a}\legendre{b}{a}\etp{\frac{tb}{a}}\label{eq:sumbSimplify}
\end{align}
is, up to a simple factor, a Gauss sum associated with the character $\legendre{\cdot}{(a,l^2/a)}$. To express the values of these Gauss sums, we let, for any odd positive integer $m$,
\begin{align*}
\radE(m) &= \prod_{\twoscript{p\mid m}{v_p(m)\equiv0\bmod{2}}}p  \qquad &\radO(m) &= \prod_{\twoscript{p\mid m}{v_p(m)\equiv1\bmod{2}}}p \\
\rad(m) &= \radE(m)\radO(m) \qquad &\radp(m) &= \radE(m)^2\radO(m) \\
\irad(m) &= m/\rad(m) \qquad &\iradp(m)&=m/\radp(m)
\end{align*}
where $p$ denotes a prime.
By \cite[Lemma 5.6]{ZZ23}, if $\irad(a)\nmid t$ then $\sum_{0\leq b<a}\legendre{b}{a}\etp{\frac{tb}{a}}=0$; if $\irad(a)\mid t$ then
\begin{equation}
\label{eq:aGaussSum}
\sum_{0\leq b<a}\legendre{b}{a}\etp{\frac{tb}{a}}
=\varepsilon_a\frac{a}{\sqrt{\radp(a)}}\legendre{t/\iradp(a)}{\radO(a)}\prod_{p \mid \radE(a)}\left(p-1-p\legendre{t/\irad(a)}{p}^2\right),
\end{equation}
where $\legendre{t/\iradp(a)}{\radO(a)}$, $\legendre{t/\irad(a)}{p}$ are Jacobi symbols and $\varepsilon_a=1$ ($\varepsilon_a=\rmi$ respectively) if $a\equiv1\bmod{4}$ ($a\equiv3\bmod{4}$ respectively). Inserting \eqref{eq:aGaussSum} and \eqref{eq:sumbSimplify} into \eqref{eq:cl2example} and noting that $\etp{-\frac{7l^2}{8a}+\frac{7}{8}}=\etp{\frac{a-1}{8}}$, $\legendre{a}{2}\varepsilon_a\etp{\frac{a-1}{8}}=1$ we obtain the following formula.
\begin{prop}
For $f(\tau)=\eta(\tau)^{-7}\eta(2\tau)^{17}\eta(4\tau)^{-3}=\sum_{n\in\frac{5}{8}+\numgeq{Z}{0}}c_f(n)q^n$ and $1\leq l\equiv1\bmod{2}$ we have
\begin{equation*}
c_{l^2}=l^{-\frac{3}{2}}\sum_{\twoscript{a\mid l}{\irad(a)\mid t}}\frac{a^{\frac{5}{2}}}{\sqrt{\radp(a)}}\prod_{p \mid \radE(a)}\left(p-1-p\legendre{t/\irad(a)}{p}^2\right)\legendre{t/\iradp(a)}{\radO(a)}c_f\left(\frac{5l^2}{8a^2}\right)
\end{equation*}
where $t=\frac{5(l^2-1)}{8}$. In particular, if $l$ is square-free, then
\begin{equation*}
c_{l^2}=l^{-\frac{3}{2}}\sum_{a\mid l}a^2\cdot\legendre{t}{a}c_f\left(\frac{5l^2}{8a^2}\right).
\end{equation*}
\end{prop}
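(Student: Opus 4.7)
The plan is to continue directly from the explicit expression \eqref{eq:cl2example} for $c_{l^2}$ and evaluate the inner $b$-sum in closed form. First I would observe that $c_f\bigl(5l^2/(8a^2)\bigr)$ vanishes unless $5l^2/(8a^2)\in\tfrac{5}{8}+\numZ$; since $a$ and $l$ are both odd, this forces $a^2\mid l^2$ and hence $a\mid l$, collapsing the outer sum to divisors $a$ of $l$. For such $a$ one has $(a,l^2/a)=a$, and because $l^2\equiv1\bmod 8$ the quantity $t=5(l^2-1)/8$ is an integer independent of $a$, as already recorded in \eqref{eq:sumbSimplify}.

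The key computation is then the inner Gauss-like sum. Breaking the range $0\leq b<l^2/a$ into blocks of length $a$ and using that $\legendre{\,\cdot\,}{a}$ is $a$-periodic and $\etp{-tb/a}$ is $a$-periodic in $b$ (since $t\in\numZ$), the sum reduces as in \eqref{eq:sumbSimplify} to $(l^2/a^2)\sum_{0\leq b<a}\legendre{b}{a}\etp{tb/a}$. At this point Lemma 5.6 of \cite{ZZ23}, restated in \eqref{eq:aGaussSum}, evaluates this Jacobi-weighted Gauss sum: it vanishes unless $\irad(a)\mid t$, and otherwise equals
\[
\varepsilon_a\,\frac{a}{\sqrt{\radp(a)}}\legendre{t/\iradp(a)}{\radO(a)}\prod_{p\mid\radE(a)}\!\!\Bigl(p-1-p\legendre{t/\irad(a)}{p}^{2}\Bigr).
\]

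Substituting back into \eqref{eq:cl2example}, the only bookkeeping is the unit factor $\legendre{a}{2}\varepsilon_a\etp{(a-1)/8}$, which equals $1$ for every odd $a$ by the standard evaluation of the quadratic Gauss sum (this cancellation is the one already noted just before the proposition in the main text). Collecting the powers of $a$ and $l$ yields $a^{7/2}\cdot (l^2/a^2)\cdot (a/\sqrt{\radp(a)})\cdot l^{-7/2}=l^{-3/2}\,a^{5/2}/\sqrt{\radp(a)}$, and together with the surviving Jacobi symbol and product over $p\mid\radE(a)$ this produces the first displayed formula.

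For the square-free specialization, every prime dividing $a$ appears to the first power, so $\radE(a)=1$, $\radO(a)=\rad(a)=a$, $\radp(a)=a$, and $\irad(a)=\iradp(a)=1$; the condition $\irad(a)\mid t$ holds automatically, the empty product over $p\mid\radE(a)$ is $1$, $a^{5/2}/\sqrt{a}=a^{2}$, and $\legendre{t/\iradp(a)}{\radO(a)}=\legendre{t}{a}$, yielding the second formula. The main obstacle is purely organizational, namely tracking which of $a$, $\irad(a)$, $\iradp(a)$, $\radE(a)$, $\radO(a)$ appears in each factor and verifying the unit cancellation; no conceptual difficulty arises beyond a careful application of the machinery already assembled in the excerpt.
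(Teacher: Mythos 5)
Your proposal is correct and follows essentially the same route as the paper: restrict to $a\mid l$ via the support of $c_f$, reduce the inner sum to the Jacobi-weighted Gauss sum of \eqref{eq:sumbSimplify}, evaluate it by \eqref{eq:aGaussSum}, and cancel the unit $\legendre{a}{2}\varepsilon_a\etp{\frac{a-1}{8}}=1$. The power bookkeeping $l^{-7/2}a^{7/2}\cdot(l^2/a^2)\cdot a/\sqrt{\radp(a)}=l^{-3/2}a^{5/2}/\sqrt{\radp(a)}$ and the square-free specialization are both handled exactly as in the text.
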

\begin{rema}
Since $\frac{a^{5/2}}{\sqrt{\radp(a)}}\in\numZ$ and $c_f(n)\in\numZ$, we have $l^{3/2}\cdot c_{l^2}\in\numZ$.
\end{rema}

According to Theorem \ref{thm:multiplicativeEtaQuotients} (applied to \eqref{eq:N4halfintegralExamp}) we have $c_{l_1^2}c_{l_2^2}=c_{l_1^2l_2^2}$ whenever $l_1,l_2$ are coprime odd integers. In the special case $l_1$ and $l_2$ are square-free, this identity reads
\begin{multline}
\label{eq:N4ExampMultisf}
\sum_{a\mid l_1}a^2\cdot\legendre{5(l_1^2-1)/8}{a}c_f\left(\frac{5l_1^2}{8a^2}\right)\cdot\sum_{a\mid l_2}a^2\cdot\legendre{5(l_2^2-1)/8}{a}c_f\left(\frac{5l_2^2}{8a^2}\right)\\
=\sum_{a\mid l_1l_2}a^2\cdot\legendre{5(l_1^2l_2^2-1)/8}{a}c_f\left(\frac{5l_1^2l_2^2}{8a^2}\right).
\end{multline}
From this we find the genuine multiplicativity property of \eqref{eq:N4halfintegralExamp} as stated below. The reader may compare this to the integral-weight counterpart---Proposition \ref{prop:kZmultiplicativecfl}.
\begin{prop}
\label{prop:semiMultiExampLevel4}
Let $f(\tau)=\eta(\tau)^{-7}\eta(2\tau)^{17}\eta(4\tau)^{-3}=\sum_{n\in\frac{5}{8}+\numgeq{Z}{0}}c_f(n)q^n$. Let $l_1,l_2$ be coprime odd square-free integers. Then
\begin{equation*}
c_f\left(\frac{5l_1^2}{8}\right)c_f\left(\frac{5l_2^2}{8}\right)=c_f\left(\frac{5l_1^2l_2^2}{8}\right).
\end{equation*}
\end{prop}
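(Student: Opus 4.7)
The plan is to derive the proposition from identity \eqref{eq:N4ExampMultisf} by strong induction on $l_1 l_2$. The base case $l_1=l_2=1$ is immediate because $c_f(5/8)=1$ is the leading coefficient of the eta-quotient.

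For the inductive step, first I would rewrite \eqref{eq:N4ExampMultisf} using the bijection $a'\leftrightarrow (a,b)$ coming from $(l_1,l_2)=1$: each divisor $a'\mid l_1l_2$ factors uniquely as $a'=ab$ with $a\mid l_1$, $b\mid l_2$, so with $t_1=5(l_1^2-1)/8$, $t_2=5(l_2^2-1)/8$, $t_{12}=5(l_1^2l_2^2-1)/8$ the identity becomes
\begin{equation*}
\sum_{\twoscript{a\mid l_1}{b\mid l_2}} a^2b^2\legendre{t_1}{a}\legendre{t_2}{b}c_f\!\left(\tfrac{5l_1^2}{8a^2}\right)c_f\!\left(\tfrac{5l_2^2}{8b^2}\right)=\sum_{\twoscript{a\mid l_1}{b\mid l_2}} a^2b^2\legendre{t_{12}}{a}\legendre{t_{12}}{b}c_f\!\left(\tfrac{5l_1^2l_2^2}{8a^2b^2}\right).
\end{equation*}
For $(a,b)\neq(1,1)$ the pair $l_1/a,\,l_2/b$ is still coprime odd square-free, and $(l_1/a)(l_2/b)<l_1l_2$, so the inductive hypothesis applies to reduce $c_f(5l_1^2/(8a^2))c_f(5l_2^2/(8b^2))$ to $c_f(5l_1^2l_2^2/(8a^2b^2))$ in every non-$(1,1)$ term on the left.

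The central step, which is the only place any real computation happens, is to match the Jacobi-symbol coefficients on the two sides for each non-$(1,1)$ term. Specifically, I would show that for every odd $a\mid l_1$ one has
\begin{equation*}
\legendre{t_{12}}{a}=\legendre{t_1}{a},\qquad \text{and symmetrically}\qquad \legendre{t_{12}}{b}=\legendre{t_2}{b}\ \text{for every odd }b\mid l_2.
\end{equation*}
The reason is that $8t_{12}=5(l_1^2l_2^2-1)\equiv -5\pmod{a}$ since $a\mid l_1$ implies $a\mid l_1^2$, and the same computation gives $8t_1\equiv-5\pmod{a}$; because $a$ is odd, $8$ is invertible mod $a$, so $t_{12}\equiv t_1\pmod{a}$, which forces equality of the Jacobi symbols. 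With this matching, all non-$(1,1)$ terms on both sides cancel, leaving exactly
\begin{equation*}
c_f\!\left(\tfrac{5l_1^2}{8}\right)c_f\!\left(\tfrac{5l_2^2}{8}\right)=c_f\!\left(\tfrac{5l_1^2l_2^2}{8}\right),
\end{equation*}
completing the induction.

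The only step that could present a minor obstacle is the Jacobi-symbol matching, but the congruence $t_{12}\equiv t_1\pmod{a}$ handles it cleanly thanks to the square-free and coprimality hypotheses on $l_1,l_2$; the rest is purely bookkeeping on top of Theorem \ref{thm:multiplicativeEtaQuotients}, specialized here through the closed-form expression for $c_{l^2}$ in the square-free case (which is what produced \eqref{eq:N4ExampMultisf} to begin with).
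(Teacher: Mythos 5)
Your proof is correct, and it rests on exactly the same two ingredients as the paper's: the specialized identity \eqref{eq:N4ExampMultisf} and the observation that $8t_{12}\equiv 8t_1\equiv -5\pmod{a}$ for odd $a\mid l_1$, whence $\legendre{t_{12}}{a}=\legendre{t_1}{a}$ by periodicity of the Jacobi symbol. The difference is organizational: the paper runs a three-stage bootstrap (both $l_i$ prime, then $l_1$ prime and $l_2$ arbitrary by induction on the number of prime factors of $l_2$, then the general case by noting both sides equal $\prod_{p\mid l_1l_2}c_f(5p^2/8)$), applying the Jacobi-symbol congruence only at $a=l_1$, $a=l_2$ and $a=l_1l_2$. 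You instead perform a single strong induction on $l_1l_2$, factor every divisor of $l_1l_2$ as $ab$ with $a\mid l_1$, $b\mid l_2$, match the coefficients $\legendre{t_{12}}{ab}=\legendre{t_1}{a}\legendre{t_2}{b}$ uniformly, and let the inductive hypothesis cancel all cross terms at once. This is cleaner and avoids the final product formula; the only points worth making explicit in a write-up are that the base case uses $c_f(5/8)=1$ (the leading coefficient of the eta-quotient) and that the pairs $(l_1/a,\,l_2/b)$ remain coprime, odd and square-free so the inductive hypothesis genuinely applies.
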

\begin{proof}
First suppose $l_1$ and $l_2$ are primes. By \eqref{eq:N4ExampMultisf} we have
\begin{multline}
\label{eq:N4ExampMultisfPrimes}
\left(c_f\left(\frac{5l_1^2}{8}\right)+l_1^2\cdot\legendre{5(l_1^2-1)/8}{l_1}\right)\cdot\left(c_f\left(\frac{5l_2^2}{8}\right)+l_2^2\cdot\legendre{5(l_2^2-1)/8}{l_2}\right)\\
=c_f\left(\frac{5l_1^2l_2^2}{8}\right)+l_2^2\cdot\legendre{5(l_1^2l_2^2-1)/8}{l_2}c_f\left(\frac{5l_1^2}{8}\right)+l_1^2\cdot\legendre{5(l_1^2l_2^2-1)/8}{l_1}c_f\left(\frac{5l_2^2}{8}\right)+l_1^2l_2^2\cdot\legendre{5(l_1^2l_2^2-1)/8}{l_1l_2}.
\end{multline}
It is not hard to prove, by the periodicity and multiplicativity of Jacobi symbols, that
\begin{align*}
\legendre{5(l_1^2l_2^2-1)/8}{l_1}&=\legendre{5(l_1^2-1)/8}{l_1},\\
\legendre{5(l_1^2l_2^2-1)/8}{l_2}&=\legendre{5(l_2^2-1)/8}{l_2},\\
\legendre{5(l_1^2l_2^2-1)/8}{l_1l_2}&=\legendre{5(l_1^2-1)/8}{l_1}\cdot\legendre{5(l_2^2-1)/8}{l_2}.
\end{align*}
Inserting these into \eqref{eq:N4ExampMultisfPrimes} gives the desired identity in the case $l_1$ and $l_2$ are primes.

Second, suppose $l_1$ is a prime and $l_2$ is arbitrary. Again by \eqref{eq:N4ExampMultisf} we have
\begin{multline}
\label{eq:N4ExampMultisfOnePrime}
\left(c_f\left(\frac{5l_1^2}{8}\right)+l_1^2\cdot\legendre{5(l_1^2-1)/8}{l_1}\right)\cdot\sum_{a\mid l_2}a^2\cdot\legendre{5(l_2^2-1)/8}{a}c_f\left(\frac{5l_2^2}{8a^2}\right)\\
=\sum_{a\mid l_2}a^2\cdot\legendre{5(l_1^2l_2^2-1)/8}{a}c_f\left(\frac{5l_1^2l_2^2}{8a^2}\right)+\sum_{a\mid l_2}l_1^2a^2\cdot\legendre{5(l_1^2l_2^2-1)/8}{l_1a}c_f\left(\frac{5l_2^2}{8a^2}\right).
\end{multline}
Since for $a\mid l_2$,
$$
\legendre{5(l_1^2-1)/8}{l_1}\cdot\legendre{5(l_2^2-1)/8}{a}=\legendre{5(l_1^2l_2^2-1)/8}{l_1a},
$$
we obtain from \eqref{eq:N4ExampMultisfOnePrime} that
\begin{align}
c_f\left(\frac{5l_1^2}{8}\right)\cdot\sum_{a\mid l_2}a^2\cdot\legendre{5(l_2^2-1)/8}{a}c_f\left(\frac{5l_2^2}{8a^2}\right)&=\sum_{a\mid l_2}a^2\cdot\legendre{5(l_1^2l_2^2-1)/8}{a}c_f\left(\frac{5l_1^2l_2^2}{8a^2}\right)\notag\\
&=\sum_{a\mid l_2}a^2\cdot\legendre{5(l_2^2-1)/8}{a}c_f\left(\frac{5l_1^2l_2^2}{8a^2}\right).
\label{eq:N4ExampMultisfOnePrime2}
\end{align}
The desired identities for $l_1$ prime and $l_2$ arbitrary now follows from \eqref{eq:N4ExampMultisfOnePrime2}, the already proven case and the induction on the number of prime divisors of $l_2$.

Finally, if $l_1$ and $l_2$ are arbitrary, then, by the already proven cases, both sides of the desired identity are equal to $\prod_{p\mid l_1l_2}c_f\left(\frac{5p^2}{8}\right)$ and hence the identity holds which concludes the proof.
\end{proof}

We have discussed consequences of Theorem \ref{thm:multiplicativeEtaQuotients} applied to \eqref{eq:N4halfintegralExamp}. The above exploration is also valid for any admissible eta-quotient of half-integral weight. (Of course the integral weight case is simpler to deal with.) In addition, for \eqref{eq:N4halfintegralExamp} or any admissible eta-quotient of half-integral weight, we can as well simplify the left-hand side of \eqref{eq:mainApp2} as we have just done for $c_{l^2}$. This will give more interesting identities about $c_f(n)$. We will not include more details about this due to the length of the paper.

\section{Miscellaneous observations and open questions}
\label{sec:Miscellaneous observations and open questions}
\textbf{More general multiplier systems.} The dimension formulas stated in Theorem \ref{thm:main} concern multiplier systems induced by eta-quotients of fractional exponents. As a comparison, most of the formulas that have appeared in the literature concern multiplier systems induced by Dirichlet characters (in the case of integral weights) or Dirichlet characters times the multiplier system of a power of $\eta(\tau)^{-2}\eta(2\tau)^5\eta(4\tau)^{-2}$ (in the case of half-integral weights). It is meaningful to deduce an explicit formula for $\dim_\numC M_k(\Gamma_0(N),\chi_1\chi_2)$ where $k\in\numQ$, $\chi_1$ is induced by an eta-quotient of fractional exponents and $\chi_2$ by a Dirichlet character.

\textbf{On generalized modular forms.} The theory of generalized modular forms, initiated in \cite{KM03}, grows naturally out of questions inherent in rational conformal field theory. There are many excellent works on this topic; cf. e.g. \cite{KM09,Koh10,KR10,Raj13,BCD18}. In these works, the space $M_k(G,\chi)$ was studied (among others), where $G$ is a cofinite Fuchsian group of the first kind, $\chi$ is a possibly non-unitary multiplier system, and $k$ is a complex number. As we mentioned in Section \ref{sec:Introduction}, formulas for $\dim_\numC M_k(G,\chi)$ were given in \cite[p. 194]{Pet38} when $k$ is complex with a restriction on the degree of the corresponding divisor, and in \cite[Theorem 2.5.4]{Fis87} when $k$ is real, greater than $2$ and $\chi$ is unitary. It is therefore a meaningful problem to derive computationally explicit formulas for concrete pairs $(k,G,\chi)$.

\textbf{Do generalized double coset operators exist for rational weights?} Theorem \ref{thm:doubleCosetOperator} let us know for what numbers $l$ does the expression $T_lf$ make sense. The multiplier systems involved are required to be induced by eta-quotients of \emph{integral} exponents. The question is: if the multiplier systems are induced by eta-quotients of fractional exponents, is there any nontrivial $T_l$ (i.e. $l>1$)? All identities presented in Section \ref{subsec:identitiesFourier} are based on Theorem \ref{thm:doubleCosetOperator}. Thus if we can find any $T_l$ with $l>1$ in the case of fractional exponents, then, taking into account of Theorem \ref{thm:main}, we would obtain identities involving $c_f(n)$ for infinitely many eta-quotients $f$ of \emph{fractional} exponents. However, no such operator has been found up to now. In fact, we have spent a considerable amount of time searching for such operators to the best of our ability, but all attempts ended in failure. We have tried double coset operators $\Dcover{\Gamma_0(N)}{D}\widetilde{\tbtmat{a}{b}{c}{d}}\Dcover{\Gamma_0(N)}{D}$ and $\Gamma_0(N)^\bullet\widetilde{\tbtmat{a}{b}{c}{d}}\Gamma_0(N)^\bullet$ (see Theorem \ref{thm:slZbullet} for the notation $\Gamma_0(N)^\bullet$) for $\tbtmat{a}{b}{c}{d}\in\glpQ$. However, for $k\not\in\frac{1}{2}\numZ$, the action of these operators on $M_k(\Dcover{G}{D},\chi)$ is always trivial (the zero operator). Nontrivial operators may indeed exist but were overlooked during our exploration. Alternatively, one may search for them among $\tbtmat{a}{b}{c}{d}\in\mathrm{GL}_2^{+}(K)$ where $K$ is a  totally real number field. (This is difficult and novel since it forces us to consider operators given by infinite sums.) All in all, this problem remains open.

\textbf{$L$-functions and Euler products.} According to Theorem \ref{thm:multiplicativeEtaQuotients}, it is natural to associate an $L$-function $L(f,s)=\sum_{l\in L_f}\frac{c_l}{l^s}$ with a weakly admissible eta-quotient $f$. This association is different from the usual one $\sum_{n}\frac{c_f(n)}{n^s}$. The identities presented in Section \ref{subsec:identitiesFourier} can be rephrased as properties of this $L(f,s)$. For instance, it has an Euler product. It is interesting to investigate $L(f,s)$, e.g., the functional equations, the corresponding Weil's theorem, etc.

\textbf{Listing all weakly admissible eta-quotients.} Open question: to find out all weakly admissible eta-quotients (if there exist only finitely many). Another open question: for each weakly admissible eta-quotient, to deduce identities like the one in Proposition \ref{prop:semiMultiExampLevel4} from \eqref{eq:mainApp2}.

\begin{appendix}
\section{The $D$-covers}
\label{apx:covers}
Let $D$ be a positive integer. We have defined $\Dcover{\glpR}{D}$ and $\Dcover{\slR}{D}$ in Section \ref{sec:Modular forms of rational weight}. They are called $D$-covers because the fibers of the natural projections have cardinality $D$. Historically, Kubota \cite{Kub66,Kub67} was the first to systematically investigate topological coverings of $\mathrm{SL}_2(F)$, where $F$ is a completion, by a finite or infinite place, of a number field (finite extension of $\numQ$). Indeed, suppose that the polynomial $x^D-1$ splits in $F$, then one can define the local Hilbert symbol $(\cdot,\cdot)_{D,F}\colon F^\times\times F^\times\rightarrow\{z\in\numC\colon z^D=1\}$; cf. e.g. \cite[p. 260--262]{Wei74}. Then Kubota \cite[Theorem]{Kub67} constructed a $D$-fold, nontrivial, topological covering of $\mathrm{SL}_2(F)$ using a so-called factor set expressed by the local Hilbert symbol. For $F=\numR$, $x^D-1$ splits in $\numR$ if and only if $D=1$ or $2$. Thus, Kubota's construction yields only a double (metaplectic) cover of  $\slR$. For $D\geq3$, $\Dcover{\slR}{D}$ is not covered by Kubota's construction.

There is another approach to extending $\slR$, namely using a central extension of $\slR$ by $S^1$, the unit circle. This was first employed by Aoki \cite{Aok17} to study modular forms and Jacobi forms of real weight. The group $\Dcover{\slR}{D}$ is then in fact a subgroup of an appropriate central extension. In our opinion, $\Dcover{\slR}{D}$ seems more convenient for rational weights.

In this appendix, we first provide an explicit formula for the multiplication in $\Dcover{\glpR}{D}$, and then describe its Lie group structure. The latter is not directly related to the main theme of this paper, but it may be of interest to researchers in Lie group theory, and it provides us with a deeper insight. Last but not least, we describe some direct applications of $D$-covers.

Recall that we adopt the convention \eqref{eq:branchRoot}.
\begin{prop}
Let $\left(\tbtmat{a_1}{b_1}{c_1}{d_1}, \varepsilon_1\right)$ and $\left(\tbtmat{a_2}{b_2}{c_2}{d_2}, \varepsilon_2\right)$ be elements of $\Dcover{\glpR}{D}$. Set $\gamma_j=\tbtmat{a_j}{b_j}{c_j}{d_j}$ and $\tbtmat{a_3}{b_3}{c_3}{d_3}=\gamma_1\gamma_2$. Then
\begin{equation*}
\left(\tbtmat{a_1}{b_1}{c_1}{d_1}, \varepsilon_1\right)\cdot\left(\tbtmat{a_2}{b_2}{c_2}{d_2}, \varepsilon_2\right)=\left(\tbtmat{a_3}{b_3}{c_3}{d_3}, \varepsilon_1\varepsilon_2\sigma(\gamma_1,\gamma_2)\right),
\end{equation*}
where $\sigma(\gamma_1,\gamma_2)\in\{\etp{1/D},\etp{-1/D},1\}$ is determined as follows:
\begin{enumerate}
  \item[(a)] $\sigma(\gamma_1,\gamma_2)=\etp{1/D}$ if one of the following two conditions holds:
  \begin{itemize}
    \item $c_1=c_2=0$ and $d_1<0,\,d_2<0$,
    \item $c_1 \geq 0,\, c_2 \geq 0$ but $c_3 < 0$.
  \end{itemize}
  \item[(b)] $\sigma(\gamma_1,\gamma_2)=\etp{-1/D}$ if $c_1 < 0$, $c_2 < 0$ but $c_3 \geq 0$.
  \item[(c)] $\sigma(\gamma_1,\gamma_2)=1$ in other cases.
\end{enumerate}
\end{prop}
\begin{proof}
Without loss of generality, we can assume $\det(\gamma_1)=\det(\gamma_2)=1$. Set $z_1=c_1(a_2\tau+b_2)/(c_2\tau+d_2)+d_1$ and $z_2=c_2\tau+d_2$, then $z_1z_2=c_3\tau+d_3$, and
$$\sigma\left(\tbtmat{a_1}{b_1}{c_1}{d_1},\tbtmat{a_2}{b_2}{c_2}{d_2}\right)=\frac{\sqrt[D]{z_1}\sqrt[D]{z_2}}{\sqrt[D]{z_1z_2}}=\exp\left(\frac{\log z_1+\log z_2 - \log(z_1z_2)}{D}\right).$$
If $c_1 < 0$, $c_2 < 0$ but $c_3 \geq 0$, then $\Im z_1<0$, $\Im z_2<0$, and $\Im(z_1z_2)\geq0$. It follows that $\log z_1+\log z_2 - \log(z_1z_2)=\rmi(\arg(z_1)+\arg(z_2)-\arg(z_1z_2))=-2\uppi\rmi$. Hence $\sigma\left(\tbtmat{a_1}{b_1}{c_1}{d_1},\tbtmat{a_2}{b_2}{c_2}{d_2}\right)=\etp{-1/D}$ as desired. The proofs for other cases are similar.
\end{proof}

Now we introduce a topology on $\Dcover{\glpR}{D}$. We fix a $\tau_0\in\uhp$ and a disk $B$ centered at $\tau_0$ with $\overline{B}\subset\uhp$, where $\overline{B}$ is the closure in $\numC$. Let $A=\left(\tbtmat{a}{b}{c}{d}, \varepsilon\right)=\eleglptRaDp{a}{b}{c}{d}{\varepsilon}{D}$ be an element of $\Dcover{\glpR}{D}$, where $\tbtmat{a'}{b'}{c'}{d'}:=(ad-bc)^{-1/2}\cdot\tbtmat{a}{b}{c}{d}\in\slR$. Set $\gamma=\tbtmat{a}{b}{c}{d}$ and $\phi_\gamma(\tau)=(c^\prime\tau+d^\prime)^{\frac{1}{D}}$. Then we can write $A=(\gamma,\varepsilon\cdot\phi_\gamma)$. We define
\begin{equation}
\label{eq:defUA}
U(A;\delta_1,\delta_2):=\left\{(\gamma_1,\varepsilon_1\cdot\phi_{\gamma_1})\in\Dcover{\glpR}{D}\colon\lVert\gamma_1-\gamma\rVert<\delta_1,\,\lVert\varepsilon_1\cdot\phi_{\gamma_1}-\varepsilon\cdot\phi_{\gamma}\rVert<\delta_2\right\},
\end{equation}
where $\delta_1,\delta_2>0$, $\lVert\gamma_1-\gamma\rVert$ is an arbitrary matrix norm (e.g., the sup norm), and where\footnote{We can use the more natural compact-open topology, i.e., the topology of uniform convergence on compact sets. But the one we use here suffices.}
$$\lVert\varepsilon_1\cdot\phi_{\gamma_1}-\varepsilon\cdot\phi_{\gamma}\rVert:=\sup\nolimits_{\tau\in\overline{B}}\lvert\varepsilon_1\cdot\phi_{\gamma_1}(\tau)-\varepsilon\cdot\phi_{\gamma}(\tau)\rvert.$$
We say a set $\mathcal{S}\subseteq\Dcover{\glpR}{D}$ open if for every $A\in \mathcal{S}$, there exist $\delta_1$ and $\delta_2$ such that $U(A;\delta_1,\delta_2)\subseteq S$. This defines a topology on $\Dcover{\glpR}{D}$. Equivalently, we can realize $\Dcover{\glpR}{D}$ as a subgroup of the semi-direct product of $\glpR$ and a certain group of invertible holomorphic functions, and then define our topology as the subspace topology induced from the product topology. The former method is more explicit and the latter more natural.
\begin{lemm}
\label{lemm:neighborhood}
If $c\neq0$, or $c=0$ but $d>0$, then there exist $r_1,r_2>0$ such that for all $0<\delta_1<r_1$, we have
\begin{equation}
\label{eq:smallneighborhood1}
U(A;\delta_1,r_2)=\left\{(\gamma_1,\varepsilon_1\cdot\phi_{\gamma_1})\in\Dcover{\glpR}{D}\colon\lVert\gamma_1-\gamma\rVert<\delta_1,\,\varepsilon_1=\varepsilon\right\}.
\end{equation}
On the other hand, if $c=0$ and $d<0$, then there exist $r_1,r_2>0$ such that for all $0<\delta_1<r_1$, we have
\begin{multline}
\label{eq:smallneighborhood2}
U(A;\delta_1,r_2)=\left\{(\gamma_1,\varepsilon_1\cdot\phi_{\gamma_1})\in\Dcover{\glpR}{D}\colon\gamma_1=\tbtmat{a_1}{b_1}{c_1}{d_1},\,c_1\geq0,\,\lVert\gamma_1-\gamma\rVert<\delta_1,\,\varepsilon_1=\varepsilon\right\}\\
\cup\left\{(\gamma_1,\varepsilon_1\cdot\phi_{\gamma_1})\in\Dcover{\glpR}{D}\colon\gamma_1=\tbtmat{a_1}{b_1}{c_1}{d_1},\,c_1<0,\,\lVert\gamma_1-\gamma\rVert<\delta_1,\,\varepsilon_1=\varepsilon\cdot\etp{1/D}\right\},
\end{multline}
where the union is disjoint.
\end{lemm}
\begin{proof}
We only write down the proof for $c=0,d<0$, since the other case is simpler and similar. The first and second sets in the right-hand side of the desired identity are denoted by $V(A;\delta_1)_{+}$ and $V(A;\delta_1)_{-}$, respectively. We need to find $r_1$ and $r_2$ and to prove $U(A;\delta_1,r_2)=V(A;\delta_1)_{+}\cup V(A;\delta_1)_{-}$. Without loss of generality, we assume that $d'=d$, namely, $ad-bc=1$.

As a prerequisite, we define a function on a small disk centered at $d$:
\begin{equation*}
f(z)=\begin{dcases}
z^{\frac{1}{D}}=\exp\frac{1}{D}\log z &\text{ if }\Im z\geq0,\\
\etp{1/D}z^{\frac{1}{D}}=\exp\frac{1}{D}\left(\log z+2\uppi\rmi\right) &\text{ if }\Im z<0.
\end{dcases}
\end{equation*}
Then $f(z)$ is continuous on small disks not containing $0$. (Indeed, it is holomorphic.)

Set $l=\min_{0\leq i\neq j<D}\abs{\etp{i/D}-\etp{j/D}}$, and set $r_2=\frac{l}{2+l}\abs{d^{\frac{1}{D}}}$. By continuity of $f$ and boundedness of $\overline{B}$, we can find a $r_1>0$ with the property that for all $2\times2$ real matrices $\gamma_1=\tbtmat{a_1}{b_1}{c_1}{d_1}$ (not necessarily in $\glpR$)
\begin{equation}
\label{eq:r1withproperty}
\lVert\gamma_1-\gamma\rVert<r_1,\,c_1\geq0\Longrightarrow d_1\neq0,\,\abs{d^{\frac{1}{D}}-(c_1^\prime\tau+d_1^\prime)^{\frac{1}{D}}}<r_2\text{ for all }\tau\in\overline{B},
\end{equation}
and
\begin{equation}
\label{eq:r1withpropertyminus}
\lVert\gamma_1-\gamma\rVert<r_1,\,c_1<0\Longrightarrow d_1\neq0,\,\abs{d^{\frac{1}{D}}-\etp{1/D}(c_1^\prime\tau+d_1^\prime)^{\frac{1}{D}}}<r_2\text{ for all }\tau\in\overline{B}.
\end{equation}
We shall prove the above $r_1$ and $r_2$ suffice. Let $0<\delta_1<r_1$.

Let $(\gamma_1,\varepsilon_1\cdot\phi_{\gamma_1})\in V(A;\delta_1)_{+}$ be arbitrary. Then $c_1\geq0$, $\lVert\gamma_1-\gamma\rVert<\delta_1$, and $\varepsilon_1=\varepsilon$. This, together with \eqref{eq:r1withproperty}, implies $\lVert\varepsilon_1\cdot\phi_{\gamma_1}-\varepsilon\cdot\phi_{\gamma}\rVert<r_2$. Therefore, $V(A;\delta_1)_{+}\subseteq U(A;\delta_1,r_2)$. Now let $(\gamma_1,\varepsilon_1\cdot\phi_{\gamma_1})\in V(A;\delta_1)_{-}$ be arbitrary. Then $c_1<0$, $\lVert\gamma_1-\gamma\rVert<\delta_1$, and $\varepsilon_1=\varepsilon\cdot\etp{1/D}$. This, together with \eqref{eq:r1withpropertyminus}, implies $\lVert\varepsilon_1\cdot\phi_{\gamma_1}-\varepsilon\cdot\phi_{\gamma}\rVert<r_2$, so $V(A;\delta_1)_{-}\subseteq U(A;\delta_1,r_2)$.

Conversely, let $(\gamma_1,\varepsilon_1\cdot\phi_{\gamma_1})\in U(A;\delta_1,r_2)$ be arbitrary. Then $\lVert\gamma_1-\gamma\rVert<\delta_1$, and $\lVert\varepsilon_1\cdot\phi_{\gamma_1}-\varepsilon\cdot\phi_{\gamma}\rVert<r_2$. Set $\varepsilon_2:=\varepsilon$ if $c_1\geq0$ and $\varepsilon_2:=\etp{1/D}\varepsilon$ if $c_1<0$. The desired conclusion $(\gamma_1,\varepsilon_1\cdot\phi_{\gamma_1})\in V(A;\delta_1)_{+}\cup V(A;\delta_1)_{-}$ will follow if we can prove $\varepsilon_1=\varepsilon_2$. To this end, we note that
\begin{equation}
\label{eq:normInequality}
\abs{\varepsilon_1-\varepsilon_2}\cdot\lVert\phi_{\gamma_1}\rVert=\lVert\varepsilon_1\phi_{\gamma_1}-\varepsilon_2\phi_{\gamma_1}\rVert\leq\lVert\varepsilon_1\phi_{\gamma_1}-\varepsilon\phi_{\gamma}\rVert+\lVert\varepsilon\phi_{\gamma}-\varepsilon_2\phi_{\gamma_1}\rVert.
\end{equation}
We have $\lVert\varepsilon_1\phi_{\gamma_1}-\varepsilon\phi_{\gamma}\rVert<r_2$ by the assumption. By $\lVert\gamma_1-\gamma\rVert<\delta_1$, and by \eqref{eq:r1withproperty} or \eqref{eq:r1withpropertyminus} (according to $c_1\geq0$ or $c_1<0$, respectively), we have $\lVert\varepsilon\phi_{\gamma}-\varepsilon_2\phi_{\gamma_1}\rVert<r_2$. Inserting these into \eqref{eq:normInequality}, we have
\begin{equation}
\label{eq:normInequality2}
\abs{\varepsilon_1-\varepsilon_2}\cdot\lVert\phi_{\gamma_1}\rVert<2r_2.
\end{equation}
Again by \eqref{eq:r1withproperty} or \eqref{eq:r1withpropertyminus}, we have
$$\lVert\phi_{\gamma_1}\rVert=\lVert\varepsilon_2\phi_{\gamma_1}\rVert=\lVert\varepsilon_2\phi_{\gamma_1}-\varepsilon\phi_\gamma+\varepsilon\phi_\gamma\rVert\geq\lVert\phi_\gamma\rVert-\lVert\varepsilon_2\phi_{\gamma_1}-\varepsilon\phi_\gamma\rVert>\abs{d^{\frac{1}{D}}}-r_2.$$
Combining this and \eqref{eq:normInequality2} we find that
\begin{equation}
\label{eq:normInequality3}
\abs{\varepsilon_1-\varepsilon_2}<\frac{2r_2}{\abs{d^{\frac{1}{D}}}-r_2}=l,
\end{equation}
from which and the definition of $l$ we conclude that $\varepsilon_1=\varepsilon_2$. This finishes the whole proof.
\end{proof}

\begin{rema}
\label{rema:openNeighborhood}
(1) Roughly speaking, this lemma tells us that if $c\neq0$, or $c=0$ but $d>0$, then there are open neighborhoods $U$ of $A=(\gamma,\varepsilon)$ whose elements are all of the form $(\gamma_1,\varepsilon)$ with $\gamma_1$ sufficiently close to $\gamma$. On the other hand, if $c=0$ and $d<0$, then every open neighborhood $U$ of $A$ contains elements of both the form $(\gamma_1,\varepsilon)$ (with $c_1\geq0$) and $(\gamma_1,\etp{1/D}\varepsilon)$ (with $c_1<0$). This complexity originates from the discontinuity of the log function across the negative real axis.

(2) We can further require $r_2$ to be arbitrarily small (then $r_1$ depends on $r_2$). For instance, if $c=0$ and $d<0$, we can replace the construction $r_2=\frac{l}{2+l}\abs{d^{\frac{1}{D}}}$ in the above proof with $r_2<\frac{l}{2+l}\abs{d^{\frac{1}{D}}}$. Then the rest of the proof remains unchanged except that \eqref{eq:normInequality3} should be replaced with
$$
\abs{\varepsilon_1-\varepsilon_2}<\frac{2r_2}{\abs{d^{\frac{1}{D}}}-r_2}<l.
$$
\end{rema}

\begin{prop}
\label{prop:LieDcover}
Let $\Dcover{\glpR}{D}$ be equipped with the above topology. Then we have the following conclusions:
\begin{enumerate}
  \item This topology is independent of the choices of $\tau_0$ and $B$.
  \item $\Dcover{\glpR}{D}$ is Hausdorff, second-countable, locally compact, and locally homeomorphic to $\numR^4$.
  \item The natural projection $p_D\colon\Dcover{\glpR}{D}\rightarrow\glpR$, $\elesltRaDs{a}{b}{c}{d}{\varepsilon}\mapsto\tbtmat{a}{b}{c}{d}$ is a topological covering map.
  \item There is a unique $C^\infty$ manifold structure on $\Dcover{\glpR}{D}$ such that $p_D$ is a $C^\infty$ covering map. Moreover, this structure makes $\Dcover{\glpR}{D}$ a $C^\infty$ Lie group, and $p_D$ a Lie group homomorphism.
\end{enumerate}
\end{prop}
\begin{proof}
For (a), let $\mathscr{T}_{\tau_0,B}$ denote the topology we just defined, and let $U_{\tau_0,B}(A;\delta_1,\delta_2)=U(A;\delta_1,\delta_2)$. Let $(\tau_0',B')$ be another pair. By Lemma \ref{lemm:neighborhood} and Remark \ref{rema:openNeighborhood}(2), for all $A$ and all $\delta_1,\delta_2>0$, there exist $\delta_1',\delta_2'>0$ such that $U_{\tau_0',B'}(A;\delta_1',\delta_2')\subseteq U_{\tau_0,B}(A;\delta_1,\delta_2)$. It follows that $\mathscr{T}_{\tau_0,B}\subseteq\mathscr{T}_{\tau_0',B'}$. The reverse inclusion is proved similarly.

The part (b) is an immediate consequence of Lemma \ref{lemm:neighborhood} and Remark \ref{rema:openNeighborhood}(2).

For (c), let $\gamma=\tbtmat{a}{b}{c}{d}\in\glpR$ and let $\varepsilon\in \Omega_D$, where $\Omega_D\subseteq\numC$ is the set of $D$th roots of unity. We define $V((\gamma,\varepsilon);\delta)$ to be the right-hand side of \eqref{eq:smallneighborhood2} if $c=0$ and $d<0$, and that of \eqref{eq:smallneighborhood1} otherwise. By Lemma \ref{lemm:neighborhood}, we can find $r_1,r_2>0$ such that $U((\gamma,\varepsilon);\delta_1,r_2)=V((\gamma,\varepsilon);\delta_1)$ for all $0<\delta_1<r_1$ and all $\varepsilon$. Set $U(\gamma;\delta_1)=\{\gamma_1\in\glR\colon\lVert\gamma_1-\gamma\rVert<\delta_1\}$, which is an open neighborhood of $\gamma$ in $\glR$. Then
\begin{equation}
\label{eq:covering}
p_D^{-1}(U(\gamma;\delta_1))=\bigcup_{\varepsilon\in\Omega_D}V((\gamma,\varepsilon);\delta_1)=\bigcup_{\varepsilon\in\Omega_D}U((\gamma,\varepsilon);\delta_1,r_2),
\end{equation}
which is a disjoint union of open sets. It follows that $p_D$ is continuous. By Lemma \ref{lemm:neighborhood} and Remark \ref{rema:openNeighborhood}(2), $p_D\vert_{U((\gamma,\varepsilon);\delta_1,r_2)}$ is a homeomorphism from $U((\gamma,\varepsilon);\delta_1,r_2)$ onto $U(\gamma;\delta_1)$. Since $p_D$ is obviously surjective, we conclude that $p_D$ is a topological covering map.

For (d), note that $\glpR$ is a connected $C^\infty$ Lie group of dimension $4$ (the identity component of $\glR$). The existence and uniqueness of the $C^\infty$ manifold structure follow from this and \cite[Proposition 4.40]{Lee13}. To show that $\Dcover{\glpR}{D}$ is a Lie group, it remains to show the multiplication map $m_D\colon\Dcover{\glpR}{D}\times\Dcover{\glpR}{D}\rightarrow\Dcover{\glpR}{D}$ and the inversion map $i_D\colon\Dcover{\glpR}{D}\rightarrow\Dcover{\glpR}{D}$ are $C^\infty$. We show the former, since the latter is similar. Let $A_1=(\gamma_1,\varepsilon_1)$ and $A_2=(\gamma_2,\varepsilon_2)$ be elements of $\Dcover{\glpR}{D}$ and set $A_3=A_1A_2=(\gamma_1\gamma_2,\varepsilon_3)$. We choose a $\delta_3>0$ such that $V(A_3;\delta_3)$  is an open neighborhood of $A_3$ (due to Lemma \ref{lemm:neighborhood}) and that \eqref{eq:covering} holds for $\gamma=\gamma_1\gamma_2$ and $\delta_1=\delta_3$. The multiplication map $m\colon\glpR\times\glpR\rightarrow\glpR$ is $C^\infty$. Therefore, there exist open sets $U(\gamma_1;\delta_1)$ and $U(\gamma_2;\delta_2)$ such that $m(U(\gamma_1;\delta_1)\times U(\gamma_2;\delta_2))\subseteq U(\gamma_1\gamma_2;\delta_3)$, that $U(\gamma_j;\delta_j)$ is evenly covered by $p_D$ (see \cite[p. 278]{Lee11}), and that \eqref{eq:covering} holds for $U(\gamma_j;\delta_j)$, $j=1,2$. It follows that $V(A_j;\delta_j)$ is an open neighborhood of $A_j$ for $j=1,2$. If we can prove for sufficiently small $\delta_1$ and $\delta_2$,
\begin{equation}
\label{eq:mDinclusion}
m_D(V(A_1;\delta_1)\times V(A_2;\delta_2))\subseteq V(A_3;\delta_3)
\end{equation}
(which is nontrivial if one looks merely at the definition of $V(A_j;\delta_j)$), then by the construction of the $C^\infty$ manifold structure on $\Dcover{\glpR}{D}$, we can conclude that $m_D$ is $C^\infty$ on $V(A_1;\delta_1)\times V(A_2;\delta_2)$, and hence on the whole $\Dcover{\glpR}{D}\times\Dcover{\glpR}{D}$ by the arbitrariness of $A_1$ and $A_2$.

To find $\delta_1$ and $\delta_2$ such that \eqref{eq:mDinclusion} holds, we return to the definition \eqref{eq:defUA}. By construction, $V(A_j;\delta_j)=U(A_j;\delta_j,r)$ for $j=1,2$ (we have chosen a common $r>0$) and $V(A_3;\delta_3)=U(A_3;\delta_3,r_3)$. Let $(A_1',A_2')=((\gamma_1',\varepsilon_1'),(\gamma_2',\varepsilon_2'))\in V(A_1;\delta_1)\times V(A_2;\delta_2)=U(A_1;\delta_1,r)\times U(A_2;\delta_2,r)$. By \eqref{eq:multplicationCover}, we have
\begin{align*}
A_1'A_2'&= (\gamma_1'\gamma_2',\varepsilon_1'\varepsilon_2'\phi_{\gamma_1'}(\gamma_2'\tau)\phi_{\gamma_2'}(\tau)),\\
A_3=A_1A_2  &= (\gamma_1\gamma_2,\varepsilon_1\varepsilon_2\phi_{\gamma_1}(\gamma_2\tau)\phi_{\gamma_2}(\tau)),
\end{align*}
where $\phi_\gamma(\tau)=(c^\prime\tau+d^\prime)^{\frac{1}{D}}$ and $\gamma\tau=\frac{a\tau+b}{c\tau+d}$ if $\gamma=\tbtmat{a}{b}{c}{d}$. Note that
\begin{multline}
\label{eq:normMultSplit}
\lVert\varepsilon_1'\varepsilon_2'\phi_{\gamma_1'}(\gamma_2'\tau)\phi_{\gamma_2'}(\tau)-\varepsilon_1\varepsilon_2\phi_{\gamma_1}(\gamma_2\tau)\phi_{\gamma_2}(\tau)\rVert
\leq\lVert\varepsilon_1'\phi_{\gamma_1'}(\gamma_2'\tau)\rVert\cdot\lVert\varepsilon_2'\phi_{\gamma_2'}(\tau)-\varepsilon_2\phi_{\gamma_2}(\tau)\rVert\\
+\lVert\varepsilon_2\phi_{\gamma_2}(\tau)\rVert\cdot\left(\lVert\varepsilon_1'\phi_{\gamma_1'}(\gamma_2'\tau)-\varepsilon_1'\phi_{\gamma_1'}(\gamma_2\tau)\rVert+\lVert\varepsilon_1'\phi_{\gamma_1'}(\gamma_2\tau)-\varepsilon_1\phi_{\gamma_1}(\gamma_2\tau)\rVert\right).
\end{multline}
Since $A_2'\in V(A_2;\delta_2)=U(A_2;\delta_2,r)$ we have $\lVert\varepsilon_2'\phi_{\gamma_2'}(\tau)-\varepsilon_2\phi_{\gamma_2}(\tau)\rVert<r$, and $\lVert\gamma_2'-\gamma_2\rVert<\delta_2$. Since $\gamma_2\overline{B}$ is a compact set in $\uhp$, shrinking $\delta_1$ if necessary we obtain $\lVert\varepsilon_1'\phi_{\gamma_1'}(\gamma_2\tau)-\varepsilon_1\phi_{\gamma_1}(\gamma_2\tau)\rVert<r$. By Lagrange's Mean Value Theorem, we have $\lVert\varepsilon_1'\phi_{\gamma_1'}(\gamma_2'\tau)-\varepsilon_1'\phi_{\gamma_1'}(\gamma_2\tau)\rVert<r$ if $\delta_1$ and $\delta_2$ are sufficiently small. Inserting these estimates into \eqref{eq:normMultSplit} we find that
\begin{equation*}
\lVert\varepsilon_1'\varepsilon_2'\phi_{\gamma_1'}(\gamma_2'\tau)\phi_{\gamma_2'}(\tau)-\varepsilon_1\varepsilon_2\phi_{\gamma_1}(\gamma_2\tau)\phi_{\gamma_2}(\tau)\rVert< r\cdot\lVert\varepsilon_1'\phi_{\gamma_1'}(\gamma_2'\tau)\rVert+2r\cdot\lVert\varepsilon_2\phi_{\gamma_2}(\tau)\rVert
\end{equation*}
for sufficiently small $\delta_1$ and $\delta_2$ and for $(A_1',A_2')\in V(A_1;\delta_1)\times V(A_2;\delta_2)=U(A_1;\delta_1,r)\times U(A_2;\delta_2,r)$. The function $(\gamma_1',\gamma_2',\tau)\mapsto\phi_{\gamma_1'}(\gamma_2'\tau)$ is bounded since $(\gamma_1',\gamma_2',\tau)$ lies in a compact set in $\numR^4\times\numR^4\times\overline{B}$. Therefore,
\begin{equation*}
\lVert\varepsilon_1'\varepsilon_2'\phi_{\gamma_1'}(\gamma_2'\tau)\phi_{\gamma_2'}(\tau)-\varepsilon_1\varepsilon_2\phi_{\gamma_1}(\gamma_2\tau)\phi_{\gamma_2}(\tau)\rVert<rC,
\end{equation*}
where $C$ is a positive constant independent of $(A_1',A_2')$. By Remark \ref{rema:openNeighborhood}(2), we can shrink $r$ so that $rC<r_3$. It follows from this, $m(U(\gamma_1;\delta_1)\times U(\gamma_2;\delta_2))\subseteq U(\gamma_1\gamma_2;\delta_3)$, and $V(A_3;\delta_3)=U(A_3;\delta_3,r_3)$ that $m_D(A_1',A_2')=A_1'A_2'\in V(A_3;\delta_3)$. Now \eqref{eq:mDinclusion} follows since $A_1'$ and $A_2'$ are arbitrary.

Finally, the assertion that $p_D$ is a Lie group homomorphism is merely a combination of what has been proved above.
\end{proof}

To conclude this appendix, we mention two more observations about $D$-covers. The first is an application in which Zhu \cite{Zhu25} constructed unitary characters and $\numZ$-valued homomorphisms of $\Gamma_0(N)$. The $D$-cover formalism plays a key role in this construction; cf. \cite[Propositions 2.1 and 3.1]{Zhu25}.

The second observation addresses an essential difference between the double cover of $\slZ$ and the $D$-covers when $\gcd(D,6)=1$. This is a new result. To begin with, we introduce some terminology.
\begin{deff}
Let $D$ be a positive integer and let $G$ be a subgroup of $\Dcover{\glpR}{D}$. Let $n\in\numgeq{Z}{1}$. We say $G$ is \emph{$n$-fold} if for every $(\gamma_0,\varepsilon_0)\in G$, we have
\begin{equation*}
\#\{(\gamma,\varepsilon)\in G\colon\gamma=\gamma_0\}=n.
\end{equation*}
Let $H$ be a subgroup of $G$. If $p_D(H)=p_D(G)$ (see Proposition \ref{prop:LieDcover}(c) for $p_D$), then we say $H$ and $G$ are \emph{base-equivalent}.
\end{deff}
\begin{prop}
Let $D$ be a positive integer and let $G$ be a subgroup of $\Dcover{\glpR}{D}$. Then there exists a unique $n>0$ such that $G$ is $n$-fold. Moreover, we have $n\mid D$.
\end{prop}
\begin{proof}
The uniqueness of $n$ is immediate. To show the existence, let $R=\{(I,\varepsilon)\in G\}$, and $n=\#R$. We now prove $G$ is $n$-fold. Equivalently, we need to prove that for every $\gamma_0\in p_D(G)$, we have $\#\{(\gamma_0,\varepsilon)\in G\}=n$. Let $(\gamma_0,\varepsilon)\in G$; then $(\gamma_0,\varepsilon)\cdot g=(\gamma_0,\varepsilon')\in G$ for all $g\in R$, and hence $\#\{(\gamma_0,\varepsilon)\in G\}\geq n$. On the other hand, if $(\gamma_0,\varepsilon_1), (\gamma_0,\varepsilon_2)\in G$, then $(\gamma_0,\varepsilon_1)^{-1}(\gamma_0,\varepsilon_2)\in R$, and hence $\#\{(\gamma_0,\varepsilon)\in G\}\leq\#((\gamma_0,\varepsilon_1)\cdot R)=n$. Therefore, $\#\{(\gamma_0,\varepsilon)\in G\}=n$. Finally, since $R$ is a subgroup of $\{(I,\varepsilon)\colon\varepsilon^D=1\}$, we have $n\mid D$.
\end{proof}

If $G$ is a subgroup of $\glpR$, then a $1$-fold base-equivalent subgroup of $\Dcover{G}{D}$ is canonically isomorphic to $G$. Namely, $G$ can be realized as a subgroup of $\Dcover{G}{D}$. Geometrically speaking, a $1$-fold base-equivalent subgroup of $\Dcover{G}{D}$ is a section of the covering map $p_D\colon\Dcover{G}{D}\rightarrow G$ that is also a subgroup. The following proposition determines for which $D$ the group $\Dcover{\slZ}{D}$ has a $1$-fold base-equivalent subgroup.
\begin{prop}
\label{prop:basisequisubgroup}
Let $D=2^\alpha3^\beta D_0$ where $\alpha,\beta\in\numgeq{Z}{0}$ and $D_0\geq1$ satisfies $\gcd(D_0,6)=1$. Then $\Dcover{\slZ}{D}$ has a $2^\alpha3^\beta$-fold base-equivalent subgroup. In particular, if $\gcd(D,6)=1$, then $\Dcover{\slZ}{D}$ has a subgroup canonically isomorphic to $\slZ$.
\end{prop}

The proof rests on the following general lemma.
\begin{lemm}
\label{lemm:condition1foldbe}
Let $D\in\numgeq{Z}{1}$, $k=\frac{t}{D}$ where $t\in\numZ$ is coprime to $D$. Let $G$ be a subgroup of $\slR$. If there exists a linear character $\chi\colon\Dcover{G}{D}\rightarrow\numC^\times$ satisfying
\begin{itemize}
  \item for all $\gamma\in\Dcover{G}{D}$ we have $\chi(\gamma)^D=1$,
  \item $\chi\left(I,\etp{1/D}\right)=\etp{-k}$,
\end{itemize}
then $\Dcover{G}{D}$ has a $1$-fold base-equivalent subgroup.
\end{lemm}
\begin{proof}
Let $R$ be a set of generators of $G$ (e.g., $R=G$). For any $\gamma\in R$, let $m_\gamma\in\numZ$ satisfy $\chi(\widetilde{\gamma})\etp{-km_\gamma}=1$. Such an $m_\gamma$ exists due to the first assumption on $\chi$ and $\gcd(D,t)=1$. Set
\begin{equation*}
R'=\{\widetilde{\gamma}\cdot(I,\etp{m_\gamma/D})\colon\gamma\in R\}.
\end{equation*}
Then for all $g\in R'$, we have
\begin{equation}
\label{eq:chigis1}
\chi(g)=\chi(\widetilde{\gamma})\chi(I,\etp{m_\gamma/D})=\chi(\widetilde{\gamma})\etp{-km_\gamma}=1,
\end{equation}
where we have used the second assumption on $\chi$ in the second equality. Let $H$ be the subgroup of $\Dcover{G}{D}$ generated by $R'$. We claim that $H$ is a $1$-fold base-equivalent subgroup. The groups $H$ and $\Dcover{G}{D}$ are obviously base-equivalent. To see $H$ is $1$-fold, let $(\gamma,\varepsilon_1)$ and $(\gamma,\varepsilon_2)$ be arbitrary elements of $H$ whose first components are the same. Then they are both products of some $g$ or $g^{-1}$ where $g\in R'$, so by \eqref{eq:chigis1} $\chi(\gamma,\varepsilon_1)=\chi(\gamma,\varepsilon_2)=1$. It follows that
\begin{equation*}
1=\chi(\gamma,\varepsilon_2)=\chi(\gamma,\varepsilon_1)\chi(I,\varepsilon_2\varepsilon_1^{-1})=\chi(I,\varepsilon_2\varepsilon_1^{-1}).
\end{equation*}
Since $\chi(I,\etp{j/D})=1$ if and only if $D\mid j$ (by the second assumption on $\chi$ and $\gcd(D,t)=1$), we have $\varepsilon_2\varepsilon_1^{-1}=1$, namely, $\varepsilon_1=\varepsilon_2$.
\end{proof}
\begin{rema}
\label{rema:genset1foldbeSubgroup}
The proof actually constructs a generating set of a $1$-fold base-equivalent subgroup, namely $R'=\{(\gamma,\etp{m_\gamma/D})\colon\gamma\in R\}$, where $R$ is an arbitrary generating set of $G$. To find $m_\gamma$ occurring therein, we should first find an integer $t$ and a character $\chi$ subject to the conditions stated in this lemma, and then find an $m_\gamma\in\numZ$ such that $\chi(\widetilde{\gamma})=\etp{tm_\gamma/D}$. For $G=\Gamma_0(N)$, this can usually (not always) be done by using the character of some eta-quotient of cover index $D$.
\end{rema}

To prove Proposition \ref{prop:basisequisubgroup}, we need to know the group of linear characters of $\Dcover{\slZ}{D}$. Recall that $T=\tbtmat{1}{1}{0}{1}$, $S=\tbtmat{0}{-1}{1}{0}$, and $I=\tbtmat{1}{0}{0}{1}$.
\begin{lemm}{\cite[Lemma 5.2]{Zhu23}}
\label{lemm:presentationslZ}
The group $\Dcover{\slZ}{D}$ has a presentation with generators $\widetilde{T}$, $\widetilde{S}$ and relations $\widetilde{S}^{4D}=\widetilde{I}$, $(\widetilde{S}\widetilde{T})^3=\widetilde{S}^2$, $\widetilde{S}^4\widetilde{T}=\widetilde{T}\widetilde{S}^4$. As a consequence, there are exactly $12D$ linear characters on $\Dcover{\slZ}{D}$ which form a cyclic group.
\end{lemm}
\begin{proof}
See \cite[Lemma 5.2]{Zhu23}.
\end{proof}
The linear characters of $\Dcover{\slZ}{D}$ are exactly the followings:
\begin{equation*}
\widetilde{T}\mapsto \etp{\frac{i}{12D}},\qquad \widetilde{S}\mapsto \etp{-\frac{i}{4D}},
\end{equation*}
where $i$ ranges over $1,2,\dots,12D$.

\begin{proof}[Proof of Proposition \ref{prop:basisequisubgroup}]
First assume that $\gcd(D,6)=1$. We need to prove $\Dcover{\slZ}{D}$ has a $1$-fold base-equivalent subgroup. Let $t=12$, $k=\frac{12}{D}$, and let $\chi\colon\Dcover{\slZ}{D}\rightarrow\numC^\times$ be the character given by $\chi(\widetilde{T})=\etp{1/D}$ and $\chi(\widetilde{S})=\etp{-3/D}$. Then all the conditions of Lemma \ref{lemm:condition1foldbe} (with $G=\slZ$) are fulfilled. (To see why $\chi(I,\etp{1/D})=\etp{-k}$, we use the fact that $\widetilde{S}^4=(I,\etp{1/D})$.) Therefore, Lemma \ref{lemm:condition1foldbe} implies that $\Dcover{\slZ}{D}$ has a $1$-fold base-equivalent subgroup.

Then assume that $D$ is general. Let $p_{D,D_0}\colon\Dcover{\slZ}{D}\rightarrow\Dcover{\slZ}{D_0}$ be the projection introduced in Remark \ref{rema:mulSysProp}. Then $p_{D,D_0}$ is a surjective group homomorphism, and $\ker p_{D,D_0}\cong\numZ/2^\alpha3^\beta\numZ$. Since $\gcd(D_0,6)=1$, there is a $1$-fold base-equivalent subgroup $H$ of $\Dcover{\slZ}{D_0}$. Then $p_{D,D_0}^{-1}(H)$ is a $2^\alpha3^\beta$-fold base-equivalent subgroup of $\Dcover{\slZ}{D}$.
\end{proof}

Actually, a $2^\alpha3^\beta$-fold base-equivalent subgroup of $\Dcover{\slZ}{D}$ has been explicitly constructed in the above proof. For example, suppose that $\gcd(D,6)=1$. Let $\chi$ be the character used in the above proof. Let $m_T,m_S$ be integers satisfying $12m_T\equiv1\bmod{D}$ and $4m_S\equiv-1\bmod{D}$, then we have
\begin{equation*}
\chi(\widetilde{T})=\etp{1/D}=\etp{12m_T/D},\qquad\chi(\widetilde{S})=\etp{-3/D}=\etp{12m_S/D}.
\end{equation*}
It then follows from Remark \ref{rema:genset1foldbeSubgroup} that the group $\slZ^\bullet$ generated by $(T,\etp{m_T/D})$ and $(S,\etp{m_S/D})$ is a $1$-fold base-equivalent subgroup of $\Dcover{\slZ}{D}$. In other words, $p_D$ restricts to a group isomorphism from $\slZ^\bullet$ onto $\slZ$.

Indeed, there is a more explicit construction:
\begin{thm}
\label{thm:slZbullet}
Let $D$ be a positive integer with $\gcd(D,6)=1$. Let $m\in\numZ$ satisfy $12m\equiv1\bmod{D}$. Then
\begin{equation*}
\slZ^\bullet:=\left\{\left(\gamma,\etp{m\Psi(\gamma)/D}\right)\colon\gamma\in\slZ\right\}
\end{equation*}
is a $1$-fold base-equivalent subgroup of $\Dcover{\slZ}{D}$, where $\Psi$ is defined in \eqref{eq:Psi}.
\end{thm}
\begin{proof}
Setting in \eqref{eq:charEtaQuotient} $N=1$ and $r_1=24/D$ we obtain a character $\chi_1\colon\Dcover{\slZ}{D}\rightarrow\numC^\times$, $(\gamma,\varepsilon)\mapsto\varepsilon^{-12}\etp{\frac{1}{D}\Psi(\gamma)}$. Then $\chi=\chi_1$ and $t=12$ satisfy the conditions of Lemma \ref{lemm:condition1foldbe} (with $G=\slZ$). Therefore, by Remark \ref{rema:genset1foldbeSubgroup} (with $R=\slZ$), $R'=\{(\gamma,\etp{m_\gamma/D})\colon\gamma\in \slZ\}$ generates a $1$-fold base-equivalent subgroup of $\Dcover{\slZ}{D}$, where $m_\gamma$ should satisfy $\chi_1(\widetilde{\gamma})=\etp{12m_\gamma/D}$. Indeed, $m_\gamma=m\Psi(\gamma)$ suffices as $12m\equiv1\bmod{D}$. Moreover, it is immediate that the group generated by $R'$ is $R'$ itself since it is $1$-fold.
\end{proof}
\begin{rema}
In fact, the character $\chi_1$ above is the same as the character $\chi$ constructed in the proof of Proposition \ref{prop:basisequisubgroup}.
\end{rema}

Finally, we show by an example that when $\gcd(D,6)>1$, $\Dcover{\slZ}{D}$ may have no $1$-fold base-equivalent subgroup:
\begin{prop}
Let $\alpha$ be a positive integer. Then the only base-equivalent subgroup of $\Dcover{\slZ}{2^\alpha}$ is this group itself.
\end{prop}
\begin{proof}
Let $G$ be an arbitrary base-equivalent subgroup of $\Dcover{\slZ}{2^\alpha}$. Then there is a $t\in\numZ$ such that $(S,\etp{t/2^\alpha})\in G$. Then $(I,\etp{(4t+1)/2^\alpha})=(S,\etp{t/2^\alpha})^4\in G$. Let $a,b\in\numZ$ satisfy $a(4t+1)+b2^\alpha=1$. Then $(I,\etp{1/2^\alpha})=(I,\etp{(a(4t+1)+b2^\alpha)/2^\alpha})=(I,\etp{(4t+1)/2^\alpha})^a\in G$. It follows that $\widetilde{S}=(S,\etp{t/2^\alpha})\cdot (I,\etp{1/2^\alpha})^{-t}\in G$. Similarly, $\widetilde{T}\in G$. By Lemma \ref{lemm:presentationslZ}, $\widetilde{T}$ and $\widetilde{S}$ generate $\Dcover{\slZ}{2^\alpha}$, and hence $G=\Dcover{\slZ}{2^\alpha}$.
\end{proof}

\section{Usage of SageMath code}
\label{apx:Usage of SageMath code}
The SageMath code for computing eta-quotients, $D$-covers, dimensions, and other related quantities, as well as for verifying the theorems and generating the tables can be found at \url{http://xjzmath.com/code/SageCodeDimensionRationalWeight.html}.
\end{appendix}

\section*{Acknowledgement}
We thank the anonymous referee for their constructive comments, which have greatly improved this paper.

\bibliographystyle{amsalpha}
\bibliography{ref}

@article {Zhu23,
    AUTHOR = {Zhu, Xiao-Jie},
     TITLE = {Holomorphic {E}isenstein series of rational weights and
              special values of gamma function},
   JOURNAL = {Acta Arith.},
  FJOURNAL = {Acta Arithmetica},
    VOLUME = {210},
      YEAR = {2023},
     PAGES = {279--305},
      ISSN = {0065-1036,1730-6264},
   MRCLASS = {11F30 (11F03 11F11 11F20 33B15)},
  MRNUMBER = {4678128},
       DOI = {10.4064/aa221110-1-4},
       URL = {https://doi.org/10.4064/aa221110-1-4},
}

@article {KM03,
    AUTHOR = {Knopp, Marvin and Mason, Geoffrey},
     TITLE = {Generalized modular forms},
   JOURNAL = {J. Number Theory},
  FJOURNAL = {Journal of Number Theory},
    VOLUME = {99},
      YEAR = {2003},
    NUMBER = {1},
     PAGES = {1--28},
      ISSN = {0022-314X,1096-1658},
   MRCLASS = {11F11},
  MRNUMBER = {1957241},
MRREVIEWER = {Thomas\ R.\ Shemanske},
       DOI = {10.1016/S0022-314X(02)00065-3},
       URL = {https://doi.org/10.1016/S0022-314X(02)00065-3},
}

@article {KM08,
    AUTHOR = {Kohnen, Winfried and Mason, Geoffrey},
     TITLE = {On generalized modular forms and their applications},
   JOURNAL = {Nagoya Math. J.},
  FJOURNAL = {Nagoya Mathematical Journal},
    VOLUME = {192},
      YEAR = {2008},
     PAGES = {119--136},
      ISSN = {0027-7630,2152-6842},
   MRCLASS = {11F11 (11F03 11F22 17B69)},
  MRNUMBER = {2477614},
MRREVIEWER = {Thomas\ R.\ Shemanske},
       DOI = {10.1017/S0027763000026003},
       URL = {https://doi.org/10.1017/S0027763000026003},
}

@article {Ibu20,
    AUTHOR = {Ibukiyama, Tomoyoshi},
     TITLE = {Graded rings of modular forms of rational weights},
   JOURNAL = {Res. Number Theory},
  FJOURNAL = {Research in Number Theory},
    VOLUME = {6},
      YEAR = {2020},
    NUMBER = {1},
     PAGES = {Paper No. 8, 13},
      ISSN = {2522-0160,2363-9555},
   MRCLASS = {11F03 (11F11 11F27)},
  MRNUMBER = {4047212},
MRREVIEWER = {Markus\ Schwagenscheidt},
       DOI = {10.1007/s40993-019-0183-9},
       URL = {https://doi.org/10.1007/s40993-019-0183-9},
}

@article {Ibu00,
    AUTHOR = {Ibukiyama, T.},
     TITLE = {Modular forms of rational weights and modular varieties},
   JOURNAL = {Abh. Math. Sem. Univ. Hamburg},
  FJOURNAL = {Abhandlungen aus dem Mathematischen Seminar der
              Universit\"{a}t Hamburg},
    VOLUME = {70},
      YEAR = {2000},
     PAGES = {315--339},
      ISSN = {0025-5858,1865-8784},
   MRCLASS = {11F11 (11F20)},
  MRNUMBER = {1809555},
MRREVIEWER = {Amir\ Akbary},
       DOI = {10.1007/BF02940923},
       URL = {https://doi.org/10.1007/BF02940923},
}

@article {Kub67,
    AUTHOR = {Kubota, Tomio},
     TITLE = {Topological covering of {${\rm SL}(2)$} over a local field},
   JOURNAL = {J. Math. Soc. Japan},
  FJOURNAL = {Journal of the Mathematical Society of Japan},
    VOLUME = {19},
      YEAR = {1967},
     PAGES = {114--121},
      ISSN = {0025-5645,1881-1167},
   MRCLASS = {10.68 (22.50)},
  MRNUMBER = {204422},
MRREVIEWER = {H.\ K.\ Farahat},
       DOI = {10.2969/jmsj/01910114},
       URL = {https://doi.org/10.2969/jmsj/01910114},
}

@article {Kub66,
    AUTHOR = {Kubota, Tomio},
     TITLE = {Ein arithmetischer {S}atz \"uber eine {M}atrizengruppe},
   JOURNAL = {J. Reine Angew. Math.},
  FJOURNAL = {Journal f\"ur die Reine und Angewandte Mathematik. [Crelle's
              Journal]},
    VOLUME = {222},
      YEAR = {1966},
     PAGES = {55--57},
      ISSN = {0075-4102,1435-5345},
   MRCLASS = {10.21 (10.66)},
  MRNUMBER = {188194},
MRREVIEWER = {D.\ J.\ Lewis},
       DOI = {10.1515/crll.1966.222.55},
       URL = {https://doi.org/10.1515/crll.1966.222.55},
}

@book {CS17,
    AUTHOR = {Cohen, Henri and Str\"{o}mberg, Fredrik},
     TITLE = {Modular forms},
    SERIES = {Graduate Studies in Mathematics},
    VOLUME = {179},
      NOTE = {A classical approach},
 PUBLISHER = {American Mathematical Society, Providence, RI},
      YEAR = {2017},
     PAGES = {xii+700},
      ISBN = {978-0-8218-4947-7},
   MRCLASS = {11-01 (11Fxx)},
  MRNUMBER = {3675870},
MRREVIEWER = {Sander\ Zwegers},
       DOI = {10.1090/gsm/179},
       URL = {https://doi.org/10.1090/gsm/179},
}

@book {Apo90,
    AUTHOR = {Apostol, Tom M.},
     TITLE = {Modular functions and {D}irichlet series in number theory},
    SERIES = {Graduate Texts in Mathematics},
    VOLUME = {41},
   EDITION = {Second},
 PUBLISHER = {Springer-Verlag, New York},
      YEAR = {1990},
     PAGES = {x+204},
      ISBN = {0-387-97127-0},
   MRCLASS = {11-01 (11Fxx 11Mxx)},
  MRNUMBER = {1027834},
       URL = {https://doi.org/10.1007/978-1-4612-0999-7},
}

@article {FH23,
    AUTHOR = {Freitag, Eberhard and Hill, Richard M.},
     TITLE = {Modular forms on {${\rm SU}(2,1)$} with weight
              {$\frac{1}{3}$}},
   JOURNAL = {Res. Number Theory},
  FJOURNAL = {Research in Number Theory},
    VOLUME = {9},
      YEAR = {2023},
    NUMBER = {2},
     PAGES = {Paper No. 26, 20},
      ISSN = {2522-0160,2363-9555},
   MRCLASS = {11F06 (11F55 11F75)},
  MRNUMBER = {4570430},
MRREVIEWER = {Mehmet\ Haluk\ \c{S}eng\"{u}n},
       DOI = {10.1007/s40993-022-00361-6},
       URL = {https://doi.org/10.1007/s40993-022-00361-6},
}

@book {Fri11,
    AUTHOR = {Freitag, Eberhard},
     TITLE = {Complex analysis. 2},
    SERIES = {Universitext},
      NOTE = {Riemann surfaces, several complex variables, abelian
              functions, higher modular functions},
 PUBLISHER = {Springer, Heidelberg},
      YEAR = {2011},
     PAGES = {xiv+506},
      ISBN = {978-3-642-20553-8},
   MRCLASS = {30-01 (30Fxx 32-01 32A99 32N05)},
  MRNUMBER = {2810329},
MRREVIEWER = {Hiroshige\ Shiga},
       DOI = {10.1007/978-3-642-20554-5},
       URL = {https://doi.org/10.1007/978-3-642-20554-5},
}

@book {Lee11,
    AUTHOR = {Lee, John M.},
     TITLE = {Introduction to topological manifolds},
    SERIES = {Graduate Texts in Mathematics},
    VOLUME = {202},
   EDITION = {Second},
 PUBLISHER = {Springer, New York},
      YEAR = {2011},
     PAGES = {xviii+433},
      ISBN = {978-1-4419-7939-1},
   MRCLASS = {57-01 (54-01 55-01)},
  MRNUMBER = {2766102},
       DOI = {10.1007/978-1-4419-7940-7},
       URL = {https://doi.org/10.1007/978-1-4419-7940-7},
}

@book {Lee13,
    AUTHOR = {Lee, John M.},
     TITLE = {Introduction to smooth manifolds},
    SERIES = {Graduate Texts in Mathematics},
    VOLUME = {218},
   EDITION = {Second},
 PUBLISHER = {Springer, New York},
      YEAR = {2013},
     PAGES = {xvi+708},
      ISBN = {978-1-4419-9981-8},
   MRCLASS = {58-01 (53-01 57-01)},
  MRNUMBER = {2954043},
}

@book {DS05,
    AUTHOR = {Diamond, Fred and Shurman, Jerry},
     TITLE = {A first course in modular forms},
    SERIES = {Graduate Texts in Mathematics},
    VOLUME = {228},
 PUBLISHER = {Springer-Verlag, New York},
      YEAR = {2005},
     PAGES = {xvi+436},
      ISBN = {0-387-23229-X},
   MRCLASS = {11Fxx},
  MRNUMBER = {2112196},
MRREVIEWER = {Henri\ Darmon},
}

@book {Miy06,
    AUTHOR = {Miyake, Toshitsune},
     TITLE = {Modular forms},
    SERIES = {Springer Monographs in Mathematics},
   EDITION = {English},
      NOTE = {Translated from the 1976 Japanese original by Yoshitaka Maeda},
 PUBLISHER = {Springer-Verlag, Berlin},
      YEAR = {2006},
     PAGES = {x+335},
      ISBN = {978-3-540-29592-1; 3-540-29592-5},
   MRCLASS = {11F11 (11F25 11F72)},
  MRNUMBER = {2194815},
}

@article {Pet38,
    AUTHOR = {Petersson, Hans},
     TITLE = {Zur analytischen {T}heorie der {G}renzkreisgruppen},
   JOURNAL = {Math. Ann.},
  FJOURNAL = {Mathematische Annalen},
    VOLUME = {115},
      YEAR = {1938},
    NUMBER = {1},
     PAGES = {175--204},
      ISSN = {0025-5831,1432-1807},
   MRCLASS = {99-04},
  MRNUMBER = {1513184},
       DOI = {10.1007/BF01448938},
       URL = {https://doi.org/10.1007/BF01448938},
}

@article {KM09,
    AUTHOR = {Knopp, Marvin and Mason, Geoffrey},
     TITLE = {Parabolic generalized modular forms and their characters},
   JOURNAL = {Int. J. Number Theory},
  FJOURNAL = {International Journal of Number Theory},
    VOLUME = {5},
      YEAR = {2009},
    NUMBER = {5},
     PAGES = {845--857},
      ISSN = {1793-0421,1793-7310},
   MRCLASS = {11F03},
  MRNUMBER = {2553511},
MRREVIEWER = {Thomas\ R.\ Shemanske},
       DOI = {10.1142/S1793042109002419},
       URL = {https://doi.org/10.1142/S1793042109002419},
}

@article {KM12,
    AUTHOR = {Knopp, Marvin and Mason, Geoffrey},
     TITLE = {Revisions to ``{P}arabolic generalized modular forms and their
              characters'' [MR2553511]},
   JOURNAL = {Int. J. Number Theory},
  FJOURNAL = {International Journal of Number Theory},
    VOLUME = {8},
      YEAR = {2012},
    NUMBER = {4},
     PAGES = {859--864},
      ISSN = {1793-0421,1793-7310},
   MRCLASS = {11F03},
  MRNUMBER = {2926548},
MRREVIEWER = {Thomas\ R.\ Shemanske},
       DOI = {10.1142/S1793042112500509},
       URL = {https://doi.org/10.1142/S1793042112500509},
}

@article {LL23,
    AUTHOR = {Lee, Youngmin and Lim, Subong},
     TITLE = {Zagier duality for real weights},
   JOURNAL = {Acta Arith.},
  FJOURNAL = {Acta Arithmetica},
    VOLUME = {207},
      YEAR = {2023},
    NUMBER = {3},
     PAGES = {235--249},
      ISSN = {0065-1036,1730-6264},
   MRCLASS = {11F37 (11F30)},
  MRNUMBER = {4576082},
MRREVIEWER = {Seokho\ Jin},
       DOI = {10.4064/aa220614-13-1},
       URL = {https://doi.org/10.4064/aa220614-13-1},
}

@article {ZZ23,
    AUTHOR = {Zhou, Hai-Gang and Zhu, Xiao-Jie},
     TITLE = {Double coset operators and eta-quotients},
   JOURNAL = {J. Number Theory},
  FJOURNAL = {Journal of Number Theory},
    VOLUME = {249},
      YEAR = {2023},
     PAGES = {537--601},
      ISSN = {0022-314X,1096-1658},
   MRCLASS = {11F25 (11F03 11F11 11F20 11F30 11F37)},
  MRNUMBER = {4578663},
       DOI = {10.1016/j.jnt.2023.02.017},
       URL = {https://doi.org/10.1016/j.jnt.2023.02.017},
}

@incollection {SS77,
    AUTHOR = {Serre, J.-P. and Stark, H. M.},
     TITLE = {Modular forms of weight {$1/2$}},
 BOOKTITLE = {Modular functions of one variable, {VI} ({P}roc. {S}econd
              {I}nternat. {C}onf., {U}niv. {B}onn, {B}onn, 1976)},
    SERIES = {Lecture Notes in Math.},
    VOLUME = {Vol. 627},
     PAGES = {27--67},
 PUBLISHER = {Springer, Berlin-New York},
      YEAR = {1977},
      ISBN = {3-540-08530-0},
   MRCLASS = {10D15},
  MRNUMBER = {472707},
}

@article {Shi73,
    AUTHOR = {Shimura, Goro},
     TITLE = {On modular forms of half integral weight},
   JOURNAL = {Ann. of Math. (2)},
  FJOURNAL = {Annals of Mathematics. Second Series},
    VOLUME = {97},
      YEAR = {1973},
     PAGES = {440--481},
      ISSN = {0003-486X},
   MRCLASS = {10D15},
  MRNUMBER = {332663},
MRREVIEWER = {I.\ Piatetski-Shapiro},
       DOI = {10.2307/1970831},
       URL = {https://doi.org/10.2307/1970831},
}

@article {Bha17,
    AUTHOR = {Bhattacharya, Soumya},
     TITLE = {Holomorphic eta quotients of weight 1/2},
   JOURNAL = {Adv. Math.},
  FJOURNAL = {Advances in Mathematics},
    VOLUME = {320},
      YEAR = {2017},
     PAGES = {1185--1200},
      ISSN = {0001-8708,1090-2082},
   MRCLASS = {11F20 (11F11 11F12 11F37 11G16)},
  MRNUMBER = {3709133},
MRREVIEWER = {Goran\ Djankovi\'{c}},
       DOI = {10.1016/j.aim.2017.09.017},
       URL = {https://doi.org/10.1016/j.aim.2017.09.017},
}

@article {Bha21,
    AUTHOR = {Bhattacharya, Soumya},
     TITLE = {Special factors of holomorphic eta quotients},
   JOURNAL = {Adv. Math.},
  FJOURNAL = {Advances in Mathematics},
    VOLUME = {392},
      YEAR = {2021},
     PAGES = {Paper No. 108019, 24},
      ISSN = {0001-8708,1090-2082},
   MRCLASS = {11F20 (11F11 11F12 11F37 11G16)},
  MRNUMBER = {4313963},
MRREVIEWER = {Dazhao\ Tang},
       DOI = {10.1016/j.aim.2021.108019},
       URL = {https://doi.org/10.1016/j.aim.2021.108019},
}

@book {Ono04,
    AUTHOR = {Ono, Ken},
     TITLE = {The web of modularity: arithmetic of the coefficients of
              modular forms and {$q$}-series},
    SERIES = {CBMS Regional Conference Series in Mathematics},
    VOLUME = {102},
 PUBLISHER = {Conference Board of the Mathematical Sciences, Washington, DC;
              by the American Mathematical Society, Providence, RI},
      YEAR = {2004},
     PAGES = {viii+216},
      ISBN = {0-8218-3368-5},
   MRCLASS = {11F30 (05A30 11F11 11F37)},
  MRNUMBER = {2020489},
MRREVIEWER = {Kevin\ L.\ James},
}

@article {AL70,
    AUTHOR = {Atkin, A. O. L. and Lehner, J.},
     TITLE = {Hecke operators on {$\Gamma \sb{0}(m)$}},
   JOURNAL = {Math. Ann.},
  FJOURNAL = {Mathematische Annalen},
    VOLUME = {185},
      YEAR = {1970},
     PAGES = {134--160},
      ISSN = {0025-5831,1432-1807},
   MRCLASS = {10.20},
  MRNUMBER = {268123},
MRREVIEWER = {R.\ A.\ Rankin},
       DOI = {10.1007/BF01359701},
       URL = {https://doi.org/10.1007/BF01359701},
}

@book {Wei77,
    AUTHOR = {Weisinger, James},
     TITLE = {S{OME} {RESULTS} {ON} {CLASSICAL} {EISENSTEIN} {SERIES} {AND}
              {MODULAR} {FORMS} {OVER} {FUNCTION} {FIELDS}},
      NOTE = {Thesis (Ph.D.)--Harvard University},
 PUBLISHER = {ProQuest LLC, Ann Arbor, MI},
      YEAR = {1977},
     PAGES = {(no paging)},
   MRCLASS = {99-05},
  MRNUMBER = {2940742},
       URL = {http://gateway.proquest.com/openurl?url_ver=Z39.88-2004&rft_val_fmt=info:ofi/fmt:kev:mtx:dissertation&res_dat=xri:pqdiss&rft_dat=xri:pqdiss:0322309},
}

@article {New59,
    AUTHOR = {Newman, Morris},
     TITLE = {Construction and application of a class of modular functions.
              {II}},
   JOURNAL = {Proc. London Math. Soc. (3)},
  FJOURNAL = {Proceedings of the London Mathematical Society. Third Series},
    VOLUME = {9},
      YEAR = {1959},
     PAGES = {373--387},
      ISSN = {0024-6115,1460-244X},
   MRCLASS = {10.00},
  MRNUMBER = {107629},
MRREVIEWER = {Harvey\ Cohn},
       DOI = {10.1112/plms/s3-9.3.373},
       URL = {https://doi.org/10.1112/plms/s3-9.3.373},
}

@article {DS74,
    AUTHOR = {Deligne, Pierre and Serre, Jean-Pierre},
     TITLE = {Formes modulaires de poids {$1$}},
   JOURNAL = {Ann. Sci. \'{E}cole Norm. Sup. (4)},
  FJOURNAL = {Annales Scientifiques de l'\'{E}cole Normale Sup\'{e}rieure.
              Quatri\`eme S\'{e}rie},
    VOLUME = {7},
      YEAR = {1974},
     PAGES = {507--530},
      ISSN = {0012-9593},
   MRCLASS = {10D15 (12A65)},
  MRNUMBER = {379379},
MRREVIEWER = {Stephen\ Gelbart},
       URL = {http://www.numdam.org/item?id=ASENS_1974_4_7_4_507_0},
}

@article {Mar96,
    AUTHOR = {Martin, Yves},
     TITLE = {Multiplicative {$\eta$}-quotients},
   JOURNAL = {Trans. Amer. Math. Soc.},
  FJOURNAL = {Transactions of the American Mathematical Society},
    VOLUME = {348},
      YEAR = {1996},
    NUMBER = {12},
     PAGES = {4825--4856},
      ISSN = {0002-9947,1088-6850},
   MRCLASS = {11F20 (11F22 20D08)},
  MRNUMBER = {1376550},
MRREVIEWER = {Thomas\ R.\ Shemanske},
       DOI = {10.1090/S0002-9947-96-01743-6},
       URL = {https://doi.org/10.1090/S0002-9947-96-01743-6},
}

@incollection {DKM85,
    AUTHOR = {Dummit, D. and Kisilevsky, H. and McKay, J.},
     TITLE = {Multiplicative products of {$\eta$}-functions},
 BOOKTITLE = {Finite groups---coming of age ({M}ontreal, {Q}ue., 1982)},
    SERIES = {Contemp. Math.},
    VOLUME = {45},
     PAGES = {89--98},
 PUBLISHER = {Amer. Math. Soc., Providence, RI},
      YEAR = {1985},
      ISBN = {0-8218-5047-4},
   MRCLASS = {11F20 (20B20)},
  MRNUMBER = {822235},
MRREVIEWER = {E.\ Grosswald},
       DOI = {10.1090/conm/045/822235},
       URL = {https://doi.org/10.1090/conm/045/822235},
}

@article {Woh57,
    AUTHOR = {Wohlfahrt, Klaus},
     TITLE = {\"{U}ber {O}peratoren {H}eckescher {A}rt bei {M}odulformen reeller
              {D}imension},
   JOURNAL = {Math. Nachr.},
  FJOURNAL = {Mathematische Nachrichten},
    VOLUME = {16},
      YEAR = {1957},
     PAGES = {233--256},
      ISSN = {0025-584X},
   MRCLASS = {10.00},
  MRNUMBER = {106888},
MRREVIEWER = {H. D. Kloosterman},
       DOI = {10.1002/mana.19570160307},
       URL = {https://doi.org/10.1002/mana.19570160307},
}

@book {Kno70,
    AUTHOR = {Knopp, Marvin I.},
     TITLE = {Modular functions in analytic number theory},
 PUBLISHER = {Markham Publishing Co., Chicago, Ill.},
      YEAR = {1970},
     PAGES = {x+150},
   MRCLASS = {10.20},
  MRNUMBER = {0265287},
MRREVIEWER = {E. Grosswald},
}

@book {Shi71,
    AUTHOR = {Shimura, Goro},
     TITLE = {Introduction to the arithmetic theory of automorphic
              functions},
    SERIES = {Kan\^o{} Memorial Lectures},
    VOLUME = {No. 1},
      NOTE = {Publications of the Mathematical Society of Japan, No. 11},
 PUBLISHER = {Iwanami Shoten Publishers, Tokyo; Princeton University Press,
              Princeton, NJ},
      YEAR = {1971},
     PAGES = {xiv+267},
   MRCLASS = {10D10 (12-02 14K22)},
  MRNUMBER = {314766},
MRREVIEWER = {A.\ N.\ Andrianov},
}

@incollection {Zag77_1,
    AUTHOR = {Zagier, Don},
     TITLE = {Traces des op\'erateurs de {H}ecke},
 BOOKTITLE = {S\'eminaire {D}elange-{P}isot-{P}oitou, 17e ann\'ee: 1975/76,
              {T}h\'eorie des nombres: {F}asc. 2},
     PAGES = {Exp. No. 23, 12},
 PUBLISHER = {Secr\'etariat Math., Paris},
      YEAR = {1977},
   MRCLASS = {10D15},
  MRNUMBER = {457353},
MRREVIEWER = {A.\ N.\ Andrianov},
}

@incollection {Zag77_2,
    AUTHOR = {Zagier, D.},
     TITLE = {Correction to: ``{T}he {E}ichler-{S}elberg trace formula on
              {${\rm SL}\sb{2}({\bf Z})$}'' ({\it {I}ntroduction to modular
              forms}, {A}ppendix, pp. 44--54, {S}pringer, {B}erlin, 1976) by
              {S}. {L}ang},
 BOOKTITLE = {Modular functions of one variable, {VI} ({P}roc. {S}econd
              {I}nternat. {C}onf., {U}niv. {B}onn, {B}onn, 1976)},
    SERIES = {Lecture Notes in Math.},
    VOLUME = {Vol. 627},
     PAGES = {171--173},
 PUBLISHER = {Springer, Berlin-New York},
      YEAR = {1977},
      ISBN = {3-540-08530-0},
   MRCLASS = {10D15},
  MRNUMBER = {480354},
MRREVIEWER = {A.\ N.\ Andrianov},
}

@incollection {Coh77,
    AUTHOR = {Cohen, Henri},
     TITLE = {Trace des op\'erateurs de {H}ecke sur {$\Gamma \sb{0}(N)$}},
 BOOKTITLE = {S\'eminaire de {T}h\'eorie des {N}ombres (1976--1977)},
     PAGES = {Exp. No. 4, 9},
 PUBLISHER = {Centre National de Recherche Scientifique, Laboratoire de
              Th\'eorie des Nombres, Talence},
      YEAR = {1977},
   MRCLASS = {10D05},
  MRNUMBER = {562292},
}

@incollection {CO77,
    AUTHOR = {Cohen, H. and Oesterl\'e, J.},
     TITLE = {Dimensions des espaces de formes modulaires},
 BOOKTITLE = {Modular functions of one variable, {VI} ({P}roc. {S}econd
              {I}nternat. {C}onf., {U}niv. {B}onn, {B}onn, 1976)},
    SERIES = {Lecture Notes in Math.},
    VOLUME = {Vol. 627},
     PAGES = {69--78},
 PUBLISHER = {Springer, Berlin-New York},
      YEAR = {1977},
      ISBN = {3-540-08530-0},
   MRCLASS = {10D15},
  MRNUMBER = {472703},
MRREVIEWER = {R.\ A.\ Rankin},
}

@article {Hij74,
    AUTHOR = {Hijikata, Hiroaki},
     TITLE = {Explicit formula of the traces of {H}ecke operators for
              {$\Gamma \sb{0}(N)$}},
   JOURNAL = {J. Math. Soc. Japan},
  FJOURNAL = {Journal of the Mathematical Society of Japan},
    VOLUME = {26},
      YEAR = {1974},
     PAGES = {56--82},
      ISSN = {0025-5645,1881-1167},
   MRCLASS = {10D05},
  MRNUMBER = {337783},
MRREVIEWER = {M.\ Eichler},
       DOI = {10.2969/jmsj/02610056},
       URL = {https://doi.org/10.2969/jmsj/02610056},
}

@article {Mar05,
    AUTHOR = {Martin, Greg},
     TITLE = {Dimensions of the spaces of cusp forms and newforms on
              {$\Gamma_0(N)$} and {$\Gamma_1(N)$}},
   JOURNAL = {J. Number Theory},
  FJOURNAL = {Journal of Number Theory},
    VOLUME = {112},
      YEAR = {2005},
    NUMBER = {2},
     PAGES = {298--331},
      ISSN = {0022-314X,1096-1658},
   MRCLASS = {11F11 (11F25)},
  MRNUMBER = {2141534},
MRREVIEWER = {Amir\ Akbary},
       DOI = {10.1016/j.jnt.2004.10.009},
       URL = {https://doi.org/10.1016/j.jnt.2004.10.009},
}

@article {Que10,
    AUTHOR = {Quer, Jordi},
     TITLE = {Dimensions of spaces of modular forms for {$\Gamma_H(N)$}},
   JOURNAL = {Acta Arith.},
  FJOURNAL = {Acta Arithmetica},
    VOLUME = {145},
      YEAR = {2010},
    NUMBER = {4},
     PAGES = {373--395},
      ISSN = {0065-1036,1730-6264},
   MRCLASS = {11F11},
  MRNUMBER = {2738153},
MRREVIEWER = {J.\ Larry\ Lehman},
       DOI = {10.4064/aa145-4-4},
       URL = {https://doi.org/10.4064/aa145-4-4},
}

@article {Mar18,
    AUTHOR = {Martin, Kimball},
     TITLE = {Refined dimensions of cusp forms, and equidistribution and
              bias of signs},
   JOURNAL = {J. Number Theory},
  FJOURNAL = {Journal of Number Theory},
    VOLUME = {188},
      YEAR = {2018},
     PAGES = {1--17},
      ISSN = {0022-314X,1096-1658},
   MRCLASS = {11F03},
  MRNUMBER = {3778620},
MRREVIEWER = {Qingfeng\ Sun},
       DOI = {10.1016/j.jnt.2018.01.015},
       URL = {https://doi.org/10.1016/j.jnt.2018.01.015},
}

@article {ZZ22,
    AUTHOR = {Zhang, Yichao and Zhou, Yang},
     TITLE = {Dimension formulas for modular form spaces with character for
              {F}ricke groups},
   JOURNAL = {Acta Arith.},
  FJOURNAL = {Acta Arithmetica},
    VOLUME = {206},
      YEAR = {2022},
    NUMBER = {4},
     PAGES = {291--311},
      ISSN = {0065-1036,1730-6264},
   MRCLASS = {11F11 (11F06 11H55)},
  MRNUMBER = {4528421},
MRREVIEWER = {J.\ Larry\ Lehman},
       DOI = {10.4064/aa211126-17-11},
       URL = {https://doi.org/10.4064/aa211126-17-11},
}

@book {Coh07,
    AUTHOR = {Cohen, Henri},
     TITLE = {Number theory. {V}ol. {I}. {T}ools and {D}iophantine
              equations},
    SERIES = {Graduate Texts in Mathematics},
    VOLUME = {239},
 PUBLISHER = {Springer, New York},
      YEAR = {2007},
     PAGES = {xxiv+650},
      ISBN = {978-0-387-49922-2},
   MRCLASS = {11-01 (11Dxx 11Rxx 11Sxx)},
  MRNUMBER = {2312337},
MRREVIEWER = {R.\ C.\ Baker},
}

@article {Sage,
    AUTHOR = {SageMath},
     TITLE = {The {S}age Mathematics Software System (Version 10.6)},
   JOURNAL = {The Sage Developers},
      YEAR = {2025},
       URL = {https://www.sagemath.org},
}

@book {Wei74,
    AUTHOR = {Weil, Andr\'e},
     TITLE = {Basic number theory},
    SERIES = {Die Grundlehren der mathematischen Wissenschaften},
    VOLUME = {Band 144},
   EDITION = {Third},
 PUBLISHER = {Springer-Verlag, New York-Berlin},
      YEAR = {1974},
     PAGES = {xviii+325},
   MRCLASS = {12-02},
  MRNUMBER = {427267},
}

@article {Zhu25,
    AUTHOR = {Zhu, Xiao-Jie},
     TITLE = {Explicit formulae for linear characters of {$\Gamma_0(N)$}},
   JOURNAL = {Comm. Algebra},
  FJOURNAL = {Communications in Algebra},
    VOLUME = {53},
      YEAR = {2025},
    NUMBER = {11},
     PAGES = {4499--4510},
      ISSN = {0092-7872,1532-4125},
   MRCLASS = {20H05 (11F06 11F20 20F05 20H10 22D10)},
  MRNUMBER = {4950628},
MRREVIEWER = {Nicol\'as\ Mayorga Uruburu},
       DOI = {10.1080/00927872.2025.2488029},
       URL = {https://doi.org/10.1080/00927872.2025.2488029},
}

@article {Sel56,
    AUTHOR = {Selberg, A.},
     TITLE = {Harmonic analysis and discontinuous groups in weakly symmetric
              {R}iemannian spaces with applications to {D}irichlet series},
   JOURNAL = {J. Indian Math. Soc. (N.S.)},
  FJOURNAL = {The Journal of the Indian Mathematical Society. New Series},
    VOLUME = {20},
      YEAR = {1956},
     PAGES = {47--87},
      ISSN = {0019-5839,2455-6475},
   MRCLASS = {10.1X},
  MRNUMBER = {88511},
MRREVIEWER = {F.\ V.\ Atkinson},
}

@book {Fis87,
    AUTHOR = {Fischer, J\"urgen},
     TITLE = {An approach to the {S}elberg trace formula via the {S}elberg
              zeta-function},
    SERIES = {Lecture Notes in Mathematics},
    VOLUME = {1253},
 PUBLISHER = {Springer-Verlag, Berlin},
      YEAR = {1987},
     PAGES = {iv+184},
      ISBN = {3-540-15208-3},
   MRCLASS = {11F72 (11F12 11M41 58G25)},
  MRNUMBER = {892317},
MRREVIEWER = {Minking\ Eie},
       DOI = {10.1007/BFb0077696},
       URL = {https://doi.org/10.1007/BFb0077696},
}

@book {Hej83,
    AUTHOR = {Hejhal, Dennis A.},
     TITLE = {The {S}elberg trace formula for {${\rm PSL}(2,\,{\bf R})$}.
              {V}ol. 2},
    SERIES = {Lecture Notes in Mathematics},
    VOLUME = {1001},
 PUBLISHER = {Springer-Verlag, Berlin},
      YEAR = {1983},
     PAGES = {viii+806},
      ISBN = {3-540-12323-7},
   MRCLASS = {11F72 (58G25)},
  MRNUMBER = {711197},
MRREVIEWER = {Audrey\ A.\ Terras},
       DOI = {10.1007/BFb0061302},
       URL = {https://doi.org/10.1007/BFb0061302},
}

@article {Eic57,
    AUTHOR = {Eichler, M.},
     TITLE = {Eine {V}erallgemeinerung der {A}belschen {I}ntegrale},
   JOURNAL = {Math. Z.},
  FJOURNAL = {Mathematische Zeitschrift},
    VOLUME = {67},
      YEAR = {1957},
     PAGES = {267--298},
      ISSN = {0025-5874,1432-1823},
   MRCLASS = {33.0X},
  MRNUMBER = {89928},
       DOI = {10.1007/BF01258863},
       URL = {https://doi.org/10.1007/BF01258863},
}

@article {KP17,
    AUTHOR = {Kaplan, Nathan and Petrow, Ian},
     TITLE = {Elliptic curves over a finite field and the trace formula},
   JOURNAL = {Proc. Lond. Math. Soc. (3)},
  FJOURNAL = {Proceedings of the London Mathematical Society. Third Series},
    VOLUME = {115},
      YEAR = {2017},
    NUMBER = {6},
     PAGES = {1317--1372},
      ISSN = {0024-6115,1460-244X},
   MRCLASS = {11G20 (14G05 14H52)},
  MRNUMBER = {3741853},
MRREVIEWER = {John\ T.\ Cullinan},
       DOI = {10.1112/plms.12069},
       URL = {https://doi.org/10.1112/plms.12069},
}

@article {Ros26,
    AUTHOR = {Ross, Erick},
     TITLE = {Newspaces with nebentypus: an explicit dimension formula and
              classification of trivial newspaces},
   JOURNAL = {J. Number Theory},
  FJOURNAL = {Journal of Number Theory},
    VOLUME = {278},
      YEAR = {2026},
     PAGES = {317--352},
      ISSN = {0022-314X,1096-1658},
   MRCLASS = {11F11 (11K06)},
  MRNUMBER = {4919846},
MRREVIEWER = {Jewel\ Mahajan},
       DOI = {10.1016/j.jnt.2025.04.003},
       URL = {https://doi.org/10.1016/j.jnt.2025.04.003},
}

@article {Aok17,
    AUTHOR = {Aoki, Hiroki},
     TITLE = {On {J}acobi forms of real weights and indices},
   JOURNAL = {Osaka J. Math.},
  FJOURNAL = {Osaka Journal of Mathematics},
    VOLUME = {54},
      YEAR = {2017},
    NUMBER = {3},
     PAGES = {569--585},
      ISSN = {0030-6126},
   MRCLASS = {11F50 (11F12 11F46)},
  MRNUMBER = {3685592},
MRREVIEWER = {Tomoyoshi\ Ibukiyama},
       URL = {https://projecteuclid.org/euclid.ojm/1502092828},
}

@article {KR10,
    AUTHOR = {Knopp, Marvin and Raji, Wissam},
     TITLE = {Eichler cohomology for generalized modular forms {II}},
   JOURNAL = {Int. J. Number Theory},
  FJOURNAL = {International Journal of Number Theory},
    VOLUME = {6},
      YEAR = {2010},
    NUMBER = {5},
     PAGES = {1083--1090},
      ISSN = {1793-0421,1793-7310},
   MRCLASS = {11F75 (11F11)},
  MRNUMBER = {2679457},
MRREVIEWER = {Stefan\ K\"uhnlein},
       DOI = {10.1142/S179304211000340X},
       URL = {https://doi.org/10.1142/S179304211000340X},
}

@article {Raj13,
    AUTHOR = {Raji, Wissam},
     TITLE = {Eichler cohomology of generalized modular forms of real
              weights},
   JOURNAL = {Proc. Amer. Math. Soc.},
  FJOURNAL = {Proceedings of the American Mathematical Society},
    VOLUME = {141},
      YEAR = {2013},
    NUMBER = {2},
     PAGES = {383--392},
      ISSN = {0002-9939,1088-6826},
   MRCLASS = {11F11 (11F20)},
  MRNUMBER = {2996943},
MRREVIEWER = {Stefan\ K\"uhnlein},
       DOI = {10.1090/S0002-9939-2012-11315-2},
       URL = {https://doi.org/10.1090/S0002-9939-2012-11315-2},
}

@article {Koh10,
    AUTHOR = {Kohnen, Winfried},
     TITLE = {On certain generalized modular forms},
   JOURNAL = {Funct. Approx. Comment. Math.},
  FJOURNAL = {Uniwersytet im. Adama Mickiewicza w Poznaniu. Wydzia\l\
              Matematyki i Informatyki. Functiones et Approximatio
              Commentarii Mathematici},
    VOLUME = {43},
      YEAR = {2010},
     PAGES = {23--29},
      ISSN = {0208-6573,2080-9433},
      ISBN = {978-83-232-2158-6},
   MRCLASS = {11F30 (11F12)},
  MRNUMBER = {2683571},
MRREVIEWER = {Huixue\ Lao},
       DOI = {10.7169/facm/1285679143},
       URL = {https://doi.org/10.7169/facm/1285679143},
}

@article {BCD18,
    AUTHOR = {Bruggeman, Roelof and Choie, YoungJu and Diamantis, Nikolaos},
     TITLE = {Holomorphic automorphic forms and cohomology},
   JOURNAL = {Mem. Amer. Math. Soc.},
  FJOURNAL = {Memoirs of the American Mathematical Society},
    VOLUME = {253},
      YEAR = {2018},
    NUMBER = {1212},
     PAGES = {vii+167},
      ISSN = {0065-9266,1947-6221},
      ISBN = {978-1-4704-2855-6; 978-1-4704-4419-8},
   MRCLASS = {11F67 (11F12 11F75 22E40)},
  MRNUMBER = {3783825},
MRREVIEWER = {Hidenori\ Katsurada},
       DOI = {10.1090/memo/1212},
       URL = {https://doi.org/10.1090/memo/1212},
}
\end{document}